\numberwithin{equation}{section}
\newtheorem{Thm}{Theorem}[section]
\newtheorem{Prop}[Thm]{Proposition}
\newtheorem{Lem}[Thm]{Lemma}
\newtheorem{Cor}[Thm]{Corollary}
\theoremstyle{definition}
\newtheorem{Def}[Thm]{Definition}
\newtheorem{Expl}[Thm]{Example}
\newcommand{\R}{\mathbb{R}}		
\newcommand{\Z}{\mathbb{Z}}		
\newcommand{\Hi}{X}				
\newcommand{\B}{\textbf{B}}		
\newcommand{\oB}{\textbf{U}}	
\newcommand{\Sp}{\textbf{S}}	
\newcommand{\J}{\mathcal J}		
\newcommand{\id}{\mbox{id}}		
\newcommand{\HS}{{\scriptscriptstyle \mathrm {HS}}}
\newcommand{\cF}{\mathscr{F}}	
\newcommand{\cH}{\mathscr H}	
\newcommand{\calH}{\mathcal H}	
\newcommand{\cL}{\mathscr L}	
\newcommand{\cP}{\mathscr{P}}	
\newcommand{\cR}{\mathscr{R}}	
\newcommand{\bg}{\mathbf{g}} 
\newcommand{\bh}{\mathbf{h}} 
\newcommand{\bv}{\mathbf{v}} 
\newcommand{\bw}{\mathbf{w}} 
\newcommand{\by}{\mathbf{y}} 
\newcommand{\bz}{\mathbf{z}}
\newcommand{\bM}{\operatorname{\mathbf{M}}}		
\newcommand{\bS}{\operatorname{\mathbf{S}}}		
\newcommand{\Exc}{\operatorname{Exc}}			
\newcommand{\exc}{\operatorname{\mathbf{exc}}}	
\newcommand{\bc}{\mathbf{c}}
\newcommand{\bG}{\mathbf{G}}
\newcommand{\bQ}{\mathbf{Q}}
\newcommand{\bb}{\mathbf{b}}
\newcommand{\bP}{\mathbf{P}}
\newcommand{\bV}{\mathbf{V}}
\newcommand{\bomega}{\pmb{\omega}}
\newcommand{\balpha}{\pmb{\alpha}}
\newcommand{\bbeta}{\pmb{\beta}}
\newcommand{\bnu}{\pmb{\nu}}
\newcommand{\dev}{\operatorname{\mathbf{dev}}}
\newcommand{\graph}{\operatorname{graph}}
\newcommand{\hdist}{d_{\textrm{H}}}
\newcommand{\spa}{\operatorname{span}}
\newcommand{\tr}{\operatorname{tr}}
\newcommand{\Lip}{\operatorname{Lip}}
\newcommand{\dist}{\operatorname{dist}}
\newcommand{\spt}{\operatorname{spt}}
\newcommand{\defl}{\mathrel{\mathop:}=}
\newcommand{\defr}{=\mathrel{\mathop:}}
\newcommand{\curr}[1]{[\![{#1}]\!]}
\newcommand{\res}{\scaleobj{1.7}{\llcorner}}
\newcommand{\be}{\mathbf{e}}
\newcommand{\rmexc}{\mathrm{exc}}
\newcommand*{\cone}{%
  {%
    \mathpalette\@coneOf{\times}%
  }%
}
\newcommand*{\@coneOf}[2]{%
  \sbox0{$\m@th#1\mathsf{#2}$}%
  \mathsf{#2}%
  \kern-\wd0 %
  \mkern2.00mu\relax
  \nonscript\mkern0.25mu\relax
  \mathsf{#2}%
}
\def\Xint#1{\mathchoice
	{\XXint\displaystyle\textstyle{#1}}%
	{\XXint\textstyle\scriptstyle{#1}}%
	{\XXint\scriptstyle\scriptscriptstyle{#1}}%
	{\XXint\scriptscriptstyle\scriptscriptstyle{#1}}%
	\!\int}
\def\XXint#1#2#3{{\setbox0=\hbox{$#1{#2#3}{\int}$ }
		\vcenter{\hbox{$#2#3$ }}\kern-.6\wd0}}
\def\dashint{\Xint-}
\title[Partial regularity of almost minimizing $G$ chains]{Partial regularity of almost minimizing rectifiable $G$ chains in Hilbert space}
\author{Thierry De Pauw}
\address{School of Mathematical Sciences\\
Shanghai Key Laboratory of PMMP\\ 
East China Normal University\\
500 Dongchuang Road\\
Shanghai 200062\\
P.R. of China\\
and NYU-ECNU Institute of Mathematical Sciences at NYU Shanghai\\
3663 Zhongshan Road North\\
Shanghai 200062\\
China}
\curraddr{Universit\'e Paris Diderot\\ 
Sorbonne Universit\'e\\
CNRS\\ 
Institut de Math\'ematiques de Jussieu -- Paris Rive Gauche, IMJ-PRG\\
F-75013, Paris\\
France}
\email{thdepauw@math.ecnu.edu.cn,thierry.de-pauw@imj-prg.fr}
\author{Roger Z\"{u}st}
\address{University of Bern\\
Mathematical Institute\\
Alpeneggstrasse 22\\
3012 Bern\\
Switzerland}
\email{roger.zuest@math.unibe.ch}
\thanks{The first author was partially supported by the Science and Technology Commission of Shanghai (No. 18dz2271000). The second author was partially supported by grant P2EZP2{\textunderscore}148732 of the Swiss National Science Foundation. Both authors were partially supported by project ANR-12-BS01-0014-01 Geometrya.}
\begin{document}

\begin{abstract}
We adapt to an infinite dimensional ambient space E.R.\ Reifenberg's epiperimetric inequality and a quantitative version of D.\ Preiss' second moments computations to establish that the set of regular points of an almost mass minimizing rectifiable $G$ chain in $\ell_2$ is dense in its support, whenever the group $G$ of coefficients is so that $\{\|g\| : g \in G \}$ is discrete and closed.
\end{abstract}

\maketitle

\tableofcontents


\newpage

\section{Introduction}

Let $(X,|\cdot|)$ be a separable Hilbert space, and $(G,\|\cdot\|)$ be a complete normed Abelian group, and $m$ be a nonnegative integer. We consider $m$ dimensional rectifiable $G$ chains in $X$, $T \in \cR_m(X;G)$, introduced in \cite{PH}. Such $T$ is associated with an $m$ dimensional rectifiable Borel subset $M \subset X$ and a Borel measurable $G$ orientation $\bg(x)$ corresponding to $\cH^m$ almost every $x \in M$. Specifically at almost every $x \in M$ where $M$ admits an approximate $m$ dimensional tangent space $W \subset X$, $\bg(x)$ is an equivalence class $(\xi,g)$ where $\xi$ is an orientation of $W$ and $g \in G$. Here $(\xi,g)$ and $(\xi',g')$ are equivalent if either they are equal or $\xi$ and $\xi'$ are opposite orientations of $W$ and $g=-g'$. We set $\|\bg\|=\|(\xi,g)\|=\|g\|$. For the data consisting of $M$ and $\bg$ to correspond to a member $T \in \cR_m(X;G)$ we also require that its mass be finite:
\begin{equation*}
\bM(T) = \int_M \|\bg(x)\| d\cH^m(x) < \infty .
\end{equation*}
In \cite{PH} $m$ dimensional rectifiable $G$ chains are understood as members of the larger groups $\cF_m(X;G)$ of $m$ dimensional flat $G$ chains. This allows for introducing the standard tools of Geometric Measure Theory : Boundary Operator; Push-Forward by Lipschitzian Mappings; Restriction; Slicing by Lipschitzian Mappings; Support of a Chain; Convergence in Flat Norm; Constancy Theorem, see \cite{PH2}; Approximation by Polyhedral $G$ chains, see \cite{DePauw2}. We will use all of these in the present paper.

The support $\spt(T)$ of $T \in \cR_m(X;G)$ consists of those $x \in X$ such that $T \res \B(x,r) \neq 0$ for all $r > 0$. Without further restriction it may be the case that $\spt(T) = X$ ; this can be achieved for $T$ consisting of a mass convergent series of properly chosen $G$ oriented circles whose collection of centers is dense. We say $a \in \spt(T) \setminus \spt(\partial T)$ is a regular point of $T$ whenever there exists a neighborhood $U$ of $a$ in $X$ such that $\spt(T) \cap U$ is an embedded $m$ dimensional H\"older continuously differentiable submanifold of $X$. In this situation there exists a possibly smaller neighborhood $V \subset U$ of $a$ with the following property. There are $C \geq 0$, $\alpha > 0$ and $0 < \delta \leq \infty$ such that for every $x \in V$, every $0 < r < \min \{\delta,\dist(x,\spt(\partial T))\}$, and every $S \in \cR_m(X;G)$, if $\spt(S) \subset \B(x,r)$ and $\partial S = 0$, then
\begin{equation*}
\bM(T \res \B(x,r)) \leq (1 + C r^\alpha) \bM( T \res \B(x,r) + S) \,.
\end{equation*}
If $T$ verifies the property stated in the last sentence, we say $T$ is $(\bM,Cr^\alpha,\delta)$ minimizing in $V$. If the specific $C$, $\alpha$ and $\delta$ are irrelevant we simply say $T$ is almost mass minimizing in $V$. This class of geometric variational objects was introduced, and their regularity studied, by F.J.\ Almgren \cite{A} in the framework of subsets of $X=\ell_2^n$ rather than chains. Still in a finite dimensional ambient space, E.\ Bombieri \cite{B} adapted Almgren's definition to rectifiable $\Z$ chains, essentially as that given above, and studied their regularity following \cite{A}. 

Examples of $(\bM,Cr^\alpha,\delta)$ minimizing chains $T \in \cR_m(X;G)$ in $V$ encompass the case when $C=0$, $\delta = \infty$ and $V = X \setminus \spt(\partial T)$. These include mass minimizing $G$ chains in the sense that $\bM(T) = \inf \{ \bM(S) : S \in \cR_m(X;G) \text{ and } \partial S = \partial T \}$.  If $a$ is a regular point of such mass minimizing $T$ and $W_a$ is the tangent space of $\spt(T)$ at $a$ then $\spt(T)$ is, in a neighborhood of $a$, the (translated) graph of a smooth $f : W_a \to W_a^\perp$ (with $f(0)=0$). Furthermore, near the origin, $f$ satisfies the minimal surface equation in case $m +1 = \dim X$, the minimal surface system in case $m + 1 < \dim X < \infty$, and a corresponding infinite dimensional system of Partial Differential Equations in case $\dim X = \infty$. Since $Df(0)=0$ (as $W_a$ is tangent to $\spt(T)$ at $a$) the so-called blow-up $\hat{f}$ of $f$, i.e. the weak limit in the Hilbert Sobolev space of properly rescaled and renormalized versions of $f$, is harmonic : $\triangle \langle \hat{f} , e \rangle = 0$ whenever $e$ is a unit vector in $W_a^\perp$. A weak version of this observation applied to Lipschitzian maps $f$ that approximate the support of $T$ near points $a$ that verify additional ``closeness to flat'' assumptions, is at the heart of many proofs of Regularity Theorems for either minimizers of mass or stationary chains. This technique goes back to E. De Giorgi \cite{DG}. However in case $T$ is $(\bM,Cr^\alpha,\delta)$ minimizing, $a$ is a regular point of $\spt(T)$ and the latter is, near $a$, the graph of some H\"older continuously differentiable $f : W_a \to W_a^\perp$, then $f$ need not solve any Partial Differential Equation whatsoever, as indeed the graph of any such $f$ has the almost mass minimizing property. Our main result is as follows.

\begin{Thm}
\label{introthm}
Assume $G$ is such that $\{ \|g\| : g \in G\}$ is discrete and closed and $T \in \cR_m(\Hi;G)$ is almost mass minimizing in an open set $V \subset \Hi \setminus \spt(\partial T)$. It follows that there exists a relatively open set $U \subset \spt(T) \cap V$ which is an embedded $m$ dimensional H\"older continuously differentiable submanifold of $\Hi$, and which is dense in $\spt(T) \cap V$. If one further assumes that $\|g\|=1$ for all $g \in G \setminus \{0_G\}$ then $\cH^m(\spt(T) \cap V \setminus U) = 0$. 
\end{Thm}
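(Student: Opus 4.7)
I define $U$ to be the set of $a \in \spt(T) \cap V$ admitting an open neighborhood $W\subset \Hi$ in which $\spt(T) \cap W$ is an $m$-dimensional Hölder $C^1$ submanifold of $\Hi$; $U$ is then open in $\spt(T) \cap V$ by construction, and what remains is to establish density, and, under the unit-norm hypothesis, full $\cH^m$-measure. The plan splits into (i) a regularity step showing every sufficiently \emph{flat} point of $\spt(T)\cap V$ lies in $U$, and (ii) a density step producing such flat points.

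\emph{Regularity step.} I would call $a \in \spt(T)\cap V$ flat at scale $r$ if, writing $\theta_a$ for the $m$-dimensional density of $\|T\|$ at $a$, both the cylindrical excess $r^{-m}(\bM(T\res \B(a,r)) - \theta_a\omega_m r^m)$ and the flat distance from $T\res \B(a,r)$ to $g_a \curr{\pi_a\cap \B(a,r)}$ (for some oriented $m$-plane $\pi_a$ through $a$ and $g_a \in G$ of norm $\theta_a$) lie below a small universal $\varepsilon_0$. Discreteness of $G$ guarantees $\theta_a \geq c_0 > 0$ everywhere on $\spt(T)$, a bound I would exploit throughout, and also forces $g_a$ to be locally constant near such a flat $a$. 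The infinite-dimensional adaptation of Reifenberg's epiperimetric inequality should then furnish $\theta,\gamma\in(0,1)$ so that flatness at scale $r$ forces flatness at scale $\theta r$ with respect to a tilted plane $\pi_a'$, the new excess bounded by $\gamma$ times the old plus $Cr^\alpha$ accounting for almost-minimality. Iteration yields Hölder decay of excess, Cauchyness of the tilted planes as $r\to 0$, and a Lipschitz-graph approximation of $\spt(T)$ near $a$ by a Hölder $C^1$ map $f\colon W_a\to W_a^\perp$; hence $a\in U$.

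\emph{Density step and full-measure statement.} Given $x_0\in \spt(T)\cap V$ and $r_0>0$, I must produce a flat point in $\B(x_0,r_0)$. Shrinking $r_0$, I ensure finite mass on $\B(x_0,r_0)$ and $\dist(x_0,\spt(\partial T))>r_0$. The quantitative form of Preiss' second-moments computation in $\ell_2$ would compare the self-adjoint, Hilbert--Schmidt moment operator
\[
A(y,\rho) \defl \int_{\B(y,\rho)} (\xi-y)\otimes(\xi-y)\, d\|T\|(\xi)
\]
and the mass $\bM(T\res\B(y,\rho))$ to their values for a $g$-weighted flat $m$-disc through $y$. A covering or dyadic iteration argument over $(y,\rho) \in (\B(x_0,r_0/2)\cap \spt(T))\times(0,r_0/2)$, combined with the lower density bound and the almost-minimality defect, should then locate a pair at which $A(y,\rho)$ has $m$ dominant eigenvalues essentially equal to $\theta_y\omega_m\rho^{m+2}/(m+2)$ with negligible transverse remainder; such $(y,\rho)$ witness flatness of $y$, whence $y\in U$. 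For the full-measure claim, the hypothesis $\|g\|=1$ on $G\setminus\{0_G\}$ gives $\|T\|=\cH^m\res M$ where $M$ is the rectifiable set underlying $T$; at $\cH^m$-a.e.\ $a\in M$ the approximate tangent plane exists, the density equals $1$, and the blow-ups of $T$ at $a$ converge in flat norm to $g_a\curr{W_a}$ with $\|g_a\|=1$, so $a$ is flat at arbitrarily small scales and lies in $U$.

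\emph{Main obstacle.} The crux is importing Reifenberg's epiperimetric construction and Preiss' second-moments analysis into infinite dimensions. In $\R^n$ each leans on compactness of the unit sphere --- the former via blow-up and harmonic replacement to build an epiperimetric competitor, the latter via spectral diagonalization of a finite-dimensional moment tensor. In $\ell_2$ compactness of bounded sets fails, so both must be recast: using flat-norm compactness of mass-bounded chains (for the blow-up step), spectral theory of the compact self-adjoint operator $A(y,\rho)$ (which is Hilbert--Schmidt once $T$ has finite mass on the ball), and a Hilbert-valued Poincaré-type inequality controlling the mass of the competitor. Threading these adaptations into a clean quantitative excess-decay inequality is the main technical labour of the proof.
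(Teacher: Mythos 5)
Your high-level architecture --- define $U$ as the set of regular points, show flat points lie in $U$ via an epiperimetric excess-decay iteration, then show flat points are dense via moment computations, with the unit-norm case yielding full measure --- matches the paper's. You also correctly identify the spectral theorem for the compact self-adjoint moment operator as the substitute for finite-dimensional diagonalization of the second-moment tensor; this is exactly the device used in Section~\ref{secplaneselection}. But there are two genuine gaps.

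First, in your "main obstacle" paragraph you propose \emph{flat-norm compactness of mass-bounded chains} as the tool that would power a blow-up argument for the epiperimetric inequality. This is precisely what is unavailable in $\ell_2$ and precisely what the paper says forces a different route. In an infinite-dimensional Hilbert space, a mass-bounded rectifiable chain need not have precompact support, and no closure/compactness theorem for flat $G$-chains applies without compact support or tightness; the Almgren--Bombieri excess-decay-by-contradiction scheme therefore collapses. The paper's Theorem~\ref{masscomparison2} sidesteps blow-up entirely: it runs Reifenberg's original, fully \emph{constructive} cone-comparison. One builds the competitor by hand --- the averaging map $\bv$ smoothing the multi-valued graph (Lemmas~\ref{dist_to_v}--\ref{integral_of_dv}), the plane $W$ chosen by diagonalizing the quadratic form $Q$ so the degree-one spherical harmonic of the trace vanishes (Lemma~\ref{newplane}, Lemma~\ref{oepsilonlinear}), and a degree-two homogeneous extension $\bh$ of the boundary values --- not even a harmonic extension (Lemma~\ref{energyformulas}, following Morrey). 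Any proof strategy that reintroduces blow-up compactness would have to justify a compactness theorem the paper explicitly flags as unavailable.

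Second, the full-measure step "blow-ups converge to $g_a\curr{W_a}$, so $a$ is flat at arbitrarily small scales and lies in $U$" conceals a uniformity gap. Existence of an approximate tangent plane at $a$ only gives flatness at scales below some $r_a > 0$ that a priori depends on $a$ and can degenerate as $a$ varies; showing $a \in U$ requires an open neighborhood of $a$ in $\spt(T)$ to be a graph, which needs flatness at a \emph{definite} scale uniformly over nearby points. The paper's backward bootstrap (Lemma~\ref{bootstraplem}) is the mechanism that upgrades flatness at tiny (point-dependent) scales to flatness at a uniform scale $r_0$, using almost monotonicity to keep the density ratio under control going \emph{upward} in scale, together with the orthogonal-frame construction (Lemma~\ref{orthogonalfamily}) and the moment estimates (Corollary~\ref{betainftysmall2}); the Reifenberg flatness of Theorem~\ref{reifflat} is then a conclusion, not an input. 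Relatedly, you treat "$g_a$ locally constant" as a consequence of flatness, but in the paper it is extracted first (Lemma~\ref{almostconstdensitieslem}) from discreteness of $G$ together with upper semicontinuity of $x \mapsto \Theta^m(\|T\|,x)$ on the set where $T$ is nearly monotonic, producing the open dense set of locally constant density on which everything else is run. Your density step should be restructured around that lemma rather than around an unspecified dyadic search for dominant moment eigenvalues.
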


We now review relevant earlier results in this vein.

\begin{itemize}
\item {\it The case of minimizers in the finite dimensional setting}. Here $T \in \cR_m(\ell_2^n;G)$ is mass minimizing. If $m+1=n$ and $G=\Z$, this is E.\ De Giorgi's Theorem \cite{DG} established in the framework of oriented frontiers. In case $m$ is arbitrary the result has been established by E.R.\ Reifenberg \cite{Reif,Reif3} in a different setting than ours. E.R. Reifenberg \cite{Reif2} considers some compact groups of coefficients, he considers sets rather than rectifiable chains and the boundary conditions are expressed by means of \v{C}ech homology groups, finally his sets minimize size rather than mass, i.e. Hausdorff measure not weighted by coefficients norm. His method differs from the analysis of blow-up set forth by E. De Giorgi and has inspired the present paper. F.J. Almgren establishes the result in the framework of rectifiable chains minimizing the integral of some elliptic integrands, \cite{A0}.
\item{\it The case of almost minimizers in the finite dimensional setting.} Here $T \in \cR_m(\ell_2^n;\Z)$ is almost mass minimizing. The regularity has been established by E.\ Bombieri \cite{B} following the scheme of proof set forth by F.J. Almgren \cite{A} in the framework of sets rather than chains.
\item {\it The case of minimizers in the infinite dimensional setting}. Here $T \in \cR_m(\ell_2;\Z)$ is mass minimizing. L.\ Ambrosio, C.\ De Lellis and T.\ Schmidt \cite{ADS} have established the result in the framework of ``currents in metric spaces'' \cite{AK2}.
\end{itemize}

A $C^1$ version of the regularity theorem holds when the bound $Cr^\alpha$ quantifying the almost minimizing property is replaced by a coarser bound that decays fast enough, see the remark after Theorem~\ref{reifflat2}. In Section~\ref{examples} we show that the discreteness of $G$ is necessary and give an example of a mass minimizing chain, with coefficients in a totally disconnected compact normed group, and without any regular point, Example~\ref{discreteness_example}. Proposition~\ref{submanifoldprop} shows that any chain induced by a $C^{1,\alpha}$ submanifold of $\Hi$ is almost mass minimizing, as we claimed above, and more generally the sum of finitely many $C^{1,\alpha}$ submanifolds that ``nicely'' intersect in a common set $\Sigma$ is almost mass minimizing as well. Taking for $\Sigma$ a Cantor set of positive $\cH^m$ measure we show that the set of regular points need not be co-null in general, Example~\ref{regularsetexample}.

One serious difficulty with working in infinite dimension is the lack of certain compactness results that are specific to a finite dimensional ambient space. Specifically, the essential Excess Decay Lemma is established in \cite{A} and \cite{B} by contradiction, a delicate argument based on compactness. Such reasoning seems to be bound to fail in $\ell_2$. We now turn to briefly describing the scheme of proof of Theorem \ref{introthm}.

Let $T \in \cR_m(X;G)$. We let $\|T\|$ denote the finite Borel measure in $X$ associated with $T$, i.e.\ $\|T\|(B) = \int_B \|\bg_T\| d\cH^m \res M_T$ where $M_T$ and $\bg_T$ are respectively the $m$ dimensional rectifiable set and the $G$ orientation of $T$. Let $a \in X$, $r > 0$ and $W \in \bG(X,m)$. Assume for definiteness $\Theta^m(\|T\|,a)=1$. We consider the following two quantities.
\begin{equation*}
\exc^m(\|T\|,a,r) = \frac{\|T\|(\B(a,r))}{\balpha(m)r^m} - 1
\end{equation*}
and
\begin{equation*}
\bbeta_2^2(\|T\|,a,r,W) = \frac{1}{r^{m+2}}\int_{\B(a,r)} \dist^2(y-a,W)d\|T\|(y) \,.
\end{equation*}
The first one we call the spherical excess. To get a sense of what these quantify, assume $\spt(T) \cap \B(a,r)$ coincides with $(a+\Gamma_f) \cap \B(a,r)$ where $\Gamma_f$ is the graph of some $f : W \to W^\perp$ with small Lipschitz constant so that $(a+\Gamma_f) \cap \B(a,r)$ is contained in a slab around $a+W$ whose height is small with respect to $r$. Upon considering cylindrical versions of $\exc^m$ and $\bbeta_2$, and calculating the Taylor expansion of the Hilbertian Jacobian of $f$, one can check that
\begin{equation*}
\exc^m(\|T\|,a,r) \cong \frac{1}{\balpha(m)r^m} \int_{W \cap \B(0,r)} \|Df\|_{\HS}^2 d\cH^m
\end{equation*}
(where $\|\cdot\|_{\HS}$ stands for the Hilbert-Schmidt norm), and
\begin{equation*}
\bbeta_2^2(\|T\|,a,r,W) \cong \frac{1}{r^{m+2}}\int_{W \cap \B(0,r)} |f|^2d\cH^m \,.
\end{equation*}

It is classical to check that if $T$ is mass minimizing then $\exc^m(\|T\|,a,r)$ is a nondecreasing function of $0 < r < \dist(a,\spt(\partial T))$, for every $a \not\in \spt(\partial T)$. In case $T$ is almost mass minimizing, the spherical excess is almost nondecreasing in the sense that $\exp[Cr^\alpha]\frac{\|T\|(\B(a,r))}{\balpha(m)r^m} - 1$ is nondecreasing. This is a useful fact used throughout the paper. The three steps of the proof are now the following.

\begin{enumerate}
\item[(H)] Identifying a set of hypotheses called ``closeness to flat'' that guarantee that $T$ is sufficiently close to flat in some ball $\B(a,r)$, both in the sense that $\spt(T) \cap \B(a,r)$ is close in Hausdorff distance to $(a+W) \cap \B(a,r)$ for some $W \in \bG(X,m)$, and in the sense that $\|T\|(\B(a,r))$ is close to the measure $\balpha(m)r^m$ of the $m$ dimensional disk $(a+W) \cap \B(a,r)$ weighted by the ``local coefficient'' of $T$ near $a$. The existence of such $W$ by no means implies that it is the ``tangent space'' to $\spt(T)$ at $a$ and the remaining part of the analysis consists in estimating how much these $W$ may vary as $r$ tends to 0. Another feature of the ``closeness to flat'' assumption is that it should be verified by an open dense set of points in the support. This is where the assumption that $G$ be discrete comes into play.
\item[(1)] Showing that, under (H), for $a \in \spt(T)$ the spherical excess decays fast enough, specifically $\exc^m(\|T\|,a,r) \leq Cr^\gamma$ for some $\gamma > 0$.
\item[(2)] Showing that, under (H), there exists $W_{a,r} \in \bG(X,m)$ (not necessarily the same as in the assumption (H)) such that $\bbeta_2^2(\|T\|,a,r,W_{a,r}) \leq C \exc^m(\|T\|,a,r)^\beta$. It will then ensue from (2) that $W_a = \lim_{r \to 0} W_{a,r}$ exists, and that $W_a$ and $W_b$ are close to one another according to how $a$ and $b$ are close. This is enough to finish off the proof.
\end{enumerate}

Let us now briefly comment on steps (1) and (2). To keep the notations short we assume $a=0$ and we abbreviate $\phi_T(r) = \|T\|(\B(0,r))$, $\be_T(r) = \exc^m(\|T\|,0,r)$, and for convenience we introduce the non normalized version of the spherical excess, namely $e_T(r) = \rmexc^m(\|T\|,0,r) = \balpha(m)r^m \exc^m(\|T\|,0,r) = \phi_T(r)-\balpha(m)r^m$. We ought to determine whether $\be_T(r) \leq C r^\gamma$, which is equivalent to $e_T(r) \leq Cr^{m(1+\epsilon)}$ where we have abbreviated $\epsilon = \gamma m^{-1}$. In case $T$ is mass minimizing $\be_T(r)$ is nondecreasing and nonnegative. In case $T$ is merely almost mass minimizing we would have to introduce a multiplicative factor $\exp[Cr^\alpha]$ in the computations, which we omit in these introductory remarks. Thus assuming that $e_T(r) \geq 0$, the growth $e_T(r) \leq Cr^{m(1+\epsilon)}$ will follow from the differential inequality
\begin{equation*}
m(1+\epsilon) \frac{d}{dr} \log r \leq \frac{d}{dr} \log e_T(r) \,,
\end{equation*}
which is thus equivalent to
\begin{equation*}
(1+\epsilon) e_T(r) \leq \frac{r}{m}e'_T(r) \,.
\end{equation*}
In estimating $rm^{-1} e'_T(r) = rm^{-1} ( \phi'_T(r) - m \balpha(m)r^{m-1}) = rm^{-1} \phi'_T(r) - \balpha(m)r^m$ we observe that
\begin{equation*}
\frac{r}{m} \phi'_T(r) \geq \frac{r}{m} \bM( \langle T , |\cdot| , r \rangle) = \bM( \curr 0 \cone \langle T , |\cdot| , r \rangle) \defr \phi_{C_r}(r)
\end{equation*}
is the measure in $\B(0,r)$ of the cone $C_r$ with vertex 0 and with base the slice $\langle T , |\cdot| , r \rangle = \partial ( T \res \B(0,r))$. In other words $C_r = \curr 0 \cone \partial ( T \res \B(0,r))$. Thus we are asking whether
\begin{equation*}
(1+\epsilon) e_T(r) \leq \phi_{C_r}(r) - \balpha(m)r^m = e_{C_r}(r) \,.
\end{equation*}
The inequality $e_T(r) \leq \theta e_{C_r}(r)$, for some $0 < \theta < 1$, is the quantitative improvement on the monotonicity inequality that we call the ``Reifenberg epiperimetric inequality''. In order to evoke its proof let us assume that $r=1$ and that $\spt(T) \cap Z_W(0,1) = \Gamma_{f_W} \cap Z_W(0,1)$ for some $W \in \bG(X,m)$ and some $f_W : W \to W^\perp$, where $Z_W(0,1)$ denotes the cylinder $\pi_W^{-1}(W \cap \B(0,1))$. We let $h_W$ be the homogeneous extension of degree 1, to $W \cap \B(0,1)$, of $f_W | _{\mathrm{Bdry}(W \cap \B(0,1))}$. The question is then (up to error terms due to the replacement of a ball by a cylinder, and which are small with respect to $e_T(1)$) whether the following holds:
\begin{equation}
\label{eeqq}
\int_{W \cap \B(0,1)} \|Df_W\|_{\HS}^2 d \cH^m \leq \theta \int_{W \cap \B(0,1)} \|Dh_W\|_{\HS}^2 d \cH^m \,.
\end{equation}
To see that this cannot be true in general it suffices to observe that if $h_W$ is linear and nonzero then it is harmonic and hence a solution of the Dirichlet problem: It minimizes its Dirichlet energy among those maps having the same boundary values and thus $f_W$ cannot have strictly less Dirichlet energy. This drawback can be overcome if we can make sure $h_W$ is far enough from being linear, which amounts to choosing $W$ initially so as to cancel the linear part of $h_W$. This should mean that, if possible, $h_W$ is orthogonal to linear functions in the Lebesgue space $\mathbf{L}_2(W \cap \B(0,1),\cH^m)$. In other words we seek for a $W \in \bG(X,m)$ such that
\begin{equation}
\label{eeeqqq}
\int_{W \cap \B(0,1)} \langle h_W(x) , v \rangle \langle x , u \rangle d\cH^m(x) = 0
\end{equation}
for every unit vectors $u \in W$ and $v \in W^\perp$. E.R. Reifenberg solves this problem by minimizing a functional over the Grassmannian $\bG(\ell^n_2,m)$. In the infinite dimensional setting the Grassmannian lacks the compactness required to make this work and instead we argue as follows. Abbreviating $C = C_1 = \curr 0 \cone \partial ( T \res \B(0,1))$, we consider the quadratic form 
\begin{equation*}
Q_C(x) = \int_{\B(0,1)} \langle x , y \rangle^2 d\|C\|(y) = \left\langle x , L_C(x) \right\rangle
\end{equation*}
where
\begin{equation*}
L_C(x) = \int_{\B(0,1)} y \langle x,y \rangle d\|C\|(y) \,.
\end{equation*}
We notice that $L_C$ is a positive self-adjoint compact operator on $X$ and therefore $X$ admits an orthonormal basis of eigenvectors $e_1,e_2,...$ of $L_C$, according to the spectral theorem. We order these as usual so that $\lambda_1 \geq \lambda_2 \geq ... \geq 0$ for the corresponding eigenvalues. In fact, since we assume $T$ (and hence $C$) to be sufficiently close to flat in $\B(0,1)$, $Q_C$ is close to a multiple of $|\pi_{W}(\cdot)|^2$. Thus if $\hat{W} \in \bG(X,m)$ is generated by $e_1,...,e_m$ then $\hat{W}$ is close to $W$ (appearing in (H)). Moreover $L_C(e_k)= \lambda_ke_k$ for every $k$, and therefore $\langle e_j , L_C(e_k) \rangle = 0$ for all $j=1,...,m$ and all $k=m+1,m+2,...$, which is \eqref{eeeqqq} up to some small error terms depending on the initial closeness to flat. With $W$ replaced by $\hat{W}$ it will be possible to find $f_{\hat{W}}$ with boundary values same as $h_{\hat{W}}$ and achieving the improvement of \eqref{eeqq}. Of course the graph of $f_{\hat{W}}$ may not meet at all the support of $T$ in $\B(0,1)$, yet
\begin{equation*}
\int_{\hat{W} \cap \B(0,1)} \|Df_{\hat{W}}\|_{\HS}^2 d \cH^m \gtrapprox e_T(1)
\end{equation*}
according to the almost mass minimizing property of $T$. We can now infer that $\be_T(1) \leq \theta \be_C(1)$. Our proof follows very closely the arguments of E.R.\ Reifenberg in \cite{Reif}. The essential differences are the averaging of the different layers to account for the elements of the normed group $G$ and the use of the quadratic form $Q_C$ as explained above.

At this point it is perhaps worth saying a word on how to obtain $f_W$ in \eqref{eeqq} provided \eqref{eeeqqq} holds. One way would be to find the optimal $f_W$ for the problem, i.e. the mapping $f_W$ each of whose coordinates is harmonic. This is indeed how E.R. Reifenberg proceeded. Nevertheless the computations that establish \eqref{eeqq}, involving identities for spherical harmonics, work as well for any homogeneous extension of $h_W$, of degree $1+t > 1$ (with $\theta$ depending on $t$). In other words, contrary to popular belief, there exist proofs of partial regularity for (almost) mass minimizing chains that do not involve at any stage the use of harmonic functions. Sure enough, after we realized this, we found out that C.B.\ Morrey had also reported this fact in \cite[Lemma~10.6.13]{Mor}.

Finally we turn to briefly discussing step (2) of our proof, i.e. the sort of Poincar\'e inequality with an exponent, $\bbeta_2^2(\|T\|,a,r,W) \leq C \exc^m(T,a,r)^\beta$ whenever (H) holds. In fact, according to the Ahlfors regularity of $\spt(T)$ (a consequence of almost monotonicity), this is equivalent to $\bbeta_\infty(\|T\|,a,r,W) \leq \exc^m(\|T\|,a,r)^\beta$ (for some different $0 < \beta < 1$). Readers familiar with regularity proofs will recognize here a height bound, usually obtained as a corollary of graphical Lipschitz approximation of the support, and that would definitely be one way to go about it for instance when $G=\Z$. Here, as we used a quadratic form $Q$ associated to $T$ already, we choose to be consistent and keep using it. This sort of moments computations has been set forth in \cite{P}, and a quantitative version was described in \cite{DePauw3}. We review the idea behind it and point out the differences with \cite{DePauw3}.

Assume for simplicity $a=0 \in \spt(T)$. We notice that according to Cavalieri's principle, information about the growth of $\|T\|(\B(0,r))$ is equivalent to information about the growth of
\begin{equation*}
\hat{V}(\|T\|,x,r) = \int_{\B(x,r)} \left( r^2 - |x-y|^2 \right)^2 d\|T\|(y) \,.
\end{equation*}
As its variable $x$ appears in the moving domain of integration, this function $\hat{V}$ does not immediately occur as being differentiable in $x$. We therefore introduce a slight variation of $\hat{V}$, the polynomial function
\begin{equation*}
V(\|T\|,x,r) = \int_{\B(0,r)} \left( r^2 - |x-y|^2 \right)^2 d\|T\|(y) = \sum_{k=0}^4 P_k(\|T\|,x,r)
\end{equation*}
which is expanded in a sum of $P_k$'s, polynomials in $x$ homogeneous of degree $k$. A simple computation yields
\begin{equation*}
\left| V(\|T\|,x,r) - \hat{V}(\|T\|,x,r) \right| \leq C \left( r^{m+1} |x|^3 + r^{m+2+\beta} |x|^2 \right) \,.
\end{equation*}
From this we infer that if we divide $V$ and $\hat{V}$ by $r^{m+2}$ the corresponding functions will be close up to $O(|x|^2)$ when $|x| \leq r$, and up to $o(|x|^2)$ when $|x| = o(1)r$, so that information about the variations of $\|T\|(\B(0,r))$, and in turn about the variations of $\hat{V}$, translates to information about the polynomials $P_k$. We then use the fact that $P_2$ normalizes to a dimensionless quantity when divided by $r^{m+2}$, and contains a term akin to $Q$ defined above. Specifically,
\begin{equation*}
P_2(\|T\|,x,r) = 4 Q_T(x,r) - 2 |x|^2 \int_{\B(0,r)} \left( r^2 - |y|^2 \right) d\|T\|(y) 
\end{equation*} 
where, as above,
\begin{equation*}
Q_T(x,r) = \int_{\B(0,r)} \langle x , y \rangle^2 d\|T\|(y) \,,
\end{equation*}
and if 
\begin{equation*}
\bP_2 = \frac{m+2}{\balpha(m)r^{m+2}} P_2 \text{ and } \bQ_T = \frac{m+2}{\balpha(m)r^{m+2}} Q_T
\end{equation*}
then, for instance,
\begin{equation}
\label{eeqq2}
\left| \tr \bQ_T(\cdot,r) - m \right| \leq C r^{\gamma'} \,,
\end{equation}
under (H), where $\gamma'$ is a function of $\gamma$ appearing in step (1). Assuming for now that the first moment $P_1$, which is a version of a center of mass of $\|T\|$ in $\B(0,r)$ that initially normalizes as $r^{m+1}$, can be made as small as $Cr^{\gamma'}$ when divided by $r^{m+2}$, we end up showing that
\begin{equation*}
\left| \bQ_T(x,r) - |x|^2 \right| \leq C r^{\gamma'} |x|^2
\end{equation*}
whenever $x \in \spt(T)$, $\Theta^m(\|T\|,x)=1$ and $|x|=r^{1+\gamma'}$. If we manage to find orthogonal vectors $x_1,...,x_m \in \spt(T)$ such that $|x_i| = r^{1+\gamma'}$ and $\Theta^m(\|T\|,x_i)=1$, then we infer from the inequality above, from the definition of $P_2$ and from \eqref{eeqq2} that
\begin{equation}
\label{eeqq3}
\frac{1}{r^{m+2}} \int_{\B(0,r)} \dist^2(y,W) d\|T\|(y) \leq C r^{\gamma'}
\end{equation}
and we will be done. In \cite{DePauw3} the orthogonal frame $x_1,...,x_m$ in $\spt(T)$ was found because $\spt(T)$ was shown to be Reifenberg flat under (H), and therefore a topological $m$ disk according to \cite{Reif}. The Reifenberg flatness however followed by a compactness argument not available in infinite dimension. Instead we take advantage of the fact that $T$ is rectifiable and hence for $\|T\|$ almost every $a$ we can assume we look at a scale $r(a)$ small enough to start with that $\spt(T) \cap \B(a,r)$ is close to its tangent space $W_a$ at $a$ (notice a tangent space exists because $\ell_2$ has the Radon-Nikod\'ym property, and closeness occurs in Hausdorff distance thanks to Ahlfors regularity). At that small scale $r(a)$ (and all smaller scales) we can find $x_1,...,x_m$ with the required properties. We then show in \ref{bootstraplem}, using a backward bootstrap argument that the same holds at {\it larger} scales $r$ provided the density ratio at scale $r$ is not much larger than 1. This can be done at neighboring points up the same largest scale $r_0$ thanks to almost monotonicity, see \ref{weakdini}. To summarize, we establish that \eqref{eeqq3} now holds at scales uniformly small $0 < r \leq r_0$ and that $\spt(T) \cap \B(a,r)$ is in fact Reifenberg flat, \ref{reifflat}. It then classically follows from \eqref{eeqq3} that $\spt(T) \cap \B(a,r)$ is $C^{1,\gamma''}$.

\section{Preliminary results for chains in a Hilbert space}

Throughout these notes $(\Hi,\langle\cdot,\cdot\rangle)$ is a separable Hilbert space with $\dim(X) > m$, and $T \in \cR_m(\Hi;G)$ is a chain with coefficients in a complete normed Abelian group $(G,\|\cdot\|)$. A norm on an Abelian group is a function $\|\cdot\| : G \to [0,\infty)$ such that for all $g,h \in G$,
\begin{enumerate}
	\item $\|-g\| = \|g\|$,
	\item $\|g + h\| \leq \|g\| + \|h\|$,
	\item $\|g\| = 0$ if and only if $g = 0_G$.
\end{enumerate}
The vector space norm on $\Hi$ is denoted by $|\cdot|$ to distinguish it from the one on $G$. Since the support of $T$ is a separable subset of $\Hi$, the statements we make remain true for any Hilbert ambient space because we can always restrict to a separable complete subspace thereof. The Grassmannian of $m$-dimensional subspaces of $\Hi$ is denoted by $\bG(\Hi,m)$. Given an $m$-plane $W \in \bG(\Hi,m)$ we denote by $\pi_W$ the orthogonal projection of $\Hi$ onto $W$. We often use these projections as push-forwards for chains. In order to speak about chains on a plane $W \in \bG(\Hi,m)$ it is necessary that we choose an orientation on $W$. This orientation is mostly not so important since we are interested in the mass of the chains which is independent of this choice. In case we work with a collection of planes that are close to each other, we implicitly assume that these orientations are compatible in the sense that the orthogonal projection from one to the other is orientation preserving. 

With $\B(x,r)$ and $\oB(x,r)$ we denote the closed, respectively open, ball of radius $r \geq 0$ around $x \in \Hi$. Sometimes we may also use $\B(r) \defl \B(0,r)$, $\oB(r) \defl \oB(0,r)$, $\B \defl \B(0,1)$ and $\oB \defl \oB(0,1)$. Similarly, for $A \subset \Hi$ we denote by $\B(A,r)$ and $\oB(A,r)$ the closed, respectively open, tubular neighborhood of radius $r$ around $A$. For intersections we use $\B_A(x,r) \defl \B(x,r) \cap A$ and $\oB_A(x,r) \defl \oB(x,r) \cap A$. The cylinder around $x \in \Hi$ of radius $r$ above an $m$-dimensional plane $W \in \bG(\Hi,m)$ is given by $Z_W(x,r) \defl {\pi_W}^{-1}(\B_W(\pi_W(x),r))$, or in special cases $Z_W(r) \defl Z_W(0,r)$ and $Z_W \defl Z_W(0,1)$. $\balpha(m)$ denotes the volume of the unit ball in $\R^m$. For two nonempty subsets $A,A' \subset \Hi$ the \emph{Hausdorff distance} is
\begin{equation*}
\begin{split}
\hdist(A,A') & \defl \inf \{r > 0 : A \subset \B(A',r), A'\subset \B(A,r) \} \\
& = \max \left\{ \sup_{x \in A} \dist(x,A') , \sup_{x' \in A'} \dist(x',A) \right\} \,.
\end{split}
\end{equation*}
In the rest of this section we describe some of the basic tools we need in the process.

\subsection{Grassmannian and Hausdorff distance}

The Grassmannian $\bG(\Hi,m)$ can be equipped with a complete metric. Two natural definitions of a metric are shown to be equivalent in the following lemma. Here $\|\cdot\|$ denotes the operator norm.

\begin{Lem}
\label{lemma.norms.1}
Let $m \geq 1$ be an integer and $V_1,V_2 \in \bG(\Hi,m)$. The following hold:
\begin{enumerate}
\item $\|\pi_{V_1}-\pi_{V_2}\| = \hdist(V_1 \cap \B(0,1) , V_2 \cap \B(0,1))$;
\item $\|\pi_{V_1}-\pi_{V_2}\| = \max \{|v - \pi_{V_2}(v)| : v \in V_1 \cap \B(0,1)\}$;
\item $\|\pi_{V_2^\perp} \circ \pi_{V_1}\| \leq \|\pi_{V_1}-\pi_{V_2}\|$.
\end{enumerate}
\end{Lem}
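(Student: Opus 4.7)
I would first establish (3), as it is essentially an algebraic identity. For any $x\in X$ the idempotency of $\pi_{V_1}$ gives $(\pi_{V_1}-\pi_{V_2})(\pi_{V_1}x)=\pi_{V_1}x-\pi_{V_2}\pi_{V_1}x=\pi_{V_2^\perp}\pi_{V_1}x$, so $|\pi_{V_2^\perp}\pi_{V_1}x|\le\|\pi_{V_1}-\pi_{V_2}\|\,|\pi_{V_1}x|\le\|\pi_{V_1}-\pi_{V_2}\|\,|x|$.

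Next I would turn to (2). One direction is immediate: on $V_1$ the operator $\pi_{V_1}$ acts as the identity, so $v-\pi_{V_2}v=(\pi_{V_1}-\pi_{V_2})v$, giving $\sup_{v\in V_1\cap\B}|v-\pi_{V_2}v|\le\|\pi_{V_1}-\pi_{V_2}\|$. The substance of (2) is the reverse inequality. My plan is to decompose, for arbitrary $x\in X$,
\[
(\pi_{V_1}-\pi_{V_2})x \;=\; \pi_{V_2^\perp}\pi_{V_1}x \;-\; \pi_{V_2}\pi_{V_1^\perp}x,
\]
and to observe that the two summands lie in $V_2^\perp$ and $V_2$ respectively, hence are orthogonal. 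Using $|\pi_{V_1}x|^2+|\pi_{V_1^\perp}x|^2=|x|^2$ and bounding each summand by the norm of the restricted map acting on its natural domain yields
\[
\|\pi_{V_1}-\pi_{V_2}\| \;=\; \max\bigl(\,\|\pi_{V_2^\perp}\!\mid_{V_1}\|,\;\|\pi_{V_2}\!\mid_{V_1^\perp}\|\bigr),
\]
the supremum being attained on $V_1$, respectively on $V_1^\perp$.

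The crucial step, and the one requiring $\dim V_1=\dim V_2=m<\infty$, is to verify that these two restricted norms coincide. I would do so by an adjointness/singular-value argument. Viewed between the finite-dimensional spaces $V_1$ and $V_2$, the maps $\pi_{V_2}\!\mid_{V_1}:V_1\to V_2$ and $\pi_{V_1}\!\mid_{V_2}:V_2\to V_1$ are Hilbert-space adjoints, hence share singular values $\sigma_1\ge\cdots\ge\sigma_m\ge0$. The Pythagorean identity then gives $\|\pi_{V_2^\perp}\!\mid_{V_1}\|^2=1-\sigma_m^2$, and an analogous computation using the adjoint pair $\pi_{V_2}\!\mid_{V_1^\perp}$ and $\pi_{V_1^\perp}\!\mid_{V_2}$ gives the same value $1-\sigma_m^2$. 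Combining with the maximum formula above yields (2); note that (3) is recovered as the special case of restricting $\pi_{V_2^\perp}\pi_{V_1}$ to $V_1$.

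Finally, for (1) I would reduce the Hausdorff distance to (2). For $v\in V_1\cap\B$, the nearest point of $V_2$ is $\pi_{V_2}v$, which satisfies $|\pi_{V_2}v|\le|v|\le1$ and therefore already lies in $V_2\cap\B$; consequently $\dist(v,V_2\cap\B)=|v-\pi_{V_2}v|$. Taking the supremum and invoking (2) bounds the one-sided Hausdorff term by $\|\pi_{V_1}-\pi_{V_2}\|$. Swapping the roles of $V_1$ and $V_2$ yields the same bound for the other direction, and equality follows. The main obstacle of the whole argument is the symmetry $\|\pi_{V_2^\perp}\!\mid_{V_1}\|=\|\pi_{V_2}\!\mid_{V_1^\perp}\|$, where the equality of dimensions is indispensable and an unrestricted codimensional comparison would fail in infinite ambient dimension.
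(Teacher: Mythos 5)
Your proof is correct, and it follows a genuinely different route from the paper's. The paper proves the hard inequality $\|\pi_{V_1}-\pi_{V_2}\|\le\max\{\dist(v,V_2):v\in V_1\cap\B\}$ by an extremal argument: it treats the $m=1$, $\dim\Hi=2$ case by an explicit computation on the circle, then for general $m$ in finite ambient dimension it applies the Lagrange multiplier theorem to pick a maximizer $v^*$ of $v\mapsto|\pi_{V_1}(v)-\pi_{V_2}(v)|^2$, forms the two-plane $V=\spa\{\pi_{V_1}(v^*),\pi_{V_2}(v^*)\}$, and reduces to the $2$-dimensional case; the passage to $\dim\Hi=\infty$ is left to the reader. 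You instead use the orthogonal decomposition $(\pi_{V_1}-\pi_{V_2})x=\pi_{V_2^\perp}\pi_{V_1}x-\pi_{V_2}\pi_{V_1^\perp}x$, whose two summands are orthogonal, to obtain $\|\pi_{V_1}-\pi_{V_2}\|=\max\bigl(\|\pi_{V_2^\perp}\!\mid_{V_1}\|,\|\pi_{V_2}\!\mid_{V_1^\perp}\|\bigr)$, and then the key point is the symmetry $\|\pi_{V_2^\perp}\!\mid_{V_1}\|=\|\pi_{V_2}\!\mid_{V_1^\perp}\|$, which you get from the adjoint pairs $\pi_{V_2}\!\mid_{V_1}\leftrightarrow\pi_{V_1}\!\mid_{V_2}$ and $\pi_{V_2}\!\mid_{V_1^\perp}\leftrightarrow\pi_{V_1^\perp}\!\mid_{V_2}$ sharing singular values, giving the common value $\sqrt{1-\sigma_m^2}$. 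Both proofs use the equality of dimensions in an essential way, but yours localizes that hypothesis transparently in the singular-value bookkeeping, avoids any case distinction on $\dim\Hi$, and handles the infinite-dimensional ambient space with no additional work (the restricted operators are finite rank, so sups are attained). The paper's approach is more concrete and geometric; yours is cleaner and more uniform. Your proof of (3) is essentially the same manipulation with idempotents the paper uses, and your deduction of (1) from (2) via the fact that $\pi_{V_2}v$ lies in $V_2\cap\B$ for $v\in V_1\cap\B$ is likewise the same as the paper's one-sided inclusion estimate.

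One small wording issue: when deducing (1) you say invoking (2) ``bounds'' the one-sided Hausdorff term by $\|\pi_{V_1}-\pi_{V_2}\|$, but in fact it gives equality, which is what you need for the final step; the logic you carry out is fine, only the word ``bounds'' undersells it.
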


\begin{proof}
We start with the trivial observation that $\dist(v,V_i \cap \B(0,1)) = \dist(v,V_i)$ whenever $v \in \B(0,1)$, $i=1,2$, which will be used repeatedly without further notice.
Next we observe that
\begin{equation}
\label{eq.dieudonne.1}
\max \{ \dist(v,V_2) : v \in V_1 \cap \B(0,1) \} \leq \|\pi_{V_1}-\pi_{V_2}\| \,.
\end{equation}
Indeed suppose $v \in V_1 \cap \B(0,1)$, then 
\begin{equation*}
\dist(v,V_2)  = |\pi_{V_2^\perp}(v)| = |\id_{\Hi}(v) - \pi_{V_2}(v)| = |\pi_{V_1}(v)-\pi_{V_2}(v)| 
 \leq \|\pi_{V_1}-\pi_{V_2}\|
\end{equation*}
which establishes \eqref{eq.dieudonne.1}. Swapping $V_1$ and $V_2$ in \eqref{eq.dieudonne.1} we obtain 
\begin{equation}
\label{eq.dieudonne.3}
\hdist(V_1 \cap \B(0,1) , V_2 \cap \B(0,1)) \leq \|\pi_{V_1}-\pi_{V_2}\| \,.
\end{equation}
In order to prove conclusion (1) it remains to prove the reverse inequality:
\begin{equation}
\label{eq.dieudonne.2}
\|\pi_{V_1}-\pi_{V_2}\| \leq \max \left\{ \max_{v \in V_1 \cap \B(0,1)}\dist(v,V_2) , \max_{v \in V_2 \cap \B(0,1)}\dist(v,V_1)  \right\} \,.
\end{equation}
We prove inequality \eqref{eq.dieudonne.2} first in case $m=1$ and $\operatorname{dim}(\Hi) = 2$. We leave to the reader the simple computation showing that, in this special case, the map $\Hi \cap \bS(0,1) \rightarrow \R : x \mapsto |\pi_{V_1}(x) - \pi_{V_2}(x)|$ is constant. Therefore
\begin{equation}
\label{eq.dieudinne.4}
\|\pi_{V_1}-\pi_{V_2}\| = |\pi_{V_1}(v) - \pi_{V_2}(v)| = |\pi_{V_2^\perp}(v)| = \dist(v,V_2) \,,
\end{equation}
for every $v \in V_1 \cap \bS(0,1)$.

Next we assume $m \geq 2$ and $\operatorname{dim}(\Hi) < \infty$. We choose $v^*$ which maximizes $\bS(0,1) \rightarrow \R : v \mapsto |\pi_{V_1}(v)-\pi_{V_2}(v)|^2$, so that $\| \pi_{V_1} - \pi_{V_2} \| = | \pi_{V_1}(v^*) - \pi_{V_2}(v^*) |$. According to the Lagrange Multiplier Theorem there exists $\lambda \in \R$ such that $\lambda v^* = \pi_{V_1}(v^*) - \pi_{V_2}(v^*)$. If $\lambda = 0$ then $\|\pi_{V_1}-\pi_{V_2}\|=0$ and \eqref{eq.dieudonne.2} clearly holds. We henceforth assume $\lambda \neq 0$. If $\pi_{V_1}(v^*)=0$ then $v^* \in V_2$, consequently $\|\pi_{V_1}-\pi_{V_2}\|=|\pi_{V_2}(v^*)| = |v^*|=1=|v^*-\pi_{V_1}(v^*)| \leq \max \{ \dist(v,V_1) : v \in V_2 \cap \B(0,1)\}$, thus \eqref{eq.dieudonne.2} holds. Similarly if $\pi_{V_2}(v^*)=0$ then $\|\pi_{V_1}-\pi_{V_2}\| \leq \max \{ \dist(v,V_2) : v \in V_1 \cap \B(0,1)\}$, and \eqref{eq.dieudonne.2} holds as well. Assuming now that $\pi_{V_1}(v^*) \neq 0 \neq \pi_{V_2}(v^*)$ we define $V \defl \operatorname{span}\{\pi_{V_1}(v^*),\pi_{V_2}(v^*)\}$, so that $v \in V$. We note $\dim V = 2$ (assuming if possible that $\dim V = 1$, it would ensue $\pi_{V_1}(v^*)=t.\pi_{V_2}(v^*)$ for some $t \neq 0$, and in turn $v^* = \lambda^{-1}(t-1)\pi_{V_2}(v^*) \in V_2$, thus also $v^* = \pi_{V_2}(v^*) = t^{-1}\pi_{V_1}(v^*) \in V_1$, whence $\lambda v^* = v^* - v^* = 0$, a contradiction), as well as $\dim V \cap V_1 = 1 = \dim V \cap V_2$. In other words we may apply the previous 2 dimensional case applies to $V$, $V \cap V_1$ and $V \cap V_2$. To this end we notice that $\pi_{v_i}(v) = \pi_{V \cap V_i}(v)$ whenever $v \in V$ (it suffices to observe that $\pi_{V_i}(v) \in V$ for $v = \pi_{V_j}(v^*)$, $j=1,2$, which is obvious if $j=i$ whereas if $j=2$, $i=1$ then $\pi_{V_2}(\pi_{V_1}(v^*)) = (\lambda+1) \pi_{V_2}(v^*) \in V$, and the other case is similar). Thus,
\begin{align*}
\|\pi_{V_1}-\pi_{V_2}\| & = |\pi_{V_1}(v^*) - \pi_{V_2}(v^*)| \\
& = | \pi_{V \cap V_1}(v^*) - \pi_{V \cap V_2} (v^*) | \\
& \leq \|\pi_{V  \cap V_1} \restriction V - \pi_{V \cap V_2} \restriction V \| \\
& \leq \max \left\{ \max_{v \in V \cap V_1 \cap \B(0,1)}\dist(v,V \cap V_2) , \max_{v \in V \cap V_2 \cap \B(0,1)}\dist(v,V \cap V_1)  \right\} \\
& = \max \left\{ \max_{v \in V \cap V_1 \cap \B(0,1)}\dist(v,V_2) , \max_{v \in V \cap V_2 \cap \B(0,1)}\dist(v,V_1)  \right\} \\
& \leq \max \left\{ \max_{v \in V_1 \cap \B(0,1)}\dist(v,V_2) , \max_{v \in V_2 \cap \B(0,1)}\dist(v,V_1)  \right\} \\
& \leq \| \pi_{V_1} - \pi_{V_2} \| \,.
\end{align*}
This completes the proof of \eqref{eq.dieudonne.2} in case $\dim(\Hi) < \infty$ and we leave it to the reader to check it also holds when $\dim(\Hi) = \infty$. Conclusion (1) now readily follows, whereas conclusion (2) holds because it holds in case $\dim X = 2$ and all inequalities above are equalities.

In order to prove (3) we simply notice that
\begin{align*}
\|\pi_{V_2^\perp} \circ \pi_{V_1} \| & = \| (\id_{\Hi} - \pi_{V_2}) \circ \pi_{V_1} \| \\
& = \| (\id_{\Hi} - \pi_{V_2}) \circ (\pi_{V_1} - \pi_{V_2}) + (\id_{\Hi}-\pi_{V_2}) \circ \pi_{V_2}) \| \\
& \leq \|\pi_{V_2^\perp} \| \; \|\pi_{V_1}-\pi_{V_2}\| \,.
\end{align*}
\end{proof}

We will need some estimates on the closeness of different planes that approximate some set $S \subset \Hi$. Such estimates are established in the results below.

\begin{Def}
Let $m > 0$ be an integer, $S \subset X$, $x \in S$, $r > 0$ and $\epsilon > 0$. We say that $S$ is \emph{$\epsilon$ flat at $(x,r)$} if there exists $V \in \bG(\Hi,m)$ such that
\begin{equation*}
\hdist \left( S \cap \B(x,r) , (x + V) \cap \B(x,r) \right) \leq \epsilon r \, .
\end{equation*}
We also define
\begin{equation*}
\bG(S,x,r,\epsilon) \defl \{ V \in \bG(\Hi,m): \hdist(S \cap \B(x,r) , (x+V) \cap \B(x,r)) \leq \epsilon r \} \, .
\end{equation*}
\end{Def}

The dimension $m$ does not appear either in the notation or in the terminology since it will always be clear from the context.
We leave it to the reader to check the trivial and useful fact that $\bG(X,m) \subset \bG(S,x,r,\epsilon)$ whenever $\epsilon \geq 1$.

\begin{Lem}
\label{lemma.reif.angle.1.prelim}
Assume that
\begin{enumerate}
	\item $S \subset X$, $x \in S$, $R > 0$, $\epsilon > 0$ and $0 < \lambda \leq 1$;
	\item $V \in \bG(S,x,R,\epsilon)$.
\end{enumerate}
Then $V \in \bG(S,x,\lambda R, 2 \epsilon \lambda^{-1})$.
\end{Lem}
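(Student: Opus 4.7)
The claim reduces to the pair of Hausdorff inclusions $S\cap\B(x,\lambda R)\subset\B((x+V)\cap\B(x,\lambda R),(1+\lambda)\epsilon R)$ and its symmetric counterpart with the roles of $S$ and $x+V$ interchanged; note that the allowed distance $(1+\lambda)\epsilon R$ equals $\epsilon(1+\lambda^{-1})\cdot\lambda R$, as required by the conclusion. I would establish the two inclusions separately, applying the hypothesis $V\in\bG(S,x,R,\epsilon)$ at judiciously chosen test points.

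The first inclusion is the easy direction and is handled by orthogonal projection. Given $s\in S\cap\B(x,\lambda R)$, set $p\defl x+\pi_V(s-x)\in x+V$. Because orthogonal projections are non-expansive, $|p-x|\le|s-x|\le\lambda R$, placing $p$ in $(x+V)\cap\B(x,\lambda R)$, and the hypothesis yields some $q\in(x+V)\cap\B(x,R)$ with $|s-q|\le\epsilon R$, whence $|s-p|=\dist(s,x+V)\le|s-q|\le\epsilon R$, comfortably within the required bound.

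The second inclusion is the delicate one; I would argue by cases using a retraction toward $x$. Given $p\in(x+V)\cap\B(x,\lambda R)$, the hypothesis produces $s'\in S\cap\B(x,R)$ with $|s'-p|\le\epsilon R$. If $s'\in\B(x,\lambda R)$ we are done. Otherwise $|s'-x|>\lambda R$, which combined with $|s'-p|\le\epsilon R$ forces $|p-x|>(\lambda-\epsilon)R$, so $p$ lies near the boundary of the small ball. I would then replace $p$ by the retract $p^*\defl x+\mu(p-x)$ with $\mu\defl(\lambda-\epsilon)R/|p-x|\in(0,1)$, so that $|p^*-x|=(\lambda-\epsilon)R$. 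Reapplying the hypothesis at $p^*$ yields $s^*\in S$ with $|s^*-p^*|\le\epsilon R$; a triangle inequality then gives $|s^*-x|\le(\lambda-\epsilon)R+\epsilon R=\lambda R$, so $s^*\in\B(x,\lambda R)$, and $|s^*-p|\le\epsilon R+(|p-x|-(\lambda-\epsilon)R)$. The principal obstacle is matching this against the stated coefficient $(1+\lambda)\epsilon R$: the crude bound $|p-x|\le\lambda R$ yields only $2\epsilon R$, so a sharper argument---a finer choice of $\mu$ or a case split on $|p-x|$---is needed, and it is here that the strict inequality $\epsilon<\lambda$ plays its decisive role beyond merely making $\mu$ well-defined.
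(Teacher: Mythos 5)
Your proof is, step for step, the same two-sided argument the paper gives: include each set in a thickening of the other at scale $\lambda R$, handling the easy direction by one application of the hypothesis and the hard direction by first retracting the test point radially to the sphere of radius $(\lambda-\epsilon)R$. Your easy direction is in fact cleaner than the paper's — you note that $\dist(s,x+V)=|\pi_{V^\perp}(s-x)|\le\epsilon R$ and that $x+\pi_V(s-x)$ already lies in $(x+V)\cap\B(x,\lambda R)$, which gives the sharp one-sided bound $\epsilon R$. The paper instead rescales $\xi$ by $\lambda(\epsilon+\lambda)^{-1}$ and records the looser bound $\epsilon(1+(\epsilon+\lambda)^{-1})R$. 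Your hard direction is the paper's argument verbatim (their $\xi'=x+\lambda^{-1}(\lambda-\epsilon)(\xi-x)$ is your $p^*$), and as you compute, the worst case there is $2\epsilon R$.

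The ``principal obstacle'' you flag at the end is real and is not closable by a finer choice of $\mu$. The hard direction genuinely produces only $\hdist\le 2\epsilon R$: take $|p-x|=\lambda R$ and an $S$ that happens to lie exactly on the sphere $|s'-x|=(\lambda+\epsilon)R$ near $p$, and the retraction argument cannot do better. The number $2\epsilon R$ equals $\epsilon(1+\lambda^{-1})\cdot\lambda R$ only when $\lambda=1$; for $\lambda<1$ one has $2\epsilon R>\epsilon(1+\lambda)R=\epsilon(1+\lambda^{-1})\lambda R$, so the stated inclusion $V\in\bG(S,x,\lambda R,\epsilon(1+\lambda^{-1}))$ does not follow. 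The paper in fact runs into the same wall: its two displayed inclusions \eqref{eq.aurore.10}, \eqref{eq.aurore.11} only give $\hdist(S\cap\B(x,\lambda R),(x+V)\cap\B(x,\lambda R))\le\epsilon(1+\lambda^{-1})R$, i.e.\ the ``radius-$R$'' normalization rather than the ``radius-$\lambda R$'' normalization demanded by the definition of $\bG(S,x,\lambda R,\cdot)$, and the closing phrase ``readily imply the Lemma'' silently absorbs a factor $\lambda^{-1}$. What the argument (yours or the paper's) actually establishes is $V\in\bG(S,x,\lambda R,2\epsilon\lambda^{-1})$, equivalently $\hdist\le 2\epsilon R$ (or, with the paper's looser bookkeeping, $V\in\bG(S,x,\lambda R,\epsilon(\lambda^{-1}+\lambda^{-2}))$). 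So do not look for a sharper retraction: the correct move is to weaken the lemma's stated constant. This costs only a harmless multiplicative factor in Corollary~\ref{cor.reif.angle.1} and in the estimate \eqref{samecenter} of Lemma~\ref{manifoldlem}, so nothing downstream breaks.
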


\begin{proof}
If $\lambda \leq \epsilon$ then there is nothing to prove, according to the remark right before the Lemma. Thus we assume from now on that $\epsilon < \lambda$. Let $\zeta \in S \cap \B(x,\lambda R) \subset S \cap \B(x,R)$. Define $\xi \defl x + \pi_V(\zeta-x)$. Since $|\xi-x| = |\pi_V(\zeta-x)| \leq |\zeta-x| \leq \lambda R$ one has $\xi \in (x+V) \cap \B(x,\lambda R)$. Furthermore,
\begin{equation*}
|\xi-\zeta| = \dist(\zeta , x +V) \leq \dist(\zeta, (x+V) \cap \B(x,R)) \leq \epsilon R 
\end{equation*}
by assumption (2). It follows that
\begin{equation*}
S \cap \B(x,\lambda R) \subset \B \big[ (x+V) \cap \B(x,\lambda R) , \left( \epsilon \lambda^{-1} \right) \lambda R \big] \,.
\end{equation*}
\par 
Let now $\xi \in (x+V) \cap \B(x,\lambda R)$ and $\epsilon < \hat{\epsilon} < \lambda$. Define $\xi' = x + \lambda^{-1}(\lambda - \hat{\epsilon})(\xi-x)$ and observe that $|\xi'-x| = \lambda^{-1} (\lambda - \hat{\epsilon})|\xi-x| \leq (\lambda - \hat{\epsilon})R \leq R$. In particular $\xi' \in (x+V) \cap \B(x,R)$ and according to assumption (2) there exists $\zeta \in S \cap \B(x,R)$ such that $|\xi' - \zeta| \leq \hat{\epsilon} R$. Note that
\begin{equation*}
|\zeta-x| \leq |\zeta - \xi'| + |\xi'-x| \leq \hat{\epsilon}R + (\lambda - \hat{\epsilon})R = \lambda R \,,
\end{equation*}
whence in fact $\zeta \in S \cap \B(x,\lambda R)$, and also
\begin{multline*}
|\zeta-\xi| \leq |\zeta - \xi'| + |\xi'-\xi| \leq \hat{\epsilon} R + \left| 1 - \lambda^{-1}(\lambda - \hat{\epsilon}) \right| . |\xi-x| \\
\leq \hat{\epsilon}R + \hat{\epsilon} \lambda^{-1}\lambda R = 2 \hat{\epsilon} R \,.
\end{multline*}
Therefore
\begin{equation*}
(x+V) \cap \B(x,\lambda R) \subset \B \big[ S \cap \B(x,\lambda R), \left( 2 \hat{\epsilon}\lambda^{-1} \right) \lambda R \big]
\end{equation*}
and the conclusion follows from the arbitrariness of $\hat{\epsilon} > \epsilon$.
\end{proof}

\begin{Cor}[Same center, different scales]
\label{cor.reif.angle.1}
Assume that
\begin{enumerate}
	\item $S \subset X$, $x \in S$, $\epsilon > 0$ and $0 < r < R$;
	\item $V_{r} \in \bG(S,x,r,\epsilon)$ and $V_{R} \in \bG(S,x,R,\epsilon)$.
\end{enumerate}
Then
\begin{equation*}
\hdist(V_{r} \cap \B(0,1), V_{R} \cap \B(0,1)) \leq \epsilon \left(1 + 2Rr^{-1}\right) \,.
\end{equation*}
\end{Cor}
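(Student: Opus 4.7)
The plan is to reduce the two approximating planes to a common scale and then combine them by a triangle inequality, translating the resulting pointwise bound into a Hausdorff distance via Lemma~\ref{lemma.norms.1}. Concretely, I would use Lemma~\ref{lemma.reif.angle.1.prelim} to transport the approximation coming from the larger scale $R$ down to the smaller scale $r$, after which both $V_r$ and $V_R$ approximate $S$ in $\B(x,r)$.

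First, a trivial reduction. Since $0\in V_r\cap V_R$, we always have $\hdist(V_r\cap\B(0,1),V_R\cap\B(0,1))\le 1$, so if $\epsilon(2+Rr^{-1})\ge 1$ there is nothing to prove. In particular we may assume $\epsilon<r/R$, which is exactly the hypothesis required to apply Lemma~\ref{lemma.reif.angle.1.prelim} with $\lambda=r/R\in(\epsilon,1]$. That lemma, applied to $V_R$, yields
\[
V_R\in\bG(S,x,r,\epsilon(1+Rr^{-1})).
\]

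Now fix $v\in V_r\cap\B(0,1)$. Then $x+rv\in(x+V_r)\cap\B(x,r)$, so by $V_r\in\bG(S,x,r,\epsilon)$ there exists $\zeta\in S\cap\B(x,r)$ with $|\zeta-(x+rv)|\le\epsilon r$; applying the previous display to $\zeta$ produces $\xi\in(x+V_R)\cap\B(x,r)$ with $|\xi-\zeta|\le\epsilon(1+Rr^{-1})r=\epsilon(r+R)$. Adding these estimates and dividing by $r$ gives
\[
|v-r^{-1}(\xi-x)|\le\epsilon(2+Rr^{-1}),
\]
with $r^{-1}(\xi-x)\in V_R$. Hence $\dist(v,V_R)\le\epsilon(2+Rr^{-1})$.

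Finally, parts (1) and (2) of Lemma~\ref{lemma.norms.1} identify $\hdist(V_r\cap\B(0,1),V_R\cap\B(0,1))$ with $\|\pi_{V_r}-\pi_{V_R}\|=\sup_{v\in V_r\cap\B(0,1)}\dist(v,V_R)$, so taking the supremum of the previous inequality yields the stated bound. There is no serious obstacle: the only point to watch is the side condition $\epsilon<\lambda$ in Lemma~\ref{lemma.reif.angle.1.prelim}, which we have dispatched by the trivial case. The constant $2+Rr^{-1}$ rather than the sharper $1+Rr^{-1}$ arises precisely because we transport $V_R$ all the way down to scale $r$ before combining; a direct two-step estimate without pre-rescaling would actually give the better constant, but the stated version is already sufficient for the applications to follow.
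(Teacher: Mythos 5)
Your proof is correct and follows the same route as the paper: rescale $V_R$'s approximation down to scale $r$ via Lemma~\ref{lemma.reif.angle.1.prelim}, then combine with $V_r$'s approximation. You actually add a useful detail the paper's proof omits, namely the check that $\epsilon < \lambda = r/R$ (dispatched via the trivial case), which is a genuine hypothesis of Lemma~\ref{lemma.reif.angle.1.prelim}; and your closing remark that a direct two-scale estimate gives the sharper constant $1+Rr^{-1}$ is accurate, though of course immaterial for the applications.
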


\begin{proof}
We put $\lambda = rR^{-1}$ and we infer from Lemma~\ref{lemma.reif.angle.1.prelim} that
$
	\hdist \bigl( (x+V_{R}) \cap \B(x,r) , S \cap \B(x,r) \bigr) \leq 2\epsilon\lambda^{-1}r \,.
$
Since also $\hdist \bigl( (x+V_r) \cap \B(x,r) , S \cap \B(x,r) \bigr) \leq \epsilon r$,
it follows from the triangle inequality for the Hausdorff distance, and from its invariance under translation, that
$
	\hdist \bigl( V_{r} \cap \B(0,r) , V_{R} \cap \B(0,r) \bigr) \leq \epsilon (1+2\lambda^{-1})r \,.
$
\end{proof}

\begin{Lem}[Different centers, same scale]
	\label{lemma.reif.angle.2}
Assume that
\begin{enumerate}
	\item $S \subset X$, $x_1,x_2 \in S$, $\epsilon > 0$, $\nu > 1$, $R > 0$, $0 < \lambda < 1$, $|x_1-x_2| \leq (1-\lambda)R$;
	\item $1-\lambda+\epsilon+\nu^{-1} < 1$;
	\item $V_i \in \bG(S,x_i,R,\epsilon)$, $i=1,2$.
\end{enumerate}
Then
\begin{equation*}
	\hdist(V_1 \cap \B(0,1), V_2 \cap \B(0,1)) \leq 3 \epsilon \nu \,.
\end{equation*}
\end{Lem}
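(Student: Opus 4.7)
The plan is to bound the Hausdorff distance by exhibiting, for any $v \in V_1 \cap \B(0,1)$, a point of $V_2 \cap \B(0,1)$ at distance $\leq 6\epsilon\nu$ from $v$; by symmetry (the hypotheses are symmetric in the indices $1,2$) this implies the claim via Lemma~\ref{lemma.norms.1}(2). The idea is to transfer $v$ from $V_1$ to $V_2$ by going through $S$, using the joint approximation properties in $\B(x_1,R)$ and $\B(x_2,R)$, at a carefully chosen intermediate radius $\nu^{-1}R$ inside $V_1$ so that the target point lands in the common region where both balls overlap.

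Concretely, fix $v \in V_1 \cap \B(0,1)$ and consider $y_1 \defl x_1 + \nu^{-1} R v \in (x_1+V_1) \cap \B(x_1,R)$. By assumption (3) applied to $V_1$, there is $\zeta \in S \cap \B(x_1,R)$ with $|\zeta - y_1| \leq \epsilon R$. Hypothesis (2) is exactly what is needed to relocate $\zeta$ to the second ball: $|\zeta - x_2| \leq \epsilon R + \nu^{-1} R + (1-\lambda)R \leq R$, so $\zeta \in S \cap \B(x_2, R)$. Applying assumption (3) to $V_2$ produces $\xi \in (x_2 + V_2) \cap \B(x_2,R)$ with $|\xi - \zeta| \leq \epsilon R$. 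At this stage, $\xi - x_2 \in V_2$ is close to $\nu^{-1} R v + (x_1 - x_2)$, not to $\nu^{-1} R v$ itself, and this spurious translation $(x_1-x_2)$ is exactly the obstacle: it is of order $(1-\lambda)R$, hence not small compared to $\epsilon R$.

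The key trick to cancel it is to apply assumption (3) for $V_2$ a second time, now to the $S$-point $x_1$ itself (which lies in $\B(x_2,R)$ since $|x_1-x_2| \leq (1-\lambda)R$), producing $\xi_0 \in (x_2+V_2) \cap \B(x_2,R)$ with $|\xi_0 - x_1| \leq \epsilon R$. The difference $w'' \defl \xi - \xi_0$ lies in $V_2$ and, by the triangle inequality,
$$|w'' - \nu^{-1} R v| \leq |\xi - \zeta| + |\zeta - y_1| + |x_1 - \xi_0| \leq 3\epsilon R.$$
Rescaling, $w \defl \nu R^{-1} w'' \in V_2$ satisfies $|w - v| \leq 3\epsilon\nu$. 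If $|w| \leq 1$ we are done; otherwise, since $|w| \leq |v| + 3\epsilon\nu \leq 1 + 3\epsilon\nu$, the projection $w/|w| \in V_2 \cap \B(0,1)$ is at distance $|w|-1 \leq 3\epsilon\nu$ from $w$, hence at distance $\leq 6\epsilon\nu$ from $v$. Interchanging the roles of the two indices yields the Hausdorff bound. The main obstacle, as noted, is the translation discrepancy $x_1-x_2$; the pivotal observation is that this error can itself be approximated by an element of $V_2$ (via the $S$-point $x_1$), so that subtracting it preserves membership in $V_2$ while killing the unwanted shift.
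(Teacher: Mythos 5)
Your proof is correct and follows essentially the same route as the paper: you pass a rescaled point of $x_1+V_1$ through $S$ into $x_2+V_2$, and the crucial step — approximating $x_1$ itself by a point of $x_2+V_2$ to cancel the translation $x_1-x_2$ — is exactly the role played by the point $c_2$ in the paper's argument. Your final renormalization of $w$ to lie in $\B(0,1)$ corresponds to the paper's rescaling $\xi_2' = c_2 + (3\epsilon\nu+1)^{-1}(\xi_2-c_2)$, so the two proofs are the same up to bookkeeping.
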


\begin{proof}
We let the relation $R = \nu r$ define $r > 0$. Choose $\hat{\epsilon} > \epsilon$ such that $1-\lambda+\hat{\epsilon}+\nu^{-1} < 1$. Let $h_1 \in V_1 \cap \B(0,r)$ and define $\xi_1 \defl x_1 + h_1 \in (x_1 + V_1) \cap \B(x_1,r) \subset (x_1 + V_1) \cap \B(x_1,R)$. Choose $y \in S \cap \B(x_1,R)$ such that $|y-\xi_1| \leq \hat{\epsilon} R$, according to hypothesis (3). Observe that
\begin{equation*}
|y-x_2| \leq |y-\xi_1| + |\xi_1-x_1| + |x_1-x_2| \leq \hat{\epsilon}R + \nu^{-1}R + (1-\lambda) R
\leq R
\end{equation*}
and define $\xi_2 \defl x_2 + \pi_{V_2}(y-x_2)$. Since $y \in S \cap \B(x_2,R)$ it follows from assumption (3) that $|y-\xi_2| \leq \epsilon R$. Define also $c_2 \defl x_2 + \pi_{V_2}(x_1-x_2)$. Since $x_2 \in S \cap \B(x_1,(1-\lambda)R) \subset S \cap \B(x_2,R)$ it follows again from assumption (3) that $|c_2-x_1| \leq \epsilon R$. Finally we let $h_2 \defl \xi_2-c_2 \in V_2$. Since $|h_2| = |\pi_{V_2}(y-x_1)| \leq |y-x_1| \leq r$ one has $h_2 \in V_2 \cap \B(0,r)$. Furthermore,
\begin{equation*}
\begin{split}
|h_1-h_2| & = |\xi_1-x_1-\xi_2 + c_2| \\
& \leq |\xi_1-y| + |y-\xi_2| + |x_1-c_2| \\
& \leq \hat{\epsilon} R + \epsilon R + \epsilon R \\
& \leq 3 \hat{\epsilon} \nu r
\end{split}
\end{equation*}
so that
\begin{equation*}
V_1 \cap \B(0,r) \subset \B \big[ V_2 \cap \B(0,r) , 3 \hat{\epsilon} \nu r \big] \,.
\end{equation*}
The conclusion follows from the arbitrariness of $\hat{\epsilon} > \epsilon$ and Lemma \ref{lemma.norms.1}(2).
\end{proof}

\subsection{Jacobians in Hilbert space}

Let $V \in \bG(\Hi,m)$ be an $m$-plane. For a linear map $L : V \to V^\perp$ we define $\J L$ to be the non-negative number such that 
\[
1 + (\J L)^2 = \det(\id_V + L^*L) = \det(\bar L^* \bar L) \,,
\]
where $\bar L = \id_V + L : V \to \Hi$ is the direct sum of $\id_V$ and $L$. If $(e_1,\dots,e_m)$ is an orthonormal basis of $V$, we can define the matrix $M_{ij} \defl \langle L e_i, L e_j\rangle$ and for any $K \subset \{1,\dots, m\}$ we denote by $M_K$ the submatrix of $M$ corresponding to the indices in $K$. The determinant in question calculates as
\begin{align*}
1 + (\J L)^2 & = \det(\delta_{ij} + M_{ij}) \\
& = \sum_{\sigma \in S_m} (-1)^{|\sigma|} \prod_{i=1}^m(\delta_{i\sigma(i)} + M_{i\sigma(i)} ) \\
 & = 1 + \sum_{K \neq \emptyset} \det(M_K) \,.
\end{align*}
The Hilbert-Schmidt norm of $L$ is given by
\[
\|L\|_{\HS} \defl \tr(L^* L)^\frac{1}{2} \,.
\]
The following identity relates the Hilbert-Schmidt norm to $\J L$,
\begin{equation}
\label{jacobian_sum}
(\J L)^2 = \|L\|_{\HS}^2 + \sum_{\#K \geq 2} \det(M_K) \,.
\end{equation}
Further, if $0 \leq \lambda_1^2 \leq \cdots \leq \lambda_m^2$ are the eigenvalues of $L^*L$ and $\|L\|$ is the usual operator norm of $L$, then
\begin{equation}
\label{norms}
\Lip(L) = \|L\| = \lambda_m \leq (\lambda_1^2 + \cdots + \lambda_m^2)^\frac{1}{2} = \|L\|_{\HS} \leq \sqrt{m}\|L\| \,.
\end{equation}
If $f : A \subset V \to V^\perp$ is a map that is differentiable at $x \in A$ we also use the abbreviation $\J f_x$ instead of $\J Df_x$.

\subsection{Chains, Slicing and tangent planes}
\label{chains_subsection}

With $\cP_m(\Hi;G)$ and $\cR_m(\Hi;G)$ we denote the $m$-dimensional polyhedral, respectively rectifiable, chains in the Hilbert space $X$ with coefficients in $G$. We refer to \cite{PH} and \cite{PH2} for precise definitions and properties of these spaces. Here is a sketch and some basic results we will use later on. Consider a rectifiable $G$-chain $T \in \cR_m(\Hi;G)$. According to the definition, $T$ can be represented by a sequence of bi-Lipschitz maps $\gamma_i : K_i \to \Hi$ defined on compact sets $K_i \subset \R^m$ and measurable maps $\bg_i : K_i \to G$ such that $\gamma_i(K_i) \cap \gamma_j(K_j) = \emptyset$ for $i \neq j$. The set $M_T \defl \bigcup_i \gamma_i(K_i)$ is $\cH^m$-rectifiable and $\bg : \Hi \to G$ is defined by $\bg(x) \defl 0_G$ for $x \notin M_T$ and $\bg(x) \defl \bg_i(\gamma_i^{-1}(x))$ if $x \in \gamma_i(K_i)$. Clearly, $\bg$ is $\cH^m$-measurable and it is further assumed to be integrable, respectively that the corresponding measure $\|T\| \defl \cH^m \res \|\bg\|$ is finite. The \emph{mass} of $T$ is the total measure $\bM(T) = \|T\|(\Hi)$. For any Borel set $B \subset \Hi$, the restriction $T \res B$ is well defined by the weight function $\chi_B \bg : \Hi \to G$. Obviously,
\[
\bM(T \res B) = \int_{B \cap M_T} \|\bg(x)\| \, d\cH^m(x) \,.
\]
Since the metric differential (in the sense of Kirchheim \cite{Kirch}) of each $\gamma_i$ is represented by a scalar product at almost every point, we can further assume that the $\gamma_i$ are close to isometries, i.e.\ given a $\lambda > 1$ we can find parametrizations $\gamma_i$ as above such that $\max\{\Lip(\gamma_i),\Lip(\gamma_i^{-1})\} \leq \lambda$, see \cite[Lemma~4]{Kirch} and \cite[Lemma~3.2.2]{Fed}.

The \emph{$m$-density} of $\|T\|$ at a point $x$ is defined by
\begin{align*}
\Theta^m(\|T\|,x) & \defl \lim_{r \downarrow 0} \frac{\|T\|(\B(x,r))}{\balpha(m) r^m} \,,
\end{align*}
in case the limit exists. It can be shown that for $\|T\|$-a.e.\ point $x \in \Hi$, there holds $\Theta^m(\|T\|,x) = \|\bg(x)\|$.

Let $f : \Hi \to \R^n$ be a Lipschitz map for some $n \leq m$. Following \cite[Section~3.7]{PH}, the slice $\langle T,f,y\rangle$ is an element of $\cR_{m-n}(\Hi;G)$ for almost every $y \in \R^n$. In particular, for almost every $y$, the set $M_T \cap f^{-1}\{y\}$ is $\cH^{m-n}$-rectifiable and has a $G$-orientation given by $\pm \bg$. On a given chart $K_i$ define $T_i$ as the $m$-dimensional $G$-chain equipped with $G$-orientation $\bg_i$. For almost all $y$ it follows from \cite[Theorem~3.8.1]{PH} that
\[
\langle T,f,y\rangle \res \gamma_i(K_i) = \langle T \res \gamma_i(K_i),f,y\rangle = \gamma_{i\#}\langle T_i,f \circ \gamma_i,y\rangle \,.
\]
Hence by the coarea formula \cite[Theorem~3.2.12]{Fed},
\begin{align*}
\int_{\R^n}\bM(\langle T,f,y\rangle & \res \gamma_i(K_i)) \, dy \\
 & = \int_{\R^n} \bM(\gamma_{i\#}\langle T_i,f \circ \gamma_i,y\rangle) \, dy \\
 & \leq \Lip({\gamma_i})^{m-n} \int_{\R^n} \bM(\langle T_i,f \circ \gamma_i,y\rangle) \, dy \\
 & = \Lip({\gamma_i})^{m-n} \int_{\R^n} \int_{K_i \cap (f \circ \gamma_i)^{-1}\{y\}} \|\bg_i(z)\| \, d\cH^{m-n}(z)\, dy \\
 & = \Lip({\gamma_i})^{m-n} \int_{K_i} \|\bg_i(x)\| C_n(D(f \circ \gamma_i)_x) \, dx \,,
\end{align*}
where $C_n(L)$ denotes the coarea factor of a linear map $L : \R^m \to \R^n$. In \cite[Lemma 9.3]{AK} or the proof of \cite[Lemma~3.2.10]{Fed} it is shown that in case the kernel $K$ of $L$ has dimension $m-n$, then $C_n(L) = J_m(q)/J_{m-n}(p|_K)$ (otherwise $C_n(L) = 0$), where $q = L + p : \R^m \to \R^m$ is linear and $p$ is injective on $K$. In the setting of Hilbert spaces we have $J_k(q)J_{k'}(q') = J_{k + k'}(q \oplus q')$ in case $q : W \to \R^k$, $q' : W' \to \R^{k'}$ and $W \oplus W'$ is a direct sum of Hilbert spaces of dimension $k$ and $k'$ respectively. Hence, $C_n(L) = J_n(L|_{K^\perp})$ and in particular $C_n(L) \leq \|L\|^n = \Lip(L)^n$. Applied to the situation above and assuming that $\max\{\Lip(\gamma_i),\Lip(\gamma_i^{-1})\} \leq \lambda$ we get
\begin{align*}
\int_{\R^n}\bM(\langle T,f,y\rangle \res \gamma_i(K_i)) \, dy & \leq \lambda^{m-n} \Lip(f \circ \gamma_i)^n \int_{K_i} \|\bg_i(x)\| \, dx \\
 & \leq \lambda^m \Lip(f)^n \bM({\gamma_i^{-1}}_\# (T \res \gamma_i(K_i))) \\
 & \leq \lambda^{2m} \Lip(f)^n \bM(T \res \gamma_i(K_i)) \,.
\end{align*}
Summing over all $i$ and taking the limit for $\lambda \to 1$ we get
\begin{equation}
\label{slicemass}
\int_{\R^n}\bM(\langle T,f,y\rangle) \, dy \leq \Lip(f)^n \bM(T) \,.
\end{equation}

Another important tool we need is the cone construction. For a rectifiable chain $T \in \cR_{m-1}(\Hi;G)$ one can construct the cone over $T$ with center $x \in \Hi$, $\curr x \cone T \in \cR_m(\Hi;G)$. In case $T$ is a polyhedral chain we can write $T = \sum_i g_i \curr{S_i}$ for finitely many oriented simplices $S_i$. If $x = 0$, then $\curr 0 \cone T = \sum_i g_i \curr{S_i'}$ where $S_i' \defl \{tx : t \in [0,1], x \in S_i\}$ and an appropriate orientation. For general rectifiable chains we may define $\curr 0 \cone T \defl \psi_\# (\curr{0,1} \times T)$ where $\psi(t,x) = tx$. If $B \subset \partial \B(0,r)$ has $\cH^{m-1}$-finite measure, then the set $B' = \{tx : t \in [0,1], x \in B\}$ satisfies $\cH^m(B') = \frac{r}{m}\cH^{m-1}(B)$ as one can verify easily. Hence if $\spt(T) \subset \partial \B(0,r)$ and $\partial T = 0$, then $\partial (\curr 0 \cone T) = T$ and
\begin{equation}
\label{cone_equality}
\bM(\curr 0 \cone T) = \frac{r}{m}\bM(T) \,.
\end{equation}
In case $B \subset \Hi\setminus \oB(0,r)$ we get $\cH^m(B' \cap \B(0,r)) \leq \frac{r}{m}\cH^{m-1}(B)$ since the orthogonal projection of $\Hi\setminus\B(0,r)$ onto $\partial \B(0,r)$ is $1$-Lipschitz. Hence if $\spt(T) \subset \Hi\setminus\oB(0,r)$ and $\partial T = 0$, then $\spt(\partial(\curr 0 \cone T)) \subset \Hi \setminus \oB(0,r)$ and
\begin{equation}
\label{cone_inequality}
\bM(\curr 0 \cone T) \leq \frac{r}{m}\bM(T) \,.
\end{equation}

The basic idea in order to establish a regularity result for some chain $T$ is to show that $\spt(T)$ can be uniformly approximated by planes. Rectifiable measures in $\R^n$ have weak tangent planes almost everywhere. It is then a classical observation that this tangent plane is actually a tangent plane for the support of the measure in case the measure is Ahlfors regular. Here we show this fact for chains in $\Hi$. For this it is crucial that Hilbert spaces have the Radon-Nikod\'ym property, which allows to differentiate Lipschitz maps $\R^m \to \Hi$ at almost every point. For a Radon measure $\phi$ in $\Hi$ and $V \in \bG(\Hi,m)$ we define
\begin{align*}
\bbeta_\infty(\phi,x,r,V) & \defl r^{-1}\sup\{|\pi_{V^\perp}(y - x)| : y \in \spt(\phi) \cap \B(x,r)\} \,.
\end{align*}

\begin{Lem}
	\label{tangentplane}
Let $\Hi$ be a Hilbert space, $T \in \cR_m(\Hi;G)$ and let $U \subset \spt(T)$ be a relatively open set with $\dist(U,\spt(\partial T)) \geq r_0 > 0$. Assume that there are constants $c_2 \geq c_1 > 0$ such that $\|T\|$ is Ahlfors regular in $U$ in the sense that for all $x \in U$ and $0 < r < r_0$,
\begin{equation}
\label{density_bound}
c_1 \balpha(m) r^m \leq \|T\|(\B(x,r)) \leq c_2 \balpha(m) r^m \,.
\end{equation}
Then for $\|T\|$-a.e.\ $x \in U$ there is a plane $W_x \in \bG(\Hi,m)$ such that
\[
\lim_{r \downarrow 0} \bbeta_\infty(\|T\|,x,r,W_x) = 0 \,.
\]
Moreover, there is some $r_x > 0$ such that for all $0 < r < r_x$, there holds $\bbeta_\infty(\|T\|,x,r,W_x) \leq 2^{-1}$ and
\[
\pi_{W_x\#} \left(T \res (\B(x,r) \cap Z_{W_x}(x,2^{-1}r))\right) = g_x \curr{\B_{W_x}(\pi_{W_x}(x),2^{-1}r)} \,,
\]
for some $g_x \in G$ with $\|g_x\| = \Theta^m(\|T\|,x)$.
\end{Lem}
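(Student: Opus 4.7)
My approach has three stages: first extract an approximate tangent plane $W_x$ from the rectifiable structure of $T$, second upgrade the weak tangency to Hausdorff tangency using the lower Ahlfors bound, and third apply the Constancy Theorem to identify the pushforward. I would begin by fixing a bi-Lipschitz parametrization $\gamma_i : K_i \to \Hi$ of $T$ as in Section~\ref{chains_subsection}. Since $\Hi$ has the Radon-Nikod\'ym property, each $\gamma_i$ is Fr\'echet differentiable at $\cH^m$-a.e.\ $z \in K_i$, with an injective derivative whose image defines an $m$-plane $W_x \in \bG(\Hi, m)$ at $x = \gamma_i(z)$. At $\|T\|$-a.e.\ such $x$ the standard rectifiability theory yields $\Theta^m(\|T\|, x) = \|\bg_T(x)\|$ together with the weak convergence $r^{-m}(\eta_{x,r})_\# (\|T\| \res \B(x,r)) \rightharpoonup \|\bg_T(x)\| \cH^m \res (W_x \cap \B(0,1))$, where $\eta_{x,r}(y) := (y-x)/r$.

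For the upgrade I would argue by contradiction: if $\bbeta_\infty(\|T\|, x, r_k, W_x) \geq \delta$ along some $r_k \downarrow 0$, pick $y_k \in \spt(T) \cap \B(x, r_k)$ with $\dist(y_k, x+W_x) \geq \delta r_k$. The hypothesis \eqref{density_bound} applied at $y_k$ with radius $\delta r_k/2 < r_0$ yields $\|T\|(\B(y_k, \delta r_k/2)) \geq c_1 \balpha(m)(\delta r_k/2)^m$, and since $\B(y_k, \delta r_k/2)$ stays at distance $\geq \delta r_k/2$ from $x+W_x$, after renormalizing by $r_k^{-m}$ a fixed positive amount of mass accumulates in the region $\{y \in \B(0,2) : \dist(y, W_x) \geq \delta/4\}$, contradicting the weak convergence of Step~1. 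Hence $\bbeta_\infty(\|T\|, x, r, W_x) \to 0$.

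For the last step, choose $r_x > 0$ so small that $\bbeta_\infty(\|T\|, x, r, W_x) \leq 1/2$ for $r < r_x$. For any $y \in \spt(T) \cap \partial \B(x,r)$ one then has $|\pi_{W_x^\perp}(y-x)| \leq r/2$ together with $|y-x| = r$, which forces $|\pi_{W_x}(y-x)| \geq r\sqrt{3}/2 > r/2$; thus $\spt(T) \cap \partial \B(x,r)$ misses the cylinder $Z_{W_x}(\pi_{W_x}(x), r/2)$. Consequently the boundary of $S := T \res (\B(x,r) \cap Z_{W_x}(\pi_{W_x}(x), r/2))$ is supported in $\pi_{W_x}^{-1}(\partial \B_{W_x}(\pi_{W_x}(x), r/2)) \cap \B(x,r)$, so $\partial (\pi_{W_x\#} S)$ is supported on $\partial \B_{W_x}(\pi_{W_x}(x), r/2)$. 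The Constancy Theorem (cf.\ \cite{PH2}) then yields $\pi_{W_x\#} S = g_x(r)\, \curr{\B_{W_x}(\pi_{W_x}(x), r/2)}$ for some $g_x(r) \in G$; restricting to smaller scales $r' < r$ shows that $g_x(r)$ is independent of $r$, call it $g_x$, and comparing masses as $r \downarrow 0$ using Step~1 gives $\|g_x\| = \Theta^m(\|T\|, x)$.

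The main obstacle is the upgrade from weak tangency to Hausdorff tangency: weak convergence of $r^{-m}\|T\|$ alone does not prevent isolated support points of $T$ from persisting far from $W_x$ at vanishing mass, and this is precisely where the Ahlfors lower bound in \eqref{density_bound} is indispensable, since any such stray support point drags along a definite positive mass in a surrounding ball. A secondary technical point is to ensure the pushforward has the right boundary; the bound $\bbeta_\infty \leq 1/2$ is tuned exactly so that $\spt(T) \cap \partial \B(x,r)$ slips past the radius-$r/2$ cylinder over $W_x$, making the Constancy Theorem directly applicable.
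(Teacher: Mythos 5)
Your steps 1–2 and the constancy argument in step 3 track the paper's strategy closely: you use the Radon–Nikod\'ym property to get an approximate tangent plane $W_x$ from the bi-Lipschitz charts, upgrade to Hausdorff tangency with the lower Ahlfors bound (the paper argues directly with the density estimates \eqref{density_bound} and \eqref{measureflat} rather than via weak convergence of rescaled measures, but the mechanism --- a stray support point drags along a quantum of mass, contradicting the density --- is the same), and then apply the Constancy Theorem after checking that $\spt T \cap \partial\B(x,r)$ avoids the cylinder of radius $r/2$.

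The genuine gap is the final identification $\|g_x\| = \Theta^m(\|T\|,x)$, which you dispatch with ``comparing masses as $r \downarrow 0$.'' That only yields $\|g_x\| \leq \Theta^m(\|T\|,x)$: since $\pi_{W_x}$ is $1$-Lipschitz, $\|g_x\|\,\balpha(m)(r/2)^m = \bM(\pi_{W_x\#}S) \leq \bM(S)$, and letting $r\downarrow 0$ gives the upper bound. The lower bound is not a consequence of mass comparison, because mass can be lost under projection through cancellation: the pushforward could equal $g_x\curr{\text{disk}}$ with $\|g_x\|$ strictly less than the density if the $G$-orientation $\bg$ varies near $x$ (even while $\|\bg\|$ is approximately constant), or if other sheets of $T$ contribute with partial cancellation. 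The paper handles this by a nontrivial extra argument: it introduces a Lipschitz function $f$ built from the chart inverse $\psi^i$, slices along $f$, pushes forward to $\R^m$, applies the Constancy Theorem there along a carefully chosen sequence $r_k$, and then uses the approximate continuity of $\|\bg_i\|$ at $y$ together with a stabilization argument ($g_k = g_{k+1}$ for large $k$ because the agreement sets overlap) to show that $\bg_i$ itself (not just its norm) is $\cH^m$-approximately constant near $y$, equal to some $g_x'$ with $\|g_x'\| = \Theta^m(\|T\|,x)$; only then does the differentiability of $\pi_{W_x}\circ\gamma^i$ at $y$ yield $g_x = g_x'$. You would need to supply an argument of comparable strength to close this step.
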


\begin{proof}
The bounds on volumes of balls \eqref{density_bound} imply that $c_1 \leq \Theta^{m*}(\|T\|,x) \leq c_2$ for all $x \in U$ and by a standard result of measure theory
\begin{equation}
\label{measurecompare}
c_1\cH^m(B) \leq \|T\|(B) \leq 2^mc_2\cH^m(B)
\end{equation}
for all Borel measurable subsets $B \subset U$, see e.g.\ \cite[Theorem~2.4.3]{AT}. As in Subsection~\ref{chains_subsection} let $\gamma^i : K_i \to U$  be countably many bi-Lipschitz maps defined on compact sets $K_i \subset \R^m$ with pairwise disjoint images and $\cH^m(U \setminus \bigcup_i \gamma^i(K_i)) = 0$. Let $\bg_i : K_i \to G$ be $\cH^m$-measurable functions such that $T \res U = \sum_{i = 1}^\infty \gamma^i_\#(\bg_i \curr{K_i})$ and $\|\bg_i\| \in [c_1,c_2]$. The $\cH^m$-measurable function $\bg : \Hi \to G$ is defined to be $\bg_i \circ (\gamma^i)^{-1}$ on $\gamma^i(K_i)$ and equal to $0_G$ on the complement of the union of all these sets. We can further assume that for each $i$ the inverse of $\gamma^i$ is extended to a Lipschitz map $\psi^i : \Hi \to \R^m$. Kirszbraun's Theorem allows to extend $\gamma^i$ to a Lipschitz map $\gamma^i : \R^m \to \Hi$. In what follows we don't need a sharp control on the Lipschitz constant of these extensions and just assume that $\Lip(\gamma^i),\Lip(\psi^i) \leq L$ for some $L > 1$. 
The extended maps $\gamma^i$ are differentiable almost everywhere since Hilbert spaces have the Radon-Nikod\'ym property. For a proof of this see for example \cite[1.2]{DePauw4}. The Radon-Nikod\'ym property implies as in a standard proof of Rademacher's Theorem that each $\gamma^i$ is differentiable at almost every point of $\R^m$. With the help of the area formula we see that for $\cH^m$-almost every point $x \in U$ there is an index $i$ and $y \in K_i$ such that
\begin{enumerate}
	\item $\gamma^i(y) = x$,
	\item $D\gamma^i_y$ exists and has rank $m$,
	\item $\Theta^m(\cH^m\res K_i,y) = \Theta^m(\cH^m\res \gamma^i(K_i),x) = \Theta^m(\cH^m\res U,x) = 1$,
	\item $\|\bg_i\| : K_i \to \R$ has an approximate point of continuity at $y$.
\end{enumerate}
Let $W_x \in \bG(\Hi,m)$ be the image of $D\gamma^i_y$. The differentiability of $\gamma^i$ at $y$ and the fact that $\gamma^i$ is $L$-bi-Lipschitz on $K_i$ imply that for any $0 < \epsilon < \frac{1}{4}$ there is a $0 < r_\epsilon < r_0$ such that $\B(x,r_\epsilon) \cap \spt(T) \subset U$ and for all $0 < r < r_\epsilon$,
\begin{equation}
\label{bilipestimate}
\dist(\gamma^i(\B(y,Lr)),x + W_x) \leq \epsilon r \,, \; \mbox{and} \; \gamma^i(K_i \setminus \B(y,Lr)) \subset U\setminus\B(x,r) \,,
\end{equation}
and with (3),
\begin{align}
\nonumber
(1 - 3^{-1}(2^mc_2)^{-1}c_1 \epsilon^m)\balpha(m)r^m & \leq \cH^m(\gamma^i(K_i) \cap \B(x,r)) \\
\nonumber
 & \leq \cH^m(U \cap \B(x,r)) \\
\label{densityest}
 & \leq (1 + 3^{-1}(2^mc_2)^{-1}c_1 \epsilon^m)\balpha(m)r^m\,.
\end{align}
The inclusions in \eqref{bilipestimate} imply
\begin{equation}
\label{measureflat}
\gamma^i(K_i) \cap \B(x,r) \subset N(x,\epsilon r) \,,
\end{equation}
where $N(x,t) \defl \{z \in U : \dist(z,x + W_x) \leq t\}$ for $t \geq 0$. Assume by contradiction that for some $0 < r < r_\epsilon$ there is a point
\begin{equation}
\label{assumepointoutside}
x' \in U \cap \B(x,r/2) \setminus N(x,2\epsilon r) \,.
\end{equation}
Then obviously $\B(x',\epsilon r) \subset \B(x,r)$ and $\dist(\B(x',\epsilon r),x + W_x) > \epsilon r$ as well as $\cH^m(U \cap \B(x',\epsilon r)) \geq (2^mc_2)^{-1}c_1\balpha(m) (\epsilon r)^m$ by \eqref{density_bound} and \eqref{measurecompare}. Hence with the first estimate of \eqref{densityest} and \eqref{measureflat},
\begin{align*}
\cH^m(U \cap \B(x,r)) & \geq \cH^m(\gamma^i(K_i) \cap \B(x,r)) + \cH^m(U \setminus N(x,\epsilon r)) \\
 & > (1 + 3^{-1}(2^mc_2)^{-1}c_1 \epsilon^m)\balpha(m)r^m \,.
\end{align*}
This contradicts the last estimate of \eqref{densityest}. Thus \eqref{assumepointoutside} is wrong and $\limsup_{r \downarrow 0} \bbeta_\infty(\|T\|,x,r,W_x) \leq 4\epsilon$ for all $\epsilon > 0$. This proves the first statement.

To establish the statement about the projections, let $r_x \defl 2^{-1} r_{8^{-1}}$. Then $\bbeta_\infty(\|T\|,x,r,W_x) \leq 2^{-1}$ for all $0 < r < r_x$. For almost every such $r$, $\langle T,d_x,r \rangle = \partial (T \res \B(x,r)) \in \cR_{m-1}(\Hi;G)$, where $d_x(z) = |x-z|$. Since $\spt(\langle T,d_x,r \rangle) \subset \partial \B(x,r) \cap N(x,2^{-1}r)$ we get that $|\pi_{W_x}(z-x)| > 2^{-1}r$ for all $z \in \spt(\langle T,d_x,r \rangle)$. Hence the constancy theorem \cite[Theorem~6.4]{PH2} implies that for $0 < s \leq 2^{-1}r$,
\[
\pi_{W_x\#} \bigl(T \res (\B(x,r) \cap Z_{W_x}(x,s)\bigr) = \bigl(\pi_{W_x\#} (T \res \B(x,r))\bigr) \res \B(\pi_{W_x}(x),s) \,,
\]
and this chain is equal to $g_{r,s}\curr{\B(\pi_{W_x}(x),s)}$ for some $g_{r,s} \in G$. The map $(r,s) \mapsto g_{r,s}$ is locally constant by the constancy theorem and since the domain $\{(r,s) \in (0,r_x) \times (0,r_x) : 2s \leq r \}$ is connected, this function is constant. Therefore we find a unique $g_x \in G$ as in the second statement of the lemma. It remains to show that $\Theta^m(\|T\|,x) = \|g_x\|$.

Since $\|T\|(\B(x,r)) = \sum_{j}\int_{\B(x,r) \cap \gamma^j(K_j)} \|\bg_j \circ (\gamma^j)^{-1}\| \, d\cH^m$ and $\|\bg_j\| \in [c_1,c_2]$, it follows directly from (3) and (4) that $\|\bg\|$ is $\cH^m$-approximately continuous at $x$ and 
\begin{equation}
\label{massdensity}
\Theta^m(\|T\|,x) = \Theta^m(\|T\| \res \gamma^i(K_i),x) = \|\bg(x)\| \,.
\end{equation}
Consider the $L$-Lipschitz function $f : \Hi \to \R$ given by
\[
f(z) \defl \max\{|\psi^i(z) - \psi^i(x)|, L^{-1}|z - x|\} \,.
\]
If $z \in \gamma^i(K_i)$, then $f(z) = |\psi^i(z) - \psi^i(x)|$ and also
\[
\B(x,L^{-1}r) \subset \{f \leq r\} \subset \B(x,Lr) \,.
\]
As before, $\langle T,f,r \rangle = \partial (T \res \{f \leq r\}) \in \cR_{m-1}(\Hi;G)$ for almost every $r > 0$. With (3) and the coarea formula
we find a sequence $r_k \in [2^{-k-1},2^{-k}]$ such that
\[
\lim_{k \to \infty} \frac{\cH^{m-1}(\partial \B(y,r_k) \cap K_i)}{\cH^{m-1}(\partial \B(y,r_k))} = 1 \,,
\]
as well as $\langle T,f,r_k \rangle = \partial (T \res \{f \leq r_k\}) \in \cR_{m-1}(\Hi;G)$ for all $k$. From this it follows that $\frac{1}{r_k}\hdist(\partial \B(y,r_k) \cap K_i,\partial \B(y,r_k)) \to 0$. Because $\gamma^i : \R^m \to X$ is Lipschitz, differentiable at $y$ and $\lim_{r \downarrow 0} \bbeta_\infty(\|T\|,x,r,W_x) = 0$, it follows from (3) that $\frac{1}{r_k} \dist(\spt(\langle T,f,r_k \rangle),\gamma^i(K_i)) \to 0$ and further $\frac{1}{r_k}\hdist\bigl(\gamma^i(\partial \B(y,r_k) \cap K_i),\spt(\langle T,f,r_k \rangle)\bigr) \to 0$. Thus
\[
\frac{1}{r_k}\hdist(\partial \B(y,r_k),\spt(\partial T_{k})) \to 0 \,,
\]
for the push-forward $T_k \defl \psi^i_\#(T \res \{f \leq r_k\}) \in \cR_{m}(\R^m;G)$. The constancy theorem implies now that there are $g_k \in G$ and a sequence $0 < s_k < r_k$ with $s_k/r_k \to 1$ and
\begin{equation}
\label{constancyapplication}
T_{k} \res \B(y,s_k) = g_k \curr{\B(y,s_k)} \,.
\end{equation}
From the construction of the push-forward as in \cite[3.5]{PH}, it follows that
\[
T_{k} = \bg_i \curr{K_i \cap \B(y,r_k)} + \psi^i_\#(T \res A_k) \,,
\]
for some $\cH^m$-measurable set $A_k \subset U \setminus \gamma^i(K_i)$. From (3) and \eqref{massdensity} it follows that $\frac{1}{r_k^m}\cH^m(\psi^i(A_k)) \to 0$ and also $\frac{1}{r_k^m}\bM(\psi^i_\#(T \res A_k)) \to 0$. With \eqref{constancyapplication} we conclude that $\bg_i = g_k$ on $\B(y,r_k/2) \setminus \psi^i(A_k)$. Since $r_k \in [2^{-k-1},2^{-k}]$, this shows that $g_k = g_{k+1}$ if $k$ is large enough. Hence $\bg_i$ is approximately equal to some $g_x' \in G$ at $y \in K_i$ with $\|g_x'\| = \|\bg(x)\| = \Theta^m(\|T\|,x)$ by (4) and \eqref{massdensity}. It follows that $\frac{1}{r_k^m}\bM(T_k - g_x'\curr{\B(y,r_k)}) \to 0$.
Thus,
$\frac{1}{r_k^m}\bM(T \res \{f \leq r_k\} - \gamma^i_\#(g_x'\curr{\B(y,r_k)})) \to 0$ and further
\begin{equation}
\label{differentialprojection1}
\frac{1}{r_k^m}\bM\bigl(\pi_{W_x\#}(T \res \{f \leq r_k\}) - (\pi_{W_x}\circ\gamma^i)_\#(g_x'\curr{\B(y,r_k)})\bigr) \to 0 \,. 
\end{equation}
From the fact that $\pi_{W_x} \circ \gamma^i$ is differentiable at $y$ and \eqref{constancyapplication}, 
\begin{equation}
\label{differentialprojection2}
\bM\bigl(g_x'\curr{x' + D(\pi_{W_x} \circ \gamma^i)_y (\B(0,1))} - (\eta_{r_k} \circ \pi_{W_x}\circ\gamma^i)_\#(g_x'\curr{\B(y,r_k)})\bigl) \to 0 \,,
\end{equation}
for $k \to \infty$, where $\eta_{r}(z) \defl \frac{1}{r}(z-x') + x'$ and $x' = \pi_{W_x}(x)$. We already know that $\pi_{W_x}$ projects $T$ around $x$ to a chain with weight $g_x$, hence $g_x = g_x'$ follows from \eqref{differentialprojection1} and \eqref{differentialprojection2}. This concludes the proof.
\end{proof}

The following lemma shows that the group element associated with a projection doesn't change for projections to nearby planes. This is a direct consequence of the constancy theorem.

\begin{Lem}
	\label{differentplane}
Let $T \in \cR_m(\Hi;G)$ be rectifiable chain. Assume that there is an $m$-plane $V \in \bG(\Hi,m)$ such that,
\begin{enumerate}
	\item $\spt(T) \subset \B(0,1)$ and $\spt(\partial T) \subset \partial\B(0,1)$,
	\item $|\pi_{V^\perp}(x)| \leq \frac{1}{5}$ for $x \in \spt(T)$,
	\item $\pi_{V\#} (T \res Z_V(0,\frac{1}{2})) = g_0 \curr{\B_V(0,\frac{1}{2})}$.
\end{enumerate}
Then for any $W \in \bG(\Hi,m)$ that satisfies $\|\pi_W - \pi_V\| \leq \frac{1}{5}$,
\[
\pi_{W\#} (T \res Z_W(0,2^{-1})) = g_0 \curr{\B_W(0,2^{-1})} \,.
\]
We assume that $W$ and $V$ are given orientations such that the projection $\pi_W : W \to V$ is orientation preserving.
\end{Lem}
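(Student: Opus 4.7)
The plan is a two-step argument: apply the constancy theorem to deduce that $\pi_{W\#}(T\res Z_W(0,2^{-1})) = g\curr{\B_W(0,2^{-1})}$ for some $g \in G$, then identify $g = g_0$ via a chain-homotopy comparison with the $V$-projection. For the first step, the boundary of $T \res Z_W(0,2^{-1})$ decomposes into $(\partial T)\res Z_W(0,2^{-1})$ plus a slice along $\{|\pi_W(\cdot)| = 2^{-1}\}$. Any $x \in \spt(\partial T) \subset \partial\B(0,1)$ satisfies $|\pi_V(x)| \geq \sqrt{24/25}$ by (1)-(2), and then $|\pi_W(x)| \geq \sqrt{24/25} - 1/5 > 2^{-1}$ since $\|\pi_W - \pi_V\| \leq 1/5$. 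Hence $(\partial T)\res Z_W(0,2^{-1}) = 0$ and the slice projects into $\partial\B_W(0,2^{-1})$, so the constancy theorem \cite{PH2} indeed gives $\pi_{W\#}(T\res Z_W(0,2^{-1})) = g\curr{\B_W(0,2^{-1})}$.

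For the identification of $g$, introduce the Lipschitz homotopy $H\colon [0,1] \times \Hi \to \Hi$ defined by $H(t,x) \defl (1-t)\pi_V(x) + t\pi_W(x)$ and set $P \defl H_\#(\curr{0,1} \times T)$. The standard chain-homotopy formula reads
\[
\pi_{W\#}T - \pi_{V\#}T = \partial P + H_\#(\curr{0,1} \times \partial T) \,.
\]
Pushing forward by $\pi_W$ and using that the rectifiable $(m+1)$-chain $\pi_{W\#}P$ lands in the $m$-dimensional plane $W$, hence vanishes, yields
\[
\pi_{W\#}T - \phi_\#\pi_{V\#}T = (\pi_W \circ H)_\#(\curr{0,1} \times \partial T) \,,
\]
where $\phi \defl \pi_W|_V\colon V \to W$ is a linear isomorphism with $\|\phi^{-1}\| \leq 5/4$ (since for $v \in V$, $|\pi_W(v) - v| = |(\pi_W - \pi_V)(v)| \leq (1/5)|v|$ yields $|\pi_W(v)| \geq (4/5)|v|$). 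A computation analogous to the first step, using $|\phi(\pi_V(x))| \geq (4/5)\sqrt{24/25}$ and $|\pi_W(x) - \phi(\pi_V(x))| = |\pi_W(\pi_{V^\perp}(x))| \leq 1/5$, shows $|\pi_W(H(t,x))| > 2^{-1}$ for every $x \in \spt(\partial T)$ and $t \in [0,1]$, so the right-hand side vanishes when restricted to $\B(0,2^{-1})$.

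Restricting the resulting identity $\pi_{W\#}T = \phi_\#\pi_{V\#}T$ (valid in $\B(0,2^{-1})$) further to $\B_W(0,2/5) \subset \B_W(0,2^{-1})$, the left side equals $g\curr{\B_W(0,2/5)}$ by the first step. On the right, $\|\phi^{-1}\| \leq 5/4$ implies $\phi^{-1}(\B_W(0,2/5)) \subset \B_V(0,2^{-1})$, so hypothesis (3) gives $(\pi_{V\#}T)\res\phi^{-1}(\B_W(0,2/5)) = g_0\curr{\phi^{-1}(\B_W(0,2/5))}$, and pushing forward by the orientation-preserving isomorphism $\phi$ produces $g_0\curr{\B_W(0,2/5)}$. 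Equating the two expressions gives $g = g_0$. The main delicacy is justifying $\pi_{W\#}P = 0$: push-forwards of rectifiable chains of dimension $m+1$ by Lipschitz maps into an $m$-dimensional target vanish by the area formula, and the chain-homotopy identity is valid for the Lipschitz map $H$ within the framework of \cite{PH}.
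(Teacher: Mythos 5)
Your proof is correct and reaches the conclusion by a route genuinely different from the paper's. Both you and the authors exploit the product chain $\curr{0,1}\times T$ and the boundary identity $\partial(\curr{0,1}\times T) = \curr 1\times T - \curr 0\times T - \curr{0,1}\times\partial T$, but what you do with it diverges. The paper builds a Lipschitz family of planes $W_t$ interpolating between $V$ and $W$, maps $\curr{0,1}\times T$ into $[0,1]\times V \cong \R^{m+1}$ via $\psi(t,x)=(t,\pi_V\circ\pi_{W_t}(x))$, applies the constancy theorem to the resulting $(m+1)$-chain in a box of $\R^{m+1}$, and then slices in the $t$-variable to read off the multiplicity. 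You instead collapse the $t$-variable immediately: you push the homotopy identity forward by $\pi_W$, so that the $\partial P$ term dies for the cheap dimensional reason that an $(m+1)$-rectifiable chain supported in the $m$-plane $W$ has zero mass and hence vanishes, and the boundary-homotopy term is killed by a support estimate. This leaves the clean pointwise identity $\pi_{W\#}T = (\pi_W\circ\pi_V)_\#T$ on $\B_W(0,1/2)$, from which $g=g_0$ follows by comparing on a slightly smaller ball. Your version trades the constancy theorem in dimension $m+1$ and the slicing lemma for the elementary vanishing of a degenerate push-forward; it is arguably lighter on machinery and structurally cleaner (first get some $g$ by constancy in dimension $m$, then identify $g=g_0$). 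Your numerical checks are correct, and the justification of $\pi_{W\#}P=0$ via the area formula is sound since $\pi_W\circ H$ is Lipschitz, $\curr{0,1}\times T$ is rectifiable, and the $(m+1)$-Jacobian of any map into an $m$-plane vanishes almost everywhere. One small notational slip: the identity $\pi_{W\#}T=\phi_\#\pi_{V\#}T$ is valid as an equality of chains restricted to $\B_W(0,1/2)\subset W$, not $\B(0,1/2)\subset\Hi$; this does not affect the argument.
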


\begin{proof}
There is a Lipschitz family of $m$-planes $t \mapsto W_t$ connecting $V$ with $W$ such that $\|\pi_{W_t} - \pi_{W_s}\| \leq L |s - t|$ and $|\pi_{V}(w) - w| \leq 5^{-1}|w|$ for all $s,t \in [0,1]$ and $w \in W_t$. With Lemma~\ref{lemma.norms.1}, the latter is equivalent to $\|\pi_V - \pi_{W_t}\| \leq 5^{-1}$ for all $t$. Such a family of planes is for example given by $\operatorname{im}(L_t)$ where $L_t : W \to \Hi$ is defined by $L_t(w) \defl \pi_V(w) + t\pi_{V^\perp}(w)$. With a reparametrization, we may assume that $W_t = V$, $W_{1-t} = W$ for $t \leq 4^{-1}$. Let $S \defl \curr{0,1} \times T \in \cR_{m+1}(\R \times \Hi;G)$ and $\psi : [0,1] \times \Hi \to [0,1] \times V$ be the Lipschitz map given by $\psi(t,x) = (t,\pi_V \circ \pi_{W_t}(x))$. We identify $[0,1] \times V$ isometrically (in an orientation preserving way) with $[0,1] \times \R^m \subset \R^{m+1}$. The boundary of $S$ is given by, see \cite[4.1.8]{Fed} and the definition of the boundary in \cite[Section~5.1]{PH},
\begin{equation}
\label{bounderyidentity}
\partial S = \curr 1 \times T - \curr 0 \times T - \curr{0,1} \times \partial T \,.
\end{equation}
Let $x \in \spt(T)$. By assumption, $x$ can be written as $x = v + v^\perp$ with $v \in \B_V(0,1)$ and $|v^\perp| \leq 5^{-1}$. It follows by the closeness of $W_t$ to $V$,
\begin{align}
	\nonumber
|\pi_{W_t^\perp}(x)| & = |\pi_{W_t}(x) - x| \leq |\pi_{W_t}(v) - v| + |\pi_{W_t}(v^\perp) - v^\perp| \\
	\label{verticaldistnewplane}
 & \leq \frac{|v|}{5} + \frac{2}{5} \leq \frac{3}{5} \,.
\end{align}
In particular if $|x| = 1$,
\begin{align}
 \nonumber
|\pi_V\circ\pi_{W_t}(x)| & \geq |\pi_V(x)| - |\pi_V(\pi_{W_t}(x) - x)| \geq 1 - |\pi_{V^\perp}(x)| - |\pi_{W_t^\perp}(x)| \\
\label{iteratedist}
 & \geq 1 - \frac{4}{5} \geq \frac{1}{5} \,.
\end{align}
		
Fix some $t_0 \in (0,\frac{1}{4})$ and let $B \defl (t_0,1 - t_0) \times \oB_V(0,\frac{1}{5})$. Because of \eqref{bounderyidentity} and \eqref{iteratedist}, the set $B$ lies outside the support of the boundary $\partial \psi_\# S$. The constancy theorem implies that $(\psi_\# S) \res B = g \curr B$ for some $g \in G$. For almost all $t \in [t_0,1-t_0]$ we have for the projection $\pi(t,x) \defl t$,
\begin{equation*}
\langle (\psi_\# S) \res B, \pi, t \rangle = \langle g \curr{B} , \pi , t \rangle 
 = g(\curr t \times \curr{\oB_V(0,5^{-1})}) \,.
\end{equation*}
Considering $t_0 < t < 4^{-1}$ we infer $g = g_0$.
The same slicing argument shows that 
\[
	((\pi_V \circ \pi_{W_t})_\# T) \res \oB_V(0,5^{-1}) = g_0 \curr{\oB_V(0,5^{-1})} \,,
\]
for almost all $t \in [t_0,1-t_0]$. $\pi_V : W_t \to V$ is bi-Lipschitz and orientation preserving (by assumption). Hence we get for some small $\rho > 0$ (such that $\oB_{W_t}(0,\rho) \subset {\pi_V}^{-1}(\oB_V(0,\frac{1}{5}))$),
\[
	(\pi_{W_t\#} T) \res \oB_{W_t}(0,\rho) = (\pi_{W_t\#} T) \res \oB_{W_t}(0,\rho) = g_0 \curr{\oB_{W_t}(0,\rho)} \,.
\]
Since $(\frac{3}{5})^2 + (\frac{1}{2})^2 < 1$ and \eqref{verticaldistnewplane}, we again see by the constancy theorem on $W$ that the conclusion of the lemma holds for $W = W_1$.

\end{proof}

\subsection{Excess over a plane}
\label{excess_section}

For a chain $T \in \cR_m(\Hi; G)$, an $m$-plane $V \in \bG(\Hi,m)$ and an $\cH^m$-measurable set $B \subset V$ the excess over $B$ is defined by
\[
\Exc(T,V,B) \defl \bM\left(T\res \pi_V^{-1}(B)\right) - \bM\left(\pi_{V\#}(T \res \pi_V^{-1}(B))\right) \,.
\]
Since $\Lip(\pi_V) = 1$, this number is non-negative and for disjoint Borel measurable sets $B,B' \subset V$ it is $\Exc(T,V,B) + \Exc(T,V,B') = \Exc(T,V,B \cup B')$. From this it follows that $\Exc$ is subadditive on the Borel $\sigma$-algebra on $V$. We also use the notation
\[
\Exc(T,V,x,r) \defl \Exc(T,V,\B_V(\pi_V(x),r)) \,,
\]
and for $x$ at the origin we abbreviate $\Exc_r(T,V)$ for $\Exc(T,V,0,r)$. We will occasionally calculate the excess for different masses. For example $\bS(T)$ denotes the \emph{size} of $T$ and is just the Hausdorff measure of the underlying set $M_T$. Alternatively, $\bS(T)$ is the mass with respect to the discrete norm on $G$ taking only the values $0$ and $1$. The excess with respect to $\bS$ is denoted by $\Exc(T,V,B,\bS)$, and similarly if we work with another mass. Here we give some first implications of small excess. For a given $T \in \cR_m(\Hi;G)$ and $V \in \bG(\Hi,m)$ we denote by $E_2$ the set of points $x \in \B_V(0,1)$ for which $M_x \defl {\pi_V}^{-1}\{x\} \cap M_T$ contains more than one point. One can show that this set is Lebesgue measurable and hence $E_2$ contains a Borel set $E_2'$ with $\cL^m(E_2\setminus E_2') = 0$.

\begin{Lem}
[{\cite[Lemma~5]{Reif}}]
\label{multiple_values_estimate}
Let $T \in \cR_m(\Hi;G)$ and assume there is an $m$-plane $V \in \bG(\Hi,m)$ such that $\Exc_1(T,V,\bS) \leq \epsilon$. Then,
\[
\cH^m(E_2) \leq \int_{E_2} \# M_x \, dx  \leq 2 \epsilon \,.
\]
\end{Lem}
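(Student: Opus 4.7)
The plan is to rewrite the size excess as an integral measuring the ``multiplicity excess'' $\#M_y - \chi_{\{\#M_y \geq 1\}}$ over $\B_V(0,1)$, using the area formula for the $1$-Lipschitz projection $\pi_V$. Throughout, write $M_y \defl \pi_V^{-1}\{y\}\cap M_T$.

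First, by the definition of size, $\bS(T\res\pi_V^{-1}(\B_V(0,1))) = \cH^m(M_T\cap\pi_V^{-1}(\B_V(0,1)))$. Applying the area formula to $\pi_V$ restricted to the rectifiable set $M_T\cap\pi_V^{-1}(\B_V(0,1))$ via the bi-Lipschitz parametrizations $\gamma^i$ from \S\ref{chains_subsection}, and using that $\pi_V$ is $1$-Lipschitz so that the $m$-dimensional Jacobian of the tangential derivative of $\pi_V|_{M_T}$ is at most $1$ at $\cH^m$-a.e.\ point, one obtains
\begin{equation*}
\bS(T\res\pi_V^{-1}(\B_V(0,1))) \;\geq\; \int_{\B_V(0,1)} \#M_y\,dy\,.
\end{equation*}

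Next, the rectifiable chain $S \defl \pi_{V\#}(T\res\pi_V^{-1}(\B_V(0,1))) \in \cR_m(V;G)$ has $M_S\subset\pi_V(M_T)\cap\B_V(0,1)$ up to $\cH^m$-null sets (possibly with strict inclusion due to cancellations among oriented sheets), so
\begin{equation*}
\bS(S) \;\leq\; \cL^m\bigl(\pi_V(M_T)\cap\B_V(0,1)\bigr) \;=\; \int_{\B_V(0,1)} \chi_{\{\#M_y\geq 1\}}\,dy\,.
\end{equation*}
Subtracting the two estimates gives
\begin{equation*}
\Exc_1(T,V,\bS) \;\geq\; \int_{\B_V(0,1)} \bigl(\#M_y - \chi_{\{\#M_y\geq 1\}}\bigr)\,dy \;=\; \int_{E_2} (\#M_y - 1)\,dy,
\end{equation*}
since the integrand vanishes whenever $\#M_y\in\{0,1\}$.

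Finally, on $E_2$ one has $\#M_y\geq 2$, so $\#M_y - 1 \geq \tfrac{1}{2}\#M_y$; this yields $\int_{E_2}\#M_y\,dy \leq 2\Exc_1(T,V,\bS) \leq 2\epsilon$, and the bound $\cH^m(E_2)\leq\int_{E_2}\#M_y\,dy$ is immediate from $\#M_y\geq 1$ on $E_2$. There is no serious obstacle: the whole argument is a one-shot application of the area formula plus a trivial combinatorial inequality. The only point requiring any care is to justify the use of the area formula in the infinite-dimensional setting, which is handled by working chart by chart through the bi-Lipschitz $\gamma^i$ introduced in \S\ref{chains_subsection}.
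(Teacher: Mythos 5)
Your argument is correct and takes essentially the same route as the paper's proof: both rest on the area formula for $\pi_V$ together with the Jacobian bound $J(\pi_V|_{M_T}) \leq 1$ and the elementary inequality $\#M_y \leq 2(\#M_y - 1)$ on $E_2$. The only difference is bookkeeping: the paper localizes the size excess to a Borel representative $E_2' \subset E_2$ and appeals to the nonnegativity and additivity of $\Exc(T,V,\cdot,\bS)$ to get $\Exc(T,V,E_2',\bS) \leq \Exc_1(T,V,\bS)$, whereas you expand $\Exc_1(T,V,\bS)$ over the whole disk $\B_V(0,1)$ and bound the two terms $\bS(T\res Z_V)$ and $\bS(\pi_{V\#}(T\res Z_V))$ separately.
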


\begin{proof}
The Jacobian of a map on an $m$-rectifiable set is bounded by the $m$th power of its Lipschitz constant. The area formula leads to,
\begin{align*}
\int_{E_2'} \# M_x \, dx & \leq 2\int_{E_2'} \# M_x - 1 \, dx \\
 & = 2\int_{{\pi_V}^{-1}(E_2') \cap M_T} J(\pi_V|M_T)(z) \, d\cH^m(z) - 2 \cH^m(E_2') \\
 & \leq 2(\cH^m({\pi_V}^{-1}(E_2') \cap M_T) - \cH^m(E_2'))\\
 & = 2 \Exc(T,V,E_2',\bS) \leq 2 \epsilon \,.
\end{align*}
\end{proof}

A similar estimate is obtained by summing over the normed group elements in the preimage.

\begin{Lem}
\label{multiple_values_mass_estimate}
Let $T \in \cR_m(\Hi;G)$ and assume there is an $m$-plane $V \in \bG(\Hi,m)$ with $\spt(\partial T) \cap Z_V = \emptyset$. Let $g_0 \in G$ be such that $\pi_{V\#} (T \res Z_V) = g_0 \curr{\B_V(0,1)}$, then
\[
\sum_{y \in M_x} \|\bg(y)\| \geq \|g_0\|
\]
for almost every $x \in \B_V(0,1)$. If further
\[
\max\{\Exc_1(T,V),\|g_0\|\Exc_1(T,V,\bS)\} \leq \|g_0\|\epsilon \,,
\]
then,
\[
\int_{E_2} \sum_{y \in M_x} \|\bg(y)\| \, dx  \leq 3 \|g_0\|\epsilon \,.
\]
\end{Lem}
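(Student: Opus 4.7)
The plan splits naturally along the two conclusions. For the pointwise inequality, I would use the area-formula characterization of push-forwards of rectifiable chains. The hypothesis $\spt(\partial T) \cap Z_V = \emptyset$ combined with the representation $\pi_{V\#}(T \res Z_V) = g_0 \curr{\B_V(0,1)}$ means that for $\cH^m$-almost every $x \in \B_V(0,1)$ the signed sum of $G$-orientations of preimages in $M_x$ (with signs determined by whether $\pi_V$ preserves or reverses the tangent orientation at each $y \in M_x$) equals $g_0$. Applying the triangle inequality for the group norm $\|\cdot\|$ to this signed sum gives the claim $\|g_0\| \leq \sum_{y \in M_x} \|\bg(y)\|$.

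For the integral estimate, the main idea is to split $\sum_{y \in M_x}\|\bg(y)\|$ as $\|g_0\|$ plus a non-negative remainder, and control the two pieces separately. By the area formula applied to $\pi_V|M_T$ together with $J(\pi_V|M_T) \leq 1$ (since $\pi_V$ is $1$-Lipschitz),
\begin{equation*}
\bM(T \res Z_V) \;=\; \int_{Z_V \cap M_T}\|\bg(z)\|\,d\cH^m(z) \;\geq\; \int_{\B_V(0,1)} \sum_{y \in M_x}\|\bg(y)\|\,dx.
\end{equation*}
Subtracting $\|g_0\|\balpha(m) = \bM(\pi_{V\#}(T \res Z_V))$ from both sides and using the pointwise inequality from the first part (so the integrand below is $\geq 0$),
\begin{equation*}
\int_{\B_V(0,1)} \Bigl(\sum_{y \in M_x}\|\bg(y)\| - \|g_0\|\Bigr)\,dx \;\leq\; \Exc_1(T,V) \;\leq\; \|g_0\|\epsilon.
\end{equation*}

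To conclude, I would decompose
\begin{equation*}
\int_{E_2}\sum_{y \in M_x}\|\bg(y)\|\,dx \;=\; \int_{E_2}\Bigl(\sum_{y \in M_x}\|\bg(y)\| - \|g_0\|\Bigr)dx \;+\; \|g_0\|\cH^m(E_2).
\end{equation*}
The first summand is bounded above by the integral over all of $\B_V(0,1)$, hence by $\|g_0\|\epsilon$ from the display above. For the second, the hypothesis $\|g_0\|\Exc_1(T,V,\bS) \leq \|g_0\|\epsilon$ gives $\Exc_1(T,V,\bS) \leq \epsilon$, so Lemma~\ref{multiple_values_estimate} yields $\cH^m(E_2) \leq 2\epsilon$, contributing $2\|g_0\|\epsilon$. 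Adding gives the desired bound $3\|g_0\|\epsilon$. There is no real obstacle here beyond bookkeeping; the only subtlety is being careful that the integrand $\sum_y \|\bg(y)\| - \|g_0\|$ is non-negative almost everywhere (which is precisely the first conclusion of the lemma), allowing the passage from integrating over $E_2$ to integrating over $\B_V(0,1)$.
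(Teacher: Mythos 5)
Your proposal is correct and takes essentially the same route as the paper's proof. The paper makes the first part precise via the slicing theorem ($\pi_{V\#}\langle T,\pi_V,x\rangle = \langle\pi_{V\#}T,\pi_V,x\rangle = g_0\curr{x}$) and for the second part integrates $\sum_{y\in M_x}\|\bg(y)\|-\|g_0\|$ directly over a Borel representative $E_2'$ of $E_2$, bounding that integral by $\Exc(T,V,E_2')\leq\Exc_1(T,V)$; you instead integrate over all of $\B_V(0,1)$ and pass to $E_2$ by non-negativity of the integrand, which is the same mechanism, and both then add the $2\|g_0\|\epsilon$ contribution from Lemma~\ref{multiple_values_estimate}.
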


\begin{proof}
The slices $\langle T,\pi_V,x \rangle$ are defined for almost every $x$ and concentrated on the finite set $M_x$, i.e.\ $\langle T,\pi_V,x \rangle = \sum_{y \in M_x} \pm \bg(y)\curr{y}$ with $+$ if $\pi_V : \operatorname{Tan}(M_T,y) \to V$ is orientation preserving and $-$ otherwise. By \cite[Theorem 3.8.1]{PH} we have
\[
\sum_{y \in M_x} \pm \bg(y) \curr x = \pi_{V\#} \langle T,\pi_V,x \rangle = \langle \pi_{V\#} T,\pi_V,x \rangle = g_0\curr x
\]
for almost all $x \in \B_V(0,1)$ since on $\B_V(0,1)$ the chain $\pi_{V\#} T$ is a multiple of $g_0$. The first statement of the lemma now follows from the triangle inequality of the norm. By the assumption on the mass excess,
\begin{align*}
& \int_{E_2'} \sum_{y \in M_x} \|\bg(y)\| - \|g_0\| \, dx \\
 & \qquad = \int_{{\pi_V^{-1}}(E_2') \cap M_T} \|\bg(z)\|J(\pi_V|M_T)(z) \, d\cH^m(z) - \|g_0\|\cH^m(E_2') \\
 & \qquad \leq \int_{{\pi_V^{-1}}(E_2') \cap M_T} \|\bg(z)\| \, d\cH^m(z) - \|g_0\|\cH^m(E_2') \\
 & \qquad = \Exc(T,V,E_2') \leq \|g_0\|\epsilon \,.
\end{align*}
By Lemma~\ref{multiple_values_estimate} it is also $\int_{E_2'} \|g_0\| dx \leq 2 \|g_0\|\epsilon$, hence the result follows.
\end{proof}

Often we assume that we are away from the boundary and close to a plane, i.e.\ by truncating, translating and scaling we are in the situation where $\spt(\partial T) \cap Z_V = \emptyset$. By the constancy theorem for $G$ chains \cite[Theorem~6.4]{PH2} it follows that there is an element $g_0 \in G$ such that $\pi_{V\#}(T \res Z_V) = g_0 \curr{\B_V(0,1)}$. Next we give conditions such that the size excess is small if the mass excess is small.

\begin{Lem}
\label{sizebound}
Let $T \in \cR_m(\Hi;G)$, $V \in \bG(\Hi,m)$ and $\epsilon > 0$ such that
\begin{enumerate}
	\item $\spt(\partial T) \cap Z_V = \emptyset$,
	\item $\pi_{V\#}(T \res Z_V) = g_0 \curr{\B_V(0,1)}$ for some $g_0 \in G \setminus \{0_G\}$,
	\item $\Theta^m(\|T\|,x) \geq \frac{3}{4}\|g_0\|$ for $\|T\|$-almost every $x \in \Hi$,
	\item $\Exc_1(T,V) \leq \|g_0\|\epsilon$. 
\end{enumerate}
Then $\Exc_1(T,V,\bS) \leq 5\epsilon$.
\end{Lem}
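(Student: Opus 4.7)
The plan is to partition $\B_V(0,1)$ into the set $E_1$ of projection-points with a single preimage in $M_T$ (a.e.) and its complement $E_2$, and estimate $\cH^m(M_T \cap Z_V)$ separately on $\pi_V^{-1}(E_1)$ and $\pi_V^{-1}(E_2)$ via the area formula applied to the $1$-Lipschitz map $\pi_V$. Write $J_y \defl J(\pi_V|M_T)(y) \in [0,1]$, and for $x \in E_1$ let $y(x)$ be the unique preimage. The area formula gives
\begin{equation*}
\cH^m(M_T \cap Z_V) = \int_{E_1}\frac{1}{J_{y(x)}}\,dx + \int_{E_2}\sum_{y \in M_x}\frac{1}{J_y}\,dx,
\end{equation*}
so the goal is to bound both pieces.

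The first preparatory observation is that Lemma~\ref{multiple_values_mass_estimate} (more precisely the slicing identity in its proof) yields $\sum_{y \in M_x} \pm\bg(y) = g_0$ for a.e.\ $x \in \B_V(0,1)$; in particular for $x \in E_1$ one has $\|\bg(y(x))\| = \|g_0\|$. The second preparatory step is bounding $\cH^m(E_2)$. For $x \in E_2$, hypothesis (3) gives $\|\bg(y)\| \geq \tfrac{3}{4}\|g_0\|$ for every $y \in M_x$, so $\sum_{y \in M_x} \|\bg(y)\| \geq \tfrac{3}{2}\|g_0\|$. Using $J_y \leq 1$, the area formula provides
\begin{equation*}
\bM(T \res Z_V) \geq \int_{\B_V(0,1)} \sum_{y \in M_x} \|\bg(y)\|\,dx \geq \|g_0\|\cH^m(E_1) + \tfrac{3}{2}\|g_0\|\cH^m(E_2) = \|g_0\|\balpha(m) + \tfrac12 \|g_0\|\cH^m(E_2),
\end{equation*}
which combined with (4) gives $\cH^m(E_2) \leq 2\epsilon$.

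Next I would decompose the mass excess itself along $E_1 \cup E_2$. Since $\|\bg(y(x))\|=\|g_0\|$ on $E_1$,
\begin{equation*}
\Exc_1(T,V) = \int_{E_1} \|g_0\|\,\frac{1 - J_{y(x)}}{J_{y(x)}}\,dx + \int_{E_2}\left(\sum_{y \in M_x}\frac{\|\bg(y)\|}{J_y} - \|g_0\|\right)dx,
\end{equation*}
and both integrands are non-negative (the second by the slicing lemma together with $J_y\leq 1$). Hypothesis (4) therefore forces each piece to be at most $\|g_0\|\epsilon$. From the $E_1$-piece, $\int_{E_1} \tfrac{1}{J_{y(x)}}\,dx \leq \cH^m(E_1) + \epsilon$. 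From the $E_2$-piece, using $\|\bg(y)\| \geq \tfrac{3}{4}\|g_0\|$, I conclude
\begin{equation*}
\int_{E_2} \sum_{y \in M_x}\frac{1}{J_y}\,dx \leq \tfrac{4}{3}\left(\cH^m(E_2) + \epsilon\right) \leq \tfrac{4}{3}(2\epsilon + \epsilon) = 4\epsilon.
\end{equation*}
Adding the two bounds to the area formula for $\cH^m(M_T \cap Z_V)$ yields
\begin{equation*}
\cH^m(M_T \cap Z_V) \leq \cH^m(E_1) + \epsilon + 4\epsilon \leq \balpha(m) + 5\epsilon,
\end{equation*}
i.e.\ $\Exc_1(T,V,\bS) \leq 5\epsilon$.

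No step is a real obstacle; the small conceptual point is that hypothesis (3) plays two different roles. On $E_2$ it forces each sheet to carry at least $\tfrac34\|g_0\|$, which turns any multiplicity into a quantitative contribution to $\Exc_1(T,V)$ and caps $\cH^m(E_2)$; on the single-sheet part $E_1$ the slicing equality upgrades (3) to $\|\bg(y(x))\| = \|g_0\|$, which turns the remaining mass excess into a pure Jacobian deficit $\int_{E_1}(1/J-1)$, i.e.\ exactly what is needed to control the size.
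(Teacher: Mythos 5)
Your overall strategy — split $\B_V(0,1)$ into the single-sheet set $E_1$ and its complement $E_2$, use hypothesis~(3) to force $\cH^m(E_2)\leq 2\epsilon$, and use the slicing identity $\sum_{y\in M_x}\pm\bg(y)=g_0$ to upgrade~(3) to $\|\bg(y(x))\|=\|g_0\|$ on $E_1$ — is the same as the paper's. The technical difference is that you run the size comparison through the area formula in the form $\cH^m(M_T\cap Z_V)=\int_{E_1}J_{y(x)}^{-1}\,dx+\int_{E_2}\sum_{y\in M_x}J_y^{-1}\,dx$, whereas the paper never divides by $J$: it exploits $\|\bg\|=\|g_0\|$ on all of $M_T\cap\pi_V^{-1}(E_1)$ to get $\cH^m(M_T\cap\pi_V^{-1}(E_1))-\cH^m(E_1)=\|g_0\|^{-1}\Exc(T,V,E_1)$, and on the complement uses only the crude estimate $\|g_0\|\cH^m(M_T\cap\pi_V^{-1}(E))\leq\frac43\bM(T\res\pi_V^{-1}(E))$.

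There is a genuine gap in your area-formula identity: it presupposes that $J(\pi_V|M_T)>0$ $\cH^m$-a.e.\ on $M_T\cap Z_V$, and the hypotheses do not guarantee this. Let $N\defl M_T\cap Z_V\cap\{J(\pi_V|M_T)=0\}$. By the area formula $\pi_V(N)$ is $\cL^m$-null, so $N$ is invisible to any integral of the form $\int_{\B_V(0,1)}\sum_{y\in M_x}(\cdots)\,dx$; the right-hand side of your identity equals $\cH^m(M_T\cap Z_V\setminus N)$, not $\cH^m(M_T\cap Z_V)$. And $\cH^m(N)>0$ is perfectly possible under (1)--(4): with $G=\Z$, $m=1$, $\Hi=\R^3$ and $V$ the first axis, take $T=\curr{L}+\curr{C}$ where $L=[-2,2]\times\{0\}\times\{0\}$ and $C$ is a circle of radius $\eta$ in $\{0\}\times\R^2$; all four hypotheses hold for $2\pi\eta\leq\epsilon$, yet $\cH^1(N)=2\pi\eta>0$. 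The same omission infects your decomposition of $\Exc_1(T,V)$: it is missing the non-negative third term $\int_N\|\bg\|\,d\cH^m$, so the conclusion of your computation is $\cH^m(M_T\cap Z_V)-\cH^m(N)\leq\balpha(m)+5\epsilon$, which is not quite the claim.

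The fix is short and stays inside your framework. By~(3), $\frac34\|g_0\|\cH^m(N)\leq\int_N\|\bg\|\,d\cH^m$, and $\int_N\|\bg\|\,d\cH^m$ is exactly the missing non-negative piece of $\Exc_1(T,V)$, so by~(4) it is $\leq\|g_0\|\epsilon$ and hence $\cH^m(N)\leq\frac43\epsilon$. Adding this to your estimate costs nothing if you note that your two integrals and $\int_N\|\bg\|$ share the single budget $\Exc_1(T,V)\leq\|g_0\|\epsilon$ rather than each being bounded by it separately; a careful accounting then gives $\Exc_1(T,V,\bS)\leq 2\epsilon$, better than $5\epsilon$. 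Alternatively, adopt the paper's route for both $E_1$ and $E_2$: the observation that $\|\bg\|=\|g_0\|$ at every point of $M_T\cap\pi_V^{-1}(E_1)$ (by the definition of $E_1$, not via the Jacobian) makes the $E_1$ estimate $J$-free, and the direct $\frac43$-estimate handles everything else.
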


\begin{proof}
Let $M_T \subset Z_V$ be an $\cH^m$-rectifiable set with $\|T\|(Z_V \setminus M_T) = 0$ and $\bg(x) = 0_G$ for $x \notin M_T$. Since $\Theta^m(\|T\|,x) = \|\bg(x)\|$ for $\|T\|$-almost every $x \in \Hi$, assumption (3) is equivalent to $\|\bg(x)\| \geq \frac{3}{4}\|g_0\|$ for $\|T\|$-almost every $x \in \Hi$.

As before let $E_2$ be the set of those $x \in \B_V(0,1)$ for which $M_x \defl \pi_V^{-1}\{x\} \cap M_T$ contains more than one point. For almost every $x \in \B_V(0,1)\setminus E_2$, the set $M_x$ consists of exactly one point $y$ for which $\|\bg(y)\| = \|g_0\|$. Using the area formula and the first part of Lemma~\ref{multiple_values_mass_estimate},
\begin{align*}
\|g_0\|\epsilon & \geq \int_{M_T} \|\bg(x)\| \, d\cH^m(x) - \|g_0\| \balpha(m) \\
 & \geq \int_{M_T} \|\bg(x)\| J(\pi_V|M_T)(x) \, d\cH^m(x) - \int_{\B_V(0,1)} \|g_0\| \, dx \\
  & = \int_{\B_V(0,1)} \sum_{y \in M_x} \|\bg(y)\| - \|g_0\| \, dx \\
  & = \int_{E_2} \sum_{y \in M_x} \|\bg(y)\| - \|g_0\| \, dx \\
  & \geq \int_{E_2} \frac{6}{4}\|g_0\| - \|g_0\| \, dx \\
	& = \frac{1}{2}\|g_0\| \cH^m(E_2) \,.
\end{align*}
If $E \subset \B_V(0,1)$ is Borel measurable with $\cH^m(E) \leq \delta$, then
\begin{align*}
\|g_0\|\cH^m(M_T \cap \pi_V^{-1}(E)) & \leq \frac{4}{3}\bM(T \res \pi_V^{-1}(E)) \\
 & \leq \frac{4}{3}(\Exc(T, V, E) + \|g_0\|\cH^m(E)) \\
 & \leq \frac{4}{3}\|g_0\|(\epsilon + \delta) \,.
\end{align*}
Let $B_2 \supset E_2$ be a Borel set with $\cL^m(B_2 \setminus E_2) = 0$ and set $B_1 \defl \B_V(0,1) \setminus B_2$, . As shown above, it is $\cH^m(B_2) \leq 2\epsilon$ and hence
\begin{align*}
\Exc_1(T,V,\bS) & = \cH^m(M_T) - \balpha(m) \\
 & = \cH^m(M_T \cap \pi_V^{-1}(B_1)) - \cH^m(B_1) \\
 & \quad + \cH^m(M_T \cap \pi_V^{-1}(B_2)) - \cH^m(B_2) \\
 & \leq \|g_0\|^{-1}\Exc(T, V,B_1) + \cH^m(M_T \cap \pi_V^{-1}(B_2)) \\
 & \leq \epsilon + \frac{4}{3}(\epsilon + 2\epsilon) \leq 5 \epsilon \,.
\end{align*}
\end{proof}

\subsection{Quadratic forms associated to a chain}
\label{betaprelim}

In order for a chain to be close to a submanifold near some point, it is necessary that it can locally be well approximated by planes. The quadratic form associated to a $G$-chain we introduce below will actually be used twice in these notes. First in the proof of the epiperimetric inequality in Section~3, and second, as part of the moments computations in Section~4.

For $T \in \cR_m(\Hi;G)$, $x \in \Hi$ and $r > 0$ we define the quadratic form
\[
Q(T,x,r)(y) \defl \frac{m+2}{\balpha(m) r^{m+2}} \int_{\B(x,r)} \langle z - x, y \rangle^2 \, d\|T\|(z) \,.
\]
The reason for this particular normalization is that in case $T = g\curr{V}$ for some oriented $m$-plane $V \in \bG(\Hi,m)$, then $Q(T,x,r)(y) = \|g\||\pi_V(y)|^2$ for all $x \in V$ and $r > 0$. This is demonstrated in Lemma~\ref{closeplanes} below. With $L_{Q} : \Hi \to \Hi$ we denote the self-adjoint operator associated to a quadratic form $Q$, i.e.\ $Q(y) = \langle y, L_Q(y) \rangle$. We now show that in case a chain is close to a plane $V$ near $x$, then $L_{Q(T,x,r)}$ is close to the orthogonal projection $\pi_V$.

\begin{Lem}
\label{closeplanes}
There is a constant $\bc_{\theThm}(m) > 0$ with the following property. Let $T \in \cR_m(\Hi;G)$ and $x \in \spt(T)$. Assume there is an $m$-plane $V \in \bG(\Hi,m)$ and $0 < \rho < 1$, $0 < \epsilon < 1$ with
\begin{enumerate}
	\item $\spt(\partial T) \cap Z_V(x,r) = \emptyset$,
	\item $\pi_{V\#} (T \res Z_V(x,r)) = g_0 \curr{\B_V(\pi_V(x),r)}$ for some $g_0 \in G \setminus \{0_G\}$,
	\item $|\pi_{V^\perp}(y - x)| \leq \rho r$ for all $y \in \spt(T) \cap Z_V(x,r)$,
	\item $\Theta^m(\|T\|,x') \geq \frac{3}{4}\|g_0\|$ for $\|T\|$-a.e. $x'$,
	\item $\Exc(T,V,x,r) \leq \|g_0\|\epsilon r^m$.
\end{enumerate}
Then
\begin{align*}
\tr(Q(T,x,r)) - m\|g_0\| & \leq \bc_{\theThm}\|g_0\|(\rho^2 + \epsilon) \,, \\
\|L_{Q(T,x,r)} - \|g_0\|\pi_V\| & \leq \bc_{\theThm}\|g_0\|(\rho + \epsilon) \,.
\end{align*}
\end{Lem}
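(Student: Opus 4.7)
The plan is to translate so that $x=0$ and, for the integration variable $z$ and the test vector $y$, use the orthogonal decompositions $z = \pi_V(z) + \pi_{V^\perp}(z)$ and $y = u + v$ with $u = \pi_V(y)$, $v = \pi_{V^\perp}(y)$. Expanding $\langle z,y\rangle^2$ splits $Q(T,0,r)(y)$ into a main term $A(y) = \int\langle \pi_V z, u\rangle^2\,d\|T\|$, a cross term $B(y) = 2\int\langle \pi_V z,u\rangle\langle \pi_{V^\perp}z, v\rangle\,d\|T\|$, and a vertical term $C(y) = \int\langle \pi_{V^\perp}z,v\rangle^2\,d\|T\|$. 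Hypothesis (3) gives $|\pi_{V^\perp}(z)|\leq \rho$ on $\spt(T)\cap Z_V(x,r)$, while (2) and (5) yield the total-mass bound $\|T\|(\B(0,r))\leq \|g_0\|(\balpha(m)+\epsilon)r^m$; after normalisation by $(m+2)/(\balpha(m)r^{m+2})$, these two facts immediately bound the contributions of $B$ and $C$ by $\bc\|g_0\|\rho|y|^2$ and $\bc\|g_0\|\rho^2|y|^2$ respectively (reading $\rho$ at the unit scale to which we reduce via the homogeneity of $Q$).

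The heart of the proof is the estimate
\begin{equation*}
A(y) = \|g_0\||u|^2 \frac{\balpha(m)r^{m+2}}{m+2} + O\bigl(\|g_0\|\epsilon\,|y|^2 r^{m+2}\bigr) \,.
\end{equation*}
First I pass from $\B(0,r)$ to the full cylinder $Z_V(0,r)$: by (3) the points of $\spt(T)$ lying in $Z_V(0,r)\setminus\B(0,r)$ project into the annulus $\B_V(0,r)\setminus\B_V(0,\sqrt{r^2-\rho^2})$, whose $\cH^m$-measure is of order $r^{m-2}\rho^2$, so combining with (2) and (5) the $\|T\|$-mass there is of order $\|g_0\|(\rho^2+\epsilon)r^m$, which controls the missing contribution to $A$. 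On the cylinder, the area formula for the $1$-Lipschitz map $\pi_V|_{M_T}$ with Jacobian $J\leq 1$ gives
\begin{equation*}
\int_{Z_V(0,r)} \langle \pi_V z,u\rangle^2 J\,d\|T\|(z) = \int_{\B_V(0,r)} \langle w,u\rangle^2 \Bigl(\sum_{z \in \pi_V^{-1}(w)\cap M_T} \|\bg(z)\|\Bigr)\,d\cH^m(w) \,.
\end{equation*}
By the constancy theorem built into (2), the multiplicity sum equals $\|g_0\|$ off the multiple-projection set $E_2$; Lemma~\ref{multiple_values_mass_estimate} bounds the integral of its excess over $E_2$ by $\Exc(T,V,0,r)\leq \|g_0\|\epsilon r^m$; the complementary $(1-J)$-part contributes at most $r^2\Exc$; and the standard disk moment $\int_{\B_V(0,r)}\langle w, u\rangle^2\,d\cH^m(w) = |u|^2 \balpha(m)r^{m+2}/(m+2)$ supplies the leading term.

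Combining the three pieces yields $|Q(T,0,r)(y) - \|g_0\||\pi_V(y)|^2|\leq \bc\|g_0\|(\rho+\epsilon)|y|^2$, and since the self-adjoint operator $L_Q - \|g_0\|\pi_V$ has operator norm equal to the supremum over $|y|\leq 1$ of this quadratic form, the second conclusion follows. The trace bound is the same scheme applied to $|z|^2 = \sum_i\langle z,e_i\rangle^2$: because $|z|^2 = |\pi_V z|^2 + |\pi_{V^\perp}z|^2$ has no cross term, the resulting error drops to order $\|g_0\|(\rho^2+\epsilon)$, matching the sharper first claim.

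The principal technical obstacle is the bookkeeping around $E_2$: to invoke Lemma~\ref{multiple_values_mass_estimate} I first need hypothesis (4), via Lemma~\ref{sizebound}, to upgrade the mass-excess bound (5) into the size-excess bound required there; once that is in hand, everything else is a direct Taylor-type comparison with the model chain $g_0\curr{V\cap\B(0,r)}$, for which all of the relevant identities hold exactly.
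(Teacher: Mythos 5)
Your proposal is correct and mirrors the paper's proof in all essentials: both reduce to $x=0$, $r=1$ by translation and scaling, expand $\langle z,y\rangle^2$ along the decomposition $V\oplus V^\perp$, bound the cross and vertical terms by the height $\rho$, treat the main term via the area formula together with the constancy-theorem fact built into hypothesis (2), invoke Lemma~\ref{sizebound} (via hypothesis (4)) and Lemmas~\ref{multiple_values_estimate} and \ref{multiple_values_mass_estimate} to control the multiple-projection set $E_2$, and recover the sharper $\rho^2$ trace bound by observing that the Pythagorean identity $|z|^2 = |\pi_V z|^2 + |\pi_{V^\perp} z|^2$ kills the cross term. The framing into explicit $A$, $B$, $C$ pieces is a cosmetic repackaging of the same calculation; no gaps.
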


\begin{proof}
By translating and rescaling $T$ we can assume that $x = 0$ and $r = 1$. This is justified by the scaling factor in the definition of $Q(T,x,r)$. Let $(e_i)_{i \geq 1}$ be an orthonormal basis of $\Hi$ such that $e_1,\dots,e_m \in V$. We define the quadratic form $Q'$ on $\Hi$ by
\[
Q'(y) \defl \frac{m+2}{\balpha(m)} \|g_0\| \int_{\B_V(0,1)} \langle y, z\rangle^2  \, dz \,.
\]
$Q'$ is indeed a quadratic form with corresponding bilinear form $Q'(y,y') = \frac{m+2}{\balpha(m)} \|g_0\| \int_{\B_V(0,1)} \langle y, z\rangle\langle y', z\rangle \, dz$. If $y \in V$ with $|y| = 1$, the symmetry of $Q'$ implies that
\begin{align}
\nonumber
\int_{\B_V(0,r)} \langle y, z\rangle^2  \, dz & = \frac{1}{m}\sum_{i=1}^m \int_{\B_V(0,r)} \langle e_i, z\rangle^2  \, dz \\
\nonumber
 & = \frac{1}{m} \int_{\B_V(0,r)} |z|^2  \, dz \\
 \nonumber
 & = \frac{1}{m} \int_0^r s^2 \cH^{m-1}(\Sp^{m-1}(0,s)) \,ds \\
 \label{flatbeta}
 & = \frac{1}{m} \int_0^r s^{m+1} \balpha(m) m \, dr = \frac{\balpha(m)}{m + 2} r^{m+2} \,.
\end{align}
Therefore, $Q'(y) = \|g_0\||\pi_V(y)|^2$ for all $y \in \Hi$ and by the formula $2 Q'(y,y') = Q'(y+y') - Q'(y) - Q'(y')$ we get
\[
Q'(y,y') = \|g_0\|\langle \pi_V (y), \pi_V (y') \rangle = \langle y, \|g_0\|\pi_V (y') \rangle
\]
for all $y,y' \in \Hi$. Hence, $L_{Q'} = \|g_0\|\pi_V$. We show next that $Q'$ is close to $Q(T,0,1)$. 

By assumption $\spt(T \res Z_V(\sqrt{1 - \rho^2})) \subset \spt(T \res \B(0,1))$ and hence the constancy theorem implies $(\pi_{V\#} (T \res \B(0,1))) \res \B_V(0,\sqrt{1-\rho^2}) = g_0 \curr{\B_V(0,\sqrt{1-\rho^2})}$. Thus with the smallness of the excess,
\[
\|g_0\|\balpha(m)(1 - \rho^2)^\frac{m}{2} \leq \bM(T \res \B(0,1)) \leq \|g_0\|(\balpha(m) + \epsilon) \,.
\]
As before let $M_T \subset \spt(T) \cap Z_V$ be a $\sigma$-compact $\cH^m$-rectifiable set on which $\|T \res Z_V\|$ is concentrated, and $\bg : M_T \to G$ is a measurable function characterizing $T$.
For $y \in \B(0,1)$ write the orthogonal decomposition with respect to $V$ as $y = y_v + y_v^\perp$. The area formula leads to
\begin{align*}
& \int_{\B(0,1)} \langle y,z \rangle^2 \, d\|T\|(z) = \int_{\B(0,1) \cap M_T} \|\bg(z)\| \langle y,z \rangle^2 \, d\cH^m(z) \\
 & \qquad\qquad \geq \int_{\B(0,1) \cap M_T} \|\bg(z)\| \langle y,z \rangle^2 J(\pi_V|M_T)(z) \, d\cH^m(z) \\
 & \qquad\qquad \geq \int_{\B_V(0,\sqrt{1-\rho^2})} \sum_{z \in \pi_V^{-1}(z') \cap M_T} \|\bg(z)\| \langle y,z \rangle^2 \, dz' \\
 & \qquad\qquad = \int_{\B_V(0,\sqrt{1-\rho^2})} \sum_{z \in \pi_V^{-1}(z') \cap M_T} \|\bg(z)\| (\langle y_v,z_v \rangle + \langle y_v^\perp,z_v^\perp \rangle)^2 \, dz' \\
 & \qquad\qquad \geq \int_{\B_V(0,\sqrt{1-\rho^2})} \left( \|g_0\|\langle y_v,z' \rangle^2 - 2 \rho\sum_{z \in \pi_V^{-1}(z') \cap M_T} \|\bg(z)\|  \right)\, dz' \,.
\end{align*}
Similarly,
\begin{align*}
\int_{\B_V(0,1)} \sum_{z \in \pi_V^{-1}(z') \cap M_T} \|\bg(z)\| \, dz' & = \int_{M_T} \|\bg(z)\| J(\pi_V|M_T) \, d\cH^m(z) \\
 & \leq \int_{M_T} \|\bg(z)\| \, d\cH^m(z) \\
 & = \bM(T \res Z_V) \\
 & \leq \|g_0\|(\epsilon + \balpha(m)) \,.
\end{align*}
With \eqref{flatbeta},
\begin{align*}
Q(T,0,1)(y) & = \frac{m+2}{\balpha(m)}\int_{\B(0,1)} \langle y,z \rangle^2 \, d\|T\|(z) \\
 & \geq \frac{m+2}{\balpha(m)} \|g_0\|\left[ \int_{\B_V(0,\sqrt{1-\rho^2})} \langle y_v,z' \rangle^2 \, dz' - (\epsilon + \balpha(m))2\rho \right] \\
 & = \|g_0\|(1-\rho^2)^\frac{m+2}{2}|y_v|^2 - \frac{m+2}{\balpha(m)} \|g_0\|(\epsilon + \balpha(m))2\rho \\
 & \geq \|g_0\||\pi_V(y)|^2 - \bc\|g_0\|\rho \,,
\end{align*}
for some $\bc(m) > 0$. To obtain an upper bound, a similar calculation shows,
\begin{align}
\nonumber
\int_{\B(0,1)} \langle y,z \rangle^2 \, d\|T\|(z) & \leq \int_{M_T} \|\bg(z)\|\langle y,z \rangle^2 \, d\cH^m(z) \\
\nonumber
 & = \int_{M_T} \|\bg(z)\|(\langle y_v,z_v \rangle + \langle y_v^\perp,z_v^\perp \rangle)^2 \, d\cH^m(z) \\
\label{upperboundonbeta}
 & \leq \int_{M_T} \|\bg(z)\|(\langle y_v,z_v \rangle^2 + 2\rho + \rho^2) \, d\cH^m(z) \,.
\end{align}
Lemma~\ref{multiple_values_estimate} and Lemma~\ref{sizebound} imply that $\cH^m(E_2) \leq 10 \epsilon$ where $E_2 \subset \B_V(0,1)$ is the set of those $x$ for which $M_x = \pi_V^{-1}\{x\} \cap M_T$ contains more than one point. By the first part of Lemma~\ref{multiple_values_mass_estimate} we can find a Borel set $E_1$ such that $M_x = \{y_x\}$ and $\|\bg(y_x)\| = \|g_0\|$ for $x \in E_1$ and $E_1 \cup E_2$ has full measure in $\B_V(0,1)$. We can find countable many pairwise disjoint compact sets $K_i \subset {\pi_V}^{-1}(E_1) \cap M_T$ such that $\pi_V : K_i \to K_i' \defl \pi_V(K_i) \subset \B_V(0,1)$ is bi-Lipschitz with inverse $\varphi_i$ such that $\sup J\varphi_i \leq 1 + \epsilon$ and the remaining set $M' \defl M_T \cap {\pi_V}^{-1}(E_1) \setminus \cup_i K_i$ satisfies $\cH^m(\pi_V(M')) = 0$. By replacing $E_1$ with $\cup_i K_i'$ if necessary, we can assume that $M' = \emptyset$. The compact sets $K_i'$ are pairwise disjoint since every $x \in E_1$ has only one preimage in $M_T$. Using the additivity of the excess,
\begin{align*}
\|g_0\|\epsilon \geq \Exc(T,V,E_1) & = \sum_i \int_{K_i} \|\bg(z)\| \, d\cH^m(z) - \int_{K_i'} \|g_0\| \, dz' \\
 & = \|g_0\|\sum_i \int_{K_i'} \left( J\varphi_i(z') - 1 \right) \, dz' \\
 & \geq 4^{-1} \|g_0\|\sum_i \int_{K_i'} (\langle y_v,z' \rangle^2 + 3\rho)(J\varphi_i(z') - 1) \, dz' \,.
\end{align*}
Similarly,
\begin{equation*}
\label{restset}
\bM(T \res {\pi_V}^{-1}(\B_V(0,1) \setminus E_1)) \leq \|g_0\|(\cH^m(E_2) + \epsilon) \leq 11\|g_0\|\epsilon \,.
\end{equation*}
Combining these estimates with \eqref{upperboundonbeta} leads to
\begin{align*}
\int_{\B(0,1)} \langle y,z \rangle^2 \, d\|T\|(z) & \leq \int_{M_T} \|\bg(z)\|(\langle y_v,z_v \rangle^2 + 2\rho + \rho^2) \, d\cH^m(z) \\
 & \leq \int_{{\pi_V}^{-1}(E_1) \cap M_T} \|\bg(z)\|(\langle y_v,z_v \rangle^2 + 3\rho) \, d\cH^m(z) \\
 & \quad + \bM(T \res {\pi_V}^{-1}(\B_V(0,1) \setminus E_1))(1 + 3\rho) \\
 & \leq \|g_0\| \sum_i \int_{K_i'} (\langle y_v,z' \rangle^2 + 3\rho)J\varphi_i(z') \, dz' + 44\|g_0\|\epsilon \\
 & \leq \|g_0\| \sum_i \int_{K_i'} \left( \langle y_v,z' \rangle^2 + 3\rho\right)  \, dz' + 48\|g_0\|\epsilon \\
 & \leq \|g_0\| \int_{\B_V(0,1)} \langle y_v,z' \rangle^2 \, dz' + 3\|g_0\|\balpha(m)\rho + 48\|g_0\|\epsilon \,.
\end{align*}
Hence, $Q(T,0,1)(y) \leq Q'(y) + \bc\|g_0\|(\rho + \epsilon)$ for some $\bc(m) > 0$. And with the lower bound we obtain $|Q(T,0,1)(y) - Q'(y)| \leq \bc\|g_0\|(\rho + \epsilon)|y|^2$ for all $y \in \Hi$. It follows directly that
\[
|\langle y', (L_{Q(T,0,1)} - L_{Q'}) (y) \rangle| = |Q(T,0,1)(y',y) - Q'(y',y)| \leq \bc \|g_0\|(\rho + \epsilon) \,,
\]
for all $y,y' \in \B(0,1)$. If we set $y' = (L_{Q(T,0,1)} - L_{Q'}) (y)/|(L_{Q(T,0,1)} - L_{Q'}) (y)|$ we get the result about the projection.

Next we estimate the trace of the quadratic form. By the monotone convergence theorem,
\begin{align*}
\tr{Q(T,0,1)} & = \sum_{i \geq 1} Q(T,0,1)(e_i) = \frac{m+2}{\balpha(m)} \int_{\B(0,1)} \sum_{i \geq 1} \langle z, e_i \rangle^2 \, d\|T\|(z) \\
 & = \frac{m+2}{\balpha(m)} \int_{\B(0,1)} |z|^2 \, d\|T\|(z) \,.
\end{align*}
With similar calculations as before,
\begin{align*}
\int_{\B(0,1)} |z|^2 \, d\|T\|(z) & \leq \int_{M_T} \|\bg(z)\|(|z_v|^2 + \rho^2) \, d\cH^m(z) \\
 & \leq \int_{{\pi_V}^{-1}(E_1) \cap M_T} \|\bg(z)\|(|z_v|^2 + \rho^2) \, d\cH^m(z) \\
 & \quad + \bM(T \res {\pi_V}^{-1}(\B_V(0,1) \setminus E_1))(1 + \rho^2) \\
 & \leq \|g_0\| \sum_i \int_{K_i'} (|z'|^2 + \rho^2)J\varphi_i(z') \, dz' + 22\|g_0\|\epsilon \\
 & \leq \|g_0\| \sum_i \int_{K_i'} |z'|^2 + \rho^2 \, dz' + 24\|g_0\|\epsilon \\
 & \leq \|g_0\| \int_{\B_V(0,1)} |z'|^2 \, dz' + \balpha(m)\|g_0\|\rho^2 + 24\|g_0\|\epsilon \,.
\end{align*}
Since $\tr{Q'} = m\|g_0\|$, the result follows.
\end{proof}

It is easy to check that the quadratic forms $Q(T,x,r)$ are compact. Without loss of generality assume that $x = 0$ and $r = 1$. If $(y_i)$ is a sequence in $\Hi$ converging weakly to $y$, then there is a finite constant $C > 0$ such that $|y_i| \leq C$ for all $i$. For any $z \in \B(0,1)$ we get $\langle y_i, z\rangle^2 \leq C^2$. So, by Lebesgue's Dominated Convergence Theorem and using the fact that $\|T\|$ is a finite measure,
\begin{align*}
\lim_{i \to \infty} Q(T,0,1)(y_i) & = \lim_{i \to \infty} \frac{m+2}{\balpha(m)}\int_{\B(0,1)} \langle y_i, z \rangle^2 \, d\|T\|(z) \\
 & = \frac{m+2}{\balpha(m)}\int_{\B(0,1)} \lim_{i \to \infty} \langle y_i, z \rangle^2 \, d\|T\|(z) \\
 & = \frac{m+2}{\balpha(m)}\int_{\B(0,1)} \langle y, z \rangle^2 \, d\|T\|(z) = Q(T,x,r)(y) \,.
\end{align*}
It follows from the spectral theorem that there is an orthonormal basis $(e_k) \subset \Hi$ of eigenvectors of $L_{Q(T,x,r)}$. The only accumulation point of the set of eigenvalues $(\lambda_k)$ corresponding to $(e_k)$ is 0 and we can assume that this sequence is ordered as $\lambda_1 \geq \lambda_2 \geq \cdots \geq 0$. Let $W_{x,r} \in \bG(\Hi,m)$ be the $m$-plane spanned by $e_1, \dots, e_m$.

\begin{Lem}
\label{newplaneclosetoold}
Assume $T$ satisfies the same assumptions as in Lemma~\ref{closeplanes} for $x \in \Hi$, $r > 0$ and an $m$-plane $V\in \bG(\Hi,m)$. If $\bc_{\ref{closeplanes}}(\rho + \epsilon) < 1$, then $|\lambda_i - \|g_0\|| \leq \bc_{\ref{closeplanes}}\|g_0\|(\rho + \epsilon)$ for $i = 1, \dots, m$ and $W_{x,r}$ is close to $V$ in the sense that 
\[
\left\|\pi_{W_{x,r}} - \pi_V\right\| \leq 2\sqrt{m}\bc_{\ref{closeplanes}} (\rho + \epsilon) \,.
\]
\end{Lem}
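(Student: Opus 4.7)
The plan is to reduce everything to a standard perturbation argument for the self-adjoint operator $A \defl L_{Q(T,x,r)}$ against the reference operator $B \defl \|g_0\|\pi_V$, both compact (or here, $B$ of finite rank). By Lemma~\ref{closeplanes} we already have $\|A-B\|\le \bc_{\ref{closeplanes}}\|g_0\|(\rho+\epsilon)$. The operator $B$ has eigenvalue $\|g_0\|$ with multiplicity exactly $m$ (on $V$) and $0$ with infinite multiplicity (on $V^\perp$).

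For the first assertion, I would invoke the min--max (Courant--Fischer--Weyl) characterization of the ordered eigenvalues of a compact self-adjoint operator: if $\mu_1\ge \mu_2\ge\cdots$ and $\nu_1\ge\nu_2\ge\cdots$ are the eigenvalue sequences of $A$ and $B$, then $|\mu_k-\nu_k|\le \|A-B\|$ for every $k$. Applied with $\nu_i=\|g_0\|$ for $i\le m$ and $\nu_i=0$ for $i>m$, this immediately gives $|\lambda_i-\|g_0\||\le \bc_{\ref{closeplanes}}\|g_0\|(\rho+\epsilon)$ for $i=1,\dots,m$ and $\lambda_i\le \bc_{\ref{closeplanes}}\|g_0\|(\rho+\epsilon)$ for $i>m$.

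For the second assertion, I would first dispose of the trivial range: since $\|\pi_{W_{x,r}}-\pi_V\|\le 1$ always, we may assume $2\sqrt{m}\,\bc_{\ref{closeplanes}}(\rho+\epsilon)<1$; in particular $\bc_{\ref{closeplanes}}(\rho+\epsilon)<1/2$, so step~1 gives $\lambda_i\ge \|g_0\|/2$ for $i=1,\dots,m$. For each such $i$, decompose
\begin{equation*}
(A-B)e_i \;=\; \lambda_i e_i - \|g_0\|\pi_V e_i \;=\; (\lambda_i-\|g_0\|)\pi_V e_i \;+\; \lambda_i\,\pi_{V^\perp} e_i,
\end{equation*}
and use the orthogonality of the two summands to extract
\begin{equation*}
\lambda_i^2 |\pi_{V^\perp} e_i|^2 \;\le\; \|(A-B)e_i\|^2 \;\le\; \bc_{\ref{closeplanes}}^2\|g_0\|^2(\rho+\epsilon)^2,
\end{equation*}
whence $|\pi_{V^\perp} e_i|\le 2\bc_{\ref{closeplanes}}(\rho+\epsilon)$. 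Now for any unit vector $w=\sum_{i=1}^m a_i e_i\in W_{x,r}$, Cauchy--Schwarz gives
\begin{equation*}
|\pi_{V^\perp} w| \;=\; \Bigl|\sum_{i=1}^m a_i\,\pi_{V^\perp} e_i\Bigr| \;\le\; \Bigl(\sum_{i=1}^m a_i^2\Bigr)^{1/2}\Bigl(\sum_{i=1}^m |\pi_{V^\perp} e_i|^2\Bigr)^{1/2} \;\le\; 2\sqrt{m}\,\bc_{\ref{closeplanes}}(\rho+\epsilon),
\end{equation*}
and Lemma~\ref{lemma.norms.1}(2) closes the proof.

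The entire argument is quite routine once the right perturbation setup is in place; the only real subtlety is the need for a lower bound $\lambda_i\ge \|g_0\|/2$ in order to convert the operator-norm control $\|(A-B)e_i\|$ into genuine control of $|\pi_{V^\perp} e_i|$. This is exactly why the hypothesis $\bc_{\ref{closeplanes}}(\rho+\epsilon)<1$ appears, and why the trivial range must be handled separately. Orientation conventions on $W_{x,r}$ play no role here since the statement concerns only orthogonal projections.
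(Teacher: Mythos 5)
Your proof is correct, but it takes a genuinely different route from the paper's for both halves of the statement. For the eigenvalue estimate $|\lambda_i - \|g_0\||\le \bc_{\ref{closeplanes}}\|g_0\|(\rho+\epsilon)$ you invoke the abstract Weyl eigenvalue perturbation inequality $|\mu_k-\nu_k|\le \|A-B\|$; the paper instead constructs a concrete test vector: it picks a unit $v\in V$ orthogonal to $\spa\{e_1,\dots,e_{m-1}\}$ (possible by dimension counting), expands $v=\sum_{i\ge m}v_ie_i$, and reads off $\lambda_m\ge \|g_0\|(1-\bc_{\ref{closeplanes}}(\rho+\epsilon))$ from $|L_{Q}(v)|^2=\sum_{i\ge m}\lambda_i^2v_i^2\le\lambda_m^2$, then bounds $\lambda_1$ from above by a triangle inequality. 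For the projection estimate, the paper simply writes
\[
\|g_0\|\,|e_i-\pi_V(e_i)|\le|\|g_0\|e_i-\lambda_ie_i|+|L_{Q}(e_i)-\|g_0\|\pi_V(e_i)|\le 2\bc_{\ref{closeplanes}}\|g_0\|(\rho+\epsilon)\,,
\]
and then uses Cauchy--Schwarz on an arbitrary $w\in W_{x,r}$ exactly as you do. Your Pythagorean decomposition $(A-B)e_i=(\lambda_i-\|g_0\|)\pi_Ve_i+\lambda_i\pi_{V^\perp}e_i$ reaches the same bound on $|\pi_{V^\perp}e_i|$, but only after dividing by $\lambda_i$, which is why you need the extra lower bound $\lambda_i\ge\|g_0\|/2$ and the separate treatment of the ``trivial range'' $2\sqrt m\,\bc_{\ref{closeplanes}}(\rho+\epsilon)\ge 1$. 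The paper's triangle-inequality step avoids any lower bound on $\lambda_i$ and is therefore slightly more economical, while your argument is more modular (it localizes the use of the operator-norm control to a single off-the-shelf perturbation result). Both yield the same final constants.
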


\begin{proof}
Again we assume that $x = 0$ and $r = 1$. Let $v$ be a unit vector in $V$ orthogonal to $V' \defl \spa\{e_1, \dots, e_{m-1}\}$. Such a vector exists since orthogonal projection $\pi_{V'}: V \to V'$ has nontrivial kernel. We can write $v = \sum_{i \geq m} v_i e_i$ with $1 = |v|^2 = \sum_{i \geq m} v_i^2$. By Lemma~\ref{closeplanes} we get $\|g_0\||v| - |L_{Q(T,x,r)}(v)| \leq \bc_{\ref{closeplanes}}\|g_0\| (\rho + \epsilon)|v|$ and further,
\[
\|g_0\|^2(1 - \bc_{\ref{closeplanes}}(\rho + \epsilon))^2 \leq |L_{Q(T,x,r)}(v)|^2 = \sum_{i \geq m} |\lambda_i v_i|^2 \leq \lambda_m^2 \,.
\]
Hence, $\|g_0\|(1 - \bc_{\ref{closeplanes}}(\rho + \epsilon)) \leq \lambda_m \leq \dots \leq \lambda_1$. To obtain the upper bound we use again Lemma~\ref{closeplanes},
\begin{align*}
\lambda_1 & \leq |\|g_0\|\pi_V(e_1) - \lambda_1 e_1| + \|g_0\||\pi_V (e_1)| \\
 & \leq |\|g_0\|\pi_V(e_1) - L_{Q(T,x,r)}(e_1)| + \|g_0\| \\
 & \leq \bc_{\ref{closeplanes}}\|g_0\|(\rho + \epsilon) + \|g_0\| \,.
\end{align*}
This shows the first estimate of the lemma. For $i = 1, \dots, m$, this implies that
\begin{align*}
\|g_0\||e_i - \pi_V(e_i)| & \leq |\|g_0\|e_i - \lambda_i e_i| + |L_{Q(T,x,r)}(e_i) - \|g_0\|\pi_V(e_i)| \\
 & \leq 2 \bc_{\ref{closeplanes}}\|g_0\|(\rho + \epsilon) \,,
\end{align*}
and therefore $|w - \pi_V(w)| \leq 2\sqrt{m}c|w|$ for all $w \in W_{x,r}$. With Lemma~\ref{lemma.norms.1}, the proof is finished.
\end{proof}

\section{Reifenberg's epiperimetric inequality for a polyhedral cone}
\label{section.epiperimetric}

This section essentially contains the results covered by Reifenberg in \cite{Reif}. In the setting of manifolds they can also be found in the book \cite{Mor} of Morrey. The proofs here often contain a bit more detail, and where appropriate we use the tools for chains such as slicing and push-forwards to clarify the arguments. The main object in this section is a polyhedral cone $P \in \cP_m(\Hi;G)$ in a Hilbert space $\Hi$ with coefficients in a normed Abelian group $G$. We assume that $\spt(P)$ has finite diameter but we could as well work with the infinite cone generated by $P$. The main statement of this section is Theorem~\ref{masscomparison2} which essentially states that if $P$ is close to an $m$-plane $V \in \bG(\Hi,m)$, then there is a comparison surface $S \in \cP_m(\Hi;G)$ with $\partial (S \res \B(0,1)) = \partial (P \res \B(0,1))$ and
\[
\bM(S \res \B(0,1)) - \|g_0\|\balpha(m) \leq \lambda(\bM(P \res \B(0,1)) - \|g_0\|\balpha(m)) \,,
\]
for some constant $0 < \lambda(m) < 1$ and some $g_0 \in G$ representing the group element of the projection from $P$ to $V$. We will apply this theorem in Proposition~\ref{rundifferentialequation} to a rectifiable chain by approximating it with polyhedral chains which is justified by \cite{DePauw2}.

\subsection{Assumptions and preliminaries}
\label{assumptions}

Until further notice we assume that $0 \leq \rho< \frac{1}{2}$, $0\leq\epsilon < \frac{1}{2}$ and $V \in \bG(\Hi,m)$ is an $m$-plane with:
\begin{enumerate}
	\item \label{general_pos} $P$ is in general position with respect to $V$, i.e.\ $P$ can be written as a finite formal sum $\sum_i g_i \curr{S_i}$ of oriented $m$-simplices $S_i$ and $g_i \in G$ such that $\pi_V$ restricted to $S_i$ is one-to-one and orientation preserving.
	\item \label{boundary} $\spt(\partial P) \cap Z_V(2) = \emptyset$.
	\item \label{homogeneous} $P \res Z_V$ is $1$-homogeneous in its domain, i.e.\ $\eta_{r\#} (P \res Z_V) = P \res Z_V(r)$ where $\eta_r(p) = rp$ is the scaling by $r \in [0,2]$ in $\Hi$.
	\item \label{projection} $\pi_{V\#} (P \res Z_V) = g_0 \curr{\B_V(0,1)}$ for some $g_0 \in G \setminus \{0_G\}$.
	\item \label{lowerbound} $\|g_i\| \geq \frac{3}{4}\|g_0\|$.
	\item \label{mass_ball} $\Exc_1(P,V) \leq \|g_0\|\epsilon$.
	\item \label{dist_ball} $|\pi_{V^\perp}(x)| \leq \rho$ for all $x \in \spt(P) \cap Z_V$.
	\item $\epsilon \leq \rho^{6m}$.
\end{enumerate}
Combining \eqref{projection} and \eqref{mass_ball} we get
\[
\bM(P \res Z_V) - \|g_0\|\balpha(m) \leq \|g_0\|\epsilon \,.
\]
Similarly, it follows directly from Lemma~\ref{sizebound} that
\begin{equation}
\label{sizeboundprelim}
\Exc_1(P,V,\bS) = \cH^m(\spt(P) \cap Z_V) - \balpha(m) \leq 5\epsilon \,.
\end{equation}
For the most part of this section we could replace \eqref{lowerbound} by a bound like $\Exc_1(P,V,\bS) \leq \epsilon$. The properties for $P$ stated above are scaling invariant with respect to $\|g_0\|$ and by replacing the group norm $\|\cdot\|$ with $\|\cdot\|/\|g_0\|$ if necessary we may assume that $\|g_0\| = 1$.

By the general position assumption \eqref{general_pos} we can write $P = \sum_i g_i \curr{S_i}$ for finitely many almost disjoint oriented $m$-simplices $S_i$ and $g_i \neq 0_G$. Because $P$ is $1$-homogeneous we can assume that $S_i$ is the convex hull of $S_i' \cup \{0\}$ where $S_i'$ is some $(m-1)$-simplex in $\Hi \setminus Z_V(2)$. It follows that $\spt(P) = \bigcup_i S_i$ and therefore the set $P_x \defl \pi_V^{-1}(x) \cap \spt(P)$ is finite for all $x \in \B_V(0,1)$. Any such simplex $S_i$ can be uniquely expressed as the graph of an affine map $\by^i : \pi_V(S_i) \to V^\perp$. Let $I_x$ be the collection of all $i$ for which $x \in \pi_V(S_i)$. Note that for almost every $x \in \B_V(0,1)$, $x + \by^i(x)$ is in the interior of $S_i$ for every $i \in I_x$. Further, for almost every $x \in \B_V(0,1)$,
\[
\sum_{i \in I_x} g_i = g_0 \,,
\]
since we assume that $\pi_V : S_i \to V$ is orientation preserving. The map $\by^i$ is defined in a neighborhood of $x$ if $x + \by^i(x)$ is in the interior of $S_i$. As in Subsections~\ref{excess_section} and ~\ref{betaprelim} let $E_1$ be the points in $\B_V(0,1)$ where $\pi_V$ has only one preimage in $\spt(P)$ lying in the interior of some simplex $S_i$ and $E_2 = \B_V(0,1) \setminus E_1$ be the complement. For a point $x \in E_1$ we write $\by(x)$ for the only element in $V^\perp$ with $x + \by(x) \in \spt(P)$. Some immediate consequences of Lemma~\ref{multiple_values_estimate}, Lemma~\ref{multiple_values_mass_estimate} and \eqref{sizeboundprelim} are
\begin{equation}
\label{multiple_values_estimate_prelim}
\cH^m(E_2) \leq \int_{E_2} \# I_x \, dx  \leq 10 \epsilon \,,
\end{equation}
\begin{equation}
\label{multiple_values_mass_estimate_prelim}
\int_{E_2} \sum_{i \in I_x} \|g_i\| \, dx  \leq 15 \|g_0\|\epsilon \,.
\end{equation}

Further, by our assumptions on $P$ and the area formula, there holds for any Borel set $B \subset V$,
\[
\cH^m(\spt(P) \cap \pi_V^{-1}(B)) = \sum_i \cH^m(S_i \cap \pi_V^{-1}(B)) = \int_B \sum_{i \in I_x} (1 + (\J \by^i_x)^2)^\frac{1}{2} \, dx \,,
\]
and
\[
\bM\left(P \res \pi_V^{-1}(B)\right) = \sum_i \bM\left(g_i \curr{S_i} \res \pi_V^{-1}(B)\right) = \int_B \sum_{i \in I_x} \|g_i\|\left(1 + (\J \by^i_x)^2\right)^\frac{1}{2} \, dx \,.
\]
We know that $\langle P, \pi_V, x \rangle$ is in $\cP_0(\Hi;G)$ for almost every $x \in V$ and because of the observation above we get that
\[
\langle P, \pi_V, x \rangle = \sum_{i \in I_x} g_i \curr{x + \by^i(x)} \,.
\]
As in \cite{Reif} we often use orthogonal coordinates $x \in V$ and $y \in V^\perp$.

 We define for any $x \in \B_V(0,1)$,
\begin{equation*}
\|D\by\|(x) \defl\sum_{i \in I_x} \|D\by^i_x\| \,,
\end{equation*}
and similarly $\|D\by\|_{\HS}(x)$ is defined.

Here are some basic estimates we will need later on.
\begin{Lem}
[{\cite[Lemma~1]{Reif}}]
\label{squares}
Let $a,b \geq 0$, $a_1, \dots, a_k \geq 0$, $\lambda_1, \dots, \lambda_k \geq 0$ with $\lambda_1 + \dots + \lambda_k = 1$ and $\delta \in (0,1]$. Then
\begin{align}
\label{squaresa}
\left(1 + a^2 + b^2\right)^{\frac{1}{2}} & \leq  \left(1 + a^2\right)^{\frac{1}{2}} + \left(\left(1 + b^2\right)^{\frac{1}{2}} - 1\right) \,, \\
\label{convex_comb}
\left(1 + (\lambda_1 a_1 + \dots +  \lambda_ka_k)^2\right)^{\frac{1}{2}} & \leq \lambda_1\left(1 + a_1^2\right)^{\frac{1}{2}} + \dots + \lambda_k\left(1 + a_k^2\right)^{\frac{1}{2}} \,, \\
\label{squaresc}
\left(1 + \delta^2 a^2\right)^{\frac{1}{2}} - 1 & \leq \delta\left(\left(1 + a^2\right)^{\frac{1}{2}} - 1\right) \,, \\
\label{squaresd}
(1 + ab)^{\frac{1}{2}} - 1 & \leq \frac{1}{2} \delta^{-2}b^2 + \delta\left(\left(1 + a^2\right)^{\frac{1}{2}} - 1\right) \,.
\end{align}
\end{Lem}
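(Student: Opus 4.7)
The plan is to verify each of the four inequalities in turn, with (d) following from (a) and (c) combined with an application of AM--GM. None of them requires anything beyond routine calculus and convexity; they are of the flavor one checks on the back of an envelope.

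For \eqref{squaresa}, I would rearrange the target as $\sqrt{1+a^2+b^2}-\sqrt{1+a^2} \leq \sqrt{1+b^2}-1$ and multiply each side by its conjugate. This yields
\[
\frac{b^2}{\sqrt{1+a^2+b^2}+\sqrt{1+a^2}} \leq \frac{b^2}{\sqrt{1+b^2}+1},
\]
which is immediate because $a^2 \geq 0$ ensures the left denominator is at least as large as the right. For \eqref{convex_comb}, note that $t \mapsto \sqrt{1+t^2}$ is convex on $\R$ (its second derivative is $(1+t^2)^{-3/2} \geq 0$), so Jensen's inequality applied to the convex combination $\sum_i \lambda_i a_i$ gives the claim at once.

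For \eqref{squaresc}, I would set $g(\delta) \defl \sqrt{1+\delta^2 a^2}-1$ and compute $g''(\delta) = a^2(1+\delta^2 a^2)^{-3/2} \geq 0$, so $g$ is convex on $[0,1]$. Since $g(0)=0$, the secant estimate gives $g(\delta) \leq \delta g(1) + (1-\delta) g(0) = \delta g(1)$, which is exactly \eqref{squaresc}. For \eqref{squaresd}, AM--GM gives $ab \leq \tfrac{1}{2}(\delta^2 a^2 + \delta^{-2} b^2)$, hence $\sqrt{1+ab}-1 \leq \sqrt{1+\tfrac{1}{2}\delta^2 a^2+\tfrac{1}{2}\delta^{-2} b^2}-1$. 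Applying \eqref{squaresa} with $\tfrac{1}{2}\delta^2 a^2$ in place of $a^2$ and $\tfrac{1}{2}\delta^{-2} b^2$ in place of $b^2$ splits the right-hand side into $(\sqrt{1+\tfrac{1}{2}\delta^2 a^2}-1)+(\sqrt{1+\tfrac{1}{2}\delta^{-2} b^2}-1)$. The first term is controlled by \eqref{squaresc} with $\delta/\sqrt{2} \in (0,1]$ in place of $\delta$, yielding at most $(\delta/\sqrt{2})(\sqrt{1+a^2}-1) \leq \delta(\sqrt{1+a^2}-1)$; the second is at most $\tfrac{1}{4}\delta^{-2} b^2$ by the elementary bound $\sqrt{1+x} \leq 1 + x/2$. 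Summing gives a bound stronger than \eqref{squaresd} (with $\tfrac{1}{4}$ rather than $\tfrac{1}{2}$ in front of $\delta^{-2}b^2$), which in particular implies \eqref{squaresd}. There is no genuine obstacle in any step; the only mild care is in choosing the AM--GM splitting for (d) so that (a) and (c) can be chained without losing factors of $\delta$.
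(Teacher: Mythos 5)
Your argument is correct in all four parts: the conjugate trick for \eqref{squaresa}, Jensen on the convex map $t \mapsto \sqrt{1+t^2}$ for \eqref{convex_comb}, the secant bound on the convex function $\delta \mapsto \sqrt{1+\delta^2 a^2}-1$ with $g(0)=0$ for \eqref{squaresc}, and the AM--GM split $ab \leq \tfrac{1}{2}(\delta^2 a^2 + \delta^{-2}b^2)$ followed by \eqref{squaresa}, \eqref{squaresc} with $\delta/\sqrt{2}$, and $\sqrt{1+x}\leq 1+x/2$ for \eqref{squaresd}. In fact you obtain the sharper constant $\tfrac{1}{4}$ in front of $\delta^{-2}b^2$, which a fortiori gives \eqref{squaresd}. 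Note that the paper itself does not prove this lemma; it cites Reifenberg's original Lemma~1, so your argument is filling in details the authors chose to leave to the reference rather than taking a different route from theirs.
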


\begin{proof}
Abbreviating $\alpha^2 = 1 + a^2$, $\beta^2 = 1 + b^2$ and squaring both sides of \eqref{squaresa} we notice it is equivalent to $\alpha + \beta \leq 1 + \alpha \beta$, which readily holds as $\alpha \geq 1$ and $\beta \geq 1$.
Next, introducing $h(t) = \sqrt{1+t^2}$, $t \geq 0$, we notice \eqref{convex_comb} expresses the convexity of $h$. Letting $g(s) = s^{-1}\int_0^s h' = s^{-1}(h(s)-h(0))$ we note \eqref{squaresc} is equivalent to $g$ being nondecreasing. Computing $g'$ this in turn is equivalent to $s^{-1}\int_0^s h' \leq h'(s)$, a consequence of the fact that $h'$ is nondecreasing. In order to establish \eqref{squaresd} we notice $h$ is nondecreasing, convex, $h(t) \leq 1 + \frac{t^2}{2}$ and we apply \eqref{squaresc} as follows : $h(\sqrt{ab}) -1 \leq h \left( \frac{1}{2} \frac{b}{\delta} + \frac{1}{2} \delta a \right) - 1 \leq \frac{1}{2} \left( h \left( \frac{b}{\delta} \right) - 1 \right) + \frac{1}{2} \left( h(\delta a) - 1 \right) \leq \frac{1}{4} \frac{b^2}{\delta^2} + \frac{1}{2} \delta \left( h(\delta a) - 1 \right)$.
\end{proof}

Now, let $W$ be a $k$-dimensional subspace of $V$ and $W^\perp$ its orthogonal complement in $V$. Then for all $w' \in W^\perp$ the slice $\langle P, \pi_{W^\perp}, w'\rangle$ is defined and an element of $\cP_{k}(\Hi;G)$. Writing $x = w + w' \in W + W^\perp$ for some $x \in \B_V(0,1)$, we have by iterated slicing and the assumption on $P$,
\[
\langle\langle P, \pi_{W^\perp}, w'\rangle, \pi_W, w\rangle = \langle P, (\pi_{W^\perp},\pi_W), (w',w)\rangle = \langle P, \pi_V, x \rangle  \,.
\]
So, for some fixed $w' \in W^\perp \cap \B_V(0,1)$ the slice $P' \defl \langle P, \pi_{W^\perp}, w'\rangle$ satisfies $\chi \langle P', \pi_W, w\rangle = \chi \langle P, \pi_V, x \rangle = g_0$ (where $\chi$ denotes the augmentation map) for all $w \in W$ with $w + w' \in \B_V(0,1)$. Further,
\[
\partial P' = (-1)^{m-k}\langle \partial P, \pi_{W^\perp}, w'\rangle \,,
\]
and hence
\[
\spt(\partial P') \cap Z_V = \spt(\langle \partial P, \pi_{W^\perp}, w'\rangle) \cap Z_V \subset \spt(\partial P) \cap Z_V = \emptyset \,.
\]
As a result $\spt(\partial (P' \res Z_V)) \subset \partial Z_V$.

\subsection{Mass estimate of the averaged cone}
\label{massest}

Let $[x_1,x_2]$ be the straight line segment connecting two different points $x_1,x_2 \in \B_V(0,1)$. Let $W$ be the oriented $(m-1)$-dimensional subspace of $V$ orthogonal to $x_2 - x_1$. By the general position assumption the set $\spt(P) \cap \pi_V^{-1}([x_1,x_2])$ is a union of finitely many straight lines each lying in some simplex $S_i$. We define the truncated slice,
\[
P(x_1,x_2) \defl \langle P, \pi_W, w\rangle \res \pi_V^{-1}([x_1,x_2]) \,,
\]
where $w \in W \cap \B_V(0,1)$ is the vector with $\pi_W(x_2)=\pi_W(x_1) = w$. By the observations above and some general facts about slices, $P(x_1,x_2) \in \cP_1(\Hi;G)$, $\pi_{V\#} P(x_1,x_2) = g_0 \curr{x_1,x_2}$ (given the right orientation on $W$) and hence $\pi_{V\#} \partial P(x_1,x_2) = g_0 \curr{x_2} - g_0 \curr{x_1}$.

\begin{Lem}
[{\cite[Lemma~2]{Reif}}]
\label{distance_estimate}
If $x_1,x_2 \in E_1$, then
\[
|\by(x_1) - \by(x_2)| \leq \int_{[x_1,x_2]} \|D\by\|(x) \, d\cH^1(x) \,.
\]
\end{Lem}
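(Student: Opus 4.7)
The plan is to realize $P(x_1,x_2)$ as a finite $1$-dimensional polyhedral $G$-chain whose support $K$ is a union of oriented line segments in the simplices $S_i$, and to exhibit a combinatorial path in $K$ from $p_0 \defl x_1+\by(x_1)$ to $p_N \defl x_2+\by(x_2)$ along which the desired inequality telescopes.

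First I would pin down $\partial P(x_1,x_2)$. Since $\pi_V^{-1}([x_1,x_2]) \subset Z_V(2)$ and $\spt(\partial P) \cap Z_V(2) = \emptyset$, the slice $\langle P, \pi_W, w \rangle$ has no boundary inside the slab $\pi_V^{-1}([x_1,x_2])$, so the boundary of $P(x_1,x_2)$ comes only from the truncation at $\pi_V \in \{x_1, x_2\}$. Because $x_1, x_2 \in E_1$, each fiber $\spt(P) \cap \pi_V^{-1}\{x_i\}$ reduces to the single point $p_i$, and matching with the identity $\pi_{V\#} P(x_1, x_2) = g_0\curr{x_1, x_2}$ yields $\partial P(x_1, x_2) = g_0(\curr{p_N} - \curr{p_0})$.

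Next I would show that $p_0$ and $p_N$ lie in the same connected component of $K$. If $C$ were the component of $p_0$ with $p_N \notin C$, then the subchain obtained by restricting $P(x_1,x_2)$ to edges contained in $C$ would have boundary $-g_0 \curr{p_0}$, because the signed sums at internal vertices of $C$ cancel already inside $C$. This contradicts the universal fact that the $G$-coefficients of the boundary of any polyhedral $1$-chain sum to $0_G$ (while $g_0 \neq 0_G$). Hence a simple path $p_0 = v_0, e_1, v_1, \dots, e_N, v_N = p_N$ in $K$ exists, using each edge at most once; setting $s_j \defl \pi_V(v_j)$, each $e_j$ lies in a unique simplex $S_{i(j)}$, and the sharing of $v_j$ between $e_j$ and $e_{j+1}$ forces $\by^{i(j)}(s_j) = \by^{i(j+1)}(s_j)$.

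Finally I would telescope, writing
\begin{equation*}
\by(x_2) - \by(x_1) = \sum_{j=1}^N \bigl(\by^{i(j)}(s_j) - \by^{i(j)}(s_{j-1})\bigr) \,,
\end{equation*}
and the affineness of $\by^{i(j)}$ together with the triangle inequality then give
\begin{equation*}
|\by(x_2) - \by(x_1)| \leq \int_{[x_1,x_2]} \Bigl(\sum_{j:\, x \in \pi_V(e_j)} \|D\by^{i(j)}\|(x)\Bigr)\, d\cH^1(x) \,.
\end{equation*}
Since the path is simple and distinct edges over a common $x$ come from distinct simplices indexed in $I_x$, the inner sum is bounded by $\|D\by\|(x)$ and the lemma follows. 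The main obstacle is the connectedness step: identifying the correct boundary of $P(x_1,x_2)$ and invoking the zero-sum property of boundaries of $1$-chains; everything else is routine once the path is secured.
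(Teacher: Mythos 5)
Your proof is correct and follows the same line as the paper: identify $\partial P(x_1,x_2) = g_0\bigl(\curr{x_2+\by(x_2)} - \curr{x_1+\by(x_1)}\bigr)$, produce a path in $\spt(P(x_1,x_2))$ connecting the two endpoints, and integrate $\|D\by\|$ along it. The two places where your execution diverges are minor and arguably improvements. For connectivity, the paper decomposes $P(x_1,x_2)$ into loops and connecting curves and kills the loops under $\pi_{V\#}$ by the constancy theorem, concluding a connecting curve exists since $\pi_{V\#}P(x_1,x_2)=g_0\curr{x_1,x_2}\neq 0$; your zero-sum argument on $\partial\bigl(P(x_1,x_2)\res C\bigr)$ for the component $C$ of $x_1+\by(x_1)$ is an elementary rephrasing of the same fact. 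For the estimate, the paper parametrizes a piecewise linear curve $\gamma$ by arclength and applies the area formula with the density $f(t)=|(\pi_V\circ\gamma)'(t)|^{-1}$, whereas you telescope the $V^\perp$-increments across the edges of a simple path; this discretization tracks exactly the vertical variation $\by(x_2)-\by(x_1)$ rather than bounding through the full curve length, and dispenses with the area formula altogether. One precision to add at the end: the edges of the path lying above a common $x$ belong to distinct simplices only for $\cH^1$-a.e.\ $x$ (the exceptions being the finitely many $\pi_V$-images of path vertices), which of course suffices for the integral inequality.
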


\begin{proof}
As noted above, $P(x_1,x_2)$ is a well-defined non-zero polyhedral $1$-chain with support in $\spt(P) \cap \pi_V^{-1}([x_1,x_2])$ and $\partial P(x_1,x_2) = g_0 \curr {x_2 + \by(x_2)} - g_0 \curr {x_1 + \by(x_1)}$. We want to show that there is a curve in $\spt(P(x_1,x_2))$ connecting $x_1 + \by(x_1)$ and $x_2 + \by(x_2)$. As a polyhedral $1$-chain, $P(x_1,x_2)$ is a finite sum of loops $g_i \curr{L_i}$ and curves $g_j \curr{C_j}$ connecting $x_1 + \by(x_1)$ with $x_2 + \by(x_2)$. For any loop, $\pi_{V\#}(g_i \curr {L_i}) = 0$ by the constancy theorem, since $\partial (g_i \curr {L_i}) = 0$. Since $\pi_{V\#} \sum_j g_j \curr{C_j} = g_0 \curr{x_1,x_2} \neq 0$, the set of $C_j$'s is not empty.

Thus there is a piecewise linear curve $\gamma : [0,L] \to \spt(P(x_1,x_2))$ connecting $x_1 + \by(x_1)$ and $x_2 + \by(x_2)$ that is injective and satisfies $\|\gamma'\| = 1$ almost everywhere. By the area formula
\[
\int_0^L f(t)|(\pi_V \circ \gamma)'(t)| \, dt = \int_{[x_1,x_2]} \sum_{t \in (\pi_V \circ \gamma)^{-1}(x)} f(t) \, d\cH^1(x) \,,
\]
for all measurable functions $f$. Set $f(t) \defl |(\pi_V \circ \gamma)'(t)|^{-1}$. If $\gamma(t)$ is in some simplex $S_{i(t)}$ in a neighborhood of $t$, then by \eqref{norms}
\[
f(t) = |\pi_V(\gamma'(t))|^{-1} \leq \Lip\bigl(\by^{i(t)}\bigr) = \bigl\|D\by^{i(t)}\bigr\| \,.
\]
Finally, 
\begin{align*}
|\by(x_1) - \by(x_2)| \leq L & = \int_0^L f(t)|(\pi_V \circ \gamma)'(t)| \, dt \\
 & \leq \int_{[x_1,x_2]} \sum_{t \in (\pi_V \circ \gamma)^{-1}(x)} \bigl\|D\by^{i(t)}_x\bigr\| \, d\cH^1(x) \\
 & \leq \int_{[x_1,x_2]} \sum_{i \in I_x}\bigl\|D\by^i_x\bigr\| \, d\cH^1(x) \,,
\end{align*}
and by definition this is exactly the integral of $\|D\by\|$.
\end{proof}

\begin{Lem}
[{\cite[Lemma~3]{Reif}}]
\label{homogen}
There is a constant $\bc_{\theThm}(m) > 0$ with the following property. If $f$ is a measurable nonnegative homogeneous function of degree $0$ on $V$ (i.e.\ $f(tx) = f(x)$ for all $t > 0$), then
\[
\int_{\B_V(0,1)} dx \int_{\B_V(x,\rho|x|)} dx' \int_{[x,x']} dz \, \rho^{-m}|x|^{-m} f(z) \leq \bc_{\theThm} \rho \int_{\B_V(0,1)} f(x) \, dx \,.
\]
\end{Lem}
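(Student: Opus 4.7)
The plan is to reduce the triple integral to an integral over the unit sphere of $V$ via two successive polar changes, using the degree-zero homogeneity of $f$ to generate a single radial integration that yields the factor $\rho$.

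For fixed $x$ I will parametrize $x' = x + s\omega$ with $\omega \in \partial\B_V(0,1)$ and $s \in [0,\rho|x|]$, and $z = x + r\omega$ along $[x,x']$ with $r \in [0,s]$. Since $dx' = s^{m-1}\,ds\,d\sigma(\omega)$ and the arc-length element on $[x,x']$ is just $dr$, Fubini followed by swapping the $r$ and $s$ orders produces
\[
\int_{\B_V(x,\rho|x|)} dx' \int_{[x,x']} f(z)\, dz \;\leq\; \frac{(\rho|x|)^m}{m} \int_{\B_V(x,\rho|x|)} \frac{f(y)}{|y-x|^{m-1}}\, dy,
\]
once one recognizes $\int_{\partial\B_V(0,1)} d\sigma(\omega)\int_0^{\rho|x|} dr$ as integration against $|y-x|^{-(m-1)}\,dy$ in the variable $y = x + r\omega$. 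The emerging prefactor $(\rho|x|)^m$ exactly cancels the $\rho^{-m}|x|^{-m}$ weight in the statement.

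Next I will substitute $y = x + |x|u$ with $u \in \B_V(0,\rho)$, so that $dy = |x|^m\,du$ and $|y-x| = |x||u|$, and then apply homogeneity to rewrite $f(x+|x|u) = f(x/|x|+u)$. Passing to polar coordinates $x = s\xi$ produces an $s$ integral $\int_0^1 s^m\,ds = 1/(m+1)$, and a further polar decomposition $u = r\omega$ cancels the $|u|^{1-m}$ weight against the polar Jacobian, yielding
\[
\int_{\B_V(0,1)} dx \int_{\B_V(x,\rho|x|)} \frac{f(y)}{|y-x|^{m-1}} dy \;=\; \frac{1}{m+1}\int_0^\rho dr \int_{\partial\B_V(0,1)} d\sigma(\xi) \int_{\partial\B_V(0,1)} d\sigma(\omega)\, f(\xi + r\omega).
\]

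The remaining task will be to bound the double sphere integral uniformly in $r \in [0,\rho]$ and $\omega$. For $r \leq \rho \leq 1/2$ the radial retraction $\Phi_{r,\omega}\colon \xi \mapsto (\xi+r\omega)/|\xi+r\omega|$ is a smooth self-diffeomorphism of $\partial\B_V(0,1)$ with Jacobian bounded uniformly away from $0$ and $\infty$, since it is a small perturbation of the identity. Homogeneity then rewrites $f(\xi+r\omega) = f(\Phi_{r,\omega}(\xi))$ and a change of variable gives $\int f(\xi+r\omega)\,d\sigma(\xi) \leq C_m \int f(\eta)\,d\sigma(\eta)$; integrating over $r \in [0,\rho]$ produces the linear factor $\rho$, and restoring $\int_{\partial\B_V(0,1)} f\, d\sigma = m\int_{\B_V(0,1)} f\, dx$ completes the estimate. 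The only technical check is the uniform bi-Lipschitz bound on $\Phi_{r,\omega}$; this is harmless for $\rho$ bounded away from $1$, which covers every invocation of the lemma in the sequel.
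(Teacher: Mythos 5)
Your proof is correct, but it takes a genuinely different route from the paper's. You reduce the triple integral to a double spherical integral via polar coordinates and Fubini, so the only nontrivial estimate becomes a Jacobian bound for the radial retraction $\Phi_{r,\omega}:\partial\B_V(0,1)\to\partial\B_V(0,1)$, $\xi\mapsto(\xi+r\omega)/|\xi+r\omega|$. Bijectivity holds because the origin lies in the interior of the translated sphere $\partial\B(r\omega,1)$ when $r<1$ (the quadratic $t^2 - 2tr\langle\eta,\omega\rangle + r^2 - 1 = 0$ has exactly one positive root), and a direct computation in the latitude angle $\theta=\arccos\langle\xi,\omega\rangle$ gives $J\Phi_{r,\omega} = (1+r\cos\theta)\bigl(1+2r\cos\theta+r^2\bigr)^{-m/2}\in\bigl[\tfrac{1}{2}(3/2)^{-m},\,2^m\bigr]$ for $r\le\rho\le\tfrac12$, which makes the ``small perturbation of the identity'' remark fully rigorous. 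The paper instead changes variables globally on $V^2\times\R$ via $\varphi(x,x',\lambda)=(x,\,x+\rho|x|x',\,x+\lambda|x|x')$ and reduces to bounding $\int_{\B_V}f(x+\lambda|x|x')\,dx$, where the key fact is that $\psi:x\mapsto x+\lambda|x|x'$ is a rank-one perturbation of the identity with $\det D\psi_x = 1+\lambda\langle x/|x|,x'\rangle\in[\tfrac12,\tfrac32]$, followed by homogeneity to rescale $\B(0,2)$ back to $\B(0,1)$. Both arguments hinge on a Jacobian bound, but for different maps: the paper's determinant is a one-line rank-one formula and sidesteps polar coordinates entirely, while yours is more hands-on but reduces everything to a classical sphere-to-sphere map. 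Your argument is equally valid; the constants you produce differ but that is immaterial.
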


\begin{proof}
Let
\begin{align*}
Z &\defl \{|x| \leq 1\} \times \{|x'| \leq 1\} \times \{\lambda \in [0,\rho]\} \subset V^2 \times \R \,, \\
Z' &\defl \{(x,x',x'') \in V^3 : |x| \leq 1, |x-x'| \leq \rho|x|, x'' \in [x,x']\} \subset V^3 \,,
\end{align*}
and define $\varphi : Z \to Z'$ by $\varphi(x,x',\lambda) \defl (x,x + \rho |x|x',x + \lambda |x|x')$ and $g : Z \to \R$ by $g(x,x',\lambda) \defl f(x + \lambda|x|x')$. $\varphi$ is bi-Lipschitz onto its image and $J\varphi_{(x,x',\lambda)} = \rho^m |x|^{m+1}|x'| \leq \rho^m |x|^{m}$ holds almost everywhere on $Z$. By the area formula,
\begin{align*}
\int_{|x| \leq 1} \int_{|x-x'| \leq \rho |x|} \int_{z \in [x,x']} \rho^{-m}|x|^{-m} f(z) & \leq \int_{|x| \leq 1} \int_{|x'| \leq 1} \int_{\lambda \in [0,\rho]} g(x,x',\lambda) \\
 & = \int_{|x'| \leq 1} \int_{\lambda \in [0,\rho]} \int_{|x| \leq 1} g(x,x',\lambda) \\
 & \leq \balpha(m) \rho \max_{|x'|\leq 1, \lambda \in [0,\rho]} \int_{|x| \leq 1} f(x + \lambda|x|x') \,.
\end{align*}
By assumption $\lambda \leq \rho < \frac{1}{2}$. For fixed $\lambda$ and $x'$, the map $\psi: x \mapsto x + \lambda|x|x'$ is a bijection of $\B_V(0,1)$ onto a subset of $\B_V(0,2)$ with
\begin{equation}
\label{psi}
 D \psi_x(v) = v + \frac{1}{|x|}\langle x, v\rangle \lambda x' \,.
\end{equation}
Hence $\det D\psi_x = 1 + \lambda \tfrac{1}{|x|} \langle x, x' \rangle \in [\tfrac{1}{2},\tfrac{3}{2}]$ since we assume that $\rho \leq \tfrac{1}{2}$. Therefore,
\begin{align*}
\int_{|x| \leq 1} f(x + \lambda|x|x') & = \int_{\B_V(0,1)} f(\psi(x))\, dx \\
 & = \int_{\psi(\B_V(0,1))} f(x) (\det D\psi_x)^{-1} \, dx \\
 & \leq 2 \int_{\B_V(0,2)} f(x) \, dx \,.
\end{align*}
Finally,
\[
\int_{\B_V(0,2)} f(x) \, dx = \int_{\B_V(0,1)} 2^{m}f(2x) \, dx = \int_{\B_V(0,1)} 2^mf(x)\, dx\,,
\]
and the statement follows.
\end{proof}

The next estimate and its proof is directly taken from \cite{Reif}.

\begin{Lem}
[{\cite[Lemma~4]{Reif}}]
\label{two_terms}
Let $\mu$ be a finite measure on $E$ and $f \in L^\infty(E,\mu)$ a positive function. Then
\[
\min\left\{\left(\int_E f \, d\mu\right)^2 , \left(\int_E 1 \, d\mu\right)^2 \right\} \leq 3\left(\int_E 1 \, d\mu\right) \left(\int_E (1 + f^2)^\frac{1}{2} - 1 \, d\mu \right) \,.
\]
\end{Lem}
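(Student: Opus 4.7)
The plan is to reduce the lemma to a single quadratic inequality in the three integrals
\[
A \defl \int_E f\,d\mu, \qquad B \defl \int_E 1\,d\mu = \mu(E), \qquad C \defl \int_E \left((1+f^2)^{1/2}-1\right) d\mu,
\]
and then handle the cases $A \leq B$ and $A > B$ separately. The goal becomes $\min(A^2,B^2) \leq 3BC$.

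The key algebraic observation I would exploit is the factorization
\[
f^2 = \left((1+f^2)^{1/2}-1\right) \left((1+f^2)^{1/2}+1\right),
\]
together with the elementary bound $(1+f^2)^{1/2} \leq 1+f$, which yields $(1+f^2)^{1/2}+1 \leq f+2$. Writing $f = \sqrt{(1+f^2)^{1/2}-1}\cdot\sqrt{(1+f^2)^{1/2}+1}$ and applying the Cauchy--Schwarz inequality gives
\[
A^2 \leq \left(\int_E \left((1+f^2)^{1/2}-1\right) d\mu\right)\left(\int_E \left((1+f^2)^{1/2}+1\right) d\mu\right) \leq C\,(A+2B).
\]
This is the single master inequality from which the lemma follows.

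If $A \leq B$, then $A + 2B \leq 3B$, so $A^2 \leq 3BC$, hence $\min(A^2,B^2) = A^2 \leq 3BC$. If instead $A > B$, then $A + 2B < 3A$, so $A^2 < 3AC$, and thus $A < 3C$; combining with $B < A$ gives $B^2 < AB < 3BC$, so $\min(A^2,B^2) = B^2 < 3BC$ as well. No part of the argument should present any real obstacle: the only ``trick'' is spotting the right factorization of $f^2$, after which the Cauchy--Schwarz step and the dichotomy are essentially automatic.
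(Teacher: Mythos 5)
Your proof is correct and uses essentially the same strategy as the paper: both hinge on the factorization $f^2=\bigl((1+f^2)^{1/2}-1\bigr)\bigl((1+f^2)^{1/2}+1\bigr)$ followed by Cauchy--Schwarz to get $A^2\leq C\cdot\int_E\bigl((1+f^2)^{1/2}+1\bigr)\,d\mu$, and then a two-case dichotomy. The only cosmetic differences are that the paper writes the second factor exactly as $2B+C$ and splits on $C$ versus $B$, while you bound it by $A+2B$ via $(1+f^2)^{1/2}\leq 1+f$ and split on $A$ versus $B$.
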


\begin{proof}
The Cauchy-Schwarz inequality implies that
\begin{align*}
\left(\int_E f \, d\mu\right)^2 & = \left(\int_E \frac{f}{((1 + f^2)^{\frac{1}{2}} + 1)^{\frac{1}{2}}}((1 + f^2)^{\frac{1}{2}} + 1)^{\frac{1}{2}} \, d\mu\right)^2 \\
 & \leq \left(\int_E \frac{f^2}{(1 + f^2)^{\frac{1}{2}} + 1} \, d\mu\right)\left(\int_E (1 + f^2)^{\frac{1}{2}} + 1 \, d\mu\right) \\
 & = \left(\int_E (1 + f^2)^{\frac{1}{2}} - 1 \, d\mu\right)\left(\int_E 2 + ((1 + f^2)^{\frac{1}{2}} - 1) \, d\mu\right) \,.
\end{align*}
If $\int_E (1 + f^2)^{\frac{1}{2}} - 1 \, d\mu \leq \int_E 1 \, d\mu$, it follows that
\begin{align*}
\left(\int_E f \, d\mu\right)^2 & \leq \left(\int_E (1 + f^2)^{\frac{1}{2}} - 1 \, d\mu\right)\left(\int_E 3 \, d\mu\right) \,,
\end{align*}
and otherwise,
\begin{align*}
\left(\int_E 1 \, d\mu\right)^2 & \leq \left(\int_E 1 \, d\mu\right) \left(\int_E (1 + f^2)^\frac{1}{2} - 1 \, d\mu \right) \,.
\end{align*}
\end{proof}

We define an average function $\bar \by : \B_V(0,2) \to V^\perp$ by
\[
\bar \by(x) \defl \sum_{i \in I_x} \delta_i \by^i(x) \defl \Bigl(\sum_{i \in I_x} \|g_i\|\Bigr)^{-1} \sum_{i \in I_x}  \|g_i\| \by^i(x) \,.
\]
Note that for $\cH^m$-a.e.\ $x \in \B_V(0,1)$ it is $\|g_0\| \leq \sum_{i \in I_x} \|g_i\|$. Obviously, $|\bar \by(x)| \leq \rho$ for $\cH^m$-a.e.\ $x \in \B_V(0,1)$ and for $x \in E_1$ we have $\bar \by(x) = \by(x)$. A suitable smoothed version of $\by$ is obtained by the map $\bv : \B_V(0,1) \to V^\perp$ with
\[
\bv(x) \defl (\balpha(m) \rho^m |x|^m)^{-1} \int_{\B_V(x,\rho |x|)} \bar \by(z) \, dz \,.
\]
Because $P$ is $1$-homogeneous, so are $\bar \by$ and $\bv$. Here are first estimates of $\bv$.

\begin{Lem}
[{\cite[Lemma~7]{Reif}}]
\label{dist_to_v}
For all $x \in \B_V(0,1)$,
\[
|\bv(x)| \leq 2\rho \,,
\]
and there is a constant $\bc_{\theThm}(m) > 0$ such that
\[
\int_{\B_V(0,1)} \sum_{i \in I_x}|\bv(x) -  \by^i(x)|^2 \, dx \leq \bc_{\theThm} \rho^2 \epsilon \,.
\]
\end{Lem}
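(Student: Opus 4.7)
The $1$-homogeneity assumption \eqref{homogeneous} forces each affine map $\by^i$ to be linear (its simplex $S_i$ is a cone with apex $0$), so the height bound \eqref{dist_ball} yields $|\by^i(z)| \leq c(m)\rho|z|$ on all of $\pi_V(S_i)$. Hence $|\bar\by(z)| \leq c(m)\rho|z|$ throughout $\B_V(0,2)$, and since every $z \in \B_V(x,\rho|x|)$ satisfies $|z| \leq (1+\rho)|x| \leq \tfrac{3}{2}|x|$, averaging immediately gives $|\bv(x)| \leq 2\rho$.

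\textbf{Reduction and the main integral.} For the $L^2$ bound the plan is to split via the triangle inequality,
\[
\sum_{i\in I_x}|\bv(x)-\by^i(x)|^2 \leq 2\sum_{i\in I_x}|\bv(x)-\bar\by(x)|^2 + 2\sum_{i\in I_x}|\bar\by(x)-\by^i(x)|^2.
\]
The second sum vanishes on $E_1$ (where $\#I_x=1$ and $\bar\by(x)=\by^i(x)$) and is at most $4\rho^2\#I_x$ on $E_2$; thus \eqref{multiple_values_estimate_prelim} controls its integral by $O(\rho^2\epsilon)$. The $E_2$-contribution of the first sum will be handled the same way using the crude estimate $|\bv(x)-\bar\by(x)|\leq 3\rho$ coming from step one. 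What remains is $\int_{E_1}|\bv(x)-\bar\by(x)|^2\,dx$. Here Jensen's inequality recasts the integrand as a double integral of $|\bar\by(z)-\bar\by(x)|^2$; for pairs $(x,z)\in E_1\times E_1$ Lemma~\ref{distance_estimate} yields $|\bar\by(z)-\bar\by(x)| \leq \int_{[x,z]}\|D\by\|\,d\cH^1$, and Cauchy--Schwarz on a segment of length $\leq\rho|x|$ upgrades this to $|\bar\by(z)-\bar\by(x)|^2 \leq \rho|x|\int_{[x,z]}\|D\by\|^2\,d\cH^1$. Since $\by$ is $1$-homogeneous the function $\|D\by\|^2$ is $0$-homogeneous, and Lemma~\ref{homogen} then converts the resulting triple integral into $\bc\rho^2\int_{\B_V(0,1)}\|D\by\|^2\,dx$.

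\textbf{Closing the estimate and the main obstacle.} To finish I will bound $\int\|D\by\|^2\,dx \leq C\epsilon$. On $E_1$ one has $\|D\by\|^2 = \|D\by^{i(x)}\|^2 \leq \|D\by^{i(x)}\|_{\HS}^2$; combining the excess hypothesis \eqref{mass_ball}, the identity \eqref{jacobian_sum}, the elementary estimate $\sqrt{1+t^2}-1 \geq ct^2$ applied to $t=\J\by^i$, and the lower bound $\|g_i\|\geq\tfrac{3}{4}\|g_0\|$, one extracts $\int\sum_{i\in I_x}\|D\by^i\|_{\HS}^2\,dx \leq C\epsilon$, which is enough. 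The hard part will be the treatment of the ``bad'' pairs $(x,z)$ in which $z$ lies in $E_2$, or in which the segment $[x,z]$ meets $E_2$ on a non-negligible set: there Lemma~\ref{distance_estimate} does not apply directly, and the crude bound $|\bar\by(z)-\bar\by(x)|\leq 2\rho$ is not sharp enough because the averaging ball $\B_V(x,\rho|x|)$ shrinks with $|x|$ while the local Lebesgue fraction of $E_2$ is roughly $\epsilon/\rho^m$. I will absorb these bad contributions by exploiting the cone structure of $E_2$ forced by \eqref{homogeneous} (so that $\cH^m(E_2 \cap \B_V(0,r)) \leq 10 r^m\epsilon$ at every scale) together with the quantitative smallness $\epsilon\leq\rho^{6m}$ from Section~\ref{assumptions}; the latter provides precisely the slack required to keep the final constant depending only on $m$.
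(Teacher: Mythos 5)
Your first estimate is essentially the paper's (the paper gets $|\bv(x)|\leq\rho$ on $\B_V(0,\tfrac12)$ by direct averaging of $|\bar\by|\leq\rho$ and extends to $\B_V(0,1)$ by $1$-homogeneity), and your decomposition into $E_1$- and $E_2$-pieces plus the cone structure of $E_2$ is also close to what the paper does. The problem is the central step of your $E_1$ estimate.

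You apply Cauchy--Schwarz on the segment $[x,x']$ to produce $\int\|D\by\|^2$, and then claim $\int\sum_{i\in I_x}\|D\by^i\|_{\HS}^2\,dx\leq C\epsilon$ by invoking ``the elementary estimate $\sqrt{1+t^2}-1\geq ct^2$''. That inequality is \emph{false} for large $t$: $\sqrt{1+t^2}-1$ grows linearly, so no uniform $c>0$ exists, and there is no a priori $L^\infty$ bound on $\|D\by^i\|$ here — a simplex of $P$ may project to a very thin cone in $V$, in which case $\|D\by^i\|$ can be arbitrarily large even under the height bound \eqref{dist_ball}. The excess hypothesis only gives an $L^1$ bound on $\sum_i\|g_i\|\bigl(\sqrt{1+(\J\by^i)^2}-1\bigr)$, which for steep pieces behaves like $\sum_i\|g_i\|\,\|D\by^i\|$, not $\|D\by^i\|^2$. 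Concretely, a simplex whose projection is a cone of angular width $\delta$ and whose slope is $\rho/\delta$ contributes $\sim\rho$ to the excess but $\sim\rho^2/\delta$ to $\int\|D\by\|^2$, which is unbounded as $\delta\to 0$; so the $L^2$ bound you need simply does not follow.

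This is exactly the obstruction that Lemma~\ref{two_terms} (Reifenberg's Lemma~4) is designed to circumvent. The paper does not use Cauchy--Schwarz on $\|D\by\|$. Instead it first uses the crude geometric bound $|\by(x')-\by(x)|\leq 2\rho|x|$ to write the inner integral as $\min\bigl\{\int_{B_{x,\rho}}\int_{[x,x']}\|D\by\|,\ \int_{B_{x,\rho}}\int_{[x,x']}1\bigr\}$, and then applies Lemma~\ref{two_terms}, which bounds this $\min$-square by $3\bigl(\int 1\bigr)\bigl(\int\sqrt{1+\|D\by\|^2}-1\bigr)$, thus requiring only the $L^1$ bound that the excess hypothesis actually provides. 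Only after that does Lemma~\ref{homogen} enter, applied to the $0$-homogeneous integrand $\sqrt{1+\|D\by\|^2}-1$ (not to $\|D\by\|^2$). To repair your argument you would need to replace the Cauchy--Schwarz step by this two-terms device; the slack $\epsilon\leq\rho^{6m}$ you invoke does not help here, because the bad set is not $E_2$ but the set where $\|D\by\|$ is large, which a priori has no measure control.
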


\begin{proof}
By the definition of $\bar \by$ and the assumption on the height bound of $P$ it is clear that $|\bar \by(x)| \leq \rho$ for $x \in \B_V(0,1)$. Fix some $x \in \B_V(0,\frac{1}{2})$. To obtain shorter formulas we abbreviate $B_{x,\rho} \defl \B_V(x,\rho|x|)$ and $|B_{x,\rho}| = \cH^m(B_{x,\rho})$. Clearly, $B_{x,\rho} \subset \B_V(0,1)$ and hence
\begin{align}
\nonumber
|\bv(x)| & \leq (\balpha(m) \rho^m |x|^m)^{-1} \int_{\B_V(x,\rho|x|)} |\bar \by(z)| \, dz \\
\label{bvbound}
 & \leq |B_{x,\rho}|^{-1} |B_{x,\rho}| \rho = \rho \,.
\end{align}
This immediately implies the first statement of the lemma. By \eqref{bvbound} and \eqref{multiple_values_estimate_prelim} we have on $E_2 \subset \B_V(0,1)$,
\begin{align}
\nonumber
\int_{E_2} \sum_{i \in I_x} |\bv(x) -  \by^i(x)|^2 \, dx & \leq \int_{E_2} \sum_{i \in I_x} 4\max\{|\bv(x)|,|\by^i(x)|\}^2 \, dx \,. \\
\nonumber
 & \leq 4 \rho^2 \int_{E_2} \# I_x \, dx \\
\label{e2bound}
 & \leq 40 \epsilon \rho^2.
\end{align}
Using that $E_2$ is a cone, it follows with \eqref{multiple_values_estimate_prelim} and the bounds $\sqrt\epsilon \leq \rho^{m} \leq 1$ that
\begin{align}
\nonumber
|B_{x,\rho}|^{-1}\cH^m(E_2 \cap \B_V(x,\rho|x|)) & \leq |B_{x,\rho}|^{-1}\cH^m(E_2 \cap \B_V(0,(1+\rho)|x|)) \\
\nonumber
 & \leq |B_{x,\rho}|^{-1}10\epsilon(1+\rho)^m|x|^m \\
\label{e2ballbound}
 & \leq 10 \cdot 2^{m}\balpha(m)^{-1} \sqrt{\epsilon} \,.
\end{align}
With \eqref{e2bound} and \eqref{e2ballbound} it follows that there is some $\bc(m) > 0$ with
\begin{align*}
& \int_{\B_V(0,\frac{1}{2})} \sum_{i \in I_x} |\bv(x) - \by^i(x)|^2 \, dx \\
& \qquad \leq \bc \epsilon \rho^2 + \int_{E_1 \cap \B_V(0,\frac{1}{2})} \sum_{i \in I_x} \Bigl[ |B_{x,\rho}|^{-1} \int_{B_{x,\rho}} \bar \by(x') \, dx' -  \by^i(x) \Bigr]^2 \, dx \\
 & \qquad \leq \bc \epsilon \rho^2 + \int_{E_1 \cap \B_V(0,\frac{1}{2})} \Bigl[ |B_{x,\rho}|^{-1} \int_{B_{x,\rho}} |\bar \by(x') - \by(x)| \, dx' \Bigr]^2 \,dx \\
& \qquad \leq \bc \epsilon \rho^2 + \int_{E_1 \cap \B_V(0,\frac{1}{2})} \Bigl[ \Bigl| |B_{x,\rho}|^{-1} \int_{E_1 \cap B_{x,\rho}} |\by(x') - \by(x)| \, dx' \Bigr| + \bc \sqrt{\epsilon} \rho \Bigr]^2 \, dx \,. \\
\end{align*}
Because $(a + b)^2 \leq 2(a^2 + b^2)$ we get that the expression above is bounded by
\begin{equation}
\label{term}
\bc' \epsilon \rho^2 + 2 \int_{E_1 \cap \B_V(0,\frac{1}{2})} \left[ |B_{x,\rho}|^{-1} \int_{E_1 \cap B_{x,\rho}} |\by(x') - \by(x)| \, dx' \right]^2 \, dx \,,
\end{equation}
for some $\bc'(m) > 0$. Since $|x'| \leq 2|x| \leq 1$ for $x' \in B_{x,\rho}$ and $\by$ is $1$-homogeneous,
\begin{align*}
\int_{E_1 \cap B_{x,\rho}} |\by(x') - \by(x)| \, dx' & \leq \int_{B_{x,\rho}}  3 \rho |x| \, dx' \leq 3\left( 1 + \frac{1}{m} \right) \int_{B_{x,\rho}} dx' \int_{[x,x']} 1 \,.
\end{align*}
By Lemma~\ref{distance_estimate},
\[
\int_{E_1 \cap B_{x,\rho}} |\by(x') - \by(x)| \, dx' \leq  \int_{B_{x,\rho}} \, dx' \int_{[x,x']}\|D\by\| \,,
\]
and hence the second term of \eqref{term} is bounded by
\begin{equation}
\label{term2}
\bc(m) \int_{E_1} \left[ |B_{x,\rho}|^{-1} \min\left\{\int_{B_{x,\rho}} dx' \int_{[x,x']} \|D\by\|, \int_{B_{x,\rho}} dx'\int_{[x,x']} 1 \right\} \right]^2 dx \,.
\end{equation}
Hence we can apply Lemma~\ref{two_terms} and \eqref{term2} is smaller or equal
\begin{align*}
& 3 \bc(m) \int_{E_1}|B_{x,\rho}|^{-2} \left(\int_{B_{x,\rho}}dx' \int_{[x,x']} 1 \right)\left(\int_{B_{x,\rho}}dx' \int_{[x,x']} (1+\|D\by\|^2)^\frac{1}{2} - 1 \right) \\
& \quad \leq 3 \bc(m) \int_{E_1}|B_{x,\rho}|^{-2}|B_{x,\rho}| \rho |x| \left(\int_{B_{x,\rho}} dx' \int_{[x,x']} (1+\|D\by\|^2)^\frac{1}{2} - 1 \right) \\
& \quad \leq 3 \bc(m) \int_{\B_V(0,1)} |B_{x,\rho}|^{-1} \rho |x| \int_{B_{x,\rho}} dx'\int_{[x,x']} (1+\|D\by\|^2)^\frac{1}{2} - 1 \,.
\end{align*}
Because $\|D\by\|$ is $0$-homogeneous, so is $(1+\|D\by\|^2)^\frac{1}{2} - 1$ and by Lemma~\ref{homogen} the term above is smaller than or equal to
\[
3 \bc(m) \balpha(m)^{-1}\bc_{\ref{homogen}} \rho^2 \int_{\B_V(0,1)} (1+\|D\by\|^2)^\frac{1}{2} - 1 \,.
\]
Using $(1 + (\sum_i a_i)^2)^\frac{1}{2} \leq \sum_i (1 + a_i^2)^\frac{1}{2}$, $\|D\by^i_x\| \leq \J \by^i_x$ for almost all $x$ and \eqref{sizeboundprelim} we see that
\begin{align*}
\int_{\B_V(0,1)} (1+\|D\by\|^2)^\frac{1}{2} - 1 & = \int_{\B_V(0,1)} \left(1 + \biggl(\, \sum_{i \in I_x}\|D\by^i_x\|\biggr)^2\right)^\frac{1}{2} - 1 \, dx\\
 & \leq \int_{\B_V(0,1)} \sum_{i \in I_x} \left(1 + \|D\by^i_x\|^2\right)^\frac{1}{2} - 1 \, dx \\
 & \leq \int_{\B_V(0,1)} \sum_{i \in I_x} \left(1 + (\J \by^i_x)^2\right)^\frac{1}{2} - 1 \, dx \\
 & = \cH^m(\spt(P) \cap Z_V) - \balpha(m) \leq 5\epsilon \,.
\end{align*}
Plugging this back into \eqref{term} gives that $\int_{\B_V(0,\frac{1}{2})} \sum_{i \in I_x} |\bv(x) - \by^i(x)|^2 \, dx \leq \bc'' \rho^2 \epsilon$ for some $\bc''(m) > 0$. By homogeneity the lemma follows.
\end{proof}

Next we want to estimate the differential $D\bv_x$. It will also become clear from the following proof that $\bv$ is differentiable at all $x \neq 0$.

\begin{Lem}
[{\cite[Lemma~8]{Reif}}]
\label{derivative_estimate}
There is a constant $\bc_{\theThm}(m) > 0$ such that for all $x \in \B_V(0,1) \setminus \{0\}$,
\[
\|D\bv_x\|_{\HS} \leq (1 + \bc_{\theThm}\rho)(\balpha(m) \rho^m |x|^m)^{-1} \int_{\B_V(x,\rho |x|)} \|D\bar \by_z\|_{\HS} \, dz \,.
\]
\end{Lem}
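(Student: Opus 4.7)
The idea is to replace the moving domain of integration in the definition of $\bv$ with a fixed one via the change of variables $z=x+\rho|x|y$, and then differentiate under the integral sign. Setting $\phi_y(x)\defl x+\rho|x|y$, one obtains
\[
\bv(x) = \balpha(m)^{-1}\int_{\B_V(0,1)} \bar\by(x+\rho|x|y)\,dy.
\]
For fixed $y\in\B_V(0,1)$, the map $\phi_y$ is smooth on $V\setminus\{0\}$ with differential $D\phi_y(x)[v] = v+\rho|x|^{-1}\langle x,v\rangle y$, operator norm bounded by $1+\rho|y|\leq 1+\rho$, and Jacobian determinant $1+\rho|x|^{-1}\langle x,y\rangle\in[1-\rho,1+\rho]$, so $\phi_y$ is a local diffeomorphism with Jacobian bounded away from $0$.

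Next I would differentiate under the integral. The function $\bar\by$ is piecewise affine, smooth on the complement $V\setminus\Sigma$ of the $(m-1)$-dimensional skeleton $\Sigma$ obtained by projecting to $V$ the simplicial faces of $\spt(P)$; moreover $|\bar\by|\leq\rho$. Since $\phi_y$ is bi-Lipschitz on any compact subset of $V\setminus\{0\}$ with constants uniform in $y$, the preimages $\phi_y^{-1}(\Sigma)$ are $\cH^m$-negligible, and a Fubini argument shows that for each $x\neq 0$ the chain rule
\[
D(\bar\by\circ\phi_y)_x = D\bar\by_{\phi_y(x)}\circ D\phi_y(x)
\]
holds for a.e.\ $y\in\B_V(0,1)$, with integrand bounded uniformly in $y$. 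Dominated convergence then yields the differentiability of $\bv$ at every $x\neq 0$ together with the identity
\[
D\bv_x = \balpha(m)^{-1}\int_{\B_V(0,1)} D\bar\by_{\phi_y(x)}\circ D\phi_y(x)\,dy.
\]

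Finally I would estimate the Hilbert-Schmidt norm. Combining the triangle inequality for vector-valued integrals with the submultiplicative estimate $\|A\circ B\|_{\HS}\leq\|A\|_{\HS}\|B\|_{\mathrm{op}}$ yields
\[
\|D\bv_x\|_{\HS} \leq (1+\rho)\,\balpha(m)^{-1}\int_{\B_V(0,1)} \|D\bar\by_{\phi_y(x)}\|_{\HS}\,dy,
\]
and the reverse change of variables, under which $dy=(\rho|x|)^{-m}\,dz$ and the domain $\B_V(0,1)$ corresponds to $\B_V(x,\rho|x|)$, produces
\[
\|D\bv_x\|_{\HS} \leq (1+\rho)(\balpha(m)\rho^m|x|^m)^{-1}\int_{\B_V(x,\rho|x|)} \|D\bar\by_z\|_{\HS}\,dz,
\]
i.e.\ the claim with $\bc_{\ref{derivative_estimate}}=1$. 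The main obstacle is justifying the chain rule despite the discontinuities of $\bar\by$ across $\Sigma$, but the Fubini argument sketched above confines the exceptional set to a product-null set and the identity for $D\bv_x$ extends to every $x\neq 0$.
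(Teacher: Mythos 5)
Your argument follows the same two-step structure as the paper's: fix the domain by the substitution $z=x+\rho|x|y$ and differentiate under the integral sign; but the Hilbert--Schmidt estimate is carried out differently. The paper first bounds the individual directional derivatives $|D\bv_x(e_i)|$ and then reassembles $\|D\bv_x\|_{\HS}$ with a multivariate Jensen inequality, ending up with the constant $1+3m\rho$. You instead pull the Hilbert--Schmidt norm through the Bochner integral and invoke the ideal property $\|A\circ B\|_{\HS}\leq\|A\|_{\HS}\|B\|_{\mathrm{op}}$ with $A=D\bar\by_{\phi_y(x)}$ and $B=D\phi_y(x)$, whose operator norm you bound by $1+\rho$. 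This is tidier, avoids Jensen entirely, gives the dimension-free constant $1+\rho$, and incidentally corrects a small typo in the paper's displayed derivative (the paper writes $\langle x,v\rangle$ where it should be $\langle x/|x|,v\rangle$; you have it right). One caution applying equally to both proofs: the Fubini argument you sketch shows the chain rule holds for a.e.\ $y$ at each fixed $x\neq0$, but it does not produce a $t$-uniform dominating function for the difference quotients $t^{-1}\bigl(\bar\by(\phi_y(x+tv))-\bar\by(\phi_y(x))\bigr)$; since $\bar\by$ can have jump discontinuities across $\Sigma$ (the weights $\delta_i$ change as $I_x$ does), these quotients may be of size $|t|^{-1}$ on a $y$-set of measure of order $|t|$, and confining the singular set to a product-null set does not on its own kill that contribution. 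The paper has the same gap—it simply invokes dominated convergence—so this is not a defect of your proof relative to the paper's, but the final sentence of your plan claims more than the Fubini observation actually delivers.
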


\begin{proof}
By definition
\[
\bv(x) = (\balpha(m) \rho^m |x|^m)^{-1} \int_{\B_V(x,\rho |x|)} \bar \by(z) \, dz = \balpha(m)^{-1}\int_{\B_V(0,1)} \bar \by(x + z \rho |x|) \, dz \,.
\]
The map $\bar \by$ is piecewise linear and in particular differentiable outside a set of measure zero. Let $v \in B_V(0,1)$ and $t \geq 0$. The Lebesgue Dominated Convergence Theorem implies that
\begin{align}
\nonumber
& \frac{1}{t}(\bv(x + tv) - \bv(x)) \\
\nonumber
 & \qquad =  \balpha(m)^{-1}\int_{\B_V(0,1)} \frac{1}{t}(\bar \by(x + tv + z \rho |x + tv|) - \bar \by(x + z \rho |x|)) \, dz \\
\label{differentialexpr}
 & \qquad \to \balpha(m)^{-1}\int_{\B_V(0,1)} D \bar \by_{x + z \rho |x|}(v + z \rho \langle x, v\rangle) \, dz \,,
\end{align}
for $t \to 0$. Hence with \eqref{norms},
\begin{equation}
\label{random.new.1}
 |D\bv_x(v)| \leq \balpha(m)^{-1} \int_{\B_V(0,1)} |D \bar \by_{x + z \rho |x|}(v)| + \rho \|D \bar \by_{x + z \rho |x|}\|_{\HS} \, dz \,.
\end{equation}
For an orthonormal basis $(e_1,\dots,e_m)$ of $V$, using the multivariate Jensen inequality
\[
\biggl(\Bigl(\dashint a_1\Bigr)^2 + \cdots + \Bigl(\dashint a_m\Bigr)^2 \biggr)^\frac{1}{2} \leq \dashint \left(a_1^2 + \cdots + a_m^2 \right)^\frac{1}{2} \,,
\]
to \eqref{random.new.1} leads to
\begin{align*}
\|D\bv_x\|_{\HS} & = \biggl(\sum_i |D\bv_x(e_i)|^2\biggr)^\frac{1}{2} \\
 & \leq  \balpha(m)^{-1}\int_{\B_V(0,1)} \biggl(\sum_i\left(|D \bar \by_{x + z \rho |x|}(e_i)| + \rho \|D \bar \by_{x + z \rho |x|}\|_{\HS} \right)^2\biggr)^\frac{1}{2} \, dz \\
 & \leq \balpha(m)^{-1}\int_{\B_V(0,1)} \left(\|D \bar \by_{x + z \rho |x|}\|_{\HS}^2 + (m\rho^2 + 2 \sqrt{m}\rho) \|D \bar \by_{x + z \rho |x|}\|_{\HS}^2\right)^\frac{1}{2} \, dz \\
 & \leq (1 + \sqrt{m}\rho) \balpha(m)^{-1} \int_{\B_V(0,1)} \|D \bar \by_{x + z \rho |x|}\|_{\HS} \, dz \\
 & \leq (1 + \sqrt{m}\rho) (\balpha(m) \rho^m |x|^m)^{-1} \int_{\B_V(x,\rho |x|)} \|D \bar \by_{z}\|_{\HS} \, dz \,.
\end{align*}
\end{proof}

In the next Lemma we estimate the derivatives of $\bv$ pointwise.

\begin{Lem}
[{\cite[Lemma~9]{Reif}}]
\label{derivative_estimate2}
There is a constant $\bc_{\theThm}(m) > 0$ such that
$\|D\bv_x\|_{\HS} \leq \bc_{\theThm} \epsilon^{\frac{1}{3}}$ and hence $|\bv(x)| \leq \bc_{\theThm} \epsilon^{\frac{1}{3}}|x|$ for all $x \in \B_V(0,1)$.
\end{Lem}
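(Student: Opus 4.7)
The plan is to iterate the bound of Lemma~\ref{derivative_estimate} with Cauchy--Schwarz and then reduce the problem to the size excess estimate \eqref{sizeboundprelim}. First I would apply Cauchy--Schwarz to the averaged integral in Lemma~\ref{derivative_estimate} (passing from the $L^1$-average to the $L^2$-average on $\B_V(x,\rho|x|)$), obtaining
\begin{equation*}
\|D\bv_x\|_{\HS}^2 \leq (1 + \bc_{\ref{derivative_estimate}}\rho)^2 (\balpha(m)\rho^m|x|^m)^{-1} \int_{\B_V(x,\rho|x|)} \|D\bar\by_z\|_{\HS}^2 \, dz \,.
\end{equation*}
Since each $\by^i$ is linear and the weights $\delta_i$ are scale-invariant, $\bar\by$ is $1$-homogeneous and $D\bar\by$ is $0$-homogeneous, so a change of variables together with the inclusion $\B_V(x,\rho|x|) \subset \B_V(0,2|x|)$ reduces the integral to $(2|x|)^m \int_{\B_V(0,1)} \|D\bar\by_z\|_{\HS}^2 \, dz$.

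Next the plan is to bound $\int_{\B_V(0,1)} \|D\bar\by_z\|_{\HS}^2 \, dz \leq C_m \epsilon$. Writing $D\bar\by_z = \sum_{i \in I_z}\delta_i D\by^i_z$ as a convex combination with $\sum \delta_i = 1$, convexity of the square together with \eqref{jacobian_sum} gives pointwise $\|D\bar\by_z\|_{\HS}^2 \leq \sum_{i \in I_z}(\J\by^i_z)^2$. The crucial step---and the main technical obstacle I expect---is to extract a uniform bound $\J\by^i_z \leq C_m$; this should follow from the height bound $|\by^i(x)| \leq \rho|x|$ on $\pi_V(S_i)$ (a consequence of \eqref{dist_ball} and the cone structure $S_i = [0,1] \cdot S_i'$) together with $\rho < \frac{1}{2}$, but a careful geometric argument may be needed to handle possibly thin simplices. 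Granting this, the factorization $a^2 = (\sqrt{1+a^2}-1)(\sqrt{1+a^2}+1)$ gives $a^2 \leq (C_m + 2)(\sqrt{1+a^2}-1)$, and so integrating and invoking \eqref{sizeboundprelim} yields
\begin{equation*}
\int_{\B_V(0,1)} \|D\bar\by_z\|_{\HS}^2 \, dz \leq (C_m+2) \int_{\B_V(0,1)} \sum_{i \in I_z}\bigl(\sqrt{1+(\J\by^i_z)^2} - 1\bigr) dz \leq 5(C_m+2)\epsilon \,.
\end{equation*}

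Combining the two displays produces a bound $\|D\bv_x\|_{\HS} \leq C'_m \rho^{-m/2} \epsilon^{1/2}$. The standing hypothesis $\epsilon \leq \rho^{6m}$ implies $\epsilon^{1/6} \leq \rho^m \leq \rho^{m/2}$, hence $\rho^{-m/2}\epsilon^{1/2} \leq \epsilon^{1/3}$, which establishes the first assertion with a suitable $\bc_{\theThm}(m)$. The pointwise bound on $\bv$ then follows immediately from its $1$-homogeneity: by Euler's identity, $\bv(x) = D\bv_x(x)$ for $x \neq 0$, so $|\bv(x)| \leq \|D\bv_x\|_{\HS}|x| \leq \bc_{\theThm}\epsilon^{1/3}|x|$.
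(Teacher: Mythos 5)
Your first reduction (Jensen applied to the average in Lemma~\ref{derivative_estimate}) and the subsequent change of variables using $0$-homogeneity of $D\bar\by$ and the inclusion $\B_V(x,\rho|x|) \subset \B_V(0,2|x|)$ are both fine, as is the final paragraph on Euler's identity and the exponent bookkeeping. But the step you flagged as the main obstacle is a genuine gap, not a technicality to be patched: there is no uniform bound $\J\by^i_z \leq C_m$, and in fact $\int_{\B_V(0,1)}\|D\bar\by_z\|_{\HS}^2\,dz$ cannot be bounded by a multiple of $\epsilon$ from the standing hypotheses. The height bound $|\by^i(w)| \leq \rho|w|$ only controls $D\by^i$ radially. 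For $m \geq 2$, a simplex $S_i$ can project to a thin conical sliver of angular width $\alpha$ around some ray in $V$; the linear map $\by^i$ may then swing from $+\rho r$ to $-\rho r$ across the sliver at radius $r$, producing a transverse slope of order $\rho/\alpha$ while the height bound is unimpaired. Choosing $\alpha$ so that this sliver contributes $\sim \alpha^{m-1}\cdot(\rho/\alpha) = \epsilon$ to the cylindrical excess, it contributes $\sim\alpha^{m-1}(\rho/\alpha)^2 = \rho\epsilon/\alpha$ to $\int\|D\bar\by\|_{\HS}^2$, which is unbounded as $\alpha\downarrow 0$. The underlying obstruction is that the excess controls $\int\sum_i\bigl((1+(\J\by^i_z)^2)^{1/2}-1\bigr)$, which grows \emph{linearly} in the slope on the large-slope set, whereas $\|D\bar\by_z\|_{\HS}^2$ grows \emph{quadratically} there; there is no comparison between the two without a pointwise cap on the slope, which the assumptions do not supply.

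The paper's proof is built precisely to avoid such a cap. It keeps Lemma~\ref{derivative_estimate} in the first power (not squared) and bounds $\int_{\B_V(x,\rho|x|)}\|D\bar\by\|_{\HS}$ by writing
\[
\|D\bar\by\|_{\HS} = \bigl((1+\|D\bar\by\|_{\HS}^2)^{1/2}-1\bigr)^{1/2}\,\bigl((1+\|D\bar\by\|_{\HS}^2)^{1/2}+1\bigr)^{1/2}
\]
and applying Cauchy--Schwarz to this product, as in \eqref{derivative_estimate2a}. The first resulting integral is exactly the excess-type quantity and is bounded by $\lesssim |x|^m\epsilon$ after pushing the square root inside the convex combination defining $\bar\by$ via \eqref{convex_comb}; the second is $2|B_{x,\rho}|$ plus the same small term. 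The product is $\lesssim |x|^m\epsilon^{1/2}$; dividing by $\balpha(m)\rho^m|x|^m$ and using $\epsilon^{1/6}\leq\rho^m$ gives $\epsilon^{1/3}$. Every intermediate quantity in this route is one the excess actually controls, so no pointwise slope bound is ever required.
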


\begin{proof}
Fix a point $x \in \B_V(0,1)$. For shorter formulas we abbreviate $B_{x,\rho} \defl \B_V(x,\rho |x|)$. The Cauchy-Schwarz inequality implies
\begin{align}
\nonumber
& \int_{B_{x,\rho}} \|D \bar \by\|_{\HS} = \int_{B_{x,\rho}} (\|D \bar \by\|_{\HS}^2)^\frac{1}{2} \\
\nonumber
 & \, = \int_{B_{x,\rho}} \left((1 + \|D \bar \by\|_{\HS}^2)^\frac{1}{2} - 1\right)^\frac{1}{2} \left((1 + \|D \bar \by\|_{\HS}^2)^\frac{1}{2} + 1 \right)^\frac{1}{2} \\
\nonumber
 & \, \leq \biggl(\int_{B_{x,\rho}} (1 + \|D \bar \by\|_{\HS}^2)^\frac{1}{2} - 1\biggr)^\frac{1}{2}\biggl(\int_{B_{x,\rho}} (1 + \|D\bar\by\|_{\HS}^2)^\frac{1}{2} + 1\biggr)^\frac{1}{2} \\
\label{derivative_estimate2a}
 & \, = \biggl(\int_{B_{x,\rho}} (1 + \|D \bar \by\|_{\HS}^2)^\frac{1}{2} - 1\biggr)^\frac{1}{2}\biggl(2|B_{x,\rho}| + \int_{B_{x,\rho}} (1 + \|D \bar \by\|_{\HS}^2)^\frac{1}{2} - 1\biggr)^\frac{1}{2} \,.
\end{align}
By definition $\bar \by(z) = \sum_{i \in I_z} \delta_i(z) \by^i(z)$, where $\delta_i(z) = \|g_i\|(\sum_{j \in I_x} \|g_j\|)^{-1} \leq \|g_i\|\|g_0\|^{-1}$. For almost every $\xi \in \B_V(0,1)$ there holds,
\[
\|D \bar \by_\xi\|_{\HS} =  \Bigl\|D \Bigl(\sum_{i \in I_\xi} \delta_i\by^i\Bigr)_\xi\Bigr\|_{\HS} = \Bigl\|\sum_{i \in I_\xi} \delta_i(\xi)D\by^i_\xi\Bigr\|_{\HS} \leq \sum_{i \in I_\xi} \delta_i(\xi)\|D\by^i_\xi\|_{\HS} \,.
\]
Using Lemma~\ref{squares} and the fact that $\|D\by^i_z\|_{\HS} \leq \J \by^i_z$ for almost all $z$,
\begin{align}
\nonumber
\int_{B_{x,\rho}} (1 + \|D \bar \by\|_{\HS}^2)^\frac{1}{2} - 1 & \leq \int_{B_{x,\rho}} \biggl(1 + \Bigl(\sum_{i \in I_z} \delta_i(z) \|D\by^i_z\|_{\HS} \Bigr)^2\biggr)^\frac{1}{2} - 1 \, dz \\
\nonumber
 & \leq \frac{1}{\|g_0\|}\int_{B_{x,\rho}} \sum_{i \in I_z} \|g_i\| \left(1 + \|D\by_z^i\|_{\HS}^2\right)^\frac{1}{2} - \|g_0\| \, dz \\
\nonumber
 & \leq \frac{1}{\|g_0\|}\int_{\B_V(0,(1+\rho)|x|)} \sum_{i \in I_z} \|g_i\|\left(1 + (\J \by_z^i)^2\right)^\frac{1}{2} - \|g_0\| \, dz \\
\nonumber
 & = (1+\rho)^m|x|^m\|g_0\|^{-1}\Exc_1(P,V) \\
\label{derivative_estimate2b}
 & \leq 2^m|x|^m\epsilon \,.
\end{align}
Hence
\begin{align*}
\int_{B_{x,\rho}} \|D \bar \by\|_{\HS} & \leq (2^m|x|^m \epsilon)^\frac{1}{2}(2^m|x|^m \epsilon + 2|B_{x,\rho}|)^\frac{1}{2} \\
 & = (2^m|x|^m \epsilon)^\frac{1}{2}(2^m|x|^m\epsilon + 2\balpha(m)\rho^m|x|^m)^\frac{1}{2} \\
 & \leq \bc(m)|x|^m \epsilon^\frac{1}{2} \,,
\end{align*}
for some constant $\bc(m) > 0$. By Lemma~\ref{derivative_estimate} and because $\epsilon^\frac{1}{6} \leq \rho^{m}$ we get for $x \neq 0$,
\begin{align*}
\|D\bv_x\|_{\HS} & \leq (1 + \bc_{\ref{derivative_estimate}}\rho)(\balpha(m)\rho^m|x|^m)^{-1}\bc|x|^m\epsilon^\frac{1}{2} \\
 & = \bc(1 + \bc_{\ref{derivative_estimate}}\rho)\balpha(m)^{-1}\epsilon^{-\frac{1}{6}}\epsilon^\frac{1}{2} \\
 & = \bc'\epsilon^\frac{1}{3} \,,
\end{align*}
for some constant $\bc'(m) > 0$.
\end{proof}

Because $\|D\bv_x\| \leq \|D\bv_x\|_{\HS} \leq \sqrt{m} \|D\bv_x\|$ the statement in the lemma above is true for both norms. Further estimates on the integral of this differential is needed. In the following lemma the particular definition of the averaging function $\bar\by(x) = (\sum_{j \in I_x}\|g_j\|)^{-1}\sum_{i \in I_x}\|g_i\|\by^i(x)$ is used in an essential way.

\begin{Lem}
[{\cite[Lemma~10]{Reif}}]
\label{integral_of_dv}
There is a constant $\bc_{\theThm}(m) > 0$ such that
\begin{align*}
\int_{\B_V} \sum_{i \in I_x} \|g_i\|(1 + \|D\bv_x\|_{\HS}^2)^\frac{1}{2} \, dx & \leq \bc_{\theThm}\|g_0\|\rho\epsilon + \int_{\B_V} \sum_{i \in I_x} \|g_i\|(1 + \|D \by^i_x\|_{\HS}^2)^\frac{1}{2} \, dx \,.
\end{align*}
\end{Lem}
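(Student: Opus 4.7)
The plan is to combine the pointwise derivative bound of Lemma~\ref{derivative_estimate} with the convex-combination inequality \eqref{convex_comb} via Jensen's inequality and Fubini, carefully subtracting the common ``flat part'' $\int_{\B_V(0,1)}\sum_{i\in I_x}\|g_i\|\,dx$ from both sides so that the remaining comparison is between two quantities of size $O(\|g_0\|\epsilon)$, where a multiplicative factor $1+O(\rho)$ then produces the desired $\|g_0\|\rho\epsilon$ error.

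First I would establish the pointwise bound
\[
(1+\|D\bv_x\|_{\HS}^2)^{1/2}-1 \leq (1+\bc\rho)\dashint_{\B_V(x,\rho|x|)}\bigl((1+\|D\bar\by_z\|_{\HS}^2)^{1/2}-1\bigr)\,dz
\]
by combining Lemma~\ref{derivative_estimate} with the elementary algebraic inequality $(1+c^2t^2)^{1/2}-1\leq c^2\bigl((1+t^2)^{1/2}-1\bigr)$ valid for $c\geq 1$ and $t\geq 0$ (proved by squaring both sides), and Jensen's inequality for the convex function $t\mapsto (1+t^2)^{1/2}-1$. Then, since $\bar\by$ is locally a convex combination of the $\by^i$'s with coefficients $\delta_i(z)=\|g_i\|/\sum_{j\in I_z}\|g_j\|$, applying \eqref{convex_comb} of Lemma~\ref{squares} and multiplying through by $\sum_{j\in I_z}\|g_j\|$ gives
\[
\Bigl(\sum_{j\in I_z}\|g_j\|\Bigr)\bigl((1+\|D\bar\by_z\|_{\HS}^2)^{1/2}-1\bigr) \leq \sum_{i\in I_z}\|g_i\|\bigl((1+\|D\by^i_z\|_{\HS}^2)^{1/2}-1\bigr).
\]

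Next I would multiply the pointwise bound by $\sum_{i\in I_x}\|g_i\|$, integrate over $x\in\B_V(0,1)$, and swap the $x$- and $z$-integrations by Fubini using the parametrization $z=\tau_u(x)\defl x+\rho|x|u$ for $u\in\B(0,1)$. By \eqref{psi}, $\tau_u$ has Jacobian $1+\rho\langle x,u\rangle/|x|\in[1-\rho,1+\rho]$, so for each fixed $u$ the change of variables $x\mapsto z$ transports the weight $\sum_{i\in I_x}\|g_i\|$ to the point $z$ up to a Jacobian error $O(\rho)$ and a discrepancy $|\sum_{i\in I_x}\|g_i\|-\sum_{j\in I_z}\|g_j\||$ supported on the preimage of the $\rho$-neighborhood of simplex boundaries together with $E_2$. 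Combining with the previous display yields $A' \leq (1+\bc\rho)\,B' + \eta$, where $A'\defl\int\sum_{i\in I_x}\|g_i\|[(1+\|D\bv_x\|_{\HS}^2)^{1/2}-1]\,dx$, $B'\defl\int\sum_{i\in I_z}\|g_i\|[(1+\|D\by^i_z\|_{\HS}^2)^{1/2}-1]\,dz$, and $\eta$ collects the Fubini and mismatch errors.

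Finally, I would show $\eta\leq\bc\|g_0\|\rho\epsilon$ using \eqref{multiple_values_estimate_prelim}--\eqref{multiple_values_mass_estimate_prelim} (giving $\cH^m(E_2)\leq 10\epsilon$ and $\int_{E_2}\sum_{i\in I_x}\|g_i\|\,dx\leq 15\|g_0\|\epsilon$), together with Lemma~\ref{derivative_estimate2} to bound $\|D\bv\|_{\HS}\leq\bc\epsilon^{1/3}$ on the $E_2$ part, and the $0$-homogeneity of each $\|D\by^i\|_{\HS}$ (since $\by^i$ is linear on its simplex) combined with a Lemma~\ref{homogen}-type argument for the $\rho$-boundary terms. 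Since $B'\leq\|g_0\|\epsilon$ by assumption~(6), one has $(1+\bc\rho)B'\leq B'+\bc\|g_0\|\rho\epsilon$, so $A'\leq B'+\bc'\|g_0\|\rho\epsilon$; adding the common flat part $\int\sum_{i\in I_x}\|g_i\|\,dx$ to both sides yields the stated inequality. The hard part will be obtaining an error of order $\rho\epsilon$ rather than $\epsilon$ or $\rho$: this relies crucially on isolating the flat part so that every multiplicative $(1+O(\rho))$ correction acts only on the excess, which is already of size $\|g_0\|\epsilon$.
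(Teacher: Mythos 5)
Your first step is sound and arguably cleaner than the paper's: combining Lemma~\ref{derivative_estimate} with the elementary inequality $(1+c^2t^2)^{1/2}-1\le c^2\bigl((1+t^2)^{1/2}-1\bigr)$ for $c\ge1$ and Jensen's inequality for $s\mapsto(1+s^2)^{1/2}-1$ does give the pointwise bound $(1+\|D\bv_x\|_{\HS}^2)^{1/2}-1\le(1+\bc\rho)^2\dashint_{\B_V(x,\rho|x|)}\bigl[(1+\|D\bar\by_z\|_{\HS}^2)^{1/2}-1\bigr]dz$; the paper obtains essentially the same thing via Cauchy--Schwarz and $(1+a^2)^{1/2}\le1+\tfrac12a^2$ in \eqref{integral_of_dva}--\eqref{integral_of_dvb}, so your route is a genuine simplification of that step.

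However, the plan breaks at the weight-transport step. When you multiply by $\sum_{i\in I_x}\|g_i\|=\|g_0\|+w(x)$ \emph{before} the Fubini/change-of-variables $z=x+\rho|x|u$, the mismatch term you must bound is of the form $\int|w(\psi_u^{-1}(z))-w(z)|\,f(z)\,dz$ with $f(z)=(1+\|D\bar\by_z\|_{\HS}^2)^{1/2}-1$, and the two building blocks you invoke are $\int w\le15\|g_0\|\epsilon$ and the pointwise bound from Lemma~\ref{derivative_estimate2}. But Lemma~\ref{derivative_estimate2} bounds $\|D\bv\|_{\HS}$, \emph{not} $\|D\bar\by\|_{\HS}$ — the latter is only $L^1$-small, not $L^\infty$-small — so you cannot leverage it against the mismatch factor. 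The best you can say about $\int w f$ is that it is $\le\int\sum_{i}\|g_i\|\bigl[(1+\|D\by^i\|^2)^{1/2}-1\bigr]\le\|g_0\|\epsilon$, i.e.\ $O(\|g_0\|\epsilon)$, which is a factor $\rho^{-1}$ too large. The support-on-``$\rho$-neighborhood of simplex boundaries'' heuristic does not rescue this, because the $(m-1)$-measure of those boundaries is not controlled by the hypotheses. The paper sidesteps the problem entirely by first establishing the \emph{unweighted} inequality $\int(1+\|D\bv\|^2)^{1/2}-1\le\bc\rho\epsilon+\int(1+\|D\bar\by\|^2)^{1/2}-1$ (all integrands $0$-homogeneous, so the change of variables is clean), and only then reinstating the weights via the split $\B_V=E_1\cup E_2$: on $E_1$ one has $\sum_{i\in I_x}\|g_i\|=\|g_0\|$ exactly and $\|g_0\|\delta_i\le\|g_i\|$, while the $E_2$ contribution is controlled by the uniform bound $\|D\bv\|_{\HS}\le\bc\epsilon^{1/3}$ (applied to $\bv$, where it is available) together with $\int_{E_2}\sum\|g_i\|\le15\|g_0\|\epsilon$, giving $O(\|g_0\|\epsilon^{5/3})=O(\|g_0\|\rho\epsilon)$. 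So the missing idea is to reverse the order: deal with the weights before, not after, the averaging/Fubini step.
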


\begin{proof}
For $x \in \B_V$ abbreviate again $B_{x,\rho} \defl \B_V(x,\rho |x|)$. Combining \eqref{derivative_estimate2a} and \eqref{derivative_estimate2b} as obtained in the proof of Lemma~\ref{derivative_estimate2} above with the estimate $\epsilon \leq \rho^{m+1}$, we get,
\begin{align}
\nonumber
& \biggl(\int_{B_{x,\rho}} \|D\bar \by\|_{\HS}\biggr)^2 \\
\nonumber
& \quad \leq \biggl(\int_{B_{x,\rho}} (1 + \|D\bar \by\|_{\HS}^2)^\frac{1}{2} - 1 \biggr)\biggl(2|B_{x,\rho}| + \int_{B_{x,\rho}} (1 + \|D\bar \by\|_{\HS}^2)^\frac{1}{2} - 1 \biggr) \\
\nonumber
 & \quad \leq \biggl(\int_{B_{x,\rho}} (1 + \|D\bar \by\|_{\HS}^2)^\frac{1}{2} - 1\biggr)\biggl(2\balpha(m) \rho^m |x|^m + 2^m|x|^m \epsilon \biggr) \\
\label{integral_of_dva}
 & \quad \leq 2\balpha(m)\rho^m|x|^m(1 + \bc\rho) \int_{B_{x,\rho}} (1 + \|D\bar \by\|_{\HS}^2)^\frac{1}{2} - 1 \,,
\end{align}
for some constant $\bc(m) > 0$. Using the fact that $(1 + a^2)^\frac{1}{2} \leq 1 + \tfrac{1}{2}a^2$, it follows from Lemma~\ref{derivative_estimate} and \eqref{integral_of_dva} that for $\bc' = \max\{\bc_{\ref{derivative_estimate}},\bc\}$,
\begin{align}
\nonumber
& \int_{\B_V} (1 + \|D\bv\|_{\HS}^2)^\frac{1}{2} - 1 \leq \frac{1}{2} \int_{\B_V} \|D\bv\|_{\HS}^2 \\
\nonumber
 & \qquad \qquad \leq \frac{1}{2} \int_{\B_V} \biggl[(1 + \bc_{\ref{derivative_estimate}}\rho)(\balpha(m) \rho^m |x|^m)^{-1} \int_{B_{x,\rho}} \|D\bar \by\|_{\HS} \biggr]^2 \\
\nonumber
 & \qquad \qquad \leq \frac{1}{2} \int_{\B_V} 2(1 + \bc'\rho)^3(\balpha(m) \rho^m |x|^m)^{-1} \int_{B_{x,\rho}}(1 + \|D \bar \by\|_{\HS}^2)^\frac{1}{2} - 1 \\
\nonumber
 & \qquad \qquad = \int_{\B_V} (1 + \bc'\rho)^3 \balpha(m)^{-1} \int_{\B_V} (1 + \|D \bar \by_{x + \rho |x| z}\|_{\HS}^2)^\frac{1}{2} - 1 \, dz \\
\label{integral_of_dvb}
 & \qquad \qquad \leq (1 + \bc'\rho)^3  \max_{z \in \B_V} \int_{\B_V} (1 + \|D \bar \by_{x + \rho |x| z}\|_{\HS}^2)^\frac{1}{2} - 1 \, dx \,.
\end{align}
As in \eqref{psi}, for fixed $z \in \B_V(0,1)$ the map $\psi : x \mapsto x + \rho |x| z$ is a bijection of $\B_V(0,1)$ and a subset of $\B_V(0,1 + \rho)$ with $D \psi_x(v) = v + \frac{\rho}{|x|}\langle x, v\rangle z$ and $\det D\psi_x = 1 + \tfrac{\rho}{|x|} \langle x, z \rangle$ for $x \neq 0$. It is $\det D\psi_x \geq 1 - \rho$ and since we assume that $\rho \leq \tfrac{1}{2}$ we obtain
\[
\det D(\psi^{-1})_\xi \leq \tfrac{1}{1 - \rho} \leq 1 + 2\rho \,,
\]
for almost all $\xi \in \psi(\B_V(0,1))$. By the homogeneity of $\|D\bar\by\|_{\HS}$,
\begin{align}
\nonumber
& \int_{\B_V(0,1)} (1 + \|D \bar \by_{x + \rho |x| z}\|_{\HS}^2)^\frac{1}{2} - 1 \, dx \\
\nonumber
 & \qquad \qquad = \int_{\psi(\B_V(0,1))} ((1 + \|D \bar \by_\xi\|_{\HS}^2)^\frac{1}{2} - 1) \det D(\psi^{-1})_\xi \, d\xi \\
\nonumber
 & \qquad \qquad \leq (1 + 2\rho) \int_{\B_V(0,1 + \rho)} (1 + \|D \bar \by_\xi\|_{\HS}^2)^\frac{1}{2} - 1 \, d\xi \\
\nonumber
 & \qquad \qquad = (1 + 2\rho) (1 + \rho)^m \int_{\B_V(0,1)} (1 + \|D \bar \by_\xi\|_{\HS}^2)^\frac{1}{2} - 1 \, d\xi \\
\label{integral_of_dvc}
 & \qquad \qquad \leq (1 + \bc''\rho) \int_{\B_V(0,1)} (1 + \|D \bar \by_\xi\|_{\HS}^2)^\frac{1}{2} - 1 \, d\xi \,,
\end{align}
for some constant $\bc''(m) > 0$. By the excess bound in term of $\epsilon$, we obtain as in \eqref{derivative_estimate2b},
\begin{align}
\nonumber
\int_{\B_V} (1 + \|D \bar \by\|_{\HS}^2)^\frac{1}{2} - 1 & \leq \int_{\B_V} \biggl(1 + \Bigl(\sum_{i \in I_\xi} \delta_i(\xi) \|D\by^i_\xi\|_{\HS} \Bigr)^2\biggr)^\frac{1}{2} - 1 \, d\xi \\
\nonumber
 & \leq \int_{\B_V} \sum_{i \in I_\xi} \delta_i(\xi) \left(1 + \|D\by_\xi^i\|_{\HS}^2\right)^\frac{1}{2} - 1 \, d\xi \\
\nonumber
 & \leq \|g_0\|^{-1}\int_{\B_V} \sum_{i \in I_\xi} \|g_i\|\left(1 + (\J \by_\xi^i)^2\right)^\frac{1}{2} - \|g_0\| \, d\xi \\
\label{integral_of_dvd}
 & = \|g_0\|^{-1}\Exc_1(P,V) \leq \epsilon \,.
\end{align}
Combining \eqref{integral_of_dvb},\eqref{integral_of_dvc} and \eqref{integral_of_dvd} we see that for $\bc''' = \max\{\bc'',\bc'\}$,
\begin{align*}
\int_{\B_V} (1 + \|D\bv\|_{\HS}^2)^\frac{1}{2} - 1 
 & \leq (1 + \bc'''\rho)^4 \int_{\B_V} (1 + \|D \bar \by_\xi\|_{\HS}^2)^\frac{1}{2} - 1 \, d\xi \\
 & \leq  \bc'''' \rho \epsilon + \int_{\B_V} (1 + \|D \bar \by_\xi\|_{\HS}^2)^\frac{1}{2} - 1 \, d\xi \,,
\end{align*}
for some constant $\bc''''(m) > 0$. With \eqref{convex_comb} of Lemma~\ref{squares} we obtain,
\begin{align*}
\int_{\B_V} (1 + \|D\bv\|_{\HS}^2)^\frac{1}{2} & \leq \bc'''' \rho \epsilon + \int_{\B_V} \biggl(\sum_{j \in I_\xi} \|g_j\|\biggr)^{-1}\sum_{i \in I_\xi}\|g_i\| (1 + \|D \by^i_\xi\|_{\HS}^2)^\frac{1}{2} \, d\xi \,. 
\end{align*}
Applying \eqref{multiple_values_mass_estimate_prelim}, Lemma~\ref{derivative_estimate2} and $\epsilon^\frac{2}{3} \leq \rho$ to the estimate above leads to,
\begin{align*}
& \int_{\B_V} \sum_{i \in I_x} \|g_i\| \left(\left(1 + \|D\bv_x\|^2 \right)^\frac{1}{2} - 1\right) \, dx \\
& \qquad \leq \|g_0\|\int_{E_1} \left(1 + \|D\bv_x\|_{\HS}^2 \right)^\frac{1}{2} - 1 \, dx + \int_{E_2} \sum_{i \in I_x} \|g_i\| \|D\bv_x\|_{\HS}^2 \, dx \\
& \qquad \leq \|g_0\|\int_{\B_V} \left(1 + \|D\bv_x\|_{\HS}^2 \right)^\frac{1}{2} - 1 \, dx + 15\|g_0\| \epsilon\bc_{\ref{derivative_estimate2}}\epsilon^\frac{2}{3} \\
& \qquad \leq 15\bc_{\ref{derivative_estimate2}}\|g_0\| \epsilon^\frac{5}{3} + \bc''''\|g_0\|\rho\epsilon \\
& \qquad \quad + \|g_0\|\int_{\B_V} \biggl(\sum_{j \in I_x} \|g_j\|\biggr)^{-1} \sum_{i \in I_x} \|g_i\|\left((1 + \|D \by^i_x\|_{\HS}^2)^\frac{1}{2} - 1\right) \, dx \\
& \qquad \leq (15\bc_{\ref{derivative_estimate2}} + \bc'''')\|g_0\|\rho\epsilon + \int_{\B_V} \sum_{i \in I_x} \|g_i\|\left((1 + \|D \by^i_x\|_{\HS}^2)^\frac{1}{2} - 1\right) \, dx \,.
\end{align*}
Adding $\int_{\B_V}\sum_{i \in I_x} \|g_i\|$ to both sides, the lemma follows.
\end{proof}




By assumption \eqref{projection} we have $\pi_{V\#}(P \res Z_V) = g_0\curr{\B_V(0,1)}$. Define the rectifiable $G$-chain $T' \defl (\id_V + \bv)_\#(g_0\curr{\B_V(0,1)}) \in \cR_m(\Hi;G)$. Lemma~\ref{dist_to_v} implies that
\[
\hdist((\spt(P) \cup \spt(T')) \cap Z_V, \B_V(0,1)) \leq 2\rho < 1 \,.
\]
Let $\varphi : \Hi \to \Hi$ be some Lipschitz map with the following properties: for $x \in V$ and $y \in V^\perp$,
\begin{enumerate}
\item $\varphi(x+y) = x + y$, if $|x| \geq \frac{3}{4}$ or $|y| \geq 2$,
\item $\varphi(x+y) = x + \bv(x)$, if $|x| \leq \tfrac{1}{2}$ and $|y| \leq 1$,
\item $\varphi(x+y) = x + (4|x| - 2)y + (3 - 4|x|)\bv(x))$, if $\tfrac{1}{2} \leq |x| \leq \tfrac{3}{4}$ and $|y| \leq 1$,
\end{enumerate}
Define $T \defl \varphi_\# P \in \cR_m(\Hi;G)$. It is easy to check that
\begin{enumerate}
\item $\partial (T \res Z_V(\frac{3}{4})) = \partial (P \res Z_V(\frac{3}{4}))$,
\item $\pi_{V\#}(T \res Z_V) = g_0\curr{\B_V(0,1)}$,
\item $T \res Z_V(\tfrac{1}{2}) = T' \res Z_V(\tfrac{1}{2})$,
\item for $\frac{1}{2} \leq |x| \leq \frac{3}{4}$, 
\[
\spt(T) \cap \pi_V^{-1}(x) = \bigcup_{i \in I_x} \left\{x + (3 - 4|x|)\bv(x) + (4|x| - 2)\by^i(x)\right\} \,.
\]
\end{enumerate}

\begin{Lem}
[{\cite[Lemma~11]{Reif}}]
\label{mass_of_z}
There are constants $\bc_{\theThm}(m),\rho_{\theThm}(m),\epsilon_{\theThm}(m) > 0$ such that if $0 < \rho \leq \rho_{\theThm}(m)$ and $0 < \epsilon \leq \epsilon_{\theThm}(m)$ (recall we also always assume $\epsilon \leq \rho^{6m}$), then
\[
\int_{A} \sum_{i \in I_x} \|g_i\| (1 + (\J \bz^i_x)^2)^\frac{1}{2} \, dx \leq \bc_{\theThm}\|g_0\|\rho^\frac{1}{2}\epsilon + \int_{A} \sum_{i \in I_x} \|g_i\| (1 + (\J \by^i_x)^2)^\frac{1}{2} \, dx \,,
\]
where $\bz^i \defl (4r - 2)\by^i + (3 - 4r)\bv$, $r(x) \defl |x|$ and $A \defl \{x \in V : \frac{1}{2} \leq |x| \leq \frac{3}{4}\}$.
\end{Lem}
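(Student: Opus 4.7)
The plan is to establish a pointwise comparison of the ``area densities'' $\sqrt{1+(\J\bz^i_x)^2}$ and $\sqrt{1+(\J\by^i_x)^2}$, and then to integrate, using Lemma~\ref{integral_of_dv} to swap the $\bv$-part of the estimate for a $\by^i$-part, Lemma~\ref{dist_to_v} to handle the mean-square of $|\by^i - \bv|$, and the balancing inequality \eqref{squaresd} of Lemma~\ref{squares} applied with $\delta = \rho^{1/2}$ to produce the advertised rate. The proof will closely follow the corresponding argument in \cite{Reif}.

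First I would differentiate. With $r = |x|$ and $\hat x = x/|x|$,
\[
D\bz^i_x(w) = (4r-2)\,D\by^i_x(w) + (3-4r)\,D\bv_x(w) + 4\langle\hat x, w\rangle\bigl(\by^i(x) - \bv(x)\bigr)
\]
for $w \in V$, a convex combination of $D\by^i_x$ and $D\bv_x$ (with coefficients in $[0,1]$ summing to $1$ on $A$) plus a rank-one perturbation of Hilbert--Schmidt norm at most $4|\by^i - \bv| \leq 12\rho$. Setting $M = (4r-2)D\by^i + (3-4r)D\bv$ and writing $\bar L = \id_V + L$, the multilinear expansion of $\bar\bz^i(e_1) \wedge \cdots \wedge \bar\bz^i(e_m)$, with $u\wedge u = 0$ killing higher-order contributions of the rank-one tail, decomposes $\sqrt{1+(\J\bz^i_x)^2}$ into a main piece equal to $\sqrt{1+(\J M_x)^2}$ plus $m$ rank-one corrections each bounded by $4|\by^i - \bv|$ times an $(m-1)$-th order partial Jacobian of $\bar M$. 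To control the main piece, I would combine the triangle inequality for $\|\cdot\|_{\HS}$, the convexity inequality \eqref{convex_comb}, and $\|L\|_{\HS}\leq \J L$ (from \eqref{jacobian_sum}), together with Weyl's inequality $\sigma_k(D\bz^i) \leq (4r-2)\sigma_k(D\by^i) + (3-4r)\|D\bv\| + 12\rho$ and the product formula $1+(\J L)^2 = \prod_k(1+\sigma_k(L)^2)$ to convert Hilbert--Schmidt bounds back into Jacobian bounds, exploiting $(4r-2)\in[0,1]$ on $A$ so that $\prod_k(1+(4r-2)^2\sigma_k(D\by^i)^2)\leq 1+(\J\by^i)^2$.

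Integration over $A$ with weights $\|g_i\|$ then splits into three parts. The $(4r-2)\sqrt{1+(\J\by^i)^2}$ piece is absorbed into the right-hand side of the lemma because $4r-2\leq 1$ on $A$; the $(3-4r)\sqrt{1+\|D\bv\|_{\HS}^2}$ piece is handled via Lemma~\ref{integral_of_dv} together with the pointwise smallness $\|D\bv\|_{\HS} \leq \bc_{\ref{derivative_estimate2}}\epsilon^{1/3}$ from Lemma~\ref{derivative_estimate2}, contributing an error of order $\|g_0\|\rho\epsilon$; the remaining corrections are absorbed via inequality \eqref{squaresd} of Lemma~\ref{squares} applied with $\delta = \rho^{1/2}$, $a = \J\by^i$, and $b$ proportional to $\rho + |\by^i(x) - \bv(x)|$. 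The $\tfrac12\rho^{-1}b^2$ piece integrates, via Lemma~\ref{dist_to_v}, to a quantity of order $\|g_0\|\rho\epsilon$, while the $\rho^{1/2}(\sqrt{1+(\J\by^i)^2}-1)$ piece integrates to $\rho^{1/2}$ times the mass excess of $P$, i.e.\ $\|g_0\|\rho^{1/2}\epsilon$; collecting yields the claim.

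The main obstacle is the pointwise estimate, since $\J\by^i$ can be arbitrarily large on thin simplices $S_i$ so that a naive additive comparison of $\J\bz^i$ with $\J\by^i$ fails outright. The delicate point is to arrange for the Weyl-type multiplicative correction to scale with the excess $\sqrt{1+(\J\by^i)^2} - 1$ rather than with the full area density $\sqrt{1+(\J\by^i)^2}$, so that the balancing inequality \eqref{squaresd} at $\delta = \rho^{1/2}$ converts this excess-proportional correction into the additive $\|g_0\|\rho^{1/2}\epsilon$ error in the conclusion. The standing assumption $\epsilon \leq \rho^{6m}$ ensures that the higher-order terms generated by the multilinear expansion and the Weyl estimates are absorbed into this single bound.
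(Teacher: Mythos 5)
Your pointwise expansion (Gram matrix versus wedge/Weyl is a matter of packaging; both yield a main term in the convex combination $(4r-2)\J\by^i$, $(3-4r)\|D\bv\|_{\HS}$ plus rank-one corrections proportional to $|\by^i-\bv|$) is compatible with the paper's, but the integration step as you describe it does not close, and the gap is genuine.

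The problem is in the sentence ``The $(4r-2)\sqrt{1+(\J\by^i)^2}$ piece is absorbed into the right-hand side because $4r-2\leq 1$ on $A$; the $(3-4r)\sqrt{1+\|D\bv\|_{\HS}^2}$ piece is handled via Lemma~\ref{integral_of_dv}\dots contributing an error of order $\|g_0\|\rho\epsilon$.'' This cannot work: $\sqrt{1+\|D\bv_x\|_{\HS}^2}\geq 1$ everywhere, so $\int_A(3-4|x|)\sum_{i\in I_x}\|g_i\|\sqrt{1+\|D\bv\|_{\HS}^2}\,dx$ is bounded \emph{below} by a constant multiple of $\|g_0\|$, not above by $\|g_0\|\rho\epsilon$. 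If you discard the weight $(4r-2)\leq 1$, you throw away precisely the complementary piece $(3-4r)$ of the partition of unity that is needed to absorb the $\bv$-term. The paper's proof uses in an essential way that the coefficients $(4r-2)$ and $(3-4r)$ are constant on each sphere $\{|x|=r\}$ \emph{and} that $x\mapsto\|D\bv_x\|_{\HS}$ and $x\mapsto\J\by^i_x$ are $0$-homogeneous: it integrates over spheres via the coarea formula, uses $0$-homogeneity to relate the sphere integral of the $\bv$-term at radius $r$ to the full-ball integral of Lemma~\ref{integral_of_dv} (again via homogeneity of the integrand), swaps the $\bv$-sphere-integral for the $\by^i$-sphere-integral (with error $\bc_{\ref{integral_of_dv}}\rho\epsilon$), and only then lets $(4r-2)+(3-4r)=1$ collapse the convex combination so that the main term becomes $\int_A\sum_i\|g_i\|\sqrt{1+(\J\by^i_x)^2}$ with coefficient exactly $1$. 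Your sketch never invokes the homogeneity or the sphere-by-sphere decomposition, so the main competitors cannot be recombined without incurring an $O(\|g_0\|)$ defect or a spurious factor $2$. The error-term analysis in your last paragraph (balancing inequality at $\delta=\rho^{1/2}$, Lemma~\ref{dist_to_v}, mass excess) matches the paper's treatment of the $R^{ij}$ terms and is fine; the missing idea is the coarea/homogeneity argument for the two main terms.
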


\begin{proof}
We start by noticing that $0 \leq 4|x|-2 \leq 1$ as well as $0 \leq 3-4|x| \leq 1$, two facts we will freely use without further reference in the course of this proof.
For almost all $x \in A$ it is
\begin{equation}
\label{random.new.20}
D\bz^i_x(v) = (4|x| - 2)D\by^i_x(v) + (3 - 4|x|)D\bv_x(v) + \frac{4\langle x,v \rangle}{|x|} (\by^i(x) - \bv(x)) \,.
\end{equation}
Fix a point $x$ for which all the differentials in the formula above exist and consider the matrix $M^i_{kl} \defl \langle D\bz^i_x(e_k), D\bz^i_x(e_l) \rangle$ for some orthonormal basis $(e_1,\dots, e_m)$ of $V$. Applying \eqref{jacobian_sum},
\[
(\J \bz^i_x)^2 = \sum_{K \neq \emptyset} \det(M_K^i) \,.
\]
For simplicity we assume that the $e_k=e^i_k$'s are eigenvectors of ${D\by^i_x}^*D\by^i_x$ with eigenvalues $(\lambda^i_k)^2$, for $\lambda^i_k \geq 0$, and $\Lambda^i$ is the corresponding diagonal matrix (i.e.\ $\Lambda^i_{kl} = \langle D\by^i_x(e^i_k), D\by^i_x(e^i_l) \rangle = (\lambda^i_k)^2 \delta_{kl}$). We can write
\begin{align*}
M^i_{kl} & = (4|x| - 2)^2 \Lambda^i_{kl} + (3 - 4|x|)^2 \langle D\bv_x(e^i_k), D\bv_x(e^i_l) \rangle + R^i_{kl} \,,
\end{align*}
where
\begin{equation}
\label{random.new.11}
\begin{split}
|R^i_{kl}| & \leq 16|\by^i(x) - \bv(x)|^2 + 8\|D\bv_x\||\by^i(x) - \bv(x)| \\
 & \quad + (\|D\bv_x\| + 4|\by^i(x) - \bv(x)|)(\lambda^i_k + \lambda^i_l)\,.
\end{split}
\end{equation}
Next we estimate the minors of $M^i$ corresponding to some nonempty subset $K \subset \{1,\dots,m\}$. Using \eqref{random.new.20} to expand each $M^i_{kk}=\det(M^i_{\{k\}})$ we find that for subsets with one element $K = \{k\}$,
\begin{equation}
\label{random.new.12}
\begin{split}
\|D\bz^i_x\|_{\HS}^2 &= \sum_{k=1}^m \det\left(M^i_{\{k\}}\right) \\
&\leq  (4|x|-2)^2 \sum_{k=1}^m  \Lambda^i_{kk} + (3-4|x|)^2 \sum_{k=1}^m |D\bv_x(e_k)|^2 \\
& \quad\quad\quad\quad +2(4|x|-2)(3-4|x|)\sum_{k=1}^m  \langle D\by_x^i(e_k),D\bv_x(e_k) \rangle  \\
& \quad + 16 |\by^i(x)-\bv(x)|^2 + 8 |\by^i(x)-\bv(x)| \left( \|D\bv_x\| + \|D\by^i_x\|\right) \\
& = \bigg( (4|x|-2) \|D\by_x^i\|_{\HS} + (3-4|x|) \|D\bv_x\|_{\HS} \bigg)^2 \\
& \quad + 16 |\by^i(x)-\bv(x)|^2 + 8 |\by^i(x)-\bv(x)| \left( \|D\bv_x\| + \|D\by^i_x\|\right) \,.
\end{split}
\end{equation}
Furthermore the exists a constant $\bc=\bc(m) > 0$ such that if $\# K \geq 2$ then,
\begin{equation}
\label{random.new.10}
\begin{split}
\det(M^i_K) & \leq (4|x| - 2)^2 \det(\Lambda^i_K) + \bc\|D\bv_x\|^4 \\
 & \quad + \bc(|\by^i(x) - \bv(x)|^2 + \|D\bv_x\||\by^i(x) - \bv(x)|) \\
 & \quad + \bc(|\by^i(x) - \bv(x)| + \|D\bv_x\|) (\J \by^i_x)^2 \,,
\end{split}
\end{equation}
We now indicate how to obtain the above inequality:
\begin{multline*}
\det(M_K^i) = \sum_{\sigma \in S_q} (-1)^{|\sigma|} \prod_{k \in K} M^i_{k \sigma(k)} \\
= \sum_{\sigma \in S_q} (-1)^{|\sigma|} \prod_{k \in K} \left(  (4|x|-2)^2 \Lambda^i_{k \sigma(k)} +(3-4|x|)^2 \langle D\bv_x(e^i_k),D\bv_x(e^i_{\sigma(k)})\rangle +R^i_{k \sigma(k)} \right)\\
= \sum_{\sigma \in S_q} (-1)^{|\sigma|} \sum_{K_1,K_2,K_3} \left( \prod_{k \in K_1} (4|x|-2)^2 \Lambda^i_{k\sigma(k)} \right)\\\left( \prod_{k \in K_2} (3-4|x|)^2 \langle D\bv_x(e^i_k),D\bv_x(e^i_{\sigma(k)})\rangle \right) 
\left( \prod_{k \in K_3} R^i_{k\sigma(k)} \right) \,,
\end{multline*}
where the last sum extends over all partitions $\{K_1,K_2,K_3\}$ of $K$. Letting $q \defl \#K$, the term corresponding to $K_2=K_3=\emptyset$ equals $(4|x| - 2)^{2q} \det(\Lambda^i_K)$ when $\sigma = \mathrm{id}_K$ and zero otherwise, whereas the term corresponding to $K_1=K_3=\emptyset$ is bounded above by $(3-4|x|)^{2q} \|D\bv_x\|^{2q}$ regardless of $\sigma$. These are respectively bounded above by $(4|x| - 2)^2 \det(\Lambda^i_K)$ and by $\|D\bv_x\|^4$ since $0 \leq \det(\Lambda^i_K)$, since $q \geq 2$, and since we may assume $\|D\bv_x\| \leq \bc_{\ref{derivative_estimate2}}(m)\epsilon^\frac{1}{3} \leq 1$ provided $\epsilon$ is small enough depending upon $\epsilon$. Thus these two terms, corresponding to all $\sigma \in S_q$, are accounted for in the first line of \eqref{random.new.10}. In order to bound from above the other terms, we start by observing that if $\emptyset \neq \hat{K} \subset \{1,\ldots,m\}$ then 
\[
\prod_{k \in \hat{K}}(\lambda^i_k)^2 \leq \det(I + \Lambda^i) - 1 = (\J \by^i_x)^2 \,,
\]
Furthermore a product $\prod_{k \in \hat{K}} \Lambda^i_{k \sigma(k)}$ is either zero (when $\sigma \neq \mathrm{id}_K$) or equal to $\prod_{k \in \hat{K}} (\lambda^i_k)^2$, thus in both cases it is less than or equal to $ (\J \by^i_x)^2$.
 Now the remaining terms corresponding to $K_3=\emptyset$ have necessarily $K_1 \neq \emptyset \neq K_2$ and are therefore bounded above by 
\begin{equation*}
\left( \prod_{k \in K_1} \Lambda^i_{k\sigma(k)} \right)\left( \prod_{k \in K_2}\|D\bv_x\|^2 \right) \leq \|D\bv_x\|  (\J \by^i_x)^2 \,,
\end{equation*}
and are accounted for in the last line of \eqref{random.new.10}. Next we consider the terms corresponding to $K_3 \neq \emptyset$ and we define $a^i_x$ and $b^i_x$ in the obvious way, according to \eqref{random.new.11}, so that $|R^i_{kl}| \leq a^i_x + b^i_x (\lambda^i_k + \lambda^i_l)$ and we notice that we may assume $0 \leq a^i_x \leq 1$ and $0 \leq b^i_x \leq 1$ provided $\epsilon$ is small enough depending on $m$. Now
\begin{multline*}
\left| \prod_{k \in K_3} R^i_{k \sigma(k)} \right| \leq \prod_{k \in K_3} \big( a^i_b + b^i_x(\lambda^i_k + \lambda^i_{\sigma(k)} ) \big) \\ = \sum_{K_{3,1},K_{3,2}} (a^i_x)^{\# K_{3,1}} (b^i_x)^{\# K_{3,2}} \prod_{k \in K_{3,2}} (\lambda^i_k + \lambda^i_{\sigma(k)})\\
\leq \begin{cases}
a^i_x & \text{ if } K_{3,2} = \emptyset \\
b^i_x \prod_{k \in K_{3,2}} (\lambda^i_k + \lambda^i_{\sigma(k)}) & \text{ if } K_{3,2} \neq \emptyset 
\end{cases}\,,
\end{multline*}
where the sum extends over partitions $\{K_{3,1},K_{3,2}\}$ of $K_3$.
Thus the terms corresponding to $K_3 \neq \emptyset$, $K_{3,2} =\emptyset$ and $K_1=\emptyset$ are accounted for in the second line of \eqref{random.new.10}, whereas the terms where $K_3 \neq \emptyset$, $K_{3,2} = \emptyset$ and $K_1 \neq \emptyset$ are accounted for in the third line of \eqref{random.new.10} because $|\by^i(x) - \bv(x)| \leq 1$. It remains to consider the terms for which $K_{3,2} \neq \emptyset$. The following sum extends over partitions $\{L_1,L_2\}$ of $K_{3,2}$:
\begin{multline*}
\prod_{k \in K_{3,2}} (\lambda^i_k + \lambda^i_{\sigma(k)}) = \sum_{L_1,L_2} \left( \prod_{k \in L_1} \lambda^i_k \right) \left( \prod_{k \in \sigma(L_2)} \lambda^i_{k} \right) \\ =\sum_{L_1,L_2} \left( \prod_{k \in L_1 \cap \sigma(L_2)} (\lambda^i_k)^2 \right) \left( \prod_{k \in L_1 \ominus \sigma(L_2)} \lambda^i_{k} \right) \\
\leq \sum_{L_1,L_2} \left( \prod_{k \in L_1 \cap \sigma(L_2)} (\lambda^i_k)^2 \right) \left(\prod_{\substack{k \in L_1 \ominus \sigma(L_2)\\\lambda^i_k \geq 1}} (\lambda^i_{k})^2 \right) \\
\end{multline*}
We first analyze the various subcases occurring as $K_1 = \emptyset$. If $L_1 \cap \sigma(L_2) \neq \emptyset$, the corresponding term is bounded above by $b_x^i (\J \by_x^i)^2$ and thus is accounted for in the third line of \eqref{random.new.10}. The same occurs if $L_1 \cap \sigma(L_2) = \emptyset$ but there exists $k \in L_1 \ominus \sigma(L_2)$ such that $\lambda^i_k \geq 1$. Now if $\lambda^i_k < 1$ for all $k \in L_1 \ominus \sigma(L_2)$, and $L_1 \cap \sigma(L_2) = \emptyset$, we pick two $\lambda^i_{k_1},\lambda^i_{k_2} \in  L_1 \ominus \sigma(L_2)$ with $k_1 \neq k_2$ (this is possible since $\#( L_1 \ominus \sigma(L_2)) \geq 2$ because $L_1 \cap \sigma(L_2) = \emptyset$) and we observe that we can bound a factor $\lambda^i_{k_1} \lambda^i_{k_2} \leq \frac{1}{2} \left( (\lambda^i_{k_1})^2 + (\lambda^i_{k_2})^2 \right) \leq (\J\by_x^i)^2$ and, again, the corresponding term is bounded above by $b_x^i (\J \by_x^i)^2$ and thus is accounted for in the third line of \eqref{random.new.10}.

Finally it remains only to consider the case when $K_{3,2} \neq \emptyset$ and $K_1 \neq \emptyset$. We observe these terms are bounded above by 
\begin{equation*}
b^i_x \left( \prod_{k \in K_1} \Lambda^i_{k \sigma(k)} \right) \left( \prod_{k \in L_1 \cap \sigma(L_2)} (\lambda^i_k)^2 \right) \left( \prod_{\substack{k \in L_1 \ominus \sigma(L_2)\\\lambda^i_k \geq 1}} (\lambda^i_{k})^2 \right)
\end{equation*}
Clearly $K_1 \cap L_1 \subset K_1 \cap K_3 = \emptyset$. If $\sigma_{K_1} \neq \mathrm{id}_{K_1}$ then the second factor above (corresponding to $k \in K_1$) vanishes, thus we may assume $K_1 \cap \sigma(L_2) = \emptyset$ but then the product is of the type $b^i_x \prod_{k \in \hat{K}} (\lambda^i_k)^2$ with $\hat{K} \neq \emptyset$ and \eqref{random.new.10} is established.

Summing up \eqref{random.new.10} over all $K$ with $\# K \geq 2$ we find that
\begin{equation*}
\begin{split}
(\J\bz_x^i)^2 - \|D\bz_x^i\|^2_{\HS} & = \sum_{\substack{K \subset \{1,\ldots,m\}\\\#K \geq 2}} \det(M^i_K) \\
& \leq (4|x|-2)^2 \big( (\J\by_x^i)^2 - \|D\by_x^i\|^2_{\HS} \big) + \bc'\|D\bv_x\|^4 \\
 & \quad + \bc'(|\by^i(x) - \bv(x)|^2 + \|D\bv_x\||\by^i(x) - \bv(x)|) \\
 & \quad + \bc'(|\by^i(x) - \bv(x)| + \|D\bv_x\|) (\J \by^i_x)^2 \,,
\end{split}
\end{equation*}
and with the help of \eqref{random.new.12}, recalling that $\|D\by_x^i\|_{\HS} \leq \J\by_x^i$, 
\begin{align*}
(\J \bz^i_x)^2 & \leq (4|x|-2)^2 \big( (\J\by_x^i)^2 - \|D\by_x^i\|^2_{\HS} \big) \\
& \quad\quad\quad + \bigg( (4|x|-2) \|D\by_x^i\|_{\HS} + (3-4|x|) \|D\bv_x\|_{\HS} \bigg)^2 + \bc'\|D\bv_x\|^4 \\
 & \quad\quad\quad + \bc'(|\by^i(x) - \bv(x)|^2 + \|D\bv_x\||\by^i(x) - \bv(x)|) \\
 & \quad\quad\quad + \bc'(|\by^i(x) - \bv(x)| + \|D\bv_x\|) (\J \by^i_x)^2 \\
 & \quad\quad\quad + \bc' |\by^i(x) - \bv(x)| \|D\by_x^i\| \\
 & \leq \bigg( (4|x|-2) \J\by_x^i + (3-4|x|) \|D\bv_x\|_{\HS} \bigg)^2 + \bc'\|D\bv_x\|^4 \\
  & \quad\quad\quad + \bc'(|\by^i(x) - \bv(x)|^2 + \|D\bv_x\||\by^i(x) - \bv(x)|) \\
 & \quad\quad\quad + \bc'(|\by^i(x) - \bv(x)| + \|D\bv_x\|) (\J \by^i_x)^2 \\
  & \quad\quad\quad + \bc' |\by^i(x) - \bv(x)| \|D\by_x^i\| \\
 & = \bigg( (4|x|-2) \J\by_x^i + (3-4|x|) \|D\bv_x\|_{\HS} \bigg)^2 + \sum_{j=1}^6 R^{ij}(x) \,,
\end{align*}
for some $\bc'(m) > 0$, where the $R^{ij}(x)$, $j=1,\ldots,6$, are readily defined by the last equality.

Now from the above inequality, applying seven times \eqref{squaresa}
 of Lemma~\ref{squares}, and one time \eqref{convex_comb}, we see that
 \begin{multline}
 \label{random.new.30}
 \left( 1 + (\J\bz_x^i)^2 \right)^\frac{1}{2}  \leq (4|x|-2) \left( 1 + (\J\by_x^i)^2 \right)^\frac{1}{2} + (3-4|x|) \left( 1 + \|D\bv_x\|^2_{\HS} \right)^\frac{1}{2} \\ + \sum_{j=1}^6 \left( \left( 1 + R^{ij}(x) \right)^\frac{1}{2} - 1 \right) \,.
 \end{multline}
Since the maps $x \mapsto \|g_i(x)\|$, $x \mapsto \|D\bv_x\|_{\HS}$ and $x \mapsto \J \by^i_x$ are $0$-homogeneous for all $i$ we obtain, thanks to Lemma~\ref{integral_of_dv},
\begin{align*}
& \int_{|x| = r} \sum_{i \in I_x} \|g_i\| \left(1 + \|D\bv_x\|_{\HS}^2\right)^\frac{1}{2} \, d\cH^{m-1}(x) \\
 & \qquad \qquad \qquad = \frac{d}{dr} \int_{\B_V(0,r)} \sum_{i \in I_x} \|g_i\| \left(1 + \|D\bv_x\|_{\HS}^2\right)^\frac{1}{2} \, dx \\
 & \qquad \qquad \qquad = mr^{m-1} \int_{\B_V(0,1)} \sum_{i \in I_x} \|g_i\| \left(1 + \|D\bv_x\|_{\HS}^2\right)^\frac{1}{2} \, dx \\
 & \qquad \qquad \qquad \leq mr^{m-1} \int_{\B_V(0,1)} \sum_{i \in I_x} \|g_i\| \left(1 + (\J \by^i_x)^2\right)^\frac{1}{2} \, dx + mr^{m-1}\bc_{\ref{integral_of_dv}}\|g_0\|\rho\epsilon \\
 & \qquad \qquad \qquad = \int_{|x| = r} \sum_{i \in I_x} \|g_i\| \left(1 + (\J \by^i_x)^2\right)^\frac{1}{2} \, d\cH^{m-1}(x) + mr^{m-1}\bc_{\ref{integral_of_dv}}\|g_0\|\rho\epsilon \,.
\end{align*}
Therefore,
\begin{equation*}
\begin{split}
\int_A & \sum_{i \in I_x} \|g_i\|\bigg( (4|x|-2) \left( 1 + (\J\by_x^i)^2 \right)^\frac{1}{2} + (3-4|x|) \left( 1 + \|D\bv_x\|^2_{\HS} \right)^\frac{1}{2} \bigg) dx \\
& = \int_{\frac{1}{2}}^\frac{3}{4} \bigg( (4r-2) \int_{|x|=r} \sum_{i \in I_x} \|g_i\|  \left( 1 + (\J\by_x^i)^2 \right)^\frac{1}{2} d\cH^{m-1}(x) \\
& \quad\quad\quad+ (3-4r) \int_{|x|=r} \sum_{i \in I_x} \|g_i\|  \left( 1 + \|D\bv_x\|_{\HS}^2 \right)^\frac{1}{2} d\cH^{m-1}(x) \bigg)  dr \\
& \leq \int_{\frac{1}{2}}^\frac{3}{4} \bigg( (4r-2) \int_{|x|=r} \sum_{i \in I_x} \|g_i\|  \left( 1 + (\J\by_x^i)^2 \right)^\frac{1}{2} d\cH^{m-1}(x) \\
& \quad\quad\quad + (3-4r)\int_{|x| = r} \sum_{i \in I_x} \|g_i\| \left(1 + (\J \by^i_x)^2\right)^\frac{1}{2} \, d\cH^{m-1}(x) \\
& \quad\quad\quad + (3-4r)mr^{m-1}\bc_{\ref{integral_of_dv}}\|g_0\|\rho\epsilon \bigg) dr\\
& \leq m\bc_{\ref{integral_of_dv}}\|g_0\|\rho\epsilon + \int_A \sum_{i \in I_x} \|g_i\|  \left( 1 + (\J\by_x^i)^2 \right)^\frac{1}{2} dx \,.
\end{split}
\end{equation*}
Thus it follows from \eqref{random.new.30} that
\begin{multline*}
\int_A \sum_{i \in I_x} \|g_i\| \left( 1 + (\J\bz_x^i)^2 \right)^\frac{1}{2} dx \leq m\bc_{\ref{integral_of_dv}}\|g_0\|\rho\epsilon + \int_A \sum_{i \in I_x} \|g_i\|  \left( 1 + (\J\by_x^i)^2 \right)^\frac{1}{2} dx \\ + \sum_{j=1}^6 \int_A \sum_{i \in I_x} \|g_i\|\left( \left( 1 + R^{ij}(x) \right)^\frac{1}{2} - 1 \right) dx \,.
\end{multline*}
In order to complete the proof of the Lemma it is therefore sufficient to establish that
\begin{equation*}
\int_A \sum_{i \in I_x} \|g_i\|\left( \left( 1 + R^{ij}(x) \right)^\frac{1}{2} - 1 \right) dx \leq \bc'''(m) \|g_0\|\rho^\frac{1}{2} \epsilon
\end{equation*}
for some $\bc'''(m)>0$ and all $j=1,\ldots,6$.
 
Clearly,
\begin{align*}
 \left(1 + R^{i1}(x) \right)^\frac{1}{2} - 1 & \leq R^{i1}(x) \leq \bc' \|D\bv_x\|^4 \leq \bc'\bc_{\ref{derivative_estimate2}}^4\epsilon^\frac{4}{3} \,, \\
 \left(1 + R^{i2}(x) \right)^\frac{1}{2} - 1 & \leq R^{i2}(x) \leq \bc'|\by^i(x) - \bv(x)|^2 \,.
\end{align*}
Setting $\delta \defl \rho^\frac{1}{2} < 1$ we obtain with \eqref{squaresd} of Lemma~\ref{squares},
\begin{align*}
 \left(1 + R^{i3}(x) \right)^\frac{1}{2} - 1 & = \left(1 + \bc'\|D\bv_x\||\by^i(x) - \bv(x)| \right)^\frac{1}{2} - 1 \,, \\
  & \leq  \frac{1}{2} \delta^{-2}\bc'^2|\by^i(x) - \bv(x)|^2 + \delta\left(\left(1 + \|D\bv_x\|^2 \right)^\frac{1}{2} - 1\right) \,, \\
  & \leq  \frac{1}{2} \bc'^2\rho^{-1}|\by^i(x) - \bv(x)|^2 + \rho^\frac{1}{2}\left(\left(1 + \|D\bv_x\|^2 \right)^\frac{1}{2} - 1\right) \,.
\end{align*}
Setting $\delta \defl \bc' |\by^i(x)-\bv(x)| \leq \bc' 3 \rho \leq 1$ provided $\rho$ is small enough depending on $m$, we infer from \eqref{squaresc} of Lemma~\ref{squares} that
\begin{align*}
 \left(1 + R^{i4}(x) \right)^\frac{1}{2} - 1 & = \left(1 + \bc'|\by^i(x) - \bv(x)|(\J \by^i_x)^2 \right)^\frac{1}{2} - 1 \\
  & \leq (3 \bc' \rho)^\frac{1}{2} \left( \left( 1 + (\J \by^i_x)^2\right)^\frac{1}{2} - 1\right) \,.
\end{align*}
Similarly, if $\epsilon$ is small enough such that $\bc'\bc_{\ref{derivative_estimate2}}\epsilon^\frac{1}{3} \leq 1$, then Lemma~\ref{derivative_estimate2} implies that $\bc'\|D\bv_x\| \leq 1$ and it follows from \eqref{squaresc} of Lemma~\ref{squares} that
\begin{align*}
 \left(1 + R^{i5}(x) \right)^\frac{1}{2} - 1 & = \left(1 + \bc'\|D\bv_x\| (\J \by^i_x)^2 \right)^\frac{1}{2} - 1 \\
  & \leq (\bc'\bc_{\ref{derivative_estimate2}})^\frac{1}{2}\epsilon^\frac{1}{6}\left(\left(1 + (\J \by^i_x)^2 \right)^\frac{1}{2} - 1\right) \,.
\end{align*}
Finally, applying again \eqref{squaresd} of Lemma~\ref{squares} with $\delta \defl \rho^\frac{1}{2}$,
\begin{align*}
 \left(1 + R^{i6}(x) \right)^\frac{1}{2} - 1 & = \left(1 + \bc'|\by^i(x) - \bv(x)| \|D\by_x^i\|\right)^\frac{1}{2} - 1 \,, \\
  & \leq  \frac{1}{2} \delta^{-2}\bc'^2|\by^i(x) - \bv(x)|^2 + \delta\left(\left(1 + \|D\by_x^i\|^2 \right)^\frac{1}{2} - 1\right) \,, \\
  & \leq  \frac{1}{2} \bc'^2\rho^{-1}|\by^i(x) - \bv(x)|^2 + \rho^\frac{1}{2}\left(\left(1 + (\J\by_x^i)^2 \right)^\frac{1}{2} - 1\right) \,.
\end{align*}

We next integrate these over $x \in A$. By \eqref{multiple_values_mass_estimate_prelim},
\begin{align*}
\int_{A} \sum_{i \in I_x} \|g_i\| \left[\left(1 + R^{i1}(x) \right)^\frac{1}{2} - 1\right] \, dx & \leq \bc'\epsilon^\frac{4}{3}\int_{A} \sum_{i \in I_x} \|g_i\| \, dx \\
 & = \bc'\epsilon^\frac{4}{3} \int_{E_1} \|g_0\| \, dx + \bc'\epsilon^\frac{4}{3} 15\|g_0\|\epsilon \\
 & \leq (\balpha(m) + 15)\bc' \|g_0\|\epsilon^\frac{4}{3} \,.
\end{align*}

For the remaining terms note that by \eqref{multiple_values_mass_estimate_prelim} and Lemma~\ref{dist_to_v},
\begin{align*}
\int_{A} \sum_{i \in I_x} \|g_i\| |\bv(x) - \by^i(x)|^2 \, dx & \leq \int_{E_1} \|g_0\| |\by(x) - \bv(x)|^2 \, dx + 9\rho^2\int_{E_2} \sum_{i \in I_x} \|g_i\| \, dx \\
 & \leq (\bc_{\ref{dist_to_v}} + 135)\|g_0\| \rho^2 \epsilon \,.
\end{align*}
Further by Lemma~\ref{integral_of_dv},
\begin{align*}
& \int_{A} \sum_{i \in I_x} \|g_i\| \left(\left(1 + \|D\bv_x\|^2 \right)^\frac{1}{2} - 1\right) \, dx \\
& \qquad \qquad \leq \bc_{\ref{integral_of_dv}}\|g_0\|\rho\epsilon + \int_{\B_V} \sum_{i \in I_x} \|g_i\| \left(\left(1 + \|D\by^i_x\|^2 \right)^\frac{1}{2} - 1\right) \, dx \\
 & \qquad \qquad \leq \bc_{\ref{integral_of_dv}}\|g_0\|\rho\epsilon + \int_{\B_V} \sum_{i \in I_x} \|g_i\| \left(1 + \|\J\by^i_x\|^2 \right)^\frac{1}{2} - \|g_0\| \, dx \\
 & \qquad \qquad \leq (\bc_{\ref{integral_of_dv}}\rho + 1)\|g_0\|\epsilon \,,
\end{align*}
and obviously,
\begin{align*}
\int_{A} \sum_{i \in I_x} \|g_i\| \left(\left(1 + (\J \by^i_x)^2 \right)^\frac{1}{2} - 1\right) \, dx & \leq \int_{\B_V} \sum_{i \in I_x} \|g_i\| \left(1 + (\J \by^i_x)^2 \right)^\frac{1}{2} - \|g_0\| \, dx \\
 & \leq \|g_0\|\epsilon \,.
\end{align*}
Combining these estimates we obtain
\begin{align*}
\int_{A} \sum_{i \in I_x} \|g_i\| \left[\left(1 + R^{i1}(x) \right)^\frac{1}{2} - 1\right] \, dx & \leq \bc''(m) \|g_0\|\epsilon^\frac{4}{3} \,,\\
\int_{A} \sum_{i \in I_x} \|g_i\| \left[\left(1 + R^{i2}(x) \right)^\frac{1}{2} - 1\right] \, dx & \leq \bc''(m) \|g_0\|\rho^2 \epsilon \,,\\
\int_{A} \sum_{i \in I_x} \|g_i\| \left[\left(1 + R^{i3}(x) \right)^\frac{1}{2} - 1\right] \, dx & \leq \bc''(m) \|g_0\|\rho^\frac{1}{2} \epsilon \,,\\
\int_{A} \sum_{i \in I_x} \|g_i\| \left[\left(1 + R^{i4}(x) \right)^\frac{1}{2} - 1\right] \, dx & \leq \bc''(m) \|g_0\|\rho^\frac{1}{2} \epsilon \,,\\
\int_{A} \sum_{i \in I_x} \|g_i\| \left[\left(1 + R^{i5}(x) \right)^\frac{1}{2} - 1\right] \, dx & \leq \bc''(m) \|g_0\|\epsilon^\frac{7}{6} \,, \\
\int_{A} \sum_{i \in I_x} \|g_i\| \left[\left(1 + R^{i6}(x) \right)^\frac{1}{2} - 1\right] \, dx & \leq \bc''(m) \|g_0\| \rho^\frac{1}{2} \epsilon\,,
\end{align*}
for some constant $\bc''(m) > 0$. Since we assume that $\epsilon \leq \rho^{6m} \leq \rho^{6} \leq 1$, the sum of all the integrals involving the $R$-terms is bounded by $\bc'''(m) \|g_0\|\rho^\frac{1}{2} \epsilon$ for some constant $\bc'''(m) > 0$. 
\end{proof}

\begin{Lem}
[{\cite[Lemma~12]{Reif}}]
\label{jacobianofu}
There is a constant $\bc_{\theThm}(m) > 0$ such that
\[
\int_{\B_V(0,1)} (1 + (\J \bv_x)^2)^\frac{1}{2} \, dx \leq \int_{\B_V(0,1)} (1 + \|D\bv_x\|_{\HS}^2)^\frac{1}{2}\, dx + \bc_{\theThm}\epsilon^\frac{4}{3} \,.
\]
\end{Lem}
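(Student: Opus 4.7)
The plan is to estimate the integrand $(1+(\J\bv_x)^2)^{1/2}$ pointwise in terms of $(1+\|D\bv_x\|_{\HS}^2)^{1/2}$, paying a price of order $\|D\bv_x\|_{\HS}^4$, and then integrate this error using the known $L^\infty$ and $L^2$ bounds on $D\bv$ established in Lemma~\ref{derivative_estimate2} and in the course of the proof of Lemma~\ref{integral_of_dv}.

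For the pointwise estimate, at a point $x$ where $\bv$ is differentiable let $\lambda_1,\dots,\lambda_m \geq 0$ be the singular values of $D\bv_x$, so that $\|D\bv_x\|_{\HS}^2 = \sum_i \lambda_i^2$ and, by the identity $1 + (\J \bv_x)^2 = \det(\id_V + (D\bv_x)^*D\bv_x) = \prod_i(1+\lambda_i^2)$ recorded right after \eqref{jacobian_sum}, we have
\begin{equation*}
(\J\bv_x)^2 - \|D\bv_x\|_{\HS}^2 \;=\; \sum_{k=2}^m e_k(\lambda_1^2,\dots,\lambda_m^2),
\end{equation*}
where $e_k$ denotes the $k$-th elementary symmetric polynomial. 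By Newton/Maclaurin inequalities $e_k \leq \binom{m}{k}(e_1/m)^k$, so as long as $\|D\bv_x\|_{\HS}^2 \leq 1$ one gets $\sum_{k\geq 2} e_k \leq \bc(m)\|D\bv_x\|_{\HS}^4$. The elementary bound $\sqrt{1+a+b}-\sqrt{1+a} \leq b/2$ for $a,b\geq 0$ then yields the pointwise inequality
\begin{equation*}
\bigl(1+(\J\bv_x)^2\bigr)^{1/2} - \bigl(1+\|D\bv_x\|_{\HS}^2\bigr)^{1/2} \;\leq\; \tfrac{\bc(m)}{2}\,\|D\bv_x\|_{\HS}^4,
\end{equation*}
valid provided $\epsilon$ is small enough that $\bc_{\ref{derivative_estimate2}}^2\epsilon^{2/3}\leq 1$.

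To integrate this, combine the uniform bound $\|D\bv_x\|_{\HS}\leq \bc_{\ref{derivative_estimate2}}\epsilon^{1/3}$ from Lemma~\ref{derivative_estimate2} with the $L^2$ estimate on $D\bv$ that was derived along the way in the proof of Lemma~\ref{integral_of_dv}: combining \eqref{integral_of_dvb}--\eqref{integral_of_dvd} gives
\begin{equation*}
\int_{\B_V(0,1)} \bigl(1+\|D\bv_x\|_{\HS}^2\bigr)^{1/2}-1\, dx \;\leq\; \bc\,\epsilon,
\end{equation*}
and since $\sqrt{1+a}-1 \geq a/3$ for $a \in [0,1]$ this entails $\int_{\B_V(0,1)}\|D\bv_x\|_{\HS}^2\,dx \leq 3\bc\,\epsilon$. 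Consequently
\begin{equation*}
\int_{\B_V(0,1)}\|D\bv_x\|_{\HS}^4\,dx \;\leq\; \bigl(\bc_{\ref{derivative_estimate2}}\epsilon^{1/3}\bigr)^2\int_{\B_V(0,1)}\|D\bv_x\|_{\HS}^2\,dx \;\leq\; \bc'\epsilon^{5/3}.
\end{equation*}
Since $\epsilon \leq 1$, we have $\epsilon^{5/3}\leq \epsilon^{4/3}$, so integrating the pointwise inequality and absorbing constants into $\bc_{\theThm}(m)$ finishes the proof.

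There is no real obstacle here; the main step is the pointwise comparison $(\J L)^2 \leq \|L\|_{\HS}^2 + \bc(m)\|L\|_{\HS}^4$, which follows from \eqref{jacobian_sum} and a symmetric-function inequality. The rest is a direct Hölder-type argument using the already established $L^\infty$ and $L^2$ control on $D\bv$.
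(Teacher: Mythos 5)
Your proof is correct and follows the same skeleton as the paper's: both start from the identity $(\J\bv_x)^2 = \|D\bv_x\|_{\HS}^2 + \sum_{\#K\geq 2}\det(M_K)$, control the correction term using the pointwise bound $\|D\bv_x\|\leq \bc_{\ref{derivative_estimate2}}\epsilon^{1/3}$ from Lemma~\ref{derivative_estimate2}, and conclude by an elementary square-root inequality (the paper uses \eqref{squaresa} directly). The only difference is in the last step: the paper bounds the correction pointwise by $\bc\epsilon^{4/3}$ and integrates, whereas you keep the dependence as $\bc\|D\bv_x\|_{\HS}^4$ and split it via H\"older, using the $L^\infty$ bound once and the $L^2$ bound $\int\|D\bv\|_{\HS}^2\leq \bc\epsilon$ (extracted from \eqref{integral_of_dvb}--\eqref{integral_of_dvd}) once, yielding the slightly sharper remainder $\epsilon^{5/3}$; this extra machinery is valid but unnecessary for the stated estimate.
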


\begin{proof}
By Lemma~\ref{derivative_estimate2} we know that $\|D\bv_x\| \leq \bc_{\ref{derivative_estimate2}} \epsilon^\frac{1}{3}$. Hence letting $M$ denote the matrix of $D\bv_x^* D\bv_x$ with respect to some orthonormal basis of $V$,
\[
(\J \bv_x)^2 = \|D\bv_x\|_{\HS}^2 + \sum_{\#K \geq 2} \det(M_K) \leq \|D\bv_x\|_{\HS}^2 + \bc \epsilon^\frac{4}{3} \,,
\]
for some constant $\bc(m) > 0$, and the statement follows by Lemma~\ref{squares}, i.e.\ by integrating the estimate
\[
(1 + (\J \bv_x)^2)^\frac{1}{2} \leq (1 + \|D\bv_x\|_{\HS}^2 + \bc \epsilon^\frac{4}{3})^\frac{1}{2} \leq (1 + \|D\bv_x\|_{\HS}^2)^\frac{1}{2} + \bc \epsilon^\frac{4}{3} \,.
\]
\end{proof}

Combining the results of this subsection we get:

\begin{Prop}
\label{masscomparison1}
There is a constant $\bc_{\theThm}(m) > 0$ with the following property. If $\rho \leq \rho_{\ref{mass_of_z}}$ and $\epsilon \leq \epsilon_{\ref{mass_of_z}}$, then
\begin{align*}
\bM(T \res Z_V(2^{-1})) & \leq \bc_{\theThm}\|g_0\|\rho\epsilon + \bM(P \res Z_V(2^{-1})) \,, \\
\bM(T) & \leq \bc_{\theThm}\|g_0\|\rho^\frac{1}{2}\epsilon + \bM(P) \,.
\end{align*}
\end{Prop}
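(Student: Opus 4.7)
The map $\varphi$ defining $T = \varphi_\# P$ splits $\Hi$ into three relevant regions: (i) the inner cylinder $Z_V(2^{-1})$, where $\varphi(z)$ depends only on $x = \pi_V(z)$ and equals $x + \bv(x)$; (ii) the annular cylinder $\pi_V^{-1}(A)$ with $A = \{x \in V : 2^{-1} \leq |x| \leq 3/4\}$, where $\varphi$ sends $x + \by^i(x)$ to $x + \bz^i(x)$ on each simplex $S_i$ of $P$; and (iii) the outer region $\Hi \setminus Z_V(3/4)$, where $\varphi = \id$. The height bound $|\pi_{V^\perp}(z)| \leq \rho < 2^{-1}$ on $\spt(P) \cap Z_V$ places $\spt(P)$ inside the interpolation and smoothing regions (so the Lipschitz extension of $\varphi$ across the ambiguous annulus $\{1 \leq |y| \leq 2\}$ is irrelevant). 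Accordingly $\bM(T)$ decomposes additively across these three pieces.

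\textbf{First inequality.} Since $\varphi$ on $Z_V(2^{-1})$ factors through $\pi_V$ and $\pi_{V\#}(P \res Z_V) = g_0\curr{\B_V(0,1)}$, functoriality of pushforward yields $T \res Z_V(2^{-1}) = (\id_V + \bv)_\#(g_0\curr{\B_V(0,2^{-1})})$. As $\id_V + \bv$ is injective (its postcomposition with $\pi_V$ is $\id_V$), the area formula gives
\begin{equation*}
\bM(T \res Z_V(2^{-1})) = \|g_0\|\int_{\B_V(0,2^{-1})}(1 + (\J \bv_x)^2)^{1/2}\,dx.
\end{equation*}
The pointwise bound $(\J \bv_x)^2 \leq \|D\bv_x\|_{\HS}^2 + \bc\epsilon^{4/3}$ from the proof of Lemma~\ref{jacobianofu} then gives
\begin{equation*}
\bM(T \res Z_V(2^{-1})) \leq \|g_0\|\int_{\B_V(0,2^{-1})}(1 + \|D\bv_x\|_{\HS}^2)^{1/2}\,dx + \bc\|g_0\|\epsilon^{4/3}.
\end{equation*}
Using $\|g_0\| \leq \sum_{i \in I_x}\|g_i\|$ a.e., the scaled version of Lemma~\ref{integral_of_dv} on $\B_V(0,2^{-1})$ (justified below), and $\|D\by^i_x\|_{\HS} \leq \J \by^i_x$, the right-hand integral is bounded by $\bc\|g_0\|\rho\epsilon + \bM(P \res Z_V(2^{-1}))$. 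The assumption $\epsilon \leq \rho^{6m}$ forces $\epsilon^{4/3} \leq \rho\epsilon$, so the first inequality follows.

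\textbf{Second inequality.} Write $\bM(T) = \bM(T \res Z_V(2^{-1})) + \bM(T \res \pi_V^{-1}(A)) + \bM(T \res (\Hi \setminus Z_V(3/4)))$. The last summand equals $\bM(P \res (\Hi \setminus Z_V(3/4)))$ since $\varphi = \id$ there. On the annular region, each simplex $g_i \curr{S_i}$ is mapped to a graph over $\pi_V(S_i) \cap A$, and the area formula combined with the triangle inequality for mass under pushforward yields
\begin{equation*}
\bM(T \res \pi_V^{-1}(A)) \leq \int_A \sum_{i \in I_x} \|g_i\|(1 + (\J \bz^i_x)^2)^{1/2}\,dx,
\end{equation*}
which Lemma~\ref{mass_of_z} bounds by $\bc\|g_0\|\rho^{1/2}\epsilon + \bM(P \res \pi_V^{-1}(A))$. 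Summing the three contributions and absorbing the $\rho\epsilon$ term from the first inequality into $\rho^{1/2}\epsilon$ (valid since $\rho \leq 1$) gives the second inequality.

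\textbf{Main obstacle.} The only nontrivial point is transferring Lemma~\ref{integral_of_dv} from $\B_V$ to $\B_V(0,2^{-1})$. This follows from the $1$-homogeneity of $P$, $\bv$, and each $\by^i$ in $Z_V(2)$: the integrands appearing in the lemma are $0$-homogeneous in $x$, so integrals over $\B_V(0,r)$ scale as $r^m$, and $\Exc_r(P,V) = r^m\Exc_1(P,V)$. Both sides of the lemma therefore acquire a common factor $2^{-m}$ when restricted from $\B_V$ to $\B_V(0,2^{-1})$, yielding the required estimate with a dimension-dependent constant.
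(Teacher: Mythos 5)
Your proof is correct and follows essentially the same route as the paper: decompose $\bM(T)$ over the inner cylinder $Z_V(2^{-1})$, the annulus $\{1/2 \leq |\pi_V| \leq 3/4\}$, and the exterior where $T=P$; apply Lemma~\ref{mass_of_z} on the annulus and the chain Lemma~\ref{jacobianofu} $\to$ Lemma~\ref{integral_of_dv} on the inner cylinder; and absorb $\epsilon^{4/3} \leq \rho\epsilon \leq \rho^{1/2}\epsilon$. Your explicit justification (via $0$-homogeneity of the integrands and $m$-homogeneous scaling of the excess) for transferring Lemmas~\ref{jacobianofu} and~\ref{integral_of_dv} from $\B_V$ to $\B_V(0,2^{-1})$ is a step the paper applies silently, so that is a welcome clarification rather than a deviation.
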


\begin{proof}
By the construction of $T$, it is $T \res Z_V(\frac{3}{4})^c = P \res Z_V(\frac{3}{4})^c$. By Lemma~\ref{mass_of_z}
\[
\bM\left(T \res \{{\scriptstyle \frac{1}{2}} \leq |\pi_V(x)| \leq {\scriptstyle \frac{3}{4}}\}\right) \leq \bM\left(P \res \{{\scriptstyle \frac{1}{2}} \leq |\pi_V(x)| \leq {\scriptstyle \frac{3}{4}}\}\right) + \bc_{\ref{mass_of_z}}\|g_0\|\rho^\frac{1}{2}\epsilon \,.
\]
Further, by the definition of $T$, Lemma~\ref{jacobianofu} and Lemma~\ref{integral_of_dv} (note that $\epsilon^\frac{1}{3} \leq \rho$) applied to the integration domain $\B_V(0,1)$,
\begin{align*}
\bM\left(T \res Z_V(2^{-1})\right) & = \bM\left((\id_V + \bv)_\#(g_0 \curr{\B_V(0,2^{-1})} \right) \\
 & = \|g_0\| \int_{\B_V(0,2^{-1})} (1 + (\J \bv_x)^2)^\frac{1}{2} \, dx \\
 & \leq \|g_0\| \int_{\B_V(0,2^{-1})} (1 + \|D\bv_x\|_{\HS}^2)^\frac{1}{2}\, dx + \bc_{\ref{jacobianofu}}\|g_0\|\epsilon^\frac{4}{3} \\
 & \leq \int_{\B_V(0,2^{-1})} \sum_{i \in I_x} \|g_i\| (1 + \|D \by^i_x\|_{\HS}^2)^\frac{1}{2} \, dx + (\bc_{\ref{jacobianofu}} + \bc_{\ref{integral_of_dv}})\rho\|g_0\|\epsilon \\
 & \leq \int_{\B_V(0,2^{-1})} \sum_{i \in I_x} \|g_i\| (1 + (\J \by^i_x)^2)^\frac{1}{2} \, dx + (\bc_{\ref{jacobianofu}} + \bc_{\ref{integral_of_dv}})\rho\|g_0\|\epsilon \\
 & = \bM\left(P \res Z_V(2^{-1})\right) + (\bc_{\ref{jacobianofu}} + \bc_{\ref{integral_of_dv}})\rho\|g_0\|\epsilon \,.
\end{align*}
Adding up the masses of these two regions gives the result.
\end{proof}

\subsection{Plane selection via a quadratic form}
\label{secplaneselection}

Additionally to the assumptions on $P$ in the beginning of Subsection~\ref{assumptions} we further assume that $\rho \leq \rho_{\ref{mass_of_z}}$ (without mentioning it repeatedly) and that $\epsilon \leq \epsilon_{\ref{mass_of_z}}$. With $T_\bv \defl (\id_V + \bv)_\# (g_0 \curr{\B_V(0,2)})$ we denote the cone generated by the graph of $\bv$ over $V$ and weight $g_0$. Let $Q = Q(T_\bv,0,1)$ be the quadratic form associated to the chain $T_\bv$ as defined in Subsection~\ref{betaprelim}, i.e.\
\[
Q(y) \defl \frac{m+2}{\balpha(m)}\|g_0\|\int_{\B(0,1) \cap \spt(T_\bv)} \langle y, x \rangle^2 \, d\cH^m(x) \,.
\]



As noted before Lemma~\ref{newplaneclosetoold}, the quadratic form $Q$ is compact and hence $\Hi$ has an orthonormal basis $(e_k)_k$ of eigenvectors of $Q$. Let $W \in \bG(\Hi,m)$ be the $m$-plane spanned by $e_1,\dots,e_m$ corresponding to the $m$ largest eigenvalues of $Q$.

\begin{Lem}
\label{newplane}
There are constants $\bc_{\theThm}(m) > 0$ and $0 < \epsilon_{\theThm} \leq \epsilon_{\ref{mass_of_z}}$ with $\bc_{\theThm} (\epsilon_{\theThm})^\frac{1}{3} < \frac{1}{20}$ and the following property. If $0 < \epsilon < \epsilon_{\theThm}$, then:
\begin{enumerate}
	\item $\|\pi_W - \pi_V\| \leq \bc_{\theThm} \epsilon^\frac{1}{3}$.
	\item $\pi_W : \spt(T_\bv) \to W$ is injective and the map $\bw : \B_W(0,1) \to W^\perp$ with $\graph(\bw) = \spt(T_\bv \res Z_W)$ is $1$-homogeneous, continuously differentiable in $\B_W(0,1) \setminus \{0\}$ with $\|D\bw_x\| \leq \bc_{\theThm}\epsilon^\frac{1}{3}$.
	\item For $x \in \B_W(0,1)$, $y \in W^\perp$ with $|x|,|y| \leq 1$,
		\begin{equation}
		\label{oepsilonofu}
			\left|\int_{\Sp_W} \langle y, \bw(z)\rangle \langle x, z\rangle \, d\cH^{m-1}(z)\right| \leq \bc_{\theThm}\epsilon \,,
		\end{equation}
\end{enumerate}
\end{Lem}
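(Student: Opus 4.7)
The plan is to handle the three items in sequence, each leveraging the previous one. Item (1) reduces to applying the plane-closeness result Lemma~\ref{newplaneclosetoold} to the cone chain $T_\bv$; item (2) combines (1) with the smallness of $D\bv$ to invert $\pi_W$ on $\graph(\bv)$; item (3) exploits the eigenvector construction of $W$ to force $Q$ to vanish on $W \times W^\perp$, and after changing to cone coordinates this reduces to the claimed orthogonality up to controllable error terms.

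For (1), I verify the hypotheses of Lemma~\ref{closeplanes} for $T_\bv$ relative to $V$. Conditions (1)--(4) are built into the definition $T_\bv = (\id_V + \bv)_\#(g_0 \curr{\B_V(0,2)})$: since $\pi_V\circ(\id_V + \bv) = \id_V$, the projection identity holds with constant $g_0$, the density equals $\|g_0\|$ a.e., and $\spt(\partial T_\bv)$ projects onto $\Sp_V(0,2)$ and hence avoids $Z_V(0,1)$. Lemma~\ref{derivative_estimate2} supplies the height bound with $\rho' = \bc_{\ref{derivative_estimate2}}\epsilon^{1/3}$. The excess $\Exc_1(T_\bv,V) = \|g_0\|\int_{\B_V}((1+(\J\bv)^2)^{1/2}-1)\,dx$ (by the area formula) is bounded by $\|g_0\|\bc\epsilon$ using Lemma~\ref{jacobianofu} together with the estimate $\int_{\B_V}((1+\|D\bv\|_{\HS}^2)^{1/2}-1) \leq \bc\epsilon$ extracted from the proof of Lemma~\ref{integral_of_dv}. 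Lemma~\ref{newplaneclosetoold} then yields $\|\pi_W - \pi_V\| \leq \bc(\rho' + \bc\epsilon) \leq \bc'\epsilon^{1/3}$.

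For (2), write $A \defl \pi_W|_V : V \to W$, which by (1) and Lemma~\ref{lemma.norms.1} is invertible with $\|A^{-1}\| \leq (1 - \bc\epsilon^{1/3})^{-1}$. Since $\|D\bv\| \leq \bc_{\ref{derivative_estimate2}}\epsilon^{1/3}$ by Lemma~\ref{derivative_estimate2}, for each $w \in \B_W(0,1)$ the map $x \mapsto A^{-1}(w - \pi_W(\bv(x)))$ is a contraction on a suitable ball in $V$, with unique fixed point $x(w)$; this is the unique preimage of $w$ under $\pi_W|_{\graph(\bv)}$, and I set $\bw(w) \defl \pi_{W^\perp}(x(w) + \bv(x(w)))$. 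One-homogeneity of $\bv$ and uniqueness of the preimage transfer to $\bw$. Continuity of $D\bv$ on $\B_V(0,1)\setminus\{0\}$, visible from the integral formula in the proof of Lemma~\ref{derivative_estimate}, plus the implicit function theorem give continuous differentiability of $\bw$ on $\B_W(0,1)\setminus\{0\}$. For the slope bound, the tangent plane $T_x$ to $\graph(\bv)$ at $x + \bv(x)$ satisfies $\|\pi_{T_x} - \pi_V\| \leq \bc\|D\bv_x\| \leq \bc'\epsilon^{1/3}$; combined with (1), $\|\pi_{T_x} - \pi_W\| \leq \bc''\epsilon^{1/3}$. Since $T_x$ is also the tangent plane to $\graph(\bw)$ at the corresponding point, the identity $D\bw_w = \pi_{W^\perp} \circ (\pi_W|_{T_x})^{-1}$ together with Lemma~\ref{lemma.norms.1}(3) yields $\|D\bw_w\| \leq \bc\epsilon^{1/3}$.

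For (3), the decomposition $\Hi = W \oplus W^\perp$ is $L_Q$-invariant by construction of $W$, so for $x \in W$ and $y \in W^\perp$,
\begin{equation*}
0 = \langle x, L_Q(y)\rangle = Q(x,y) = \frac{m+2}{\balpha(m)}\|g_0\|\int_{\spt(T_\bv)\cap\B(0,1)} \langle x,z\rangle\langle y,z\rangle \, d\cH^m(z).
\end{equation*}
Using (2), parametrize the surface by $z = r(\xi + \bw(\xi))$ with $\xi \in \Sp_W$, $r \in [0, R(\xi)]$ and $R(\xi) = (1+|\bw(\xi)|^2)^{-1/2}$. Since $x \in W$ and $y \in W^\perp$, one has $\langle x,z\rangle = r\langle x,\xi\rangle$ and $\langle y,z\rangle = r\langle y,\bw(\xi)\rangle$; by $0$-homogeneity of $D\bw$ the surface element is $r^{m-1}(1+(\J\bw_\xi)^2)^{1/2}\,dr\,d\cH^{m-1}(\xi)$. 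Evaluating the radial integral converts the vanishing above into
\begin{equation*}
\int_{\Sp_W} R(\xi)^{m+2}(1+(\J\bw_\xi)^2)^{\frac{1}{2}}\langle x,\xi\rangle\langle y,\bw(\xi)\rangle \, d\cH^{m-1}(\xi) = 0.
\end{equation*}
From (2) the prefactor satisfies $R(\xi)^{m+2}(1+(\J\bw_\xi)^2)^{1/2} = 1 + O(\epsilon^{2/3})$ uniformly in $\xi$, while $|\langle y,\bw(\xi)\rangle| \leq \bc\epsilon^{1/3}$; hence replacing the prefactor by $1$ introduces an error of order $\epsilon^{2/3}\cdot\epsilon^{1/3}\cdot\cH^{m-1}(\Sp_W) = O(\epsilon)$, yielding (3). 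The main technical difficulty lies in (2): carefully translating the $\|D\bv\|$ bound and the closeness of $W$ to $V$ into a slope bound for $\bw$ via the tangent-plane identification, and justifying the $C^1$ regularity of $\bw$ given only that $\bv$ is $C^1$ away from the origin. Part (3) is conceptually the heart of the lemma but reduces to a direct cone-coordinate computation once (2) is in place.
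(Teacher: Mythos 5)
Your proof is correct and follows the same three-step reduction as the paper: (1) apply Lemma~\ref{newplaneclosetoold} to $T_\bv$ over $V$; (2) show $\pi_W$ is invertible on the graph and bound $\|D\bw\|$; (3) convert the vanishing of the mixed quadratic form $Q(x,y)$ for $x \in W$, $y \in W^\perp$ into the integral estimate over $\Sp_W$. The implementation differs in two places. For (2) you solve $\pi_W(x + \bv(x)) = w$ by a contraction-mapping argument and extract the slope bound from the tangent-plane identity $D\bw_w = \pi_{W^\perp} \circ (\pi_W|_{T_x})^{-1}$ together with Lemma~\ref{lemma.norms.1}(3), while the paper works directly with two-point difference quotients $|\pi_{W^\perp}(p - p')| \leq \bc'\epsilon^{1/3}|\pi_W(p - p')|$; both give the same Lipschitz bound, and your route makes the implicit-function-theorem regularity statement somewhat cleaner. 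For (3) you parametrize $\spt(T_\bv) \cap \B(0,1)$ in cone coordinates $(r,\xi) \mapsto r(\xi + \bw(\xi))$ with $r \leq R(\xi) = (1 + |\bw(\xi)|^2)^{-1/2}$, which integrates out the radial variable in closed form and leaves a single spherical integral with a prefactor $R(\xi)^{m+2}(1 + (\J\bw_\xi)^2)^{1/2} = 1 + O(\epsilon^{2/3})$; the paper instead projects onto $\B_W(0,1)$ and separately controls $\cH^m(\B_W(0,1) \setminus \pi_W(D))$ and the Jacobian deviation. Your polar change of variables neatly replaces that boundary-region estimate and is a bit more self-contained, at the cost of needing the observation that $\graph(\bv) \cap \B(0,1) \subset Z_W$ (which you use implicitly and which holds because $|\pi_W(z)| \leq |z|$). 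The excess bound in (1) you extract directly from Lemmas~\ref{jacobianofu} and~\ref{integral_of_dv}, while the paper cites Proposition~\ref{masscomparison1}; these are interchangeable since the latter is proved from the former.
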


\begin{proof}
By Lemma~\ref{derivative_estimate2} we have a height bound for $T_\bv$ over $\B_V(0,1)$ of $\bc_{\ref{derivative_estimate2}} \epsilon^\frac{1}{3}$ and by Proposition~\ref{masscomparison1} the excess is bounded by $\Exc_1(T_\bv,V) \leq 2^m\bc_{\ref{masscomparison1}}\|g_0\|\rho\epsilon + \|g_0\|\epsilon$. The first statement is now a direct consequence of Lemma~\ref{newplaneclosetoold} and there is some $\bc(m) > 0$ such that $\|\pi_W - \pi_V\| \leq \bc \epsilon^\frac{1}{3}$. We may assume that $\epsilon$ is small enough such that $\max\{\bc,\bc_{\ref{derivative_estimate2}}\} \epsilon^\frac{1}{3} \leq \frac{1}{4}$. For $v \in V$ we have $|v - \pi_W(v)| \leq \bc \epsilon^\frac{1}{3}|v| \leq \frac{1}{2}|v|$, respectively, $\frac{1}{2}|v| \leq |\pi_W(v)|$. Now, consider two points $p,p' \in \spt(T_\bv)$, i.e.\ $p = v + \bv(v)$ and $p' = v' + \bv(v')$ for $v,v' \in V$. By Lemma~\ref{derivative_estimate2}, $\bv$ is Lipschitz with constant $\bc_{\ref{derivative_estimate2}}\epsilon^\frac{1}{3}$ and hence
\begin{align*}
|\pi_{W^\perp}(p - p')| & = |p - p' - \pi_W(p - p')| \\
 & \leq |v - v' - \pi_W(v - v')| + |\bv(v) - \bv(v') - \pi_W(\bv(v) - \bv(v'))| \\
 & \leq \bc\epsilon^\frac{1}{3}|v - v'| + 2\bc_{\ref{derivative_estimate2}}\epsilon^\frac{1}{3}|v - v'| \\
 & \leq 2(\bc + 2\bc_{\ref{derivative_estimate2}})\epsilon^\frac{1}{3}|\pi_W(v - v')| \,.
\end{align*}
Also
\begin{align*}
|\pi_W(p - p')| & \geq |\pi_W(v - v')| - |\pi_W(\bv(v) - \bv(v'))| \\
 & \geq |\pi_W(v - v')| - \bc_{\ref{derivative_estimate2}}\epsilon^\frac{1}{3}|v - v'| \\
 & \geq (1 - 2\bc_{\ref{derivative_estimate2}}\epsilon^\frac{1}{3})|\pi_W(v - v')| \\
 & \geq \frac{1}{2}|\pi_W(v - v')| \,.
\end{align*}
Combined we get
\[
|\pi_{W^\perp}(p - p')| \leq \bc'\epsilon^\frac{1}{3}|\pi_W(p - p')| \,,
\]
for some constant $\bc'(m) > 0$. This in turn implies that $\bw$ is well defined and Lipschitz with the constant as stated in (2). Since $\pi_W \circ (\id_V + \bv)$ is of class $C^1$ outside the origin, it follows from the inverse function theorem that the same holds for $\bw$ and $\|D\bw_x\| \leq \Lip(\bw) \leq \bc'\epsilon^\frac{1}{3}$. This shows (2).

Let $D \defl \B(0,1) \cap \spt(T_\bv)$. For $x \in W$ and $y \in W^\perp$ with $|x|,|y| \leq 1$,
\begin{align*}
0 & = \frac{\balpha(m)}{\|g_0\|(m+2)}Q(x,y) = \int_{D} \langle x, z\rangle \langle y, z\rangle \, d\cH^m(z) \\
 & = \int_{\pi_W(D)} \langle x, w + \bw(w)\rangle \langle y, w + \bw(w) \rangle (1 + (\J \bw_w)^2)^\frac{1}{2} \, dw \\
 & = \int_{\pi_W(D)} \langle x, w\rangle \langle y, \bw(w) \rangle (1 + (\J \bw_w)^2)^\frac{1}{2} \, dw \,.
\end{align*}
Because of $\|D\bw_x\| \leq \bc'\epsilon^\frac{1}{3}$, we note as in the proof of Lemma~\ref{jacobianofu} that
\begin{align*}
1 & \leq (1 + (\J \bw_z)^2)^\frac{1}{2} \leq 1 + \bc''\epsilon^\frac{2}{3} \,,
\end{align*}
for some constant $\bc''(m) > 0$. Hence $|s(w)| \leq \bc''\epsilon^\frac{2}{3}$ for $s(w) \defl (1 + (\J \bw_w)^2)^\frac{1}{2} - 1$ and $w \in \B_W(0,1)$. Further, $\cH^m(\B_W(0,1) \setminus \pi_W(D)) \leq \bc' \epsilon^\frac{2}{3}$ since the height bound obtained above implies $\B_W(0,\sqrt{1-\bc'^2\epsilon^\frac{2}{3}}) \subset \pi_W(D)$ and $|\bw(w)| \leq \bc'\epsilon^\frac{1}{3} \leq 1$ if $\epsilon$ is small enough. Hence,
\begin{align*}
 & \biggl|\int_{\B_W(0,1)} \langle y, \bw(w)\rangle \langle x, w \rangle \, dw \biggr| \\
 & \qquad \leq \biggl|\int_{\B_W(0,1)} \langle y, \bw(w)\rangle \langle x, w \rangle (1 + s(w)) \, dw \biggr| \\
 & \qquad \quad + \biggl|\int_{\B_W(0,1)} \langle y, \bw(w)\rangle \langle x, w \rangle s(w) \, dw \biggr| \\
  & \qquad \leq \biggl|\int_{\B_W(0,1) \setminus \pi_W(D)} \langle y, \bw(w)\rangle \langle x, w \rangle (1 + s(w)) \, dw \biggr| + \balpha(m) \|\bw\|_\infty \|s\|_\infty \\
  & \qquad \leq \cH^m(\B_W(0,1) \setminus \pi_W(D)) \|\bw\|_\infty 2 + \balpha(m) \|\bw\|_\infty \|s\|_\infty \\
	& \qquad \leq \bc''' \epsilon \,,
\end{align*}
for some constant $\bc'''(m) > 0$. By the homogeneity of $\bw$ the last statement of the lemma follows.
\end{proof}

Let
\begin{equation}
\label{zeroharm}
\bw_0 \defl \frac{1}{\balpha(m) m} \int_{\Sp_W} \bw(x) \, d\cH^{m-1}(x) \in W^\perp \,,
\end{equation}
and note that $\balpha(m) m = \cH^{m-1}(\Sp_W)$. Instead of taking the harmonic extension of $\bw$ over $\Sp_W$ as in \cite{Reif}, we follow \cite{Mor} and define $\bh : \B_W(0,1) \to W^\perp$ by 
\[
\bh(tx) \defl \bw_0 + t^2\left(\bw(x) - \bw_0\right) \,,
\]
for $|x|=1$ and $t \in [0,1]$. For a point $x \in \Sp_W$ we denote with $D_S \bw_x$ the derivative of the restriction of $\bw$ to $\Sp_W$ at the point $x$. The following calculations are contained in \cite{Mor}.

\begin{Lem}
\label{energyformulas}
There is a constant $\bc_{\theThm}(m) > 0$ such that if $0 < \epsilon < \epsilon_{\ref{newplane}}$, then
\[
\int_{\B_W(0,1)} (1 + (\J \bh)^2)^\frac{1}{2} \leq \bc_{\theThm}\epsilon^\frac{4}{3} + \int_{\B_W(0,1)} (1 + \|D\bh\|_{\HS}^2)^\frac{1}{2} \,,
\]
and further
\begin{align*}
\int_{\B_W(0,1)} \|D\bh\|_{\HS}^2 & = \frac{1}{m+2} \int_{\Sp_W} 4|\bw(x) - \bw_0|^2  + \|D_S\bw_x\|_{\HS}^2 \, d\cH^{m-1}(x) \,, \\
\int_{\B_W(0,1)} \|D\bw\|_{\HS}^2 & = \frac{1}{m} \int_{\Sp_W} |\bw(x)|^2  + \|D_S\bw_x\|_{\HS}^2 \, d\cH^{m-1}(x) \,.
\end{align*}
\end{Lem}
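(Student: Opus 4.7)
The plan is to reduce both energy identities to one-dimensional integrals by splitting the derivatives of $\bw$ and $\bh$ into a radial and a spherical component, and to obtain the Jacobian comparison from the expansion \eqref{jacobian_sum} together with the uniform derivative bound of Lemma~\ref{newplane}(2).

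For the third identity I would fix $y = tx$ with $t = |y| \in (0,1]$ and $x = y/|y| \in \Sp_W$. Since $\bw$ is $1$-homogeneous by Lemma~\ref{newplane}(2), differentiating $\bw(tx) = t\bw(x)$ in $t$ gives the radial derivative $D\bw_y(x) = \bw(x)$; differentiating along a unit-speed curve on the sphere of radius $t$ and rescaling yields $D\bw_y(e_j) = D_S\bw_x(e_j)$ for any $e_j$ in an orthonormal basis of $x^\perp \cap W$. Evaluating the Hilbert--Schmidt norm in the orthonormal basis $(x, e_2, \ldots, e_m)$ then gives $\|D\bw_y\|_{\HS}^2 = |\bw(x)|^2 + \|D_S\bw_x\|_{\HS}^2$, and the third identity follows by integrating in polar coordinates $d\cH^m(y) = t^{m-1}\,dt\,d\cH^{m-1}(x)$ using $\int_0^1 t^{m-1}\,dt = 1/m$.

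For the second identity the argument is completely parallel. Since $\bh(tx) = \bw_0 + t^2(\bw(x) - \bw_0)$, one differentiates in $t$ to obtain $D\bh_y(x) = 2t(\bw(x) - \bw_0)$, while the same spherical-curve computation applied to $\bh$ produces $D\bh_y(e_j) = t\, D_S\bw_x(e_j)$. Hence $\|D\bh_y\|_{\HS}^2 = 4t^2 |\bw(x) - \bw_0|^2 + t^2 \|D_S\bw_x\|_{\HS}^2$, and integration in polar coordinates with $\int_0^1 t^{m+1}\,dt = 1/(m+2)$ yields the stated formula.

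For the first inequality, the decomposition above combined with Lemma~\ref{newplane}(2) gives a uniform bound $\|D\bh_y\| \leq \bc'\,\epsilon^{1/3}$ (the height bound $|\bw(x)| \leq \bc_{\ref{newplane}}\epsilon^{1/3}$ for $|x|=1$ being immediate from $1$-homogeneity and continuity of $\bw$ at $0$). Formula \eqref{jacobian_sum} then shows that the nonnegative difference $(\J \bh_y)^2 - \|D\bh_y\|_{\HS}^2$ is a sum of minors of order at least two and is hence controlled pointwise by $\bc''\|D\bh_y\|^4 \leq \bc'''\epsilon^{4/3}$. Combined with the elementary identity $\sqrt{1+A} - \sqrt{1+B} = (A-B)/(\sqrt{1+A}+\sqrt{1+B}) \leq (A-B)/2$ applied with $A = (\J\bh_y)^2 \geq B = \|D\bh_y\|_{\HS}^2$, this yields the pointwise inequality $(1 + (\J\bh_y)^2)^{1/2} \leq (1 + \|D\bh_y\|_{\HS}^2)^{1/2} + \bc''''\epsilon^{4/3}$, which integrates to the claim. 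The only delicate point in the whole argument is to verify that the tangential derivatives on the sphere of radius $t$ pick up exactly one factor of $t$ in both cases; all remaining steps are routine spherical-coordinate computations combined with the smallness estimates already available.
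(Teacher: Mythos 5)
Your proposal is correct and follows essentially the same route as the paper: both split $D\bw$ and $D\bh$ into radial and spherical components using the homogeneity, evaluate the Hilbert--Schmidt norm in the polar orthonormal frame, integrate in polar coordinates, and control $(\J\bh)^2-\|D\bh\|_{\HS}^2$ by the sum of minors of order $\geq 2$ together with the uniform $O(\epsilon^{1/3})$ bound on $\|D\bh\|$. The only cosmetic difference is that you parametrize by $y=tx$ with $|x|=1$, whereas the paper writes everything directly in terms of the point $x$ and $x/|x|$.
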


\begin{proof}
By definition and the $1$-homogeneity of $\bw$ it is
\[
\bh(x) = \bw_0 + |x|^2 \left(\bw\left(x/|x|\right) - \bw_0\right) = \bw_0\left(1 - |x|^2\right) + |x|\bw(x) \,.
\]
Hence, 
\[
D\bh_x(w) = - 2\langle x, w \rangle \bw_0 + \langle x/|x|, w \rangle\bw(x) + |x| D\bw_x(w) \,.
\]
If $x \neq 0$ and $e_1, \dots, e_m$ is an orthonormal basis of $W$ with $e_1 = |x|^{-1}x$, then $|x|D\bw_x(e_1) = \bw(x)$ and further
\begin{align*}
D\bh_x(e_1) & = -2 |x|\bw_0 + \bw(x) + |x|D\bw_x(e_1) = 2(\bw(x) - |x|\bw_0) \\
 & = 2|x|(\bw(x/|x|) - \bw_0) \,.
\end{align*}
For $k \geq 2$ by the $1$-homogeneity of $\bw$,
\[
D\bh_x(e_k) = |x|D\bw_x(e_k) = |x| D_S \bw_{|x|^{-1}x}(e_k) \,.
\]
Hence,
\begin{align*}
\|D\bh_x\|_{\HS}^2 & = \sum_k |D\bh_x(e_k)|^2 \\
 & = |D\bh_x(e_1)|^2 + |x|^2 \sum_{k \geq 2} |D_S \bw_{x/|x|}(e_k)|^2 \\
 & = 4|x|^2|\bw(x/|x|) - \bw_0|^2 + |x|^2 \|D_S \bw_{x/|x|}\|_{\HS}^2 \,,
\end{align*}
and in particular with the definition of $\bw_0$ and Lemma~\ref{newplane},
\[
\|D\bh_x\|_{\HS}^2 \leq 16 \mathrm{Lip}(\bw)^2  + \|D\bw_x\|_{\HS}^2 \leq 17\bc_{\ref{newplane}} \epsilon^\frac{2}{3} \,.
\]
The first statement follows now exactly as in Lemma~\ref{jacobianofu}. Similarly,
\[
\|D\bw_x\|_{\HS}^2 = |\bw(x/|x|)|^2 + \|D_S \bw_{x/|x|}\|_{\HS}^2 \,.
\]
Integrating gives
\begin{align*}
\int_{\B_W(0,1)} \|D\bh\|_{\HS}^2 & = \int_0^1 dr \int_{\Sp_W(r)} \|D\bh_x\|_{\HS}^2 \, d\cH^{m-1}(x) \\
 & = \int_0^1 r^2 \, dr \int_{\Sp_W(r)} 4|\bw(x/r) - \bw_0|^2 + \|D_S \bw_{x/r}\|_{\HS}^2 \, d\cH^{m-1}(x)\\
 & = \int_0^1 r^{m + 1} \, dr \int_{\Sp_W} 4|\bw(x) - \bw_0|^2 + \|D_S \bw_x\|_{\HS}^2 \, d\cH^{m-1}(x) \\
 & = \frac{1}{m+2} \int_{\Sp_W} 4|\bw(x) - \bw_0|^2  + \|D_S\bw_x\|_{\HS}^2 \, d\cH^{m-1}(x) \,.
\end{align*}
And similarly,
\begin{align*}
\int_{\B_V(0,1)} \|D\bw\|_{\HS}^2 & = \int_0^1 r^{m-1} \, dr \, \int_{\Sp_W} |\bw(x)|^2 + \|D_S \bw_{x}\|_{\HS}^2 \, d\cH^{m-1}(x) \\
 & =  \frac{1}{m} \int_{\Sp_W} |\bw(x)|^2 + \|D_S \bw_{x}\|_{\HS}^2 \, d\cH^{m-1}(x) \,.
\end{align*}
\end{proof}

It is $L^2(\Sp_W) = \oplus_{i=0}^\infty \calH_i(\Sp_W)$ where $\calH_i(\Sp_W)$ denotes the Hilbert space of harmonic polynomials of degree $i$ on $\Sp_W = \Sp^{m-1}$. Hence for a fixed orthonormal basis $(e_l)_{l > m}$ of $W^\perp$ we can write the restriction $w^{l} = \langle \bw, e_l\rangle$ as a sum $\sum_{i \geq 0}w^l_i p^l_i$ with $w^l_i \in \R$ and $p^l_i \in \calH_i(\Sp_W)$ of unit $L^2$-norm. It is understood that the partial sums converge in $L^2$ to $w^{l} : \B_W(0,1) \to \R$. Every $p^l_i$ is the restriction of a harmonic homogeneous polynomial of degree $i$ (the extension is just given by $p^l_i(rx) = r^ip^l_i(x)$ for $r \geq 0$), which is for simplicity also denoted by $p^l_i$. Thus
\begin{align*}
\int_{\Sp_W} |\bw(x)|^2 & = \int_{\Sp_W} \sum_{m < l} |w^l(x)|^2 = \sum_{m < l} \int_{\Sp_W} |w^l(x)|^2 \\
 & = \sum_{m < l} \sum_{i = 0}^\infty \int_{\Sp_W} (w_i^l)^2|p_i^l(x)|^2 = \sum_{m < l} \sum_{i = 0}^\infty (w_i^l)^2 \\
 & \leq \balpha(m)m\bc_{\ref{newplane}}^2 \epsilon^\frac{2}{3} \,,
\end{align*}
if we assume that $\epsilon$ is small enough. In particular, for each fixed $i$, $\sum_{m < l} (w^l_i)^2 < \infty$. Furthermore, since $\calH_i(\Sp_W)$ is finite dimensional, there is a constant $C(i,m)$ depending only on $i$ and $m$ such that $|p_i^l(x)| \leq C(i,m)$ for all $|x| \leq 1$. Hence,
\[
\sup_{|x| \leq 1} \biggl| \sum_{m < N < l \leq N'} w_i^l p_i^l(x) e_l \biggr|^2 \leq C(i,m)^2\sum_{m < N < l \leq N'} (w_i^l)^2  \to 0 \,,
\]
as $N,N' \to \infty$,
and the partial sums
\[
\biggl(x \mapsto \sum_{m < l \leq N} w_i^l p_i^l(x) e_l\biggr)_{N > m}
\]
converge uniformly with respect to the norm in $\Hi$ for $N \to \infty$ to some $\bw_i : \B_W(0,1) \to W^\perp$. Similarly, the norms of the derivatives up to some fixed order of these partial sums converge uniformly as well, hence $\bw_i$ is smooth because the partial sums are. Note that $\bw_0$ is the constant map with value as defined in \eqref{zeroharm}. Finally, it is not hard to see that the partial sums $\sum_i \bw_i$ converge to $\bw$ in $L^2(\Sp_W,W^\perp)$.

\begin{Lem}
\label{oepsilonlinear}
There is a constant $\bc_{\theThm}(m) > 0$ with the following property. If $0 < \epsilon < \epsilon_{\ref{newplane}}$, then $\|\bw_1\|_\infty \defl \sup_{|x| \leq 1}|\bw_1(x)| \leq \bc_{\theThm}(m)\epsilon$.
\end{Lem}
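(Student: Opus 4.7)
The plan is to identify $\bw_1$ with a linear map $W \to W^\perp$ (via its homogeneous degree-$1$ extension) and then exploit the hypothesis \eqref{oepsilonofu}, which was designed precisely to suppress the linear part of $\bw$ through the choice of $W$ as the top-$m$ eigenspace of $Q$. In effect \eqref{oepsilonofu} is already the $L^2(\Sp_W)$-orthogonality of $\bw$ to linear functions (up to an $O(\epsilon)$ defect), and degree-$1$ spherical harmonics on $\Sp_W$ are exactly the linear functions, so the whole argument is to read this off via the standard reproducing-kernel formula for the $\calH_1$-projection.

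First I would record that formula. For any scalar $f \in L^2(\Sp_W)$ and any $v \in W$, the unique linear function $p : W \to \R$ such that $f - p$ is $L^2(\Sp_W)$-orthogonal to every linear function is
\[
p(v) = \frac{1}{\balpha(m)} \int_{\Sp_W} f(z) \langle v, z\rangle \, d\cH^{m-1}(z) \,.
\]
This follows by decomposing $f = \sum_{i \geq 0} f_i$ into its spherical-harmonic components, observing that $\int_{\Sp_W} f_i(z) \langle v, z\rangle \, d\cH^{m-1}(z) = 0$ for $i \neq 1$ (orthogonality of distinct harmonic degrees, combined with the vanishing of the integral of a linear function on $\Sp_W$ for $i = 0$), and computing $\int_{\Sp_W} \langle u, z\rangle \langle v, z\rangle \, d\cH^{m-1}(z) = \balpha(m) \langle u, v\rangle$ for $u, v \in W$.

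Second, I would apply this coordinatewise. For any $y \in W^\perp$ the scalar function $z \mapsto \langle y, \bw(z)\rangle$ lies in $L^2(\Sp_W)$, and since the $W^\perp$-valued spherical-harmonic decomposition is obtained by applying the scalar decomposition to each coordinate, the $\calH_1$-projection of $\langle y, \bw(\cdot)\rangle$ is exactly $v \mapsto \langle y, \bw_1(v)\rangle$. Hence
\[
\langle y, \bw_1(v)\rangle = \frac{1}{\balpha(m)} \int_{\Sp_W} \langle y, \bw(z)\rangle \langle v, z\rangle \, d\cH^{m-1}(z) \,.
\]

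Third, I would invoke \eqref{oepsilonofu} directly. For $v \in W$ and $y \in W^\perp$ with $|v|, |y| \leq 1$, the integral above has absolute value at most $\bc_{\ref{newplane}} \epsilon$, and therefore $|\langle y, \bw_1(v)\rangle| \leq \bc_{\ref{newplane}} \epsilon / \balpha(m)$. Taking the supremum over unit $y \in W^\perp$ gives $|\bw_1(v)| \leq \bc_{\ref{newplane}} \epsilon / \balpha(m)$, so the conclusion holds with $\bc_{\theThm}(m) = \bc_{\ref{newplane}}(m)/\balpha(m)$. There is no real obstacle here: the entire content of the lemma is packaged inside \eqref{oepsilonofu}, and the proof amounts to recognizing that \eqref{oepsilonofu} is the reproducing-kernel integral for the $\calH_1$-projection tested against an arbitrary $y \in W^\perp$.
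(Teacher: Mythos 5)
Your argument is correct and follows the same route as the paper's proof: both exploit orthogonality of distinct-degree spherical harmonics to reduce the $\calH_1$-part of \eqref{oepsilonofu} to a bound on $\int_{\Sp_W} \langle y, \bw_1(z)\rangle \langle x, z\rangle \, d\cH^{m-1}(z)$, and then evaluate this integral as a constant multiple of $\langle y, \bw_1(x)\rangle$ via $\int_{\Sp_W} \langle u, z\rangle\langle v, z\rangle\, d\cH^{m-1}(z) = \balpha(m)\langle u,v\rangle$. You phrase the final step as the reproducing-kernel formula for the $\calH_1$-projection, whereas the paper carries out the same computation explicitly in an orthonormal basis; the content is identical.
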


\begin{proof}
For $i \neq 1$, $x \in W$ and $y \in W^\perp$,
\[
\int_{\Sp_W} \langle y, \bw_i(z) \rangle \langle x, z \rangle \, d\cH^{m-1}(z) = 0 \,,
\]
since $\calH_1 \perp \calH_i$ if $i \neq 1$ and $z \mapsto \langle x,z \rangle \in \calH_1$. Because $\sum_i \bw_i$ converge to $\bw$ in $L^2$ and \eqref{oepsilonofu} we get
\[
\left| \int_{\Sp_W} \langle y, \bw_1(z) \rangle \langle x, z \rangle \, d\cH^{m-1}(z) \right| \leq \bc_{\ref{newplane}}\epsilon \,,
\]
for all $x \in W$ and $y \in W^\perp$ with $|x|,|y| \leq 1$. Since $w^l_1 p^l_1 : W \to \R$ is a linear functional it is represented by some vector $w_l \in W$ with $\langle e_l, \bw_1(z)\rangle = \langle w_l, z\rangle$. If we set $x = \sum_k \kappa^k e_k$,  $y = \sum_l\lambda^l e_l$ and $w_l = \sum_{k'}w_l^{k'}e_{k'} \in \Hi$ it follows
\begin{align*}
\int_{\Sp_W} \langle y, \bw_1(z) \rangle \langle x, z \rangle \, d\cH^{m-1}(z) & = \sum_{k,l,l'} \int_{\Sp_W} \langle \lambda^l e_l, \langle w_{l'}, z \rangle e_{l'} \rangle \langle \kappa^k e_k, z \rangle \, d\cH^{m-1}(z) \\
 & = \sum_{k,l} \int_{\Sp_W} \kappa^k \lambda^l \langle w_l, z \rangle \langle e_k, z \rangle \, d\cH^{m-1}(z) \\
 & = \sum_{k,k',l} \int_{\Sp_W} \kappa^k \lambda^l w_l^{k'} \langle e_{k'}, z \rangle \langle e_k, z \rangle \, d\cH^{m-1}(z) \\
 & = \sum_{k,l} \int_{\Sp_W} \kappa^k \lambda^l w_l^{k} \langle e_{k}, z \rangle^2 \, d\cH^{m-1}(z) \\
 & = \sum_{k,l} \bc(m) \kappa^k \lambda^l w_l^{k} = \bc(m) \sum_{k,l,l'} \langle \lambda^l e_l, \kappa^k w_{l'}^{k} e_{l'} \rangle \\
 & = \bc(m) \sum_{l,l'} \langle \lambda^l e_l, \langle w_{l'}, x \rangle e_{l'} \rangle = \bc(m) \, \langle y, \bw_1(x) \rangle \,,
\end{align*}
for some $\bc(m) > 0$ depending only on $m$. Hence letting $y = \bw_1(x)$ and referring to \eqref{oepsilonofu},
\[
|\bw_1(x)| = \left\langle \frac{\bw_1(x)}{|\bw_1(x)|}, \bw_1(x)\right\rangle\leq \bc(m)^{-1} \bc_{\ref{newplane}} \epsilon \,.
\]
Since $x$ is arbitrary the proof is complete.
\end{proof}

For $p,q \in \calH_k(\Sp^{m-1})$ it is a standard exercise using the divergence theorem to show that (with $\nabla$ we understand the gradient in $\R^m$)
\[
\int_{\Sp^{m-1}} \nabla p \cdot \nabla q \, d\cH^{m-1} = k(m + 2k - 2) \int_{\Sp^{m-1}} pq \, d\cH^{m-1} \,,
\]
whereas for harmonic polynomials $p$ and $q$ of different degree both integrals are zero. With $\partial_r p = k p$ we get (for $|x| = 1$)
\[
\nabla p \cdot \nabla p = \|Dp\|_{\HS}^2 = k^2 p^2 + \|D_S p\|_{\HS}^2 \,,
\]
and hence
\begin{align*}
\int_{\Sp^{m-1}} \|D_S p\|_{\HS}^2 \, d\cH^{m-1} & = k(m + 2k - 2) \int_{\Sp^{m-1}} p^2 \, d\cH^{m-1} - k^2 \int_{\Sp^{m-1}} p^2 \, d\cH^{m-1} \\
 & = k(m + k - 2) \int_{\Sp^{m-1}} p^2 \, d\cH^{m-1} \,.
\end{align*}
More generally, if $f = \sum_k f_k p_k$ is a finite sum of homogeneous harmonic polynomials $p_k$ of degree $k$ and $L^2$-norm $1$ we get as in \cite{Mor}
\begin{equation}
\label{harmoniccoordinates}
\int_{\Sp^{m-1}} \|D_S f\|_{\HS}^2 \, d\cH^{m-1} = \sum_k f_k^2 k(m + k - 2) \,.
\end{equation}
The restriction of any polynomial to the sphere is the finite sum of homogeneous harmonic polynomials and the formula remains true for $f$ in $C^1(\Sp^{m-1},\R)$ by approximation.

Let $\bw = \bw_0 + \bw_1 + \sum_{k \geq 2} \bw_k$ as before with convergence in $L^2(\Sp^{m-1},W^\perp)$, whereas with uniform convergence on $\Sp^{m-1}$,
\[
\bw_k = \sum_{l > n} w_k^l p_k^l e_l \,.
\]
Note that $p_0^l = \cH^{m-1}(\Sp^{m-1})^{-\frac{1}{2}} = (\balpha(m) m)^{-\frac{1}{2}}$ by normalization. As in \cite{Mor}, although there without the error term in $\epsilon$, we get the following estimate.

\begin{Lem}
\label{harmoniccoordinates2}
There is some constant $\bc_{\theThm}(m) > 0$ such that if $0 < \epsilon < \epsilon_{\ref{newplane}}$, then
\[
\int_{\B_W(0,1)} \|D\bh\|_{\HS}^2 \leq \bc_{\theThm}\epsilon^2 + \frac{2m}{2m+1} \int_{\B_W(0,1)} \|D\bw\|_{\HS}^2 \,.
\]
\end{Lem}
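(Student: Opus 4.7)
The plan is to rewrite both sides of the desired inequality as sums indexed by spherical harmonic degree, using Lemma~\ref{energyformulas} to reduce the integrals over $\B_W(0,1)$ to integrals over $\Sp_W$, and then to compare the two sums mode by mode. The only obstruction to a clean mode-wise comparison is the degree $k=1$ term, which we absorb into the error $\bc\epsilon^2$ via Lemma~\ref{oepsilonlinear}.

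First I would expand $\bw = \sum_{k \geq 0} \bw_k$ as described just before the statement of the lemma, where $\bw_k = \sum_{l>m} w_k^l p_k^l e_l$ with $(p_k^l)$ an $L^2(\Sp^{m-1})$-orthonormal basis of $\calH_k(\Sp^{m-1})$. Setting $a_k \defl \sum_l (w_k^l)^2$, orthogonality of the spherical harmonics gives
\begin{equation*}
\int_{\Sp_W} |\bw(x) - \bw_0|^2\, d\cH^{m-1} = \sum_{k \geq 1} a_k, \qquad \int_{\Sp_W} |\bw(x)|^2\, d\cH^{m-1} = \sum_{k \geq 0} a_k,
\end{equation*}
while identity \eqref{harmoniccoordinates} (applied coordinate by coordinate along the $e_l$'s and summed, with the dominated convergence justification coming from the uniform bound on $\bw$) yields
\begin{equation*}
\int_{\Sp_W} \|D_S \bw_x\|_{\HS}^2\, d\cH^{m-1} = \sum_{k \geq 1} a_k\, k(m+k-2).
\end{equation*}
Plugging these into the two identities of Lemma~\ref{energyformulas} I obtain
\begin{equation*}
\int_{\B_W(0,1)} \|D\bh\|_{\HS}^2 = \frac{1}{m+2}\sum_{k\geq 1} a_k\bigl[4 + k(m+k-2)\bigr],
\end{equation*}
\begin{equation*}
\int_{\B_W(0,1)} \|D\bw\|_{\HS}^2 = \frac{1}{m}\sum_{k\geq 0} a_k\bigl[1 + k(m+k-2)\bigr].
\end{equation*}

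Now the desired inequality will follow if I show, term by term in $k$, the mode-wise bound
\begin{equation*}
\frac{1}{m+2}\bigl[4 + k(m+k-2)\bigr] \;\leq\; \frac{2}{2m+1}\bigl[1 + k(m+k-2)\bigr]
\end{equation*}
for all $k \geq 2$, which after clearing denominators is equivalent to $2m \leq k(m+k-2)$. This inequality is an equality at $k=2$ (so the constant $\frac{2m}{2m+1}$ in the statement is sharp), and for $k \geq 3$ one has $k(m+k-2) \geq 3(m+1) > 2m$. The contribution of $k = 0$ on the $\bh$-side is absent, so dropping the nonnegative $k=0$ term on the $\bw$-side only strengthens the inequality.

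The one remaining difficulty, and the main point of the proof, is the $k=1$ mode, for which the mode-wise inequality \emph{fails} since $\frac{m+3}{m+2} > \frac{2m}{2m+1}$. Here I use Lemma~\ref{oepsilonlinear}: since $\|\bw_1\|_\infty \leq \bc_{\ref{oepsilonlinear}}\epsilon$, one has
\begin{equation*}
a_1 = \int_{\Sp_W} |\bw_1|^2\, d\cH^{m-1} \leq \cH^{m-1}(\Sp_W)\,\bc_{\ref{oepsilonlinear}}^2 \epsilon^2 = \balpha(m)m\,\bc_{\ref{oepsilonlinear}}^2\epsilon^2,
\end{equation*}
so the entire $k=1$ contribution to $\int_{\B_W}\|D\bh\|_{\HS}^2$ is bounded by $\frac{m+3}{m+2}\,\balpha(m)m\,\bc_{\ref{oepsilonlinear}}^2\epsilon^2$, and absorbs into the $\bc_{\theThm}\epsilon^2$ error. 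Combining this with the mode-wise bounds for $k \geq 2$ yields the lemma, with an explicit constant depending only on $m$ and $\bc_{\ref{oepsilonlinear}}$.
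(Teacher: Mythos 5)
Your proof is correct and follows essentially the same route as the paper's: expand $\bw$ in spherical harmonics, convert the two integrals to weighted $\ell^2$ sums via Lemma~\ref{energyformulas} and \eqref{harmoniccoordinates}, absorb the degree-$1$ mode into the $\epsilon^2$ error using Lemma~\ref{oepsilonlinear}, and compare the remaining coefficients mode by mode. The only difference is cosmetic: where you verify the mode-wise bound directly by clearing denominators (reducing it to $2m \le k(m+k-2)$ for $k\ge 2$), the paper writes $4 + A = (1+A)\bigl(1 + \tfrac{3}{1+A}\bigr)$ and bounds $\tfrac{3}{1+A}$ by $\tfrac{3}{1+2m}$; both lead to the same factor $\tfrac{2m}{2m+1}$. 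Your aside that equality holds at $k=2$ correctly identifies why this constant is sharp within the argument.
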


\begin{proof}
With Lemma~\ref{energyformulas} and \eqref{harmoniccoordinates} we get
\begin{align*}
\int_{\B_W(0,1)} \|D\bh\|_{\HS}^2 & = \frac{1}{m+2} \int_{\Sp_W} 4|\bw(x) - \bw_0|^2  + \|D_S\bw_x\|_{\HS}^2 \, d\cH^{m-1}(x) \\
 & = \frac{1}{m+2} \biggl[4 \sum_{l > m, k \geq 1} (w^l_k)^2 + \sum_{l > m, k \geq 1} (w^l_k)^2 k(m + k - 2)\biggr] \,.
\end{align*}
By Lemma~\ref{oepsilonlinear} it is
\[
\sum_{l > m} (w^l_1)^2 = \|\bw_1\|_{{\scriptscriptstyle \mathrm{L^2}}}^2 \leq \bc'\|\bw_1\|_\infty^2 \leq \bc\epsilon^2 \,,
\]
for some constants $\bc'(m) > 0$ and $\bc(m) = \bc_{\ref{oepsilonlinear}}(m) \bc'(m) > 0$. Hence,
\begin{align*}
\int_{\B_W(0,1)} \|D\bh\|_{\HS}^2 & \leq \bc\epsilon^2 + \frac{1}{m+2} \sum_{l > m, k \geq 2} (w^l_k)^2 [4 + k(m + k - 2) ] \,.
\end{align*}
Similarly by Lemma \ref{energyformulas},
\begin{align*}
\int_{\B_W(0,1)} \|D\bw\|_{\HS}^2 & =\frac{1}{m} \sum_{l > n, k \geq 0} (w^l_k)^2 [1 + k(m + k - 2) ] \,.
\end{align*}
As in \cite{Mor},
\begin{align*}
& \int_{\B_W(0,1)} \|D\bh\|_{\HS}^2 \leq \bc\epsilon^2 + \frac{1}{m+2} \sum_{l > m, k \geq 2} (w^l_k)^2 [4 + k(m + k - 2) ] \\
 & \qquad = \bc\epsilon^2 + \frac{1}{m+2} \sum_{l > m, k \geq 2} (w^l_k)^2 [1 + k(m + k - 2)] \left[1 + \frac{3}{1 + k(m + k - 2)}\right] \\
 & \qquad \leq \bc\epsilon^2 + \frac{m}{m+2} \left[1 + \frac{3}{1 + 2m}\right] \int_{\B_W(0,1)} \|D\bw\|_{\HS}^2 \\
 & \qquad = \bc\epsilon^2 + \frac{2m}{2m+1} \int_{\B_W(0,1)} \|D\bw\|_{\HS}^2 \,.
\end{align*}
\end{proof}

\subsection{A competitor better than the cone}

In Subsection~\ref{massest} we defined the comparison surface $T$ and in Proposition~\ref{masscomparison1} we saw that $\bM(T)$ can't be much bigger than $\bM(P)$. We now modify $T$ on the cylinder $Z_V(\frac{1}{4})$ to get a new chain $S$. We work on a $4$-times smaller scale than in the last subsection, so we replace the $\bh$ constructed there by $\bh_4 \defl 4^{-1} \bh(4x)$. The new chain $S$ is defined by
\begin{enumerate}
\item $S \res Z_W(\frac{1}{4})^c \defl T \res Z_W(\frac{1}{4})^c$,
\item $S \res Z_W(\frac{1}{4}) \defl (\id_W + \bh_4)_\#(g_0 \curr{\B_W(0,\frac{1}{4})})$.
\end{enumerate}
The plane $W$ is close to $V$ by Lemma~\ref{newplane}. So we can assume that $\epsilon$ is small enough such that
\begin{equation}
\label{otherplaneintersection}
\spt(T) \cap Z_W(4^{-1}) \subset \spt(T) \cap Z_V(2^{-1}) \,, \quad \mbox{and}
\end{equation}
\begin{equation}
\label{bounderywithball}
S \res \B(0,1)^c = T \res \B(0,1)^c = P \res \B(0,1)^c \,.
\end{equation}
By the construction of $\bh$ and because $\bw$ is $1$-homogeneous we have $\bh_4(x) = \bw(x)$ for $|x| = \frac{1}{4}$ and hence $\partial P = \partial T = \partial S$. Note that the comparison surface $S$ is obtained via a push-forward from $T$ and hence also from $P$, i.e.\ $S = \psi_\# P$ for some Lipschitz map $\psi : \Hi \to \Hi$ with $\spt(\psi)$ contained in a neighborhood of $0$ (for example, $\B_{V}(0,1) + \B_{V^\perp}(0,2\rho)$ would be okay).

Before we give the main result, we compare the mass over the planes $W$ and $V$. The Lemma is essentially contained in \cite{Reif}.

\begin{Lem}
[{\cite[Lemma~14]{Reif}}]
\label{planeselection}
There are constants $\bc_{\theThm}(m) > 0$ and $0 < \epsilon_{\theThm} \leq \epsilon_{\ref{newplane}}$ such that if $0 < \rho, \epsilon < \epsilon_{\theThm}$, then
\[
\bM(T_\bv \res Z_W) \leq \bM(T_\bv \res Z_V)
\]
implies
\[
\bigl|\bM(P \res Z_W) - \bM(P \res Z_{V}) - [\bM(T_\bv \res Z_W) - \bM(T_\bv \res Z_{V})]\bigr| \leq \bc_{\theThm}\|g_0\|\rho\epsilon \,.
\]
\end{Lem}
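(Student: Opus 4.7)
The plan is to reduce both mass differences to integrals over a thin annulus near $\Sp_V$, then exploit the averaging property $\bv \approx \bar\by = \sum_i \delta_i \by^i$ together with the spectral choice of $W$ to extract the needed cancellation.

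First, I parametrize both chains as graphs over $V$ and apply the area formula. Since each simplex $S_i$ of $P$ is the graph of $\by^i$ over $\pi_V(S_i)$, and $T_\bv = (\id_V+\bv)_\#(g_0\curr{\B_V(0,2)})$, I can write
\begin{align*}
\bM(P\res Z_W) - \bM(P\res Z_V) &= \int_{\B_V(0,2)}\sum_{i\in I_x}\|g_i\|\,\Delta^i(x)\,\sqrt{1+(\J\by^i_x)^2}\,dx, \\
\bM(T_\bv\res Z_W) - \bM(T_\bv\res Z_V) &= \|g_0\|\int_{\B_V(0,2)}\Delta_\bv(x)\,\sqrt{1+(\J\bv_x)^2}\,dx,
\end{align*}
where $\Delta^i(x)=\chi_{\B_W(0,1)}(\pi_W(x+\by^i(x)))-\chi_{\B_V(0,1)}(x)$ and $\Delta_\bv(x)=\chi_{\B_W(0,1)}(\pi_W(x+\bv(x)))-\chi_{\B_V(0,1)}(x)$.

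Second, I localize. By Lemma~\ref{newplane} we have $\|\pi_W-\pi_V\|\leq\bc\epsilon^{1/3}$ and by the height bound $|\by^i|,|\bv|\leq\rho$, both $\Delta^i$ and $\Delta_\bv$ vanish outside the annulus $A\defl\{x\in V:1-4\rho\leq|x|\leq 1+4\rho\}$ (for $\epsilon$ small enough, using $\epsilon^{1/3}\leq\rho$). On $A$ I replace each Jacobian factor $\sqrt{1+(\J\cdot)^2}$ by $1$; the induced error is controlled via Lemma~\ref{derivative_estimate2}, Lemma~\ref{integral_of_dv} and the 1-homogeneity of $\|D\by^i\|_\HS$ and $\|D\bv\|_\HS$, which allows one to restrict the total excess to $A$ at the cost of a factor $\rho$. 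This replacement introduces an error of at most $\bc\|g_0\|\rho\epsilon$ on each side.

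Third, I pass to spherical coordinates $x=r\sigma$, $\sigma\in\Sp_V$. Using the 1-homogeneity $\by^i(r\sigma)=r\by^i(\sigma)$, $\bv(r\sigma)=r\bv(\sigma)$, the inclusions $\by^i(\sigma),\bv(\sigma)\in V^\perp$ (so that $\langle\sigma,\by^i(\sigma)\rangle=0$), and expanding $|\pi_W(r\sigma+\by^i(r\sigma))|^2=1$, the boundary of $\Delta^i$ is radially at
\[
r_{\by^i}(\sigma)=1-\langle\sigma,(\pi_W-\pi_V)(\sigma+\by^i(\sigma))\rangle + O(\rho^2+\epsilon^{2/3}),
\]
with an analogous expression for $r_\bv$. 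Radial integration and the bound on the $O$-term reduce the annular integrals to spherical ones modulo a further error of $\bc\|g_0\|\rho\epsilon$.

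Fourth, I extract the cancellation. The remaining difference is, to leading order,
\[
-\int_{\Sp_V}\Bigl\langle\sigma,(\pi_W-\pi_V)\Bigl(\textstyle\sum_{i\in I_\sigma}\|g_i\|\by^i(\sigma)-\|g_0\|\bv(\sigma)\Bigr)\Bigr\rangle\,d\cH^{m-1}(\sigma).
\]
Splitting the inner argument as $\|g_0\|(\bar\by(\sigma)-\bv(\sigma))+\bigl(\sum_{i\in I_\sigma}\|g_i\|-\|g_0\|\bigr)\bar\by(\sigma)$, the second summand is supported on $E_2\cap\Sp_V$ and contributes $O(\|g_0\|\rho\epsilon)$ via~\eqref{multiple_values_mass_estimate_prelim}. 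For the main piece $\|g_0\|(\bar\by-\bv)$, I recast the sphere integral as a volume integral on $\B_V(0,1)$ using 1-homogeneity and the identity $\int_{\B_V(0,1)}\langle x, f(x)\rangle\,dx=\frac{1}{m+2}\int_{\Sp_V}\langle\sigma, f(\sigma)\rangle\,d\cH^{m-1}(\sigma)$ valid for $1$-homogeneous $f$. Combining the Cauchy--Schwarz bound on $\|\pi_W-\pi_V\|$ (Lemma~\ref{newplane}) with the $L^2$ estimate $\int\sum_i\|g_i\||\bv-\by^i|^2\leq\bc\|g_0\|\rho^2\epsilon$ from Lemma~\ref{dist_to_v}, and the spectral orthogonality \eqref{oepsilonofu} relating first moments of the graph to $W$, yields the bound $\bc\|g_0\|\rho\epsilon$. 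The hypothesis $\bM(T_\bv\res Z_W)\leq\bM(T_\bv\res Z_V)$ is used to fix the favourable sign of the error term coming from the $(\pi_W-\pi_V)(\sigma)$ part of the linearization.

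The principal obstacle is Step~4: extracting the cancellation at the required $O(\epsilon)$ level rather than the naive $O(\epsilon^{1/2})$. This demands careful combination of the linearization, the $L^2$-closeness of $\bv$ to $\bar\by$, the spectral definition of $W$, and the sign constraint provided by the hypothesis.
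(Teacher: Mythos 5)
Your overall plan is a legitimate variant of the paper's argument: instead of Reifenberg's explicit change-of-variables factor $\bigl(\tfrac{|x|^2+|\by^i|^2-(k^i)^2}{|x|^2}\bigr)^{-m/2}$ for the mass over $Z_W$ (which the paper takes from \cite[Lemma~13]{Reif} and Taylor-expands into the quantities $\delta$, $\delta^i$), you compare the two cylinder masses by inserting indicator differences $\Delta^i$, localizing to an annulus, and linearizing the radial boundary. The bookkeeping in Steps 1--3 is plausible and the leading difference you isolate in Step~4, namely $\int_{\Sp_V}\langle(\pi_W-\pi_V)\sigma,\bar\by-\bv\rangle$ plus an $E_2$ piece, is indeed the right object to control.

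The gap is in Step~4, and it is not cosmetic. As written, "Cauchy--Schwarz bound on $\|\pi_W-\pi_V\|$'' together with $\int\sum_i\|g_i\||\bv-\by^i|^2\leq\bc\rho^2\epsilon$ yields
\[
\Bigl|\int_{\Sp_V}\langle(\pi_W-\pi_V)\sigma,\bar\by-\bv\rangle\Bigr|\leq \|\pi_W-\pi_V\|\cdot\bc\cdot\Bigl(\int|\bar\by-\bv|^2\Bigr)^{1/2}\leq \bc\,\epsilon^{1/3}\cdot\rho\sqrt{\epsilon}=\bc\rho\,\epsilon^{5/6},
\]
which is short of the target $\bc\rho\epsilon$. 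The only way this closes is if, instead of the operator-norm bound $\|\pi_W-\pi_V\|\leq\bc\epsilon^{1/3}$ from Lemma~\ref{newplane}, one has the \emph{$L^2$ bound} $\int_{\Sp_V}|\pi_{W^\perp}\sigma|^2\leq\bc\epsilon$. That improvement from $\epsilon^{2/3}$ to $\epsilon$ is precisely what the hypothesis $\bM(T_\bv\res Z_W)\leq\bM(T_\bv\res Z_V)$ delivers; in the paper this is the chain of estimates \eqref{tprimeerror}$\rightarrow$\eqref{kminusbvest}$\rightarrow$\eqref{boundonk}, which gives $\int k^2\leq\bc\epsilon$ and hence, since $k^2\geq\tfrac12|\pi_{W^\perp}\sigma|^2-2|\bv|^2$ and $\int|\bv|^2\leq\bc\epsilon$, the needed $L^2$ control of $(\pi_W-\pi_V)\sigma$. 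Your description of the hypothesis as merely "fixing the favourable sign of the error term'' misses this: it is used quantitatively, to extract an $L^2$ smallness that is strictly stronger than what the operator-norm bound on $\pi_W-\pi_V$ gives. Likewise, the paper's proof of Lemma~\ref{planeselection} never invokes the spectral orthogonality \eqref{oepsilonofu}; that estimate controls the linear first moment of $\bw$ and enters only later (Lemma~\ref{oepsilonlinear}, Lemma~\ref{harmoniccoordinates2}), and I do not see a mechanism by which it would rescue the $\epsilon^{5/6}$ deficit here. A final remark: even after securing $\int_{\Sp_V}|\pi_{W^\perp}\sigma|^2\leq\bc\epsilon$, the paper's cross-term $\int\sum_i(|\bv|+k)|\bv-\by^i|$ is controlled with the weighted Young inequality $2ab\leq\rho a^2+\rho^{-1}b^2$; you should keep this device available, since depending on how you organize the quadratic remainders in Step~3 you may again need to balance a term of size $\epsilon$ against one of size $\rho^2\epsilon$ rather than two terms of size $\epsilon$ and $\rho^2\epsilon$ that factor directly under Cauchy--Schwarz.
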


\begin{proof}
As noted in \cite[Lemma~13]{Reif}, an elementary calculation shows that
\begin{align*}
\bM(T_\bv \res Z_W) & = \int_{\B_{V}} \|g_0\|(1 + (\J \bv_x)^2)^\frac{1}{2} \left( \frac{|x|^2 + |\bv(x)|^2 - k(x)^2}{|x|^2} \right)^{-\frac{m}{2}} dx \,, \\
\bM(P \res Z_W) & = \int_{\B_{V}} \sum_{i \in I_x} \|g_i\|(1 + (\J \by^i_x)^2)^\frac{1}{2} \left( \frac{|x|^2 + |\by^i(x)|^2 - k^i(x)^2}{|x|^2} \right)^{-\frac{m}{2}} dx \,,
\end{align*}
where $k(x) \defl |\pi_{W^\perp}(x + \bv(x))|$ and $k^i(x) \defl |\pi_{W^\perp}(x + \by^i(x))|$. 
It is $|\by^i(x)| \leq \rho$ for $|x| \leq 1$ by the assumption on $P$. Using $\epsilon \leq \rho^3$,
\begin{align*}
k^i(x) & = |x + \by^i(x) - \pi_W(x + \by^i(x))| \leq 2\rho + |x - \pi_W(x)| \leq 2\rho + \bc_{\ref{newplane}}\epsilon^\frac{1}{3} \\
 & \leq (2 + \bc_{\ref{newplane}})\rho \,.
\end{align*}
Similarly, $k(x) \leq (2 \bc_{\ref{derivative_estimate2}} + \bc_{\ref{newplane}}) \epsilon^\frac{1}{3}$. For $i \in I_x$ let
\begin{align*}
\delta(x) & \defl \left( \frac{|x|^2 + |\bv(x)|^2 - k(x)^2}{|x|^2} \right)^{-\frac{m}{2}} - 1 \,, \\
\delta^i(x) & \defl \left( \frac{|x|^2 + |\by^i(x)|^2 - k^i(x)^2}{|x|^2} \right)^{-\frac{m}{2}} - 1 \,.
\end{align*}
The Taylor polynomial approximation for $t \in [-\frac{1}{2},\frac{1}{2}]$ implies that $(1 + t)^{-\frac{m}{2}} = 1 -\frac{m}{2}t + R(t)$ and $R(t) = \frac{m(m+2)}{8}(1 + \xi)^{-\frac{m+2}{2}}t^2$ for some $\xi \in [-\frac{1}{2},\frac{1}{2}]$. So, if $\epsilon$ and $\rho$ are small, there is a constant $\bc(m) > 0$ such that 
\begin{align}
\label{diff1}
\left|\delta(x) - \frac{m}{2}\frac{k(x)^2 - |\bv(x)|^2}{|x|^2}\right| & \leq \bc\epsilon^\frac{4}{3} \,, \\
\label{diff2}
\left|\delta^i(x) - \frac{m}{2}\frac{k^i(x)^2 - |\by^i(x)|^2}{|x|^2}\right| & \leq \bc \rho^4 \,.
\end{align}
Hence, for some $\bc'(m) > 0$, both $|\delta^i(x)|$ and $|\delta(x)|$ are bounded by $\bc' \rho^2$. It is
\begin{align*}
\bM(P \res Z_W) - \bM(P \res Z_{V}) & = \int_{\B_{V}} \sum_{i \in I_x} \|g_i\|(1 + (\J \by^i_x)^2)^\frac{1}{2} \delta^i(x) \, dx \\
 & = \int_{\B_{V}} \sum_{i \in I_x} \|g_i\|\delta^i(x) \, dx \\
 & \quad + \int_{\B_{V}} \sum_{i \in I_x} \|g_i\|\delta^i(x)\left((1 + (\J \by^i_x)^2)^\frac{1}{2} - 1\right) \, dx \,,
\end{align*}
and therefore
\begin{equation}
\label{perror}
\biggl|\bM(P \res Z_W) - \bM(P \res Z_{V}) - \int_{\B_{V}} \sum_{i \in I_x} \|g_i\|\delta^i(x) \, dx \biggr| \leq \bc'\|g_0\|\rho^2\epsilon \,,
\end{equation}
by the excess estimate on $P$ and the bound on $|\delta^i(x)|$ above. Similarly it follows from Lemma~\ref{integral_of_dv} and Lemma~\ref{jacobianofu},
\begin{align}
\nonumber
& \left|\bM(T_\bv \res Z_W) - \bM(T_\bv \res Z_{V}) - \int_{\B_{V}} \|g_0\|\delta(x) \, dx \right| \\
\nonumber
& \quad \leq \int_{\B_{V}} |\delta(x)| \|g_0\|\left((1 + (\J \bv_x)^2)^\frac{1}{2} - 1\right) \, dx \\
\nonumber
& \quad \leq \bc'\rho^2\biggl((\bc_{\ref{integral_of_dv}} + \bc_{\ref{jacobianofu}})\|g_0\|\epsilon + \int_{\B_{V}} \sum_{i \in I_x} \|g_i\|(1 + (\J \by^i_x)^2)^\frac{1}{2} - \|g_0\| \, dx\biggr) \\
\label{tprimeerror}
& \quad \leq \bc'' \|g_0\|\rho^2\epsilon \,,
\end{align}
for some constant $\bc''(m) > 0$. Since we assume that $\bM(T_\bv \res Z_W) \leq \bM(T_\bv \res Z_V)$ it follows from \eqref{tprimeerror} that $\int_{\B_{V}} \|g_0\| \delta(x) \, dx \leq \bc'' \|g_0\|\rho^2\epsilon$. Then by \eqref{diff1},
\begin{align*}
\|g_0\|\int_{\B_{V}} \frac{m}{2}\frac{k(x)^2 - |\bv(x)|^2}{|x|^2} \, dx & \leq \|g_0\| \int_{\B_{V}} \delta(x) + \bc\epsilon^\frac{4}{3} \, dx \\
 & \leq (\bc'' + \balpha(m)\bc)\|g_0\|\rho^2\epsilon \,.
\end{align*}
By $1$-homogeneity of $k$ and $\bv$,
\begin{align}
\nonumber
\|g_0\|\int_{\B_{V}} k(x)^2 - |\bv(x)|^2 \, dx & = \|g_0\|\frac{m}{m+2} \int_{\B_{V}} \frac{k(x)^2 - |\bv(x)|^2}{|x|^2} \, dx \\
\nonumber
 & \leq \frac{2}{m} (\bc'' + \balpha(m)\bc)\|g_0\|\rho^2\epsilon \\
\label{kminusbvest}
 & \leq \bc'''\|g_0\|\rho\epsilon \,,
\end{align}
for some $\bc'''(m) > 0$. Because of \eqref{multiple_values_estimate_prelim}, Lemma~\ref{derivative_estimate2}, Lemma~\ref{integral_of_dv} and the assumption on $P$ and assuming $\epsilon$ is small,
\begin{align}
\nonumber
\|g_0\|\int_{\B_{V}} \sum_{i \in I_x} |\bv(x)|^2 \, dx & \leq \|g_0\|\int_{E_2} \#I_x |\bv(x)|^2 \, dx + \|g_0\|\int_{E_1} |\bv(x)|^2 \, dx \\
\nonumber
 & \leq 10\bc_{\ref{derivative_estimate2}}\|g_0\|\epsilon^\frac{5}{3} + \|g_0\|\int_{\B_{V}} |\bv(x)|^2 \, dx \\
\nonumber
 & \leq 10\bc_{\ref{derivative_estimate2}}\|g_0\|\epsilon + 4\|g_0\|\int_{\B_{V}} (1 + |\bv(x)|^2)^\frac{1}{2} - 1 \, dx \\
\nonumber
 & \leq 10\bc_{\ref{derivative_estimate2}}\|g_0\|\epsilon + 4\|g_0\|\int_{\B_{V}} (1 + \|D\bv_x\|_{\HS}^2)^\frac{1}{2} - 1 \, dx \\
\nonumber
 & \leq (10\bc_{\ref{derivative_estimate2}} + 4\bc_{\ref{integral_of_dv}})\|g_0\|\epsilon \\
\nonumber
 & \quad + 4\int_{\B_{V}} \sum_{i \in I_x}\|g_i\|(1 + (\J \by^i_x)^2)^\frac{1}{2} - \|g_0\| \, dx \\
\label{integrationofbv}
 & \leq \bc''''\|g_0\|\epsilon \,,
\end{align}
for some $\bc''''(m) > 0$. Using a similar calculation, the bound $k(x) \leq (2 \bc_{\ref{derivative_estimate2}} + \bc_{\ref{newplane}}) \epsilon^\frac{1}{3}$ and \eqref{kminusbvest}, $\bc''''$ can be assumed big enough such that also
\begin{align}
\label{boundonk}
\|g_0\|\int_{\B_{V}} \sum_{i \in I_x} k(x)^2 \, dx \leq \bc''''\|g_0\|\epsilon \,.
\end{align}

There is a Lipschitz constant $L(m) > 0$ for which $|(1 + s)^{-\frac{m}{2}} - (1 + t)^{-\frac{m}{2}}| \leq L|s - t|$ if $|s|,|t|\leq \frac{1}{2}$. The assumption on $P$ and the $1$-homogeneity of $\bv$,$\by^i$,$k$ and $k^i$ imply with \eqref{multiple_values_estimate_prelim}, \eqref{multiple_values_mass_estimate_prelim}, \eqref{diff1} and \eqref{diff2},
\begin{align*}
& \int_{\B_{V}} \biggl|\|g_0\|\delta(x) - \sum_{i \in I_x} \|g_i\|\delta^i(x)\biggr| \, dx \\
& \quad = \|g_0\|\int_{\B_{V}} |(1 - \#I_x) \delta(x)| \, dx + \int_{\B_{V}} \biggl|\sum_{i \in I_x} \|g_0\|\delta^i(x) - \|g_i\|\delta^i(x)\biggr| \, dx  \\
& \quad \quad + \|g_0\|\int_{\B_{V}} \sum_{i \in I_x} |\delta(x) - \delta^i(x)| \, dx  \\
& \quad \leq \bc'\rho^2 \|g_0\| \int_{E_2} \#I_x \, dx + \bc'\rho^2 \int_{E_2} \#I_x\|g_0\| + \sum_{i \in I_x} \|g_i\| \, dx \\
& \quad \quad + L\|g_0\|\int_{\B_{V}} \sum_{i \in I_x} \biggl|\frac{ k(x)^2 - |\bv(x)|^2}{|x|^2} - \frac{k^i(x)^2 - |\by^i(x)|^2}{|x|^2}\biggr| \, dx \\
& \quad \leq 35\bc'\|g_0\|\rho^2\epsilon + 3L\|g_0\|\int_{\B_{V}} \sum_{i \in I_x} \left||\bv(x)|^2 - |\by^i(x)|^2\right| + \left|k(x)^2 - k^i(x)^2\right| \, dx \,.
\end{align*}
Instead of the factor $3$ above we could use $\frac{m+2}{m}$. Since $|k(x) - k^i(x)| \leq |\bv(x) - \by^i(x)|$ by the definition of $k$ and $k^i$, there holds
\begin{align*}
\left||\bv(x)|^2 - |\by^i(x)|^2\right| & + \left|k(x)^2 - k^i(x)^2\right| \\
& \leq |\bv(x) - \by^i(x)|^2 + 2|\bv(x)||\bv(x) - \by^i(x)| \\
& \quad + |k(x) - k^i(x)|^2 + 2k(x)|k(x) - k^i(x)| \\
& \leq 2|\bv(x) - \by^i(x)|^2 + 2(|\bv(x)| + k(x))|\bv(x) - \by^i(x)| \,.
\end{align*}
Hence by Lemma~\ref{dist_to_v}, the fact that $2ab \leq \rho a^2 + \rho^{-1} b^2$, \eqref{integrationofbv} and \eqref{boundonk},
\begin{align*}
& \left| \int_{\B_{V}} \|g_0\|\delta(x) - \sum_{i \in I_x} \|g_i\|\delta^i(x) \, dx \right| \\
& \qquad \leq 35\bc'\|g_0\|\rho^2\epsilon  \\
& \qquad \quad + 3L\|g_0\|\int_{\B_{V}} \sum_{i \in I_x} 2|\bv(x)||\bv(x) - \by^i(x)| + 2k(x)|\bv(x) - \by^i(x)| \, dx \\
& \qquad \leq 35\bc'\|g_0\|\rho^2\epsilon + 3L\|g_0\|\int_{\B_{V}} \sum_{i \in I_x} \rho |\bv(x)|^2 + \rho k(x)^2 + 2\rho^{-1} |\bv(x) - \by^i(x)|^2 \, dx \\
& \qquad \leq (35\bc' + 6\bc''''L + 6\bc_{\ref{dist_to_v}}L)\|g_0\|\rho\epsilon \,.
\end{align*}
If we apply this in \eqref{perror} and \eqref{tprimeerror}, the lemma follows.
\end{proof}

Next we give another technical lemma that will be used in the main result of this section. It essentially tells that if a rather flat cone is far away from some plane, then the cylindrical excess over this plane is large.

\begin{Lem}
\label{newplaneheight}
Let $C \in \cP_m(\Hi;G)$ be an infinite (or large enough) cone with center $0$. Assume that there is some $0 < \delta < \frac{1}{5}$ and $V \in \bG(\Hi,m)$ such that,
\begin{enumerate}
	\item $\spt(\partial (C \res Z_V(0,2))) \subset \Hi \setminus Z_V(0,2)$,
	\item $\pi_{V\#} (C \res Z_V) = g_0 \curr{\B_V(0,1)}$,
	\item $|\pi_{V^\perp}(x)| \leq \delta$ for $x \in \spt(C) \cap Z_V$.
\end{enumerate}
If for some $\lambda \geq 2$ and $U \in \bG(\Hi,m)$ there holds
\[
\frac{1}{5} > \|\pi_U - \pi_V\| > 6\lambda\delta \,,
\]
then $|\pi_{U^\perp}(x)| > 2(\lambda-2)\delta$ for some $x \in \spt(C) \cap Z_U$ and
	\[
	\Exc_1(C,U) \geq \|g_0\|\frac{\balpha(m)}{3^{m+1}}\lambda^2\delta^2 \,.
	\]
\end{Lem}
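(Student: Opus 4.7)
For the first assertion, I would invoke Lemma~\ref{lemma.norms.1}(2) to extract a unit vector $v \in V$ with $|\pi_{U^\perp}(v)| > 6\lambda\delta$, then use the projection hypothesis~(2) together with the $1$-homogeneity of the cone to produce, for each $s \in (0,1]$, a point $x \in \spt(C)$ with $\pi_V(x) = sv$. The homogeneity-extension of~(3) (namely $|\pi_{V^\perp}(y)| \leq \delta |\pi_V(y)|$ on $\spt(C)$) then gives $|\pi_{V^\perp}(x)| \leq s\delta$. Choosing $s = 1/(1+\delta)$ ensures $|x| \leq s\sqrt{1+\delta^2} \leq s(1+\delta) = 1$ so that $x \in Z_U$, and a triangle-inequality computation yields $|\pi_{U^\perp}(x)| \geq (6\lambda-1)\delta/(1+\delta)$, which exceeds $2(\lambda-2)\delta$ for $\lambda \geq 2$ and $\delta < 1/5$ (equivalently $4\lambda + 3 > 2(\lambda-2)\delta$, which is free).

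For the excess bound, my plan is to restrict to $A \defl \spt(C) \cap Z_V(0,1/3)$, which the cone-extension of~(3) places inside $Z_U$ since $|x| \leq (1/3)\sqrt{1+\delta^2} < 1$ on $A$. Hypothesis~(2) and $\Lip(\pi_V) = 1$ give $\bM(C \res A) \geq \|g_0\|\balpha(m)/3^m$. The area-formula bound $\bM(\pi_{U\#} T) \leq \int \|\bg\|\, J(\pi_U|_{W_x})\, d\cH^m$ applied to $T = C \res Z_U$ yields
\[
\Exc_1(C,U) \geq \int_{A \cap M_C} \|\bg(x)\|\bigl(1 - J(\pi_U|_{W_x})\bigr)\, d\cH^m(x) \,,
\]
where $W_x$ is the tangent $m$-plane at $x$. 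On the ``bulk'' of $A$ where $\|\pi_V - \pi_{W_x}\| \leq c(m)\delta$, the reverse triangle inequality gives $\|\pi_U - \pi_{W_x}\| \geq (6\lambda - c(m))\delta \geq 5\lambda\delta$ for $\lambda \geq 2$, and the standard estimate $1 - J(\pi_U|_W) \geq \tfrac12 \|\pi_U - \pi_W\|^2$ on principal angles produces $1 - J(\pi_U|_{W_x}) \geq \tfrac{25}{2}\lambda^2\delta^2$. Integrating gives $\Exc_1(C,U) \geq \tfrac{25}{2 \cdot 3^m}\|g_0\|\balpha(m)\lambda^2\delta^2$, comfortably larger than the required $\|g_0\|\balpha(m)\lambda^2\delta^2/3^{m+1}$.

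The main obstacle will be controlling the non-bulk part of $A$. The slab-cone condition does not by itself bound $\|\pi_V - \pi_{W_x}\|$: a simplex whose $V$-projection is nearly degenerate can have its affine span tilted arbitrarily far from $V$ while still fitting in $\{|\pi_{V^\perp}(y)| \leq \delta|\pi_V(y)|\}$. I would handle this by importing the smallness of the $V$-excess $\Exc_1(C,V) \leq \|g_0\|\epsilon$ built into the ambient assumptions of Subsection~\ref{assumptions} (with $\epsilon \leq \delta^{6m}$): the total mass of ``skewed'' simplices where $\|\pi_V - \pi_{W_x}\| > c(m)\delta$ is controlled from above by $\Exc_1(C,V)/(c'\delta^2)$, which is negligible relative to $\|g_0\|\balpha(m)/3^m$, so the integral argument can safely be restricted to the bulk.
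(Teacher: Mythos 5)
Your proof of the first assertion is essentially sound: homogeneity of the cone plus hypothesis~(2) does force the support to project onto all of $\B_V(0,1)$, and the triangle inequality then produces the required point. The paper instead finds a whole ball $A' = \B_U(u_0,\tfrac13)$ inside $\B_U(0,1)$ on which the height $|\pi_{U^\perp}|$ of $\spt(C)$ exceeds $\lambda\delta$ (via a hyperplane separation argument applied to the convex set $A = \{u \in \B_U(0,1) : |\pi_{V^\perp}(u)| \leq 2\lambda\delta\}$), and reads off the first conclusion as a by-product. Both routes give the first assertion, but the paper's version also sets up the excess bound, which is where your plan runs into a genuine gap.

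For the excess bound, you correctly identify the central obstacle — the slab condition~(3) does not control the tilt of the tangent planes of individual simplices — but your fix imports the assumption $\Exc_1(C,V) \leq \|g_0\|\epsilon$ with $\epsilon \leq \delta^{6m}$ from Subsection~\ref{assumptions}. That assumption is \emph{not} among the hypotheses of Lemma~\ref{newplaneheight}, which is a free-standing statement about any polyhedral cone satisfying~(1)--(3) and the norm inequalities. (It is in fact invoked in the proof of Theorem~\ref{masscomparison2} precisely to \emph{establish} that the auxiliary plane $V'$ is close to $V$, i.e.\ at a stage where no bound on $\Exc_1(P,V')$ in terms of $\delta$ is yet available; only $\Exc_1(P,V)$ is known to be small, which does not help control tangent-plane tilt relative to $V'$ in your scheme.) The paper avoids the tilt issue entirely by exploiting that the cone has apex at the origin: after a generic decomposition $C \res Z_U = \sum g_i\curr{S_i}$ into simplices with one vertex at $0$, each $\by^i : U \to U^\perp$ (the height of $S_i$ over $U$) is a \emph{linear} map, so $\J\by^i \geq \|D\by^i\| \geq |\by^i(u)|/|u| \geq \lambda\delta$ for $u$ in the ball $A'$ of radius $\tfrac13$. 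This is a pointwise lower bound on the Jacobian for \emph{every} simplex over $A'$, regardless of tilt, and integrating $\sqrt{1+(\J\by^i)^2}-1 \geq \tfrac13(\lambda\delta)^2$ over $A'$ (using $\sum_{i\in I_x}\|g_i\| \geq \|g_0\|$ from Lemma~\ref{multiple_values_mass_estimate}) gives the claimed bound with no decomposition into ``bulk'' and ``skewed'' parts. If you want to salvage your tangent-plane approach you would need a self-contained argument that the mass of the ``skewed'' simplices is dominated — and the slab condition alone does not provide it.
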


\begin{proof}
Note first that Lemma~\ref{differentplane} implies $\pi_{U\#} (C \res Z_U) = g_0 \curr{\B_U(0,1)}$ for the same group element $g_0 \in G$. Let $A$ be the set of points $u \in \B_U(0,1)$ for which $|\pi_{V^\perp}(u)| \leq 2\lambda\delta$. $A$ is closed, contains the origin and by the triangle inequality it is also convex. By assumption and Lemma~\ref{lemma.norms.1} there is some $u \in \B_U(0,1)$ such that $|\pi_{V^\perp}(u)| = |\pi_V(u) - u| > 6\lambda\delta$. Hence $\frac{1}{3}u \notin A$. By the hyperplane separation theorem there is a half-plane $U_0$ of $U$ that contains $\frac{1}{3}u$ and is disjoint from $A$. Hence there is some ball $A' \defl \B_U(u_0, \frac{1}{3}) \subset \B_U(0,1) \setminus A$. Let $x \in \spt(C) \cap Z_U$ with $\pi_U(x) \in A'$ and $|\pi_{U^\perp}(x)| \leq 2\lambda\delta$. Then $|x| \leq 1 + 2\lambda\delta \leq 2$ and hence $|\pi_{V^\perp}(x)| \leq \dist(V,\spt(C) \cap Z_V(0,2)) \leq 2\delta$. Further,
\begin{align*}
|\pi_{V^\perp}(x)| + |\pi_{U^\perp}(x)| & \geq |\pi_{V^\perp}(x)| + |\pi_{V^\perp}(\pi_{U^\perp}(x))| \geq |\pi_{V^\perp}(x - \pi_{U^\perp}(x))| \\
 & \geq |\pi_{V^\perp}(\pi_U(x))| > 2\lambda\delta \,,
\end{align*}
and because $\lambda \geq 2$,
\[
|\pi_{U^\perp}(x)| > 2\lambda\delta - 2\delta \geq \lambda\delta \,.
\]
This shows the first conclusion and further that if $x \in \spt(C)$ with $\pi_U(x) \in A'$, then $|\pi_{U^\perp}(x)| \geq 2\delta$. 

Set $C' = \sum_i g_i\curr{S_i}$ as a finite sum over oriented simplices $S_i$ for which one vertex is at the origin and the other vertices outside $Z_U$ such that $C \res Z_U = C' \res Z_U$. Let $A''$ be the subset of those points $u \in A'$ for which the set ${\pi_U}^{-1}\{u\} \cap \spt(C')$ is finite and contained in the interior of any simplex it intersects. $\pi_U(A'')$ has full measure in $A'$. For $x \in A''$ let $I_x$ be the collection of those $i$ for which $x \in \pi_U(S_i)$ and as we did for $P$ in the first sections, let $\by^i : U \to U^\perp$ be the affine map such that $x' + \by^i(x')$ is in $S_i$ if $x'$ is close to $x$. From Lemma~\ref{multiple_values_mass_estimate} it follows that $\sum_{i \in I_x} \|g_i\| \geq \|g_0\|$. It is $\J \by^i_x \geq \|D\by^i_x\| \geq |\by^i(x)| \geq 2\delta$ for $x \in A''$ by \eqref{jacobian_sum} and \eqref{norms}. Using $(1 + a^2)^\frac{1}{2} \geq 1 + \frac{1}{3}a^2$ for $a \leq 1$ we get
\begin{align*}
\Exc_1(C,U) & \geq \Exc(C,U,A'') = \int_{A''} \sum_{i \in I_x}\|g_i\|(1 + (\J \by^i_x)^2)^\frac{1}{2} - \|g_0\| \, dx \\
 & \geq \int_{A''} \sum_{i \in I_x}\|g_i\|(1 + (\lambda\delta)^2)^\frac{1}{2} - \|g_0\| \, dx \\
 & \geq \|g_0\| \cH^m(A'') \left[(1 + (\lambda\delta)^2)^\frac{1}{2} - 1\right] \\
 & \geq \|g_0\|\frac{\balpha(m)}{3^{m+1}}\lambda^2\delta^2 \,.
\end{align*}
\end{proof}

Below is the main result of this section, the epiperimetric inequality of Reifenberg \cite{Reif} adapted to $G$-chains in Hilbert spaces.

\begin{Thm}
\label{masscomparison2}
There is a constant $\epsilon_{\theThm}(m) > 0$ with the following property. Let $P \in \cP_m(\Hi;G)$ be a cone with center $0$ and $V$ be an $m$-plane such that:
\begin{enumerate}
	\item $\spt(\partial P) \cap Z_V(2) = \emptyset$,
	\item $\pi_{V\#} (P \res Z_V) = g_0 \curr{\B_V(0,1)}$ for some $g_0 \in G \setminus \{0_G\}$,
	\item $\|\bg(x)\| \geq \frac{3}{4}\|g_0\|$ for $\|P\|$-almost every $x \in \Hi$
	\item $|\pi_{V^\perp}(x)| < \epsilon_{\theThm}$ for $x \in \spt(P) \cap Z_V$,
	\item $\Exc_1(P,V) < \|g_0\|\epsilon_{\theThm}$.
\end{enumerate}
Then there is some $S \in \cR_m(\Hi;G)$ with $\partial(S \res \B(0,1)) = \partial(P \res \B(0,1))$ and 
\[
\bM(S \res \B(0,1)) - \|g_0\|\balpha(m) \leq \lambda_{\theThm}\bigl(\bM(P \res \B(0,1)) - \|g_0\|\balpha(m)\bigr)
\]
for $\lambda_{\theThm} \defl \frac{2m+1 - 4^{-m-1}}{2m+1} < 1$.
\end{Thm}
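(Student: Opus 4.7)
The plan is to take $S$ as suggested in the paragraphs preceding the theorem — namely $S$ coincides with the competitor $T$ of Subsection~\ref{massest} outside the cylinder $Z_W(4^{-1})$ and equals $(\id_W+\bh_4)_\#(g_0\curr{\B_W(0,4^{-1})})$ inside — and establish the epiperimetric inequality by splitting
\begin{equation*}
\bM(S\res\B(0,1))-\bM(P\res\B(0,1))=\bigl[\bM(T)-\bM(P)\bigr]+\bigl[\bM(S\res Z_W(4^{-1}))-\bM(T\res Z_W(4^{-1}))\bigr].
\end{equation*}
Here we use that $S=T$ on $\B(0,1)\setminus Z_W(4^{-1})$ and that by \eqref{bounderywithball} the boundary condition holds. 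Proposition~\ref{masscomparison1} controls the first bracket by $\bc\|g_0\|\rho^{1/2}\epsilon$.

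For the second bracket, the inclusion \eqref{otherplaneintersection} and Lemma~\ref{newplane}(2) give $T\res Z_W(4^{-1})=(\id_W+\bw)_\#(g_0\curr{\B_W(0,4^{-1})})$. The $1$-homogeneity of $\bw$ and the scaling $\bh_4(x)=4^{-1}\bh(4x)$ transform both masses into integrals over $\B_W(0,1)$, yielding
\begin{equation*}
\bM(S\res Z_W(4^{-1}))-\bM(T\res Z_W(4^{-1}))=4^{-m}\|g_0\|\left(E_\bh-E_\bw\right),
\end{equation*}
where $E_f\defl\int_{\B_W(0,1)}(1+(\J f)^2)^{1/2}-\balpha(m)$. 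The Dirichlet-energy improvement $E_\bh\leq\tfrac{2m}{2m+1}E_\bw+\bc\epsilon^{4/3}$ is obtained from Lemma~\ref{harmoniccoordinates2}, combined with the Jacobian-to-Dirichlet passage of Lemma~\ref{jacobianofu} applied to $\bh$ (whose derivative inherits the $O(\epsilon^{1/3})$ bound of $\bw$ by construction), the elementary inequality $(1+a)^{1/2}\leq 1+a/2$ for the upper bound on $E_\bh$, and the matching lower bound $E_\bw\geq\tfrac{1}{2}(1-\bc\epsilon^{2/3})\int\|D\bw\|_{\HS}^2$ (using $(\J\bw)^2\geq\|D\bw\|_{\HS}^2$ from \eqref{jacobian_sum}, the expansion $(1+a)^{1/2}\geq 1+a/2-a^2/8$, and $\|D\bw\|_\infty\leq\bc\epsilon^{1/3}$ from Lemma~\ref{newplane}). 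Consequently
\begin{equation*}
\bM(S\res Z_W(4^{-1}))-\bM(T\res Z_W(4^{-1}))\leq-\tfrac{4^{-m}\|g_0\|}{2m+1}E_\bw+\bc\|g_0\|\epsilon^{4/3}.
\end{equation*}

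To convert the gain in $E_\bw$ into a gain against $\mathcal{E}\defl\bM(P\res\B(0,1))-\|g_0\|\balpha(m)$, I would argue by cases. If $\bM(T_\bv\res Z_W)\geq\bM(T_\bv\res Z_V)$ then $\|g_0\|E_\bw$ already exceeds the excess of $T_\bv$ over $V$, which by Lemmas~\ref{dist_to_v}, \ref{integral_of_dv} and~\ref{jacobianofu} matches $\Exc_1(P,V)$ up to $\bc\|g_0\|\rho\epsilon$; otherwise Lemma~\ref{planeselection} yields $\bM(P\res Z_W)-\bM(P\res Z_V)=\bM(T_\bv\res Z_W)-\bM(T_\bv\res Z_V)+O(\|g_0\|\rho\epsilon)$, from which the analogous lower bound follows. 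Using the height bound to see $\spt(P)\cap\B(0,1)\subset Z_V$ and hence $\mathcal{E}\leq\Exc_1(P,V)$, both cases produce $\|g_0\|E_\bw\geq\mathcal{E}/4-\bc\|g_0\|\rho\epsilon$. Absorbing the accumulated errors $\bc\|g_0\|(\rho^{1/2}\epsilon+\rho\epsilon+\epsilon^{4/3})$ into the main improvement $\tfrac{4^{-m-1}}{2m+1}\mathcal{E}$ via $\epsilon\leq\rho^{6m}$ and taking $\epsilon,\rho$ sufficiently small, we deduce $\bM(S\res\B(0,1))-\bM(P\res\B(0,1))\leq-\tfrac{4^{-m-1}}{2m+1}\mathcal{E}$, equivalent to the claimed inequality with $\lambda=(2m+1-4^{-m-1})/(2m+1)$. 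The main obstacle is this very last step: producing a lower bound on $E_\bw$ proportional to $\mathcal{E}$, since the spectral choice of $W$ only guarantees the orthogonality feeding Lemma~\ref{harmoniccoordinates2}, not the minimality of mass — Lemma~\ref{planeselection} is the essential bridge.
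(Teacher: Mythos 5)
Your overall structure — construct $S$ from the averaged competitor $T$ of Subsection~\ref{massest} and the degree-$2$ extension $\bh_4$, estimate $\bM(T)-\bM(P)$ with Proposition~\ref{masscomparison1}, estimate $\bM(S\res Z_W(4^{-1}))-\bM(T\res Z_W(4^{-1}))$ with Lemmas~\ref{jacobianofu}, \ref{energyformulas}, \ref{harmoniccoordinates2}, and then do a case analysis comparing $\bM(T\res Z_W(4^{-1}))$ with $\bM(T\res Z_V(4^{-1}))$ using Lemma~\ref{planeselection} in the second case — matches the paper. You also correctly identify that the real difficulty is converting the Dirichlet-energy gain into a gain against $\mathcal{E}$. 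But there is a genuine gap precisely at that point, and it is not the one you name.

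The missing ingredient is the replacement of $V$ by a nearly excess-minimizing plane $V'$ at the very start. The paper chooses $V'$ so that $\Exc_1(P,V') < 2\inf\{\Exc_1(P,U) : \|\pi_V-\pi_U\| < 1/10\}$, verifies via Lemma~\ref{newplaneheight} that $V'$ is close to $V$ (so the hypotheses of Subsection~\ref{assumptions}, including $\epsilon \leq \rho^{6m}$, still hold with the new data), and then runs the whole argument for $V'$. This near-optimality is what powers case~2: from Lemma~\ref{planeselection} one gets $\Exc_{4^{-1}}(T,V') \leq \Exc_{4^{-1}}(T,W) + \Exc_{4^{-1}}(P,V') - \Exc_{4^{-1}}(P,W) + O(\rho\epsilon)$, and then the near-optimality gives $\Exc_{4^{-1}}(P,V') - \Exc_{4^{-1}}(P,W) \leq \tfrac{1}{2}\Exc_{4^{-1}}(P,V')$. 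Without this extremal choice one only knows $\Exc_{4^{-1}}(P,V')-\Exc_{4^{-1}}(P,W) \leq \Exc_{4^{-1}}(P,V')$, which wipes out the gain entirely. Lemma~\ref{planeselection} alone does not close the argument; it merely transfers the comparison from $T_\bv$ to $P$, and then you still need a nontrivial bound on $\Exc(P,W)$ from below by $\Exc(P,V')$, which is exactly the content of near-optimality.

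A secondary problem is the assertion that ``both cases produce $\|g_0\|E_\bw\geq\mathcal{E}/4-\bc\|g_0\|\rho\epsilon$.'' That is not what either case yields, and case~1 does not lower-bound $E_\bw$ by $\mathcal{E}$ at all: it uses the case hypothesis to replace $\Exc_{4^{-1}}(T,W)$ by $\Exc_{4^{-1}}(T,V')$, rewrites $\bM(T\res Z_{V'}) - \tfrac{1}{2m+1}\Exc_{4^{-1}}(T,V')$ as $\bM(T\res(Z_{V'}\setminus Z_{V'}(4^{-1}))) + \tfrac{2m}{2m+1}\bM(T\res Z_{V'}(4^{-1})) + \tfrac{4^{-m}\balpha(m)}{2m+1}$, and then passes from $T$ to $P$ piece by piece (Lemma~\ref{mass_of_z}, Lemma~\ref{integral_of_dv}, Lemma~\ref{jacobianofu}, Proposition~\ref{masscomparison1}). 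The whole comparison is carried out against $\Exc_1(P,V')$, and only at the very end converted to the spherical quantity via \eqref{bounderywithball}. Your line ``$\mathcal{E}\leq\Exc_1(P,V)$'' is the right final reduction, but the inequality you would need in the middle is of the opposite flavor and does not hold in the way you state it.
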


\begin{proof}
As noted in the beginning of this section, the group norm can be normalized such that $\|g_0\| = 1$. Let $\epsilon_{\theThm} > 0$ be small enough such that 
\[
24 \lambda \sqrt{\epsilon_{\theThm}} < (2\epsilon_{\theThm})^\frac{1}{6m} \leq \epsilon_{\ref{newplane}} < \frac{1}{20} \,.
\]
where $\lambda^2 \defl 2\balpha(m)^{-1}3^{m+1} \geq \frac{3^m}{2^m} 6 \geq 2^2$. Let $V' \in \bG(\Hi,m)$ be such that
\begin{equation*}
\Exc_1(P,V') < 2 \inf \left\{ \Exc_1(P,U) : \|\pi_V - \pi_{U}\| < \tfrac{1}{10} \right\} \,.
\end{equation*}
Then $\epsilon \defl \Exc_1(P,V') < 2\epsilon_{\theThm} \leq \epsilon_{\ref{newplane}}$ and hence $\|\pi_V - \pi_{V'}\| \leq 6\lambda \sqrt{\epsilon_{\theThm}} < \frac{1}{20}$ by Lemma~\ref{newplaneheight}. Since $\spt(P) \cap Z_{V'} \subset \B(0,2)$, there holds $|\pi_{V'^\perp}(x)| \leq 24\lambda\sqrt{\epsilon_{\theThm}} < \rho \defl (2\epsilon_{\theThm})^\frac{1}{6m} \leq \epsilon_{\ref{newplane}}$. In particular
\begin{equation*}
\Exc_1(P,V') < 2 \inf \left\{ \Exc_1(P,U) : \|\pi_{V'} - \pi_{U}\| < \tfrac{1}{20} \right\} \,,
\end{equation*}
and Lemma~\ref{newplane} implies $\|\pi_{V'} - \pi_{W}\| < \tfrac{1}{20}$ for the plane $W$ used in the construction of the competitor $S$. By changing $V'$ slightly we can assume that $P$ is in general position with respect to $V'$ ($\Exc_1(P,U)$ is continuous in $U$ and the $m$-planes for which $P$ is in general position are dense in $\bG(\Hi,m)$). Hence $\epsilon$, $\rho$ and $V'$ satisfy all the conditions at the beginning of Subsection~\ref{assumptions} and the subsequent estimates. For the rest of this proof we identify $V'$ with $V$. Let $S$ be the comparison surface as described in the beginning of this subsection. Lemma~\ref{energyformulas} and Lemma~\ref{harmoniccoordinates2} imply
\begin{align*}
\bM(S \res Z_W(4^{-1})) - 4^{-m}\balpha(m) & = 4^{-m}\int_{\B_W} (1 + (\J \bh_x)^2)^\frac{1}{2} - 1 \, dx \\
 & \leq \bc_{\ref{energyformulas}}\epsilon^\frac{4}{3} + 4^{-m}\int_{\B_W} (1 + \|D\bh_x\|_{\HS}^2)^\frac{1}{2} - 1 \, dx \\
 & \leq \bc_{\ref{energyformulas}}\epsilon^\frac{4}{3} + 4^{-m}\int_{\B_W} \frac{1}{2}\|D\bh_x\|_{\HS}^2 \, dx \\
 & \leq (\bc_{\ref{energyformulas}} + \bc_{\ref{harmoniccoordinates2}})\epsilon^\frac{4}{3} \\
 & \quad + 4^{-m} \frac{2m}{2m+1}\int_{\B_W} \frac{1}{2}\|D\bw_x\|_{\HS}^2 \, dx \,. \\
\end{align*}
Because $\frac{1}{2}a^2 \leq \frac{1}{8}a^4 + (1 + a^2)^\frac{1}{2} - 1$ for $a \in [0,1]$ and $\|D\bw_x\|^4_{\HS} \leq \bc_{\ref{newplane}}\epsilon^\frac{4}{3}$, we get for some $\bc(m) > 0$,
\begin{align*}
& \bM(S \res Z_W(4^{-1})) - 4^{-m}\balpha(m) \\
 & \qquad\qquad \leq \bc\epsilon^\frac{4}{3} + 4^{-m} \frac{2m}{2m+1}\int_{\B_W} (1 + \|D\bw_x\|_{\HS}^2)^\frac{1}{2} - 1 \, dx \\
 & \qquad\qquad \leq \bc\epsilon^\frac{4}{3} + 4^{-m} \frac{2m}{2m+1}\int_{\B_W} (1 + (\J \bw_x)^2)^\frac{1}{2} - 1 \, dx \\
 & \qquad\qquad = \bc\epsilon^\frac{4}{3} + \frac{2m}{2m+1} \left[\bM(T \res Z_W(4^{-1})) - 4^{-m}\balpha(m) \right] \,.
\end{align*}
Hence,
\begin{align*}
\bM(S \res Z_V) & = \bM(T \res (Z_V \setminus Z_W(4^{-1}))) + \bM(S \res Z_W(4^{-1})) \\
 & \leq \bc\epsilon^\frac{4}{3} + \bM(T \res (Z_V \setminus Z_W(4^{-1}))) \\
 & \quad + \frac{2m}{2m+1}\bM(T \res Z_W(4^{-1})) + \frac{4^{-m}\balpha(m)}{2m+1} \\
 & = \bc\epsilon^\frac{4}{3} + \bM(T \res Z_V) - \frac{1}{2m+1} \left[ \bM(T \res Z_W(4^{-1})) - 4^{-m}\balpha(m) \right] \,.
\end{align*}
If $\bM(T \res Z_W(4^{-1})) \geq \bM(T \res Z_{V}(4^{-1}))$ we estimate using Proposition~\ref{masscomparison1},
\begin{align*}
\bM(S \res Z_{V}) & \leq \bc\epsilon^\frac{4}{3} + \bM(T \res Z_{V}) - \frac{1}{2m+1} \left[ \bM(T \res Z_{W}(4^{-1})) - 4^{-m}\balpha(m) \right] \\
 & \leq \bc\epsilon^\frac{4}{3} + \bM(T \res (Z_{V} \setminus Z_{V}(4^{-1}))) \\
 & \quad + \frac{2m}{2m+1}\bM(T \res Z_{V}(4^{-1})) + \frac{4^{-m}\balpha(m)}{2m+1} \\
 & \leq (\bc + \bc_{\ref{masscomparison1}})\rho^\frac{1}{2}\epsilon + \bM(P \res (Z_{V} \setminus Z_{V}(4^{-1}))) \\
 & \quad + \frac{2m}{2m+1}\bM(P \res Z_{V}(4^{-1})) + \frac{4^{-m}\balpha(m)}{2m+1} \\
 & = (\bc + \bc_{\ref{masscomparison1}})\rho^\frac{1}{2}\epsilon + \bM(P \res Z_{V}) \\
 & \quad - \frac{1}{2m+1} \left[ \bM(P \res Z_{V}(4^{-1})) - 4^{-m}\balpha(m) \right] \\
 & = (\bc + \bc_{\ref{masscomparison1}})\rho^\frac{1}{2}\epsilon + \frac{2m + 1 - 4^{-m}}{2m+1}\bM(P \res Z_{V}) + \frac{4^{-m}}{2m+1}\balpha(m) \,.
\end{align*}
On the other hand, if $\bM(T \res Z_W(4^{-1})) \leq \bM(T \res Z_{V}(4^{-1}))$ it follows from Lemma~\ref{planeselection} and the almost minimality of the plane ${V}$ with respect to the excess of $P$,
\begin{align*}
\Exc_{4^{-1}}(T,{V}) & \leq \bc_{\ref{planeselection}}\rho\epsilon + \Exc_{4^{-1}}(T,W) + \Exc_{4^{-1}}(P,{V}) - \Exc_{4^{-1}}(P,W) \\
 & \leq \bc_{\ref{planeselection}}\rho\epsilon + \Exc_{4^{-1}}(T,W) + 2^{-1}\Exc_{4^{-1}}(P,{V}) \,.
\end{align*}
Hence, with the same derivations as in the other case above,
\begin{align*}
\bM(S \res Z_{V}) & \leq \bc\epsilon^\frac{4}{3} + \bM(T \res Z_{V}) - \frac{1}{2m+1} \Exc_{4^{-1}}(T,W) \\
 & \leq (\bc + \bc_{\ref{planeselection}})\rho\epsilon + \bM(T \res Z_{V}) \\
 & \qquad \qquad - \frac{1}{2m+1}\left[ \Exc_{4^{-1}}(T,{V}) - 2^{-1}\Exc_{4^{-1}}(P,{V}) \right] \\
 & \leq (\bc + \bc_{\ref{masscomparison1}} + \bc_{\ref{planeselection}})\rho^\frac{1}{2}\epsilon + \frac{2m + 1 - 4^{-m}}{2m+1}\bM(P \res Z_{V}) \\
 & \qquad \qquad + \frac{4^{-m}}{2m+1}\balpha(m) + \frac{2^{-1}4^{-m}}{2m+1}\Exc_1(P,{V}) \,.
\end{align*}
By subtracting $\balpha(m)$ from these estimates we obtain in both cases
\[
\Exc_1(S,{V}) \leq \bc'\rho^\frac{1}{2}\epsilon + \frac{2m + 1 - 2^{-1}4^{-m}}{2m+1}\Exc_1(P,{V}) \,,
\]
for some $\bc'(m) > 0$. If we assume that $\epsilon_{\theThm}$ is small enough such that for $\rho = (2\epsilon_{\theThm})^\frac{1}{6m}$,
\[
\bc' \rho^\frac{1}{2} + \frac{2m + 1 - 2^{-1}4^{-m}}{2m+1} \leq \frac{2m + 1 - 4^{-m - 1}}{2m+1} = \lambda,
\]
we obtain $\Exc_1(S,{V}) \leq \lambda \, \Exc_1(P,{V})$. Finally, by \eqref{bounderywithball},
\begin{align*}
\bM(S \res \B(0,1)) - \balpha(m) & = \bM(S \res Z_{V}) - \bM(P \res (Z_{V} \setminus \B(0,1))) - \balpha(m) \\
 & \leq \lambda \left[\bM(P \res \B(0,1)) + \bM(P \res (Z_{V} \setminus \B(0,1))) - \balpha(m)\right] \\
 & \quad - \bM(P \res (Z_{V} \setminus \B(0,1))) \\
 & \leq \lambda (\bM(P \res \B(0,1)) - \balpha(m)) \,.
\end{align*}
\end{proof}

\section{Moments computations and good approximations by planes}
\label{momentscomputations}

\subsection{Nearly monotonic, almost monotonic and epiperimetric chains}

Let $\phi$ be a finite Borel measure on $\Hi$. For $x \in \Hi$ and $r > 0$ we define
\begin{align*}
\exc_*^m(\phi,x,r) & \defl \sup \Biggl\{\biggl(\frac{\phi(\B(x,t))}{\balpha(m) t^m} - \frac{\phi(\B(x,s))}{\balpha(m) s^m}\biggr)_- : 0 < s \leq t \leq r \Biggr\} \,, \\
\exc^{m*}(\phi,x,r) & \defl \sup \Biggl\{\biggl(\frac{\phi(\B(x,t))}{\balpha(m) t^m} - \frac{\phi(\B(x,s))}{\balpha(m) s^m}\biggr)_+ : 0 < s \leq t \leq r \Biggr\} \,, \\
\exc^m(\phi,x,r) & \defl \max\{\exc_*^m(\phi,x,r),\exc^{m*}(\phi,x,r)\} \,.
\end{align*}

Throughout the rest of these notes we use the term \emph{gauge} or \emph{gauge function} for an increasing function $\xi : (0,\delta] \to \R_+$ for some $\delta > 0$ with $\lim_{r \to 0} \xi(r) = 0$. Let $T \in \cR_m(\Hi;G)$. For a subset $A \subset \Hi \setminus \spt(\partial T)$ we say that $T$ is \emph{nearly monotonic} in $A$ if there is a gauge $\xi_* : (0,\delta] \to \R_+$ such that
\[
\exc_*^m(\|T\|,x,r) \leq \xi_*(r) \,,
\]
for all $x \in A$ and $0 < r < \min\{\delta, \dist(x,\spt(\partial T))\}$. We say that $T$ is \emph{almost monotonic} in $A$ if there is a gauge $\xi : (0,\delta] \to \R_+$ such that
\[
r \mapsto \exp(\xi(r))\frac{\|T\|(\B(x,r))}{\balpha(m) r^m}
\]
is an increasing function for all $x \in A$ and all $0 < r < \min\{\delta, \dist(x,\spt(\partial T))\}$.

The following Lemma is an adaptation of \cite[Lemma~3.2.3]{DePauw3} to the setting of rectifiable chains Hilbert spaces. The proof goes through unchanged.

\begin{Lem}
	\label{densityexist}
If $T \in \cR_m(\Hi;G)$ is nearly monotonic in $A \subset \Hi \setminus \spt(\partial T)$, then the density $\Theta^m(\|T\|,x)$ exists for all $x \in A$ and is finite. Further, the function $x \mapsto \Theta^m(\|T\|,x)$ is upper semicontinuous in $A$.
\end{Lem}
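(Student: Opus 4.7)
The plan is to treat the two assertions separately, each by unpacking the definition of $\exc_*^m$ into the inequality
\begin{equation*}
\frac{\|T\|(\B(x,s))}{\balpha(m) s^m} \,\leq\, \frac{\|T\|(\B(x,t))}{\balpha(m) t^m} + \xi_*(r), \qquad 0<s\leq t\leq r,
\end{equation*}
valid for every $x\in A$ whenever $r<\min\{\delta,\dist(x,\spt(\partial T))\}$. Write $f_x(r)\defl \|T\|(\B(x,r))/(\balpha(m)r^m)$ for brevity.

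For the existence of $\Theta^m(\|T\|,x)$, set $L\defl \limsup_{s\downarrow 0}f_x(s)$ and $\ell\defl \liminf_{s\downarrow 0}f_x(s)$. For any $\epsilon>0$ pick $r$ small and a $t\leq r$ with $f_x(t)<\ell+\epsilon$; the displayed inequality then gives $f_x(s)\leq \ell+\epsilon+\xi_*(r)$ for all $s\leq t$, hence $L\leq \ell+\epsilon+\xi_*(r)$. Sending first $r\downarrow 0$ (so $\xi_*(r)\to 0$) and then $\epsilon\downarrow 0$ yields $L=\ell$, so the density exists. Finiteness follows from the same inequality applied once: for any fixed $t$ in the allowed range, $L\leq f_x(t)+\xi_*(t)<\infty$.

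For upper semicontinuity at $x\in A$, let $x_n\to x$ with $x_n\in A$. Given $\epsilon>0$, choose $r>0$ with $r<\min\{\delta,\tfrac12 \dist(x,\spt(\partial T))\}$ small enough that $f_x(r)\leq \Theta^m(\|T\|,x)+\epsilon$ and $\xi_*(r)<\epsilon$. For $n$ large set $\rho_n\defl r-|x_n-x|$, so that $\rho_n<\min\{\delta,\dist(x_n,\spt(\partial T))\}$ and $\B(x_n,\rho_n)\subset \B(x,r)$. Applying the near-monotonicity inequality at $x_n$ with $t=\rho_n$ and letting $s\downarrow 0$ yields
\begin{equation*}
\Theta^m(\|T\|,x_n) \,\leq\, \frac{\|T\|(\B(x_n,\rho_n))}{\balpha(m)\rho_n^m} + \xi_*(\rho_n) \,\leq\, f_x(r)\cdot\frac{r^m}{\rho_n^m} + \xi_*(\rho_n),
\end{equation*}
where the last step uses the ball inclusion. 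Since $\rho_n\to r$ and $\xi_*$ is monotone, $r^m/\rho_n^m\to 1$ and $\xi_*(\rho_n)\leq \xi_*(r)<\epsilon$ eventually, so
\begin{equation*}
\limsup_{n\to\infty}\Theta^m(\|T\|,x_n) \,\leq\, \Theta^m(\|T\|,x)+2\epsilon.
\end{equation*}
As $\epsilon$ was arbitrary, $\Theta^m(\|T\|,\cdot)$ is upper semicontinuous on $A$.

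There is no serious obstacle; the only point deserving care is the bookkeeping in the upper semicontinuity step, namely ensuring that $\rho_n$ lies in the range where the near-monotonicity hypothesis applies at $x_n$ (which is why we require $r<\tfrac12\dist(x,\spt(\partial T))$) and that the two ``loss'' terms $f_x(r)\cdot (r/\rho_n)^m - f_x(r)$ and $\xi_*(\rho_n)$ are simultaneously controlled by $\epsilon$.
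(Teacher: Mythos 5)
Your proof is correct and follows essentially the same strategy as the paper: unpack the near-monotonicity bound $f_x(s)\leq f_x(t)+\xi_*(r)$, squeeze $\limsup$ against $\liminf$ for existence, and transfer density ratios between nearby centers via a ball inclusion for upper semicontinuity. The one small difference is in the semicontinuity step: the paper writes $\B(x_i,r)\subset\B(x,r+|x_i-x|)$ and takes a limit in the outer radius (choosing a radius of continuity of $\rho\mapsto\|T\|(\B(x,\rho))$ so that $f_x(r+|x_i-x|)\to f_x(r)$), whereas you hold the outer ball $\B(x,r)$ fixed and contract the inner radius to $\rho_n=r-|x_n-x|$, so that only the explicit factor $r^m/\rho_n^m\to 1$ enters; this sidesteps the continuity discussion and is a minor tidy-up rather than a different argument.
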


\begin{proof}
Let $\xi_* : (0,\delta] \to \R_+$ be a gauge with respect to which $T$ is nearly monotonic. For $x \in A$ and $0 < r < \min\{\delta, \dist(x,\spt(\partial T))\}$ we abbreviate $f_x(r) \defl \balpha(m)^{-1}r^{-m}\|T\|(\B(x,r))$. By assumption
\[
-\xi_*(R) \leq f_x(R) - f_x(r) \,,
\]
for all $x \in A$ and $0 < r \leq R < \min\{\delta, \dist(x,\spt(\partial T))\}$. Therefore,
\[
-\xi_*(R) \leq \liminf_{r \downarrow 0}(f_x(R) - f_x(r)) = f_x(R) - \limsup_{r \downarrow 0}f_x(r) \,,
\]
and, in turn,
\[
0 \leq \liminf_{R \downarrow 0}f_x(R) - \limsup_{r \downarrow 0}f_x(r) \,.
\]
This shows that the densities $\Theta^m(\|T\|,x)$ exist for all $x \in A$. Let $(x_i)$ be a sequence in $A$ with $\lim_{i\to\infty}x_i = x \in A$ and $|x_i - x| < 2^{-1} \min\{\delta, \dist(x,\spt(\partial T))\}$ for all $i$. Since $\|T\|$ is a finite Borel measure, the function $\rho \mapsto \|T\|(\B(x,\rho))$ is continuous in all but countably many $r > 0$. If we pick a point of continuity $0 < r < 2^{-1}\min\{\delta, \dist(x,\spt(\partial T))\}$, then
\begin{align*}
\Theta^m(\|T\|,x_i) & \leq f_{x_i}(r) + \xi_*(r) \\
 & \leq f_{x}(r + |x_i - x|)\Biggl(1 + \frac{|x_i-x|}{r}\Biggr)^m + \xi_*(r) \,,
\end{align*}
for all $i$. Hence for all such $r$,
\[
\limsup_{i \to \infty}\Theta^m(\|T\|,x_i) \leq f_x(r) + \xi_*(r)  \,.
\]
Taking the limit for $r \to 0$, we see that $\limsup_{i \to \infty}\Theta^m(\|T\|,x_i) \leq \Theta^m(\|T\|,x)$.
\end{proof}

The following observation is essentially \cite[Lemma~3.4.1]{DePauw3}. For the reader's convenience we repeat the proof here.

\begin{Lem}
	\label{nearlymonotonic}
If $T \in \cR_m(\Hi;G)$ is almost monotonic in $A \subset \Hi \setminus \spt(\partial T)$ with gauge $\xi : (0,\delta] \to \R_+$, then $T$ is nearly monotonic in $A$ with gauge $\xi_* \defl \bc_{\theThm}\xi$, for some constant $\bc_{\theThm}(m,\delta,\xi(\delta),\bM(T)) > 0$.
\end{Lem}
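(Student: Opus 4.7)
The approach is to translate the exponential almost-monotonicity hypothesis into a linearized near-monotonicity bound. Throughout, set $f_x(r) \defl \|T\|(\B(x,r))/(\balpha(m) r^m)$ and $R_0(x) \defl \min\{\delta, \dist(x, \spt(\partial T))\}$; the hypothesis then states that $s \leq t < R_0(x)$ implies $f_x(s) \leq e^{\xi(t)-\xi(s)} f_x(t)$.

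First, for any $0 < s \leq t \leq r < R_0(x)$ I would derive
\[
\bigl(f_x(t) - f_x(s)\bigr)_{-} \;\leq\; \bigl(e^{\xi(t)-\xi(s)} - 1\bigr)\, f_x(t) \;\leq\; e^{\xi(\delta)}\, \xi(r)\, f_x(t),
\]
by using $e^u - 1 \leq u\, e^{\xi(\delta)}$ on $[0, \xi(\delta)]$ together with $\xi(t) - \xi(s) \leq \xi(r) \leq \xi(\delta)$. Taking the supremum over $s \leq t \leq r$ reduces the lemma to producing an $x$-uniform upper bound on $f_x(t)$ for $t < R_0(x)$.

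Second, to bound $f_x(t)$ I would reuse the monotonicity at the upper end combined with the trivial bound $\|T\|(\B(x,R)) \leq \bM(T)$: for any $R \in [t, R_0(x))$,
\[
f_x(t) \;\leq\; e^{\xi(R)-\xi(t)} f_x(R) \;\leq\; e^{\xi(\delta)}\, \bM(T)/(\balpha(m) R^m),
\]
and passing to the limit $R \uparrow R_0(x)$ yields $f_x(t) \leq e^{\xi(\delta)} \bM(T)/(\balpha(m) R_0(x)^m)$. Combining with the first step gives $\exc_*^m(\|T\|, x, r) \leq e^{2\xi(\delta)}\bM(T)\,\xi(r)/(\balpha(m) R_0(x)^m)$.

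The main obstacle is eliminating the $R_0(x)^{-m}$ factor, which is not among the parameters permitted for $\bc$. My plan is a regime split. When $R_0(x) \geq \delta$, the factor is uniformly bounded by $\delta^{-m}$ and we obtain the target inequality with $\bc = e^{2\xi(\delta)} \bM(T)/(\balpha(m)\delta^m)$ directly. When $R_0(x) < \delta$, i.e.\ $\dist(x, \spt(\partial T)) < \delta$, the admissible radii are squeezed to $r < R_0(x)$, and I would re-run the linearization at the intrinsic scale $R_0(x)$: the rescaled gauge $\tilde\xi(\tau) \defl \xi(R_0(x)\tau)$ satisfies $\tilde\xi \leq \xi(\delta)$ on $(0,1]$, and the upper semicontinuity of the density function (Lemma~\ref{densityexist}) provides a local bound on $\Theta^m(\|T\|,\cdot)$, allowing the $R_0(x)^{-m}$ divergence to be absorbed into a constant depending only on $m, \delta, \xi(\delta), \bM(T)$. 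The fiddly bit will be making this second-regime rescaling rigorous while keeping track of exactly how $\bM(T)$ enters; I expect it to work because the original mass $\bM(T)$ provides the only global scale in the problem.
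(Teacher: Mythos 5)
Your first two steps reproduce the paper's argument essentially verbatim: linearize $e^u - 1 \le u\,\xi(\delta)^{-1}(e^{\xi(\delta)}-1)$ on $[0,\xi(\delta)]$, then bound $f_x(t)$ using almost-monotonicity together with the trivial inequality $\|T\|(\B(x,R)) \le \bM(T)$. Where you part ways is the third step. The paper carries out \emph{no} regime split at all: it simply applies the almost-monotonicity with the upper radius pushed all the way to $s = \delta$, obtaining $f_x(r) \le \exp(\xi(\delta))\bM(T)/(\balpha(m)\delta^m) \defr \bc_1$ directly and then writing $f_x(r) \le f_x(s) + \bc_1\bc_2\xi(s)$ for $0 < r \le s \le \delta$. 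That only makes sense when $\dist(x,\spt(\partial T)) \ge \delta$, so the paper is implicitly assuming this; the lemma as literally stated is a bit loose, but every place it is invoked has $A$ at a fixed distance at least $\delta$ (or larger) from $\spt(\partial T)$, so the implicit hypothesis is harmless there.

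Your proposed repair for the regime $\dist(x,\spt(\partial T)) < \delta$ does not go through. Lemma~\ref{densityexist} assumes near-monotonicity, which is exactly what you are trying to prove, so appealing to it would be circular. More fundamentally, a pointwise bound on $\Theta^m(\|T\|,\cdot)$ cannot uniformly control $f_x(t)$ for $t$ ranging over $(0,R_0(x))$ as $R_0(x)\downarrow 0$: the density is the infimal scale while the problem is at all intermediate scales, and nothing in the permitted parameters $(m,\delta,\xi(\delta),\bM(T))$ can absorb an $R_0(x)^{-m}$ blowup. The clean fix is to strengthen the hypothesis to $\dist(A,\spt(\partial T)) \ge \delta$ (as the paper's proof and applications tacitly do), after which your first two steps already finish the argument.
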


\begin{proof}
For $x \in A$ and $0 < r < \min\{\delta, \dist(x,\spt(\partial T))\}$ we abbreviate again $f_x(r) \defl \balpha(m)^{-1} r^{-m}\|T\|(\B(x,r))$. Because $T$ is almost monotonic,
\[
f_x(r) \leq \bc_1 \defl \exp(\xi(\delta))\frac{\bM(T)}{\balpha(m) \delta^m} \,.
\]
Let $\bc_2 \defl \xi(\delta)^{-1}(\exp(\xi(\delta)) - 1)$. Then for $x \in A$ and $0 < r \leq s \leq \delta$,
\begin{align*}
f_x(r) & \leq \exp(\xi(r))f_x(r) \leq \exp(\xi(s))f_x(s) \\
	& \leq (1 + \bc_2\xi(s))f_x(s) \leq f_x(s) + \bc_1\bc_2\xi(s) \,.
\end{align*}
\end{proof}

A rather simple consequence of nearly monotonicity and lower density bounds is the compactness of the support away from the boundary and Ahlfors-regularity of the associated measure. The latter observation will be important in connection with Lemma~\ref{tangentplane} about tangent planes.

\begin{Lem}
	\label{compactlem}
Let $T \in \cR_m(\Hi;G)$ and $U \subset \Hi$ be an open set with $\dist(U,\spt(\partial T)) > r_0 > 0$. Assume that there is a constant $\theta > 0$ such that:
\begin{enumerate}
	\item $\Theta^m(\|T\|,x) \geq \theta$ for $\|T\|$-almost all $x \in \operatorname{cl}(U)$.
	\item $\exc^m_*(\|T\|,x,r_0) \leq \frac{\theta}{2}$ for all $x \in \operatorname{cl}(U)$.
	\item $T$ is nearly monotonic in $\operatorname{cl}(U)$.
\end{enumerate}
Then $\spt(T) \cap \operatorname{cl}(U)$ is compact and for all $0 < r < r_0$ and $x \in \spt(T) \cap \operatorname{cl}(U)$ there holds
\[
\frac{\theta}{2} \leq \frac{\|T\|(\B(x,r))}{\balpha(m)r^m} \leq \bc(\bM(T),r_0,\theta,m) \,.
\]
\end{Lem}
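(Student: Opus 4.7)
The plan is to derive both density bounds first and then deduce compactness from a packing argument.

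For the lower bound, I will first work at $\|T\|$-almost every point of $U$, where rectifiability ensures that $\Theta^m(\|T\|,y) = \|\bg(y)\|$ exists and, by hypothesis (1), is $\geq \theta$. Applying hypothesis (2) to such $y$ with $s'\to 0^+$ in $\bigl(f_y(s)-f_y(s')\bigr)_-\leq \theta/2$ (where $f_y(s) := \|T\|(\B(y,s))/(\balpha(m)s^m)$) gives $f_y(s)\geq \theta-\theta/2=\theta/2$ for all $0<s<r_0$. Then, for an arbitrary $x\in \spt(T)\cap U$ and $0<r<r_0$, I will pick $\delta>0$ small so that $\B(x,\delta)\subset U$ and $r-\delta>0$. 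Since $x\in\spt(T)$ forces $\|T\|(\B(x,\delta))>0$, a point $y$ in $\B(x,\delta)\cap U$ enjoying the a.e. lower bound exists; then $\B(y,r-\delta)\subset\B(x,r)$ yields $\|T\|(\B(x,r))\geq (\theta/2)\balpha(m)(r-\delta)^m$, and letting $\delta\to 0$ finishes it.

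For the upper bound, for any $x\in \spt(T)\cap U$ and $0<r<r_0$ I will choose $r'\in(r,r_0)$ and apply hypothesis (2) again: $f_x(r)\leq f_x(r')+\theta/2\leq \bM(T)/(\balpha(m)r'^m)+\theta/2$. Taking $r'\uparrow r_0$ (using finiteness of $\|T\|$) produces the constant $\bc=\bM(T)/(\balpha(m)r_0^m)+\theta/2$, depending only on $m,r_0,\theta$ and $\bM(T)$.

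Finally, for compactness, $\spt(T)\cap\clos(U)$ is closed in the complete space $\Hi$, so it suffices to establish total boundedness. Fix $0<\epsilon<r_0$ and select a maximal $\epsilon$-separated family $\{x_1,\dots,x_N\}\subset\spt(T)\cap U$. The balls $\B(x_i,\epsilon/2)$ are pairwise disjoint and each, by the previous paragraph, has $\|T\|$-mass at least $(\theta/2)\balpha(m)(\epsilon/2)^m$, so $N\leq 2^{m+1}\bM(T)/(\theta\balpha(m)\epsilon^m)$. Maximality implies that $\spt(T)\cap U$ is covered by the closed balls $\B(x_i,\epsilon)$; passing to closures shows that $\clos(\spt(T)\cap U)$ is totally bounded, hence compact. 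The remaining step, which I expect to be the real obstacle, is to show $\spt(T)\cap \clos(U)=\clos(\spt(T)\cap U)$: for $x\in\spt(T)\cap\partial U$, I will argue that every ball $\B(x,\epsilon)$ either meets $U$ in a set of positive $\|T\|$-measure (so that a Lebesgue-density point of $\|T\|$ there gives a point of $\spt(T)\cap U$ in $\B(x,\epsilon)$), or else we may discard such a pathological boundary point because hypothesis (2) applied at nearby interior points forces some mass of $\|T\|$ to accumulate in $U$ in any such ball. Confirming that no genuine $\|T\|=0$-in-$U$ exceptional boundary points survive is the delicate point in the argument, and I expect it will use a combination of hypothesis (2) and the fact that $\|T\|(\B(x,\epsilon))>0$ for $x\in\spt(T)$.
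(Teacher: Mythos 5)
Your density bounds and packing argument follow the same strategy as the paper. One worthwhile difference: your lower bound at an arbitrary $x \in \spt(T)\cap U$ is obtained by a direct approximation with $\|T\|$-density points in a shrinking ball $\B(x,\delta)\subset U$, whereas the paper extends the a.e.\ bound to all of $\spt(T)\cap U$ by invoking the upper semicontinuity of $\Theta^m(\|T\|,\cdot)$ from Lemma~\ref{densityexist}. Your route is arguably more robust here: Lemma~\ref{densityexist} is stated for nearly monotonic chains, which require a gauge vanishing at $0$, while hypothesis~(2) supplies only the constant bound $\theta/2$, so the paper's invocation is not entirely automatic. The upper bound and the $2r$-separated packing argument are exactly as in the paper.

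The discrepancy you flag between $\spt(T)\cap\operatorname{cl}(U)$ and $\operatorname{cl}(\spt(T)\cap U)$ is real, but the repair you sketch cannot succeed. Hypotheses~(1) and~(2) constrain $T$ only on $U$; nothing prevents $\spt(T)$ from having a piece lying entirely in $\Hi\setminus U$ whose closure meets $\partial U$, and at such a boundary point $x$ one has $\|T\|(\B(x,\epsilon)\cap U)=0$ for all small $\epsilon$ while still $\|T\|(\B(x,\epsilon))>0$, so $x\in\spt(T)\cap\operatorname{cl}(U)\setminus\operatorname{cl}(\spt(T)\cap U)$. Thus the set equality you aim for is false in general. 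In fact the paper's own proof is subject to the same limitation: it establishes total boundedness of $\spt(T)\cap U$ and hence compactness of $\operatorname{cl}(\spt(T)\cap U)$, which is what the argument actually yields. This is harmless in practice because the lemma is later invoked on open sets whose closures lie strictly inside the region where the monotonicity hypotheses hold; one can therefore apply the lemma to a slightly enlarged open set and recover the stated conclusion as an immediate corollary.
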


\begin{proof}
As a closed subset of a complete space, $\spt(T)\cap \operatorname{cl}(U)$ is itself complete. It remains to show that this set is totally bounded. By assumption, for $\|T\|$-almost all $x \in \operatorname{cl}(U)$ and all $0 < r < r_0$,
\begin{equation}
\label{densboundone}
	\theta \leq \Theta^m(\|T\|,x) \leq \frac{\|T\|(\B(x,r))}{\balpha(m) r^m} + \frac{\theta}{2} \,,
\end{equation}
and hence $\frac{\theta}{2}\,\balpha(m) r^m \leq \|T\|(\B(x,r))$. The nearly monotonicity of $T$ and Lemma \ref{densityexist} show that \eqref{densboundone} holds for every $x \in \spt(T) \cap \operatorname{cl}(U)$.
Let $x_1,\dots,x_k$ be points in $\spt(T) \cap \operatorname{cl}(U)$ with $d(x_i,x_j) > 2r$ for $i \neq j$. Then
\[
	k \,\frac{\theta}{2}\,\balpha(m) r^m \leq \sum_{i = 1}^k \|T\|(\B(x_i,r)) \leq \bM(T) \,.
\]
Thus there is an upper bound on the number of points that are $2r$-separated. Taking a maximal collection of such points it follows that $\spt(T) \cap \operatorname{cl}(U)$ can be covered by $\lfloor\bM(T)(\frac{\theta}{2}\balpha(m) r^m)^{-1}\rfloor$ closed balls of radius $2r$. This is true for any $r < r_0$ and $\spt(T) \cap \operatorname{cl}(U)$ is therefore totally bounded and hence compact.

The first inequality of the second conclusion follows from \eqref{densboundone}. Finally,
for all $x \in \spt(T) \cap \operatorname{cl}(U)$ and $0 < r < r_0$,
\[
\frac{\|T\|(\B(x,r))}{\balpha(m) r^m} \leq \frac{\|T\|(\B(x,r_0))}{\balpha(m) r_0^m} + \exc^m_*(\|T\|,x,r_0) \leq \frac{\bM(T)}{\balpha(m) r_0^m} + \frac{\theta}{2} \,.
\]
\end{proof}

\subsection{Moments computations}

Let $\phi$ be a finite Borel measure on $\Hi$. As defined in the introduction of the seminal paper \cite{P} by Preiss and later used in \cite{DePauw3}, we define some integrals for $r > 0$, compare with \cite[\S 4.1]{DePauw3}. First,
\[
V(\phi,x,r) \defl \int_{\B(0,r)} (r^2 - |x-y|^2)^2 \, d\phi(y) \,.
\]
This can be written as a sum $V(\phi,x,r) = \sum_{k=0}^4 P_k(\phi,x,r)$, where
\begin{align*}
P_0(\phi,x,r) & \defl \int_{\B(0,r)} (r^2 - |y|^2)^2 \, d\phi(y) \,, \\
P_1(\phi,x,r) & \defl 4\biggl\langle x, \int_{\B(0,r)} y(r^2 - |y|^2) \, d\phi(y) \biggr \rangle \,, \\
P_2(\phi,x,r) & \defl 4 \int_{\B(0,r)} \langle x,y \rangle^2 \, d\phi(y) - 2|x|^2\int_{\B(0,r)} r^2 - |y|^2 \, d\phi(y) \,, \\
P_3(\phi,x,r) & \defl -4|x|^2 \int_{\B(0,r)} \langle x,y \rangle \, d\phi(y) \,, \\
P_4(\phi,x,r) & \defl |x|^4 \phi(\B(0,r)) \,.
\end{align*}
We further define
\begin{align*}
b(\phi,r) & \defl \int_{\B(0,r)} y(r^2 - |y|^2) \, d\phi(y), \\
Q(\phi,r)(x) & \defl \int_{\B(0,r)} \langle x,y \rangle^2 \, d\phi(y) \,.
\end{align*}
We have already encountered the quadratic form $Q$ in Subsection~\ref{betaprelim} although with a particular renormalization. This same renormalization of the quantities above is what we define next and use boldface letters for those. Let $\bnu(m) \defl \frac{\balpha(m)}{m+2}$ and define:
\begin{align*}
\bV(\phi,x,r) & \defl \bnu(m)^{-1}r^{-m-2}V(\phi,x,r) \,, \\
\bP_k(\phi,x,r) & \defl \bnu(m)^{-1}r^{-m-2}P_k(\phi,x,r), \, k=0,\dots,4 \,, \\
\bb(\phi,r) & \defl \bnu(m)^{-1}r^{-m-2}b(\phi,r) \,, \\
\bQ(\phi,r) & \defl \bnu(m)^{-1}r^{-m-2}Q(\phi,r) \,. \\
\end{align*}
Further let
\[
\bomega(m,q) \defl \int_{\B^m(0,1)}(1 - |y|^2)^q \, d\cL^m(y), \quad q=0,1,2,\dots \,.
\]
The following simple identities will be important,
\begin{equation}
\label{omegaidentity2}
\frac{\balpha(m)}{m+2} = \bnu(m) = \frac{\bomega(m,0) - \bomega(m,1)}{m} = \frac{\balpha(m) - \bomega(m,1)}{m} \,,
\end{equation}
and hence
\begin{equation}
\label{omegaidentity}
\bomega(m,1) = \frac{2\balpha(m)}{m+2} = 2\bnu(m) \,.
\end{equation}

The following results in this section are from \cite{DePauw3}, the proofs below are almost the same with the minor difference that $\xi$ is a continuous increasing function for which we don't require that $\lim_{r \to 0} \xi(r) = 0$. This will be important when applied in Lemma~\ref{bootstraplem}, where only small bounds on the spherical excess are assumed. For the reader's convenience we repeat the proofs here.

\begin{Lem}{\cite[Lemma~4.1.1]{DePauw3}}
\label{densityintegralbound1}
Let $x \in \Hi$ and $r , \epsilon > 0$ be such that
\[
\left| \frac{\phi(\B(x,\rho))}{\balpha(m) \rho^m} - 1 \right| \leq \epsilon \,,
\]
for every $0 < \rho < r$. Then for every $q=0,1,2,\dots$ one has
\[
\left|\int_{\B(x,r)} (r^2 - |x-y|^2)^q \, d\phi(y) - \bomega(m,q) r^{2q + m} \right| \leq \epsilon \bomega(m,q) r^{2q+m} \,.
\]
\end{Lem}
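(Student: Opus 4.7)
The plan is to use Cavalieri's principle to reduce the weighted integral to a one-dimensional integral against the mass function $\rho \mapsto \phi(\B(x,\rho))$, where the hypothesis gives a direct two-sided bound.

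More concretely, the first step is the layer-cake identity
\[
(r^2-|x-y|^2)^q = \int_0^{r^2} q t^{q-1} \mathbf{1}_{\{r^2-|x-y|^2 > t\}}\, dt
\]
valid for $y \in \B(x,r)$. Applying Fubini to exchange the order of integration and observing that $\{y : r^2-|x-y|^2 > t\} = \oB(x, \sqrt{r^2-t})$, one obtains
\[
\int_{\B(x,r)} (r^2-|x-y|^2)^q\, d\phi(y) = \int_0^{r^2} q t^{q-1} \phi\bigl(\oB(x,\sqrt{r^2-t})\bigr)\, dt .
\]
Since $\phi$ is a finite Borel measure, $\phi(\oB(x,\rho)) = \phi(\B(x,\rho))$ for all but countably many $\rho \in (0,r)$, so the hypothesis yields
\[
(1-\epsilon)\balpha(m)(r^2-t)^{m/2} \leq \phi\bigl(\oB(x,\sqrt{r^2-t})\bigr) \leq (1+\epsilon)\balpha(m)(r^2-t)^{m/2}
\]
for almost every $t \in (0,r^2)$.

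The second step is to evaluate the Lebesgue integral
\[
I(m,q,r) \defl \int_0^{r^2} q t^{q-1} \balpha(m) (r^2-t)^{m/2}\, dt
\]
and show $I(m,q,r) = \bomega(m,q) r^{2q+m}$. The substitution $t=r^2 s$ gives $I(m,q,r) = r^{2q+m} \int_0^1 q s^{q-1}\balpha(m)(1-s)^{m/2}\,ds$, and the same layer-cake trick applied backwards to the defining integral $\bomega(m,q) = \int_{\B^m(0,1)}(1-|y|^2)^q\, d\cL^m(y)$ (writing $(1-|y|^2)^q$ as $\int_0^1 q s^{q-1}\mathbf{1}_{\{|y|^2 < 1-s\}}\,ds$ and using $\cL^m(\B^m(0,\sqrt{1-s})) = \balpha(m)(1-s)^{m/2}$) identifies this integral with $\bomega(m,q)$.

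Combining these two steps, integration of the pointwise bound against $q t^{q-1}$ yields
\[
\left| \int_{\B(x,r)} (r^2-|x-y|^2)^q\, d\phi(y) - \bomega(m,q) r^{2q+m} \right| \leq \epsilon\, \bomega(m,q) r^{2q+m},
\]
which is the desired conclusion. There is no real obstacle here — the only minor technical point is handling the distinction between open and closed balls, which is resolved by the fact that the set of radii where they differ is countable and hence Lebesgue-negligible in the $t$-integration.
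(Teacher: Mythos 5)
Your proof is correct and uses essentially the same Cavalieri/layer-cake reduction as the paper: both express the weighted integral as a one-dimensional integral of $\phi$ of balls of shrinking radius, apply the density hypothesis pointwise, and then run the same reduction backwards against Lebesgue measure to identify $\bomega(m,q)r^{2q+m}$ (your $q\,t^{q-1}$ parametrization is just the substitution $t \mapsto t^q$ applied to the paper's direct Cavalieri formula). One small caveat: your layer-cake identity degenerates at $q=0$ (the factor $q$ kills the integral), so that case must be read off directly from the hypothesis; the same remark applies to the paper's formula involving $t^{1/q}$, so this is a shared cosmetic issue rather than a gap in your argument.
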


\begin{proof}
It suffices to observe that with Cavalieri's principle,
\begin{align*}
\int_{\B(x,r)}(r^2 - |x-y|^2)^q \, d\phi(y) & = \int_0^{r^{2q}} \phi\left(\B\left(x,\bigl(r^2 - t^\frac{1}{q}\bigr)^\frac{1}{2}\right)\right) \, d\cL^1(t) \\
 & \leq (1 + \epsilon) \int_0^{r^{2q}} \cL^m\left(\B\left(0,\bigl(r^2 - t^\frac{1}{q}\bigr)^\frac{1}{2}\right)\right) \, d\cL^1(t) \\
 & = (1 + \epsilon) \int_{\B^m(0,r)} (r^2 - |y|^2)^q \, d\cL^m(y) \\
 & = (1 + \epsilon) \bomega(m,q) r^{2q+m} \,.
\end{align*}
The other inequality is proved exactly the same way.
\end{proof}

Next is an a priori bound on the trace of $\bQ(\phi,r)$.

\begin{Lem}{\cite[Lemma~4.1.2]{DePauw3}}
\label{densityintegralbound2}
Let $r , \epsilon > 0$ be such that
\[
\left| \frac{\phi(\B(0,\rho))}{\balpha(m) \rho^m} - 1 \right| \leq \epsilon \,,
\]
for every $0 < \rho < r$. Then
\[
|\tr \bQ(\phi,r) - m| \leq \epsilon(m+4) \,.
\]
\end{Lem}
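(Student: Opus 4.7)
The plan is to reduce the trace of $\bQ(\phi,r)$ to the single scalar integral $\int_{\B(0,r)}|y|^2\, d\phi(y)$ and then estimate it by splitting $|y|^2=r^2-(r^2-|y|^2)$, applying Lemma~\ref{densityintegralbound1} to control each piece.

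First I would fix an orthonormal basis $(e_i)_{i\geq 1}$ of $\Hi$. Using the monotone convergence argument already used in Subsection~\ref{betaprelim} (just after Lemma~\ref{closeplanes}), one has
\begin{equation*}
\tr \bQ(\phi,r)=\sum_{i\geq 1}\bQ(\phi,r)(e_i)=\bnu(m)^{-1}r^{-m-2}\int_{\B(0,r)}\sum_{i\geq 1}\langle e_i,y\rangle^2\, d\phi(y)=\bnu(m)^{-1}r^{-m-2}\int_{\B(0,r)}|y|^2\, d\phi(y).
\end{equation*}
Thus the whole question reduces to estimating $I(r)\defl\int_{\B(0,r)}|y|^2\, d\phi(y)$.

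Next, write $|y|^2=r^2-(r^2-|y|^2)$ so that
\begin{equation*}
I(r)=r^2\phi(\B(0,r))-\int_{\B(0,r)}(r^2-|y|^2)\, d\phi(y).
\end{equation*}
The hypothesis directly gives $|\phi(\B(0,r))-\balpha(m)r^m|\leq\epsilon\balpha(m)r^m$, and applying Lemma~\ref{densityintegralbound1} with $q=1$ yields
\begin{equation*}
\biggl|\int_{\B(0,r)}(r^2-|y|^2)\, d\phi(y)-\bomega(m,1)r^{m+2}\biggr|\leq\epsilon\bomega(m,1)r^{m+2}.
\end{equation*}
Combining these two bounds by the triangle inequality gives
\begin{equation*}
\bigl|I(r)-(\balpha(m)-\bomega(m,1))r^{m+2}\bigr|\leq\epsilon\bigl(\balpha(m)+\bomega(m,1)\bigr)r^{m+2}.
\end{equation*}

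Finally, the identities \eqref{omegaidentity2} and \eqref{omegaidentity} give $\balpha(m)-\bomega(m,1)=m\bnu(m)$ and $\balpha(m)+\bomega(m,1)=m\bnu(m)+2\bomega(m,1)=(m+4)\bnu(m)$. Dividing through by $\bnu(m)r^{m+2}$ turns the previous display into $|\tr\bQ(\phi,r)-m|\leq\epsilon(m+4)$, as required. There is no serious obstacle here; the only step worth double-checking is the interchange of sum and integral needed to express $\tr\bQ(\phi,r)$ as the integral of $|y|^2$, which is justified by monotone convergence since $\sum_{i\leq N}\langle e_i,y\rangle^2\nearrow|y|^2$.
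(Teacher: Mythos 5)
Your proof is correct and follows essentially the same route as the paper: express $\tr\bQ(\phi,r)$ as $\bnu(m)^{-1}r^{-m-2}\int_{\B(0,r)}|y|^2\,d\phi$, split $|y|^2 = r^2 - (r^2-|y|^2)$, apply Lemma~\ref{densityintegralbound1} to each piece, and finish with the identities \eqref{omegaidentity2} and \eqref{omegaidentity}. The only cosmetic difference is that you carry the algebra a bit more explicitly (and thereby avoid a small typographical slip in the paper's own last display, where the factor $\epsilon$ is missing in front of $\bomega(m,0)r^{m+2}$).
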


\begin{proof}
Let $e_1,e_2,\dots$ be an orthonormal basis of $\Hi$. Then
\begin{align*}
\tr Q(\phi,r) & = \sum_{i \geq 1} Q(\phi,r)(e_i) = \int_{\B(0,r)} |y|^2 \, d\phi(y) \,.
\end{align*}
Therefore we have with \eqref{omegaidentity2} and Lemma~\ref{densityintegralbound1},
\begin{align*}
\bigl|m\bnu(m)r^{m+2} - \tr Q(\phi,r)\bigr| & = \Bigl|m\bnu(m)r^{m+2} - \int_{\B(0,r)} |y|^2 \, d\phi(y)\Bigr| \\
 & \leq \Bigl|\bomega(m,1)r^{m+2} - \int_{\B(0,r)} (r^2 - |y|^2) \, d\phi(y)\Bigr| \\
 & \quad + \Bigl|\bomega(m,0)r^{m+2} - r^2\phi(\B(0,r)) \Bigr| \\
 & \leq \epsilon\bomega(m,1)r^{m+2} + \epsilon \bomega(m,0)r^{m+2} \,.
\end{align*}
Dividing both sides by $\bnu(m)r^{m+2}$ gives the result.
\end{proof}

In Proposition~\ref{quadratformprop} the quadratic form $\bQ(\phi,r)$ is controlled in terms of the excess. In order to do so we need some bound on the length of $\bb(\phi,r)$ in terms of the spherical excess. Define
\begin{align*}
\hat V(\phi,x,r) & \defl \int_{\B(x,r)} (r^2 - |x-y|^2)^2 \, d\phi(y) \,.
\end{align*}

\begin{Lem}{\cite[Lemma~4.2.1]{DePauw3}}
\label{hatnohatdiff}
There is a constant $\bc_{\theThm}(m) > 0$ with the following property. Whenever $2|x| < r$, then
\begin{align*}
\bigl| V(\phi,x,r)-\hat V(\phi,x,r)\bigr| \leq \bc_{\theThm}(m) \bnu(m)r^m \bigl(& r|x|^3(r^{-m}\phi(\B(0,r))) \\
 & + r^2|x|^2\exc^{m*}(\phi,0,2r)\bigr) \,.
\end{align*}
\end{Lem}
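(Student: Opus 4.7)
The plan is to write the difference as an integral over the symmetric difference of the two balls, and estimate the integrand and the mass of that symmetric difference separately. Specifically,
\begin{equation*}
V(\phi,x,r) - \hat{V}(\phi,x,r) = \int_{\B(0,r)\setminus\B(x,r)} (r^2-|x-y|^2)^2 \, d\phi(y) - \int_{\B(x,r)\setminus\B(0,r)} (r^2-|x-y|^2)^2 \, d\phi(y),
\end{equation*}
and both regions of integration lie inside the annulus $A \defl \{y : r-|x| \leq |y| \leq r+|x|\}$, since $2|x|<r$ ensures that $\B(x,r-|x|) \subset \B(0,r) \cap \B(x,r)$.

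The first step is a pointwise bound on the integrand. For $y$ in either symmetric difference, the triangle inequality forces $\bigl| |x-y| - r \bigr| \leq |x|$ (on $\B(0,r)\setminus\B(x,r)$ we have $r < |x-y| \leq |x|+|y| \leq r+|x|$; on $\B(x,r)\setminus\B(0,r)$ we have $|x-y| \leq r < |y| \leq r+|x|$). Hence $|r^2-|x-y|^2| = |r-|x-y||(r+|x-y|) \leq |x|(2r+|x|) \leq 3r|x|$, so the integrand is at most $9r^2|x|^2$.

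The second step bounds $\phi(A)$ using the one-sided density control from $\exc^{m*}$. Setting $\mu(s) \defl \phi(\B(0,s))/(\balpha(m)s^m)$ we have, since $r\pm|x| \leq 2r$, the inequality $\mu(r+|x|) \leq \mu(r-|x|) + \exc^{m*}(\phi,0,2r)$, and likewise $\mu(r-|x|) \leq \mu(r) + \exc^{m*}(\phi,0,2r)$. Therefore
\begin{align*}
\phi(A) &= \balpha(m)(r+|x|)^m \mu(r+|x|) - \balpha(m)(r-|x|)^m \mu(r-|x|) \\
 &\leq \balpha(m)\bigl[(r+|x|)^m - (r-|x|)^m\bigr]\mu(r-|x|) + \balpha(m)(r+|x|)^m \exc^{m*}(\phi,0,2r).
\end{align*}
Since $|x| \leq r/2$, the elementary inequality $(r+|x|)^m - (r-|x|)^m \leq \bc(m) r^{m-1}|x|$ together with the estimate $\mu(r-|x|) \leq r^{-m}\balpha(m)^{-1}\phi(\B(0,r)) + \exc^{m*}$ gives, after absorbing the $|x| \exc^{m*}$ term into the $r^m\exc^{m*}$ term,
\begin{equation*}
\phi(A) \leq \bc(m)\bigl[ r^{-1}|x|\,\phi(\B(0,r)) + r^m \exc^{m*}(\phi,0,2r)\bigr].
\end{equation*}

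Combining the two steps yields $|V-\hat{V}| \leq 9r^2|x|^2 \phi(A) \leq \bc(m)\bigl[r|x|^3 \phi(\B(0,r)) + r^{m+2}|x|^2\exc^{m*}(\phi,0,2r)\bigr]$, which is the claimed inequality after absorbing $1/\bnu(m)$ into the constant. There is no real obstacle here; the only point requiring slight care is the fact that $\exc^{m*}$ only controls density ratios in one direction, but since we just need an upper bound on $\phi(A)$, applying it to $\mu(r+|x|) - \mu(r-|x|)$ is exactly the right way around.
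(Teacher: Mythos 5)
Your decomposition matches the paper's (integrate over the symmetric difference, bound $|r^2 - |x-y|^2| \le 3r|x|$ pointwise, then control the $\phi$-mass of the enclosing $0$-centered annulus via the excess). Your annulus $\{y : r-|x| \le |y| \le r+|x|\}$ is in fact a bit tighter than the paper's $\B(0,r+2|x|)\setminus\B(0,r-2|x|)$, which the paper arrives at by passing through an $x$-centered annulus first; your direct estimate of $|y|$ from the triangle inequality is cleaner.

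The one genuine error is the inequality ``$\mu(r-|x|) \le \mu(r) + \exc^{m*}(\phi,0,2r)$'', which you label ``likewise'' but whose direction is reversed. By definition $\exc^{m*}$ is $\sup\{(\mu(t)-\mu(s))_+ : 0 < s \le t\}$, so it controls increases of $\mu$ along growing radii, never decreases; with $s = r-|x| \le t = r$ it yields $\mu(r)-\mu(r-|x|) \le \exc^{m*}(\phi,0,2r)$, i.e.\ the lower bound $\mu(r-|x|) \ge \mu(r) - \exc^{m*}(\phi,0,2r)$, not the upper bound your estimate requires. (An upper bound of that shape would come from $\exc^{m}_*$, which is not a hypothesis of the lemma.) The repair is immediate and actually simplifies the argument: since $\B(0,r-|x|) \subset \B(0,r)$ and $r-|x| > r/2$ by the hypothesis $2|x| < r$, one has directly
\[
\mu(r-|x|) = \frac{\phi(\B(0,r-|x|))}{\balpha(m)(r-|x|)^m} \le \frac{2^m\,\phi(\B(0,r))}{\balpha(m)\,r^m}\,,
\]
with no excess term needed in this step. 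Alternatively, rearrange the algebraic split so that the factor $(r+|x|)^m-(r-|x|)^m$ multiplies $\mu(r+|x|)$ rather than $\mu(r-|x|)$; then the excess is invoked only as $\mu(r+|x|) \le \mu(r) + \exc^{m*}(\phi,0,2r)$, which is the correct direction since $r \le r+|x| \le 2r$. Either fix leaves your bound on $\phi(A)$ and your conclusion unchanged.
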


\begin{proof}
The following statements are easy to check:
\begin{enumerate}
	\item $(\B(x,r) \setminus \B(0,r)) \cup (\B(0,r) \setminus \B(x,r)) \subset \B(x,r + |x|) \setminus \B(x,r-|x|)$;
	\item If $y \in (\B(x,r) \setminus \B(0,r)) \cup (\B(0,r) \setminus \B(x,r))$, then $|r^2 - |x-y|^2| \leq 3r|x|$;
	\item $\B(0,r - 2|x|) \subset \B(x,r-|x|) \subset \B(x,r + |x|) \subset \B(0,r + 2|x|)$.
\end{enumerate}
The statements (1) and (3) are obvious. To see (2), note that because of (1),
\[
0 \leq r - |x| \leq |x-y| \leq r + |x| \,,
\]
whence
\[
r^2 - 2r|x| + |x|^2 \leq |x-y|^2 \leq r^2 + 2r|x| + |x|^2 \,.
\]
Statement (2) now follows by noting that $|x|^2 \leq r|x|$. Using these properties we see that
\begin{align}
\nonumber
\bigl| V(\phi,x,r)-\hat V(\phi,x,r)\bigr| & \leq 9r^2|x|^2\phi\bigl[(\B(x,r) \setminus \B(0,r)) \cup (\B(0,r) \setminus \B(x,r))\bigr] \\
\nonumber
 & \leq 9r^2|x|^2\bigl(\phi(\B(x,r + |x|)) - \phi(\B(x,r - |x|))\bigr) \\
\label{vestimate}
 & \leq 9r^2|x|^2\bigl(\phi(\B(0,r + 2|x|)) - \phi(\B(0,r - 2|x|))\bigr) \,.
\end{align}
Since $r + 2|x| < 2r$, we have that
\[
\frac{\phi(\B(0,r + 2|x|))}{\balpha(m)(r + 2|x|)^m} \leq \frac{\phi(\B(0,r))}{\balpha(m)r^m} + \exc^{m*}(\phi,0,2r) \,,
\]
so that
\[
\phi(\B(0,r + 2|x|)) \leq \left(1 + \frac{2|x|}{r}\right)^m\phi(\B(0,r)) + \balpha(m)(r + 2|x|)^m\exc^{m*}(\phi,0,2r) \,.
\]
Similarly,
\[
\frac{\phi(\B(0,r - 2|x|))}{\balpha(m)(r - 2|x|)^m} \geq \frac{\phi(\B(0,r))}{\balpha(m)r^m} - \exc^{m*}(\phi,0,r) \,,
\]
so that
\[
\phi(\B(0,r - 2|x|)) \geq \left(1 - \frac{2|x|}{r}\right)^m\phi(\B(0,r)) - \balpha(m)(r - 2|x|)^m\exc^{m*}(\phi,0,r) \,.
\]
From this we deduce that
\begin{align*}
\phi(\B(0,r + 2|x|)) & - \phi(\B(0,r - 2|x|)) \\
& \leq \phi(\B(0,r))\left(\left(1 + \frac{2|x|}{r}\right)^m - \left(1 - \frac{2|x|}{r}\right)^m\right) \\
& \quad + (1 + 2^m)\balpha(m)r^m\exc^{m*}(\phi,0,2r) \\
 & \leq \phi(\B(0,r))m2^m\frac{2|x|}{r} + (1 + 2^m)\balpha(m)r^m\exc^{m*}(\phi,0,2r) \,.
\end{align*}
Plugging this into \eqref{vestimate} gives the result.
\end{proof}

\begin{Def}
Given $x_1,x_2 \in \Hi$ and $r > 0$ we define the \emph{deviation} as
\[
\dev^m(\phi,x_1,x_2,r) \defl \frac{\phi(\B(x_1,r)) - \phi(\B(x_2,r))}{r^m} \,.
\]
\end{Def}

\begin{Lem}{\cite[Lemma~4.2.3]{DePauw3}}
\label{hathatdiff}
There is a constant $\bc_{\theThm}(m) > 0$ such that
\begin{align*}
\hat V(\phi,x,r)-\hat V(\phi,0,r) & \leq \bc_{\theThm}(m) \bnu(m)r^{m+4}\bigl(\dev^m(\phi,x,0,r)  \\
 & \qquad + \exc^{m}_*(\phi,x,r) + \exc^{m*}(\phi,0,r)\bigr) \,, \\
\bigl|\hat V(\phi,x,r)-\hat V(\phi,0,r)\bigr| & \leq \bc_{\theThm}(m) \bnu(m)r^{m+4}\bigl(|\dev^m(\phi,x,0,r)|  \\
 & \qquad + \exc^{m}(\phi,x,r) + \exc^{m}(\phi,0,r)\bigr) \,.
\end{align*}
\end{Lem}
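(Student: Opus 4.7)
The plan is to reduce both inequalities to a single integral identity via Cavalieri's principle, and then to compare the derivation at inner scale $s \in (0,r]$ to its value at scale $r$ using the two one-sided spherical excesses.

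First I would apply the layer-cake formula to the nonnegative integrand $y \mapsto (r^2 - |x-y|^2)^2$ on $\B(x,r)$. Its superlevel set at height $t \in [0,r^4]$ is the concentric ball $\B(x,\sqrt{r^2-\sqrt{t}})$, so after the substitution $t = (r^2-s^2)^2$ one obtains
\[
\hat V(\phi,x,r) = 4 \int_0^r s(r^2-s^2)\, \phi(\B(x,s))\, ds.
\]
Subtracting the analogous identity with center $0$ and using $\phi(\B(x,s)) - \phi(\B(0,s)) = s^m \dev^m(\phi,x,0,s)$ gives the working identity
\[
\hat V(\phi,x,r) - \hat V(\phi,0,r) = 4 \int_0^r s^{m+1}(r^2-s^2)\, \dev^m(\phi,x,0,s)\, ds.
\]

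Next I would control $\dev^m(\phi,x,0,s)$ at inner scale $s \leq r$ in terms of $\dev^m(\phi,x,0,r)$ and the two excesses at scale $r$. Directly from the definitions of $\exc^m_*$ and $\exc^{m*}$, for $0 < s \leq r$,
\[
\tfrac{\phi(\B(x,r))}{\balpha(m) r^m} - \exc^{m*}(\phi,x,r) \leq \tfrac{\phi(\B(x,s))}{\balpha(m) s^m} \leq \tfrac{\phi(\B(x,r))}{\balpha(m) r^m} + \exc^{m}_*(\phi,x,r),
\]
and the analogous double inequality centered at $0$. Subtracting the upper bound at $x$ from the lower bound at $0$ yields the one-sided estimate
\[
\dev^m(\phi,x,0,s) \leq \dev^m(\phi,x,0,r) + \balpha(m) \bigl( \exc^m_*(\phi,x,r) + \exc^{m*}(\phi,0,r) \bigr),
\]
which is exactly the pairing of excesses appearing in the first inequality of the statement; the symmetric lower bound is obtained by swapping $\exc^m_*$ with $\exc^{m*}$ throughout. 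The unsigned version $|\dev^m(\phi,x,0,s)| \leq |\dev^m(\phi,x,0,r)| + \balpha(m)(\exc^m(\phi,x,r) + \exc^m(\phi,0,r))$ then follows from both signed bounds together with $\exc^m = \max\{\exc^m_*, \exc^{m*}\}$.

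Finally I would insert these pointwise (in $s$) bounds into the integral identity and use the elementary computation
\[
\int_0^r s^{m+1}(r^2-s^2)\, ds = \frac{2\, r^{m+4}}{(m+2)(m+4)} = \frac{2\, \bnu(m)\, r^{m+4}}{\balpha(m)(m+4)}
\]
to produce both inequalities with an explicit dimensional constant. I do not foresee any real analytic obstacle: once the Cavalieri representation is in place, the proof is elementary bookkeeping. The only subtlety worth flagging is the asymmetric pairing of excesses in the signed version (the dropping excess $\exc^m_*$ at $x$ with the rising excess $\exc^{m*}$ at $0$), which is forced by the direction of the inequality but collapses harmlessly when passing to absolute values.
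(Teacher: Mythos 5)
Your argument is correct and follows exactly the same route as the paper's own proof: both pass through the Cavalieri identity $\hat V(\phi,x,r) = 4\int_0^r \rho(r^2-\rho^2)\phi(\B(x,\rho))\,d\rho$, bound the difference of masses at inner scale $\rho$ by the masses at scale $r$ plus the appropriate one-sided excesses, and then integrate against $4\rho^{m+1}(r^2-\rho^2)$. The only cosmetic difference is that you repackage the mass difference as $\dev^m$ from the outset, whereas the paper keeps the raw measures until the final step, and you correctly note (as the paper does) that the unsigned version follows from the signed one by swapping $x$ and $0$.
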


\begin{proof}
As in the proof of Lemma~\ref{densityintegralbound1},
\begin{align*}
\hat V(\phi,x,r) & = \int_{\B(x,r)} (r^2 - |x-y|^2)^2 \, d\phi(y) \\
 & = \int_0^{r^2} \phi\left(\B\left(x,\bigl(r^2 - t^\frac{1}{2}\bigr)^\frac{1}{2}\right)\right) \, d\cL^1(t) \\
 & = \int_0^{r} \phi(\B(x,\rho))4\rho(r^2-\rho^2) \, d\cL^1(\rho) \,.
\end{align*}
Similarly for $\hat V(\phi,0,r)$, so that
\begin{equation}
\label{firstvdifference}
\hat V(\phi,x,r) - \hat V(\phi,0,r) = \int_0^{r} (\phi(\B(x,\rho)) - \phi(\B(0,\rho)))4\rho(r^2-\rho^2) \, d\cL^1(\rho) \,.
\end{equation}
For $0 < \rho < r$ we have
\[
\frac{\phi(\B(x,\rho))}{\balpha(m)\rho^m} \leq \frac{\phi(\B(x,r))}{\balpha(m)r^m} + \exc^{m}_*(\phi,x,r) \,,
\]
so that
\begin{equation}
\label{phirhobound}
\phi(\B(x,\rho)) \leq \rho^m\frac{\phi(\B(x,r))}{r^m} + \balpha(m)\rho^m\exc^{m}_*(\phi,x,r) \,.
\end{equation}
Similarly,
\begin{equation}
\label{phirhobound2}
-\phi(\B(0,\rho)) \leq -\rho^m\frac{\phi(\B(0,r))}{r^m} + \balpha(m)\rho^m\exc^{m*}(\phi,0,r) \,.
\end{equation}
One also checks that
\begin{equation}
\label{phirhobound3}
\int_0^r 4\rho^{m+1}(r^2-\rho^2) \, d\cL^1(\rho) = 8(m+2)^{-1}(m+4)^{-1}r^{m+4} \,.
\end{equation}
Plugging \eqref{phirhobound} and \eqref{phirhobound2} into \eqref{firstvdifference} and using \eqref{phirhobound3} yields the first estimate. To obtain the second it suffices to apply the first one with $x$ and $0$ swapped.
\end{proof}

Next we obtain a trivial bound on $|\bb(\phi,r)|$ due to the normalization.

\begin{Lem}
{\cite[Lemma~4.3.1]{DePauw3}}
\label{trivialfirstmombound}
There is a constant $\bc_{\theThm}(m) > 0$ with the following property. If $\Theta^m(\phi,0) = 1$, then
\[
|\bb(\phi,r)| \leq 2r(1 + \exc^m(\phi,0,r)) \,.
\]
\end{Lem}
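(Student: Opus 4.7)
The plan is to obtain the estimate by a direct Cavalieri-type computation, using the factor $(r^2-|y|^2)$ to absorb the weight in $\bnu(m)$ via identity \eqref{omegaidentity}.

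First I would bound the vector-valued integral pointwise by $|y|(r^2-|y|^2) \leq r(r^2-|y|^2)$ since the integration is over $\B(0,r)$. This yields
\[
|b(\phi,r)| \leq r \int_{\B(0,r)} (r^2-|y|^2) \, d\phi(y) \,.
\]
Next, by Cavalieri's principle (or integration by parts on the radial variable),
\[
\int_{\B(0,r)} (r^2-|y|^2) \, d\phi(y) = \int_0^{r^2} \phi(\B(0,\sqrt{r^2-t}))\,dt = \int_0^r 2s\,\phi(\B(0,s))\,ds \,,
\]
so everything reduces to bounding $\phi(\B(0,s))$ for $s \leq r$.

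The key step is to bound $\phi(\B(0,s))$ uniformly in $s \in (0,r]$. The assumption $\Theta^m(\phi,0)=1$ together with the definition of $\exc^{m*}(\phi,0,r)$ gives, for any $0 < s \leq r$,
\[
\frac{\phi(\B(0,s))}{\balpha(m) s^m} - \frac{\phi(\B(0,t))}{\balpha(m) t^m} \leq \exc^{m*}(\phi,0,r) \leq \exc^m(\phi,0,r) \,,
\]
valid for every $0 < t \leq s$; letting $t \to 0^+$ and using $\Theta^m(\phi,0) = 1$ one obtains $\phi(\B(0,s)) \leq \bigl(1+\exc^m(\phi,0,r)\bigr)\balpha(m) s^m$. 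Plugging this into the previous display and computing $\int_0^r 2s\cdot s^m\,ds = 2r^{m+2}/(m+2)$, while using identity \eqref{omegaidentity} in the form $2\balpha(m)/(m+2) = 2\bnu(m)$, yields
\[
\int_{\B(0,r)} (r^2-|y|^2)\,d\phi(y) \leq 2\bnu(m)\bigl(1+\exc^m(\phi,0,r)\bigr) r^{m+2} \,.
\]
Combining with the pointwise bound gives $|b(\phi,r)| \leq 2\bnu(m)\bigl(1+\exc^m(\phi,0,r)\bigr) r^{m+3}$, and dividing by $\bnu(m) r^{m+2}$ produces the claimed inequality $|\bb(\phi,r)| \leq 2r\bigl(1+\exc^m(\phi,0,r)\bigr)$. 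No real obstacle arises here: the only subtlety is passing to the limit $t \to 0^+$ in the excess inequality, which is immediate from the existence of the density $\Theta^m(\phi,0)=1$.
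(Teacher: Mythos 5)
Your proof is correct and follows essentially the same route as the paper: the paper simply cites Lemma~\ref{densityintegralbound1} with $q=1$ (itself proved by Cavalieri's principle), uses $\bomega(m,1)=2\bnu(m)$, and divides by $\bnu(m)r^{m+2}$, while you unroll that lemma's Cavalieri argument explicitly and spell out why $\Theta^m(\phi,0)=1$ combined with the definition of $\exc^m$ yields the needed uniform density bound on $\phi(\B(0,s))$.
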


\begin{proof}
It suffices to apply Lemma~\ref{densityintegralbound1}:
\begin{align*}
|b(\phi,r)| & \leq \int_{\B(0,r)} |y|(r^2 - |y|^2) \, d\phi(y) \\
 &  \leq r(1 + \exc^m(\phi,0,r))\bomega(m,1)r^{m+2} \,,
\end{align*}
and divide by $\bnu(m)r^{m+2}$.
\end{proof}

We will also need to control the deviation in the following way.

\begin{Lem}
{\cite[Lemma~4.3.2]{DePauw3}}
\label{trivialdevbound}
Assume that $\Theta^m(\phi,0) = 1$, $0 < r \leq R$ and $|x| = \epsilon R$ for some $0 < \epsilon \leq 1$. Then
\begin{align*}
\balpha(m)^{-1}\dev^m(\phi,x,0,r) & \leq m2^{m-1}\epsilon(1 + \exc^m(\phi,0,2R)) \\
 & \quad + \exc^{m*}(\phi,0,2R) + \exc^{m}_*(\phi,x,R) \,.
\end{align*}
\end{Lem}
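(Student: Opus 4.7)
The proof will proceed by chaining three comparison inequalities, exploiting that the density ratio at both $x$ and $0$ is controlled by the two excess quantities appearing on the right-hand side, together with the purely geometric containment $\B(x,R) \subset \B(0,(1+\epsilon)R)$.

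First, since $r \leq R$ and $\exc^m_*$ precisely quantifies how much the density ratio can drop as the radius increases, the definition gives
\begin{equation*}
\frac{\phi(\B(x,r))}{\balpha(m) r^m} \leq \frac{\phi(\B(x,R))}{\balpha(m) R^m} + \exc^m_*(\phi,x,R) \,.
\end{equation*}
Next, the triangle inequality yields $\B(x,R) \subset \B(0,(1+\epsilon)R)$, so $\phi(\B(x,R)) \leq \phi(\B(0,(1+\epsilon)R))$. Dividing through by $\balpha(m) R^m$ produces a factor of $(1+\epsilon)^m$. Finally, since $(1+\epsilon)R \leq 2R$ for $\epsilon \leq 1$, the definition of $\exc^{m*}$ applied at scale $2R$ with inner radius $r$ and outer radius $(1+\epsilon)R$ gives
\begin{equation*}
\frac{\phi(\B(0,(1+\epsilon)R))}{\balpha(m) ((1+\epsilon)R)^m} \leq \frac{\phi(\B(0,r))}{\balpha(m) r^m} + \exc^{m*}(\phi,0,2R) \,.
\end{equation*}

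Chaining these three inequalities, subtracting $\phi(\B(0,r))/(\balpha(m) r^m)$ from both sides, and regrouping terms, one obtains
\begin{equation*}
\balpha(m)^{-1}\dev^m(\phi,x,0,r) \leq \bigl((1+\epsilon)^m - 1\bigr)\,\frac{\phi(\B(0,r))}{\balpha(m) r^m} + (1+\epsilon)^m\,\exc^{m*}(\phi,0,2R) + \exc^m_*(\phi,x,R) \,.
\end{equation*}
To conclude I will use two elementary bounds: by the mean value theorem, $(1+\epsilon)^m - 1 \leq m\,2^{m-1}\,\epsilon$ for $0 < \epsilon \leq 1$; and, using the hypothesis $\Theta^m(\phi,0)=1$ together with the definition of $\exc^{m*}$ (taking the inner radius to $0$), one has $\phi(\B(0,r))/(\balpha(m) r^m) \leq 1 + \exc^{m*}(\phi,0,2R) \leq 1 + \exc^m(\phi,0,2R)$.

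Plugging these in and absorbing the small cross-term $((1+\epsilon)^m-1)\,\exc^{m*}(\phi,0,2R)$ into the $m\,2^{m-1}\epsilon(1+\exc^m(\phi,0,2R))$ summand yields the claimed inequality. The entire argument is essentially a three-step comparison; the only non-trivial bookkeeping is making sure the factor $(1+\epsilon)^m$ splits as $1 + O(\epsilon)$ cleanly enough to reproduce the stated constant, which is a routine polynomial estimate and is the only place where the assumption $\epsilon \leq 1$ is genuinely used.
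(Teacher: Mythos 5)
Your three comparison inequalities (monotonicity at $x$ down from $R$ to $r$; the containment $\B(x,R)\subset\B(0,(1+\epsilon)R)$; monotonicity at $0$ from $r$ up to $(1+\epsilon)R$) are all correct and this is essentially the same chain as the paper. However, the last "absorb" step does not close.

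After chaining and subtracting $\phi(\B(0,r))/(\balpha(m)r^m)$ you correctly arrive at
\[
\balpha(m)^{-1}\dev^m(\phi,x,0,r) \;\leq\; \bigl((1+\epsilon)^m - 1\bigr)\,\frac{\phi(\B(0,r))}{\balpha(m) r^m}\;+\;(1+\epsilon)^m\,\exc^{m*}(\phi,0,2R)\;+\;\exc^m_*(\phi,x,R)\,.
\]
The target has coefficient $1$, not $(1+\epsilon)^m$, in front of $\exc^{m*}(\phi,0,2R)$. You propose to split off $((1+\epsilon)^m-1)\exc^{m*}(\phi,0,2R)$ and absorb it into $m\,2^{m-1}\epsilon(1+\exc^m(\phi,0,2R))$, but that summand is already fully committed to dominating the first term $((1+\epsilon)^m-1)\,\phi(\B(0,r))/(\balpha(m)r^m)$ after you use $(1+\epsilon)^m-1 \leq m\,2^{m-1}\epsilon$ and $\phi(\B(0,r))/(\balpha(m)r^m)\leq 1+\exc^m(\phi,0,2R)$; you cannot use it twice. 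Concretely, for $m=1$, $\epsilon=1$, and $\exc^{m*}=\exc^m=\exc^m_*=E$ with $\phi(\B(0,r))/(\balpha(m)r^m)=1+E$, your chain gives the upper bound $1+2E+\exc^m_*(\phi,x,R)$ while the claimed bound is $1+E+\exc^m_*(\phi,x,R)$.

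The fix is to split the factor $(1+\epsilon)^m = \bigl((1+\epsilon)^m-1\bigr)+1$ one step earlier, at the intermediate radius $(1+\epsilon)R$, before invoking $\exc^{m*}$. That is, write
\[
(1+\epsilon)^m\,\frac{\phi(\B(0,(1+\epsilon)R))}{\balpha(m)((1+\epsilon)R)^m}
= \bigl((1+\epsilon)^m-1\bigr)\,\frac{\phi(\B(0,(1+\epsilon)R))}{\balpha(m)((1+\epsilon)R)^m}
+ \frac{\phi(\B(0,(1+\epsilon)R))}{\balpha(m)((1+\epsilon)R)^m}\,.
\]
Bound the first summand by $m\,2^{m-1}\epsilon\,(1+\exc^m(\phi,0,2R))$ using $\Theta^m(\phi,0)=1$; after subtracting $\phi(\B(0,r))/(\balpha(m)r^m)$, the second summand produces exactly $\exc^{m*}(\phi,0,2R)$ with coefficient $1$. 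This is precisely what the paper does.
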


\begin{proof}
It suffices to compute:
\begin{align*}
\balpha(m)^{-1}\dev^m(\phi,x,0,r) & = \frac{\phi(\B(x,r))}{\balpha(m)r^m} - \frac{\phi(\B(0,r))}{\balpha(m)r^m}  \\
 & \leq \frac{\phi(\B(x,R))}{\balpha(m)R^m} + \exc^m_*(\phi,x,R) - \frac{\phi(\B(0,r))}{\balpha(m)r^m} \\
 & \leq \frac{(R + |x|)^m}{R^m} \frac{\phi(\B(0,R + |x|))}{\balpha(m)(R + |x|)^m} - \frac{\phi(\B(0,r))}{\balpha(m)r^m} \\
 & \quad + \exc^m_*(\phi,x,R) \\
 & \leq \left((1 + \epsilon)^m - 1\right) \frac{\phi(\B(0,R + |x|))}{\balpha(m)(R + |x|)^m} \\
 & \quad + \frac{\phi(\B(0,R + |x|))}{\balpha(m)(R + |x|)^m}  - \frac{\phi(\B(0,r))}{\balpha(m)r^m} + \exc^m_*(\phi,x,R) \\
 & \leq m2^{m-1}\epsilon(1 + \exc^m(\phi,0,R + |x|)) \\
 & \quad + \exc^{m*}(\phi,0,R + |x|) + \exc^{m}_*(\phi,x,R)
\end{align*}
\end{proof}

The following is an improvement on Lemma~\ref{trivialfirstmombound}. Note that compared with \cite[Proposition~4.3.3]{DePauw3} we don't assume that $\lim_{t \to 0}\xi(t) = 0$.

\begin{Lem}
{\cite[Proposition~4.3.3]{DePauw3}}
\label{firstmombound}
There is a constant $\bc_{\theThm}(m) > 0$ with the following property. Let $0 < 2\sqrt{r} \leq r_0 \leq 1$ and assume there is a continuous increasing function $\xi : (0,r_0] \to [0,1]$ with
\begin{enumerate}
	\item $\Theta^m(\phi,0) = 1$,
	\item $\exc^{m}_*(\phi,x,\rho) \leq \xi(\rho)$ for $0 < \rho \leq \sqrt r$ and $x \in \B(0,r)$,
	\item $\exc^{m}(\phi,0,\rho) \leq \xi(\rho)$ for $0 < \rho \leq 2 \sqrt r$.
\end{enumerate}
Then
\[
|\bb(\phi,r)| \leq \bc_{\theThm}(m) r \max\left\{\sqrt[4]{r}, \sqrt{\xi(2\sqrt{r})}\right\}\,.
\]
\end{Lem}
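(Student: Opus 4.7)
The idea is to use the polynomial decomposition $V(\phi,x,r) = \sum_{k=0}^{4}P_k(\phi,x,r)$ to isolate $P_1(\phi,x,r) = 4\langle x, b(\phi,r)\rangle$ and extract $|b(\phi,r)|$ by choosing $x$ aligned with $b(\phi,r)$. Since $P_0$ does not depend on $x$ and $V(\phi, 0, r) = \hat{V}(\phi, 0, r)$, we have the identity
\[
P_1(\phi,x,r) = [\hat{V}(\phi,x,r) - \hat{V}(\phi,0,r)] + [V(\phi,x,r) - \hat{V}(\phi,x,r)] - P_2 - P_3 - P_4.
\]
Taking $x = \epsilon\, b(\phi,r)/|b(\phi,r)|$ with $\epsilon > 0$ to be chosen and $\epsilon \leq \sqrt{r}$, so that $P_1 = 4\epsilon|b(\phi,r)|$, each term on the right admits an estimate from the tools already assembled.

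Hypotheses (1) and (3) combined with Lemma~\ref{densityintegralbound1} give $\phi(\B(0,r)) \leq 2\balpha(m) r^m$ (which is legal since $r \leq 1$ yields $r \leq 2\sqrt{r}$). This controls the polynomial tail: $|P_k| \leq \bc\,\bnu(m)\, r^{m+4-k}\epsilon^{k}$ for $k=2,3,4$. Lemma~\ref{hatnohatdiff}, fed with $\exc^{m*}(\phi,0,2r) \leq \xi(2r) \leq \xi(2\sqrt{r})$, bounds $|V - \hat{V}|$. For the one-sided upper bound on $\hat{V}(\phi,x,r) - \hat{V}(\phi,0,r)$ I would apply Lemma~\ref{hathatdiff}, using Lemma~\ref{trivialdevbound} with the choice $R=\sqrt{r}$ to get the sharper estimate $\dev^m(\phi,x,0,r) \leq \bc\,\balpha(\epsilon/\sqrt{r} + \xi(2\sqrt{r}))$ (which already requires $\epsilon \leq \sqrt{r}$), together with $\exc^{m*}(\phi,0,r) \leq \xi(r)$ from (3) and an estimate on $\exc^m_*(\phi,x,r)$ pieced together from hypothesis (2) at scales $\rho \leq \sqrt{r}$ and the comparisons $\phi(\B(x,\rho)) \leq \phi(\B(0,\rho+|x|))$, $\phi(\B(x,\rho)) \geq \phi(\B(0,\rho-|x|))$ for $\rho \in (\sqrt{r},r]$.

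Assembling everything, the inequality $4\epsilon|b(\phi,r)| \leq \text{RHS}$ takes the shape
\[
4\epsilon|b(\phi,r)| \leq \bc\, \bnu(m)\bigl(\epsilon\, r^{m+7/2} + r^{m+4}\xi(2\sqrt{r}) + \epsilon^{2} r^{m+2} + \text{lower-order in }\epsilon\bigr),
\]
where the key factor $r^{m+7/2}$ (rather than the naive $r^{m+3}$ that would yield only a trivial bound) comes from performing the deviation estimate at the scale $\sqrt{r}$. Dividing by $4\epsilon$, normalizing by $\bnu(m)\, r^{m+2}$, and minimizing over $\epsilon \in (0,\sqrt{r}\,]$ — the relevant optimum being $\epsilon \sim r\sqrt{\xi(2\sqrt{r})}$ when the $\xi$ term dominates, and saturating at $\epsilon \sim \sqrt{r}$ otherwise — produces $|\bb(\phi,r)| \leq \bc_{\theThm}(m)\, r\max\{\sqrt[4]{r},\sqrt{\xi(2\sqrt{r})}\}$.

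The main obstacle is the bound on $\exc^m_*(\phi,x,r)$: hypothesis (2) provides near-monotonicity at $x$ only up to scale $\sqrt{r}$, whereas Lemma~\ref{hathatdiff} is formulated at scale $r$. Bridging this gap forces one to transfer information from the origin, where hypothesis (3) supplies two-sided control up to $2\sqrt{r} \geq r$, to $x$-centered balls via the ball inclusions above, which is precisely what pins down the constraint $|x| \leq \sqrt{r}$ and generates the factor $\epsilon/\sqrt{r}$ responsible for the $\sqrt[4]{r}$ floor appearing in the conclusion.
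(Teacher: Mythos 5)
Your approach is sound and uses the same ingredients as the paper's proof (the polynomial decomposition $V = \sum_k P_k$, the identity $P_0(\phi,x,r) = \hat V(\phi,0,r)$, Lemmas~\ref{hatnohatdiff}, \ref{hathatdiff} and \ref{trivialdevbound}, and a test point aligned with $\bb(\phi,r)$), but it organizes the estimate differently. The paper fixes $x := \gamma(m)\bb(\phi,r)$ for a small constant $\gamma(m)$, so that $\bP_1 = 4\gamma(m)|\bb(\phi,r)|^2$ is quadratic in $|\bb(\phi,r)|$ and the $|x|^2$-sized error terms from $\bP_2,\bP_3,\bP_4$ are absorbed by choosing $\gamma(m)$ small; to control the deviation term it runs a dichotomy (either $|\bb|$ is already small, or the intermediate value theorem produces a scale $R\in[r,\sqrt r]$ calibrated by $R\epsilon(2R) = |\gamma(m)\bb(\phi,r)|$ where $\epsilon(\rho) = \max\{\rho,\xi(\rho)\}$). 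Your version instead takes $x = \epsilon\,\bb(\phi,r)/|\bb(\phi,r)|$ with $\epsilon$ free, yielding a bound linear in $|\bb(\phi,r)|$, applies Lemma~\ref{trivialdevbound} at the fixed scale $R = \sqrt r$, and optimizes over $\epsilon$, which avoids the intermediate value argument entirely and recovers the floor $r^{1/4}$ from the $r^{3/2}\epsilon$ term after dividing by $4\epsilon$. Both routes deliver the stated estimate; yours is arguably cleaner.

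There is, however, a conceptual slip worth flagging. Since $2\sqrt r \leq r_0 \leq 1$ forces $r < 1$, one has $r < \sqrt r$, not $r > \sqrt r$. Consequently the ``main obstacle'' you describe does not exist: hypothesis (2) controls $\exc^{m}_*(\phi,x,\rho)$ for all $0 < \rho \leq \sqrt r$, and $r$ lies in that range, so no transfer from the origin via ball inclusions is needed, and the interval $(\sqrt r,r]$ you propose to bridge is actually empty. Separately, the admissible range for $\epsilon = |x|$ should be dictated by the hypothesis $2|x| < r$ of Lemma~\ref{hatnohatdiff}, i.e.\ $\epsilon < r/2$ rather than $\epsilon \leq \sqrt r$ (note also $r/2 < \sqrt r$ here); this moves the saturation point to $\epsilon \sim r/2$, but the final bound survives because whenever the unconstrained optimum $r\sqrt{\xi(2\sqrt r)}$ exceeds $r/2$ one has $\sqrt{\xi(2\sqrt r)} \geq 1/2$, so the saturated bound $\bc r$ is still $\leq 2\bc\, r\sqrt{\xi(2\sqrt r)}$.
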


\begin{proof}
We start by choosing $0 < \gamma(m) \leq \frac{1}{8}$ and $\eta(m)$ such that
\begin{equation}
\label{prepgammaeta}
\eta(m) \defl 4 \gamma(m) - \gamma(m)^2(2\bc_{\ref{hatnohatdiff}} + 8 + 8(m+2)) > 0 \,.
\end{equation}
We define a continuous increasing function $\epsilon : (0,r_0] \to \R_+$ by $\epsilon(\rho) \defl \max\{\rho,\xi(\rho)\}$. Now either $|\gamma(m)\bb(\phi,r)| \leq r \sqrt{\epsilon(\sqrt{r})}$ or $|\gamma(m)\bb(\phi,r)| > r \sqrt{\epsilon(\sqrt{r})}$. We will subsequently derive an estimate for $|\bb(\phi,r)|$ in the latter case. We first observe that since $2r \leq \sqrt{r}$,
\begin{equation}
\label{prepgammaeta2}
r\epsilon(2r) \leq r\epsilon(\sqrt{r}) \leq r\sqrt{\epsilon(\sqrt{r})} < |\gamma(m)\bb(\phi,r)| \,.
\end{equation}
According to Lemma~\ref{trivialfirstmombound}, $|\bb(\phi,r)| \leq 4r$ and hence $|\gamma(m)\bb(\phi,r)| \leq \frac{r}{2} \leq r$ as well. Since $\epsilon(\sqrt{r}) \geq \sqrt{r}$ we see that
\begin{equation}
\label{prepgammaeta3}
|\gamma(m)\bb(\phi,r)| \leq r \leq \sqrt{r}\epsilon(2\sqrt{r}) \,.
\end{equation}
According to \eqref{prepgammaeta2} and \eqref{prepgammaeta3}, the intermediate value theorem applied to the function
\[
[r, \sqrt{r}] \to \R_+, \; \rho \mapsto \rho\epsilon(2\rho) 
\]
ensures that there exists some $r \leq R \leq \sqrt{r}$ with $R\epsilon(2R) = |\gamma(m)\bb(\phi,r)|$. For the point $x \defl \gamma(m)\bb(\phi,r)$ we have $x \in \B\left(0,\frac{r}{2}\right)$. Since $P_0(\phi,x,r) = V(\phi,0,r) = \hat V(\phi,0,r)$ we deduce from Lemma~\ref{hatnohatdiff}, Lemma~\ref{hathatdiff} and Lemma~\ref{trivialdevbound} together with $|x| = R \epsilon(2R)$ that
\begin{align*}
P_1(\phi,x,r) & + P_2(\phi,x,r) + P_3(\phi,x,r) + P_4(\phi,x,r) \\
 & = V(\phi,x,r) - P_0(\phi,x,r) \\
 & \leq \left|V(\phi,x,r) - \hat V(\phi,x,r) \right| + \hat V(\phi,x,r) - \hat V(\phi,0,r) \\
 & \leq \bc_{\ref{hatnohatdiff}}(m)\bnu(m)r^m(2r|x|^3 + r^2|x|^2) \\
 & \quad + \bc_{\ref{hathatdiff}}(m)\bnu(m)r^{m+4}\bigl(\balpha(m)m2^{m-1}\epsilon(2R)2 \\
 & \qquad \qquad \quad + (1 + \balpha(m))(\exc^{m*}(\phi,0,2R) + \exc^m_*(\phi,x,R))\bigr) \,.
\end{align*}
Define
\[
\bc(m) \defl 3\max\{\balpha(m)m2^m,\balpha(m) + 1\} \,,
\]
and divide the estimate above by $\bnu(m)r^{m+2}$. Recalling the definition of $\epsilon$, hypotheses (2), (3) and $2|x| \leq r$,
\begin{align}
\nonumber
\bP_1(\phi,x,r) & + \bP_2(\phi,x,r) + \bP_3(\phi,x,r) + \bP_4(\phi,x,r) \\
\nonumber
 & \leq |x|^2\bc_{\ref{hatnohatdiff}}(m)\left(\frac{2|x|}{r} + 1\right) \\
\nonumber
 & \quad + \bc_{\ref{hathatdiff}}(m)3^{-1}\bc(m)r^{2}\bigl(\epsilon(2R) + \exc^{m*}(\phi,0,2R) + \exc^m_*(\phi,x,R))\bigr) \\
\label{sumbound}
 & \leq |x|^22\bc_{\ref{hatnohatdiff}}(m) + \bc_{\ref{hathatdiff}}(m)\bc(m)r^{2}\epsilon(2R) \,.
\end{align}
We further observe that according to Lemma~\ref{densityintegralbound1},
\begin{align*}
\bP_2(\phi,x,r) & \geq -2|x|^2\bnu(m)^{-1}r^{-m-2}\int_{\B(0,r)}r^2 - |y|^2 \, d\phi(y) \\
 & \geq - 8|x|^2 \,,
\end{align*}
as well as,
\begin{align*}
|\bP_3(\phi,x,r)| & \leq 4|x|^2\bnu(m)^{-1}r^{-m-2}\int_{\B(0,r)}|x|^2|y|^2 \, d\phi(y) \\
 & \leq 8(m+2)|x|^2 \,,
\end{align*}
and $\bP_4(\phi,x,r) \geq 0$. Together with \eqref{sumbound} this yields
\begin{equation}
\label{p1bound}
\bP_1(\phi,x,r) \leq |x|^2(2\bc_{\ref{hatnohatdiff}}(m) + 8 + 8(m+2)) + \bc_{\ref{hathatdiff}}(m)\bc(m)r^{2}\epsilon(2R) \,.
\end{equation}
Finally recall that $x = \gamma(m)\bb(\phi,r)$ so that $|x|^2 = \gamma(m)^2|\bb(\phi,r)|^2$ and
\begin{align*}
\bP_1(\phi,x,r) & = \frac{4}{\bnu(m)r^{m+2}} \biggl\langle x, \int_{\B(0,r)} y(r^2 - |y|^2) \, d\phi(y) \biggr \rangle \\
 & = 4 \langle x, \bb(\phi,r) \rangle = 4\gamma(m)|\bb(\phi,r)|^2 \,.
\end{align*}
Therefore, by \eqref{prepgammaeta}, \eqref{p1bound} becomes,
\[
\eta(m)|\bb(\phi,r)|^2 \leq \bc_{\ref{hathatdiff}}(m)\bc(m)r^{2}\epsilon(2R) \,,
\]
and in turn with \eqref{prepgammaeta2}:
\begin{align}
\label{bbbound}
|\bb(\phi,r)| & \leq \sqrt{\eta(m)^{-1}\bc_{\ref{hathatdiff}}(m)\bc(m)}r\sqrt{\epsilon(2\sqrt{r})}  \,.
\end{align}
We recall that according to the initial dichotomy either \eqref{bbbound} holds true or
\[
|\bb(\phi,r)| \leq \gamma(m)^{-1}r \sqrt{\epsilon(\sqrt{r})} \,.
\]
This proves the lemma.
\end{proof}

The following proposition is the key estimate of these moment computations.

\begin{Prop}{\cite[Proposition~4.4.1]{DePauw3}}
	\label{quadratformprop}
There is a constant $\bc_{\theThm}(m) > 0$ with the following property. Let $x \in \Hi$, $0 < 2\sqrt{r} \leq r_0 \leq 1$ and $\xi : (0,r_0] \to [0,1]$ be a continuous increasing function and assume that
\begin{enumerate}
	\item $\Theta^m(\phi,0) = \Theta^m(\phi,x) = 1$,
	\item $|x| = r \max\left\{\sqrt[8]{r}, \sqrt[4]{\xi(2\sqrt{r})}\right\}$,
	\item $\exc_*^m(\phi,y,\rho) \leq \xi(\rho)$ for $ 0 < \rho \leq \sqrt{r}$ and $y \in \B(0,r)$,
	\item $\exc^{m}(\phi,y,\rho) \leq \xi(\rho)$ for $0 < \rho \leq 2 \sqrt r$ and $y \in \{0,x\}$.
\end{enumerate}
Then
\[
\left|\bQ(\phi,r)(x) - |x|^2\right| \leq \bc_{\theThm}(m)|x|^2 \max\biggl\{ \sqrt[8]{r}, \sqrt[4]{\xi\left(2\sqrt{r}\right)}\biggr\} \,.
\]
\end{Prop}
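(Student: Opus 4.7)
The strategy is to solve for $\bP_2(\phi,x,r)$ in the decomposition $V(\phi,x,r) = \sum_{k=0}^{4}P_k(\phi,x,r)$. Combined with the identity $P_0(\phi,x,r) = \hat V(\phi,0,r)$ and the explicit formula $P_2(\phi,x,r) = 4Q(\phi,r)(x) - 2|x|^{2}\int_{\B(0,r)}(r^{2}-|y|^{2})d\phi(y)$, this allows one to read off $\bQ(\phi,r)(x)$ from
\[
\bP_2 = [\bV(\phi,x,r) - \hat\bV(\phi,x,r)] + [\hat\bV(\phi,x,r) - \hat\bV(\phi,0,r)] - \bP_1 - \bP_3 - \bP_4,
\]
provided each of the five terms on the right is bounded by a constant multiple of $|x|^{2}M$, where $M \defl \max\{\sqrt[8]{r},\sqrt[4]{\xi(2\sqrt r)}\} = |x|/r$ (the last equality being hypothesis~(2)). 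Note that $M \leq 1$ and, by hypothesis~(2) again, $\xi(2\sqrt r) \leq M^{4}$; moreover $|x| \leq r \leq \sqrt r/2$, facts that I will use freely throughout.

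The terms $\bP_3$ and $\bP_4$ are straightforward: the crude bound $\phi(\B(0,r)) \leq \balpha(m)r^{m}(1+\xi)$ from Lemma~\ref{densityintegralbound1} yields $|\bP_3| \leq \bc|x|^{3}/r$ and $\bP_4 \leq \bc|x|^{4}/r^{2}$, both $\leq \bc|x|^{2}M$ since $|x|/r = M \leq 1$. Lemma~\ref{hatnohatdiff}, divided by $\bnu(m)r^{m+2}$, reads $|\bV-\hat\bV| \leq \bc(|x|^{3}/r + |x|^{2}\xi)$, which is again $\leq \bc|x|^{2}M$ because $\xi \leq M^{4} \leq M$. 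For the $\hat V$-difference, Lemma~\ref{hathatdiff} supplies $|\hat\bV(\phi,x,r)-\hat\bV(\phi,0,r)| \leq \bc r^{2}(|\dev^{m}(\phi,x,0,r)| + \xi)$, and I would plug in Lemma~\ref{trivialdevbound} at scale $R = \sqrt r$ (so that $\epsilon = |x|/R = \sqrt r\,M \leq 1$) to obtain $|\dev^{m}(\phi,x,0,r)| \leq \bc(\sqrt r\,M + \xi)$; together with $\sqrt r/M \leq \sqrt r/r^{1/8} = r^{3/8} \leq M$ and $\xi/M^{2} \leq M^{2} \leq M$ this closes the bound.

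The crucial term is $\bP_1 = 4\langle x,\bb(\phi,r)\rangle$. Lemma~\ref{firstmombound} gives $|\bb(\phi,r)| \leq \bc r\max\{\sqrt[4]{r},\sqrt{\xi(2\sqrt r)}\} = \bc rM^{2}$, whence $|\bP_1| \leq 4\bc|x|\,rM^{2} = 4\bc|x|^{2}M$. This is precisely where the exponents $1/8$ and $1/4$ in the statement arise: the square-root loss built into Lemma~\ref{firstmombound} must balance against the cubic $|x|^{3}/r$ loss in Lemma~\ref{hatnohatdiff}, and the prescribed $|x| = rM$ is exactly the common fixed point. Collecting the five estimates yields $|\bP_2| \leq \bc|x|^{2}M$. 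On the other hand, Lemma~\ref{densityintegralbound1} with $q=1$ together with \eqref{omegaidentity} identifies $\bnu(m)^{-1}r^{-m-2}\int_{\B(0,r)}(r^{2}-|y|^{2})d\phi(y) = 2 + O(\xi)$, so that $\bP_2 = 4\bQ(\phi,r)(x) - 4|x|^{2} + O(|x|^{2}\xi)$; absorbing $\xi \leq M^{4} \leq M$ then delivers $|\bQ(\phi,r)(x) - |x|^{2}| \leq \bc|x|^{2}M$, as required. The main obstacle is entirely bookkeeping, namely checking that the hypotheses of Lemmas~\ref{firstmombound}, \ref{hathatdiff} and \ref{trivialdevbound} are all satisfied with a single choice of scale $R = \sqrt r$ and that $\epsilon = |x|/R \leq 1$; both follow from $2\sqrt r \leq r_0 \leq 1$ and $|x| = rM \leq r$.
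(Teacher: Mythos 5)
Your proposal tracks the paper's proof almost exactly: the same decomposition isolating $\bP_2$, the same reliance on Lemmas \ref{hatnohatdiff}, \ref{hathatdiff}, \ref{firstmombound}, the same crude bounds on $\bP_3$, $\bP_4$ via Lemma \ref{densityintegralbound1}, and the same final passage from $\bP_2$ to $\bQ$ via \eqref{omegaidentity}. The one place you diverge is in bounding $|\dev^m(\phi,x,0,r)|$: you route this through Lemma \ref{trivialdevbound} at scale $R = \sqrt r$, which is a detour. Note that Lemma \ref{trivialdevbound} as stated gives only a one-sided upper bound on $\dev^m(\phi,x,0,r)$; to control the absolute value you would have to apply it a second time with the roles of $0$ and $x$ interchanged (which is permissible here, since hypotheses (1) and (4) are symmetric in the two centers). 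The paper instead uses the more direct two-line estimate
\[
|\balpha(m)^{-1}\dev^m(\phi,x,0,r)| \leq \Bigl|\tfrac{\phi(\B(0,r))}{\balpha(m)r^m} - \Theta^m(\phi,0)\Bigr| + \Bigl|\Theta^m(\phi,x) - \tfrac{\phi(\B(x,r))}{\balpha(m)r^m}\Bigr| \leq 2\xi(r) \,,
\]
which exploits precisely that both densities are $1$ and both excesses are small; this is both shorter and sharper than what Lemma \ref{trivialdevbound} returns, and avoids the $\sqrt r\,M$ term entirely. Your version still closes (since $\sqrt r\,M/M^2 = \sqrt r/M \leq r^{3/8} \leq M$), so this is a stylistic inefficiency rather than a gap.
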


\begin{proof}
First note that as in the proof of Lemma~\ref{firstmombound} above, Lemma~\ref{hatnohatdiff} together with Lemma~\ref{hathatdiff} imply that
\begin{align}
\nonumber
|P_1(\phi,x,r) & + P_2(\phi,x,r) + P_3(\phi,x,r) + P_4(\phi,x,r)| \\
\nonumber
 & = |V(\phi,x,r) - P_0(\phi,x,r)| \\
\nonumber
 & \leq \left|V(\phi,x,r) - \hat V(\phi,x,r) \right| + \left|\hat V(\phi,x,r) - \hat V(\phi,0,r)\right| \\
\nonumber
 & \leq \bc_{\ref{hatnohatdiff}}(m)\bnu(m)r^m(r|x|^3(1 + \xi(r)) + r^2|x|^2\xi(2r)) \\
\label{sumbound2}
 & \quad + \bc_{\ref{hathatdiff}}(m)\bnu(m)r^{m+4}\bigl(|\dev^m(\phi,x,0,r)| + 2\xi(r) \bigr) \,.
\end{align}
Next we estimate $|\dev^m(\phi,x,0,r)|$ by
\begin{align}
\nonumber
|\balpha(m)^{-1}\dev^m(\phi,x,0,r)| & = \left|\frac{\phi(\B(0,r))}{\balpha(m)r^m} - \frac{\phi(\B(x,r))}{\balpha(m)r^m}\right| \\
\nonumber
 & \leq \left|\frac{\phi(\B(0,r))}{\balpha(m)r^m} - \Theta^m(\phi,0)\right| + \left|\Theta^m(\phi,x) - \frac{\phi(\B(x,r))}{\balpha(m)r^m} \right| \\
\nonumber
 & \leq \exc^m(\phi,0,r) + \exc^m(\phi,x,r) \\
\label{devbound}
 & \leq 2 \xi(r) \,.
\end{align}
To simplify the writing, we introduce the following notation:
\[
\eta(r) \defl \max\left\{\sqrt[8]{r}, \sqrt[4]{\xi(2\sqrt{r})}\right\} \,.
\]
Dividing \eqref{sumbound2} by $\bnu(m)r^{m+2}$ and using \eqref{devbound} and hypothesis (2), we obtain
\begin{align}
\nonumber
|\bP_1(\phi,x,r) & + \bP_2(\phi,x,r) + \bP_3(\phi,x,r) + \bP_4(\phi,x,r)| \\
\nonumber
 & \leq \bc_{\ref{hatnohatdiff}}(m)|x|^2\left(\frac{|x|}{r}(1 + \xi(r)) + \xi(2r)\right) \\
\nonumber
 & \quad + \bc_{\ref{hathatdiff}}(m)r^{2}\bigl(|\dev^m(\phi,x,0,r)| + 2\xi(r)\bigr) \\
\nonumber
 & \leq \bc_{\ref{hatnohatdiff}}(m)|x|^2\left(\eta(r)(1 + \xi(r)) + \xi(2r)\right) \\
\label{sumbound3}
 & \quad + \bc_{\ref{hathatdiff}}(m)2(1 + \balpha(m))|x|^{2}\eta(r)^{-2}\xi(r) \,.
\end{align}
According to Lemma~\ref{firstmombound} we also have that
\begin{align}
\nonumber
|\bP_1(\phi,x,r)| & = 4|\langle x, \bb(\phi,r) \rangle| \\
\nonumber
 & \leq 4 \bc_{\ref{firstmombound}}(m)|x|r\max\left\{\sqrt[4]{r}, \sqrt{\xi(2\sqrt{r})}\right\} \\
\label{p1estimate}
 & = 4\bc_{\ref{firstmombound}}(m)|x|^2 \eta(r) \,.
\end{align}
Furthermore,
\begin{align}
\nonumber
|\bP_3(\phi,x,r)| & \leq 4|x|^2\bnu(m)^{-1}r^{-m-2}\int_{\B(0,r)}|x||y| \, d\phi(y) \\
\nonumber
 & \leq 4|x|^2\bnu(m)^{-1}r^{-m-2}r\eta(r)r\phi(\B(0,r)) \\
\label{p3estimate}
 & \leq 4(m+2)|x|^2\eta(r)(1 + \xi(r)) \,,
\end{align}
as well as
\begin{align}
\nonumber
|\bP_4(\phi,x,r)| & = |x|^4\bnu(m)^{-1}r^{-m-2}\phi(\B(0,r)) \\
\nonumber
 & \leq |x|^2\eta(r)^2r^2\bnu(m)^{-1}r^{-m-2}\phi(\B(0,r)) \\
\label{p4estimate}
 & \leq (m+2)|x|^2\eta(r)^2(1 + \xi(r)) \,.
\end{align}
Plugging \eqref{p1estimate},\eqref{p3estimate} and \eqref{p4estimate} into \eqref{sumbound3}, and observing that $\xi(2r) \leq \eta(r)$ as well as $\eta(r)^{-2}\xi(r) \leq \sqrt{\xi(r)}$, we find that
\begin{align}
\nonumber
|\bP_2(\phi,x,r)| & \leq \bc_{\ref{hatnohatdiff}}(m)|x|^2\left(\eta(r)(1 + \xi(r)) + \xi(2r)\right) \\
\nonumber
 & \quad + 2(1 + \balpha(m))\bc_{\ref{hathatdiff}}(m)|x|^{2}\eta(r)^{-1}\xi(r) \\
\nonumber
 & \quad + 4\bc_{\ref{firstmombound}}(m)|x|^2 \eta(r) \\
\nonumber
 & \quad + 5(m+2)|x|^2\eta(r)(1 + \xi(r)) \\
\label{p2estimate}
 & \leq \bc(m) |x|^2\eta(r) \,,
\end{align}
for some $\bc(m) > 0$ depending only on $m$. Finally, recalling the definition of $P_2(\phi,x,r)$ and referring to Lemma~\ref{densityintegralbound1}, it is an easy matter to check that
\begin{equation}
\label{finalmoment}
4|\bQ(\phi,r)(x) - |x|^2| \leq 4\exc^m(\phi,0,x)|x|^2 + |\bP_2(\phi,x,r)| \,.
\end{equation}
Plugging \eqref{p2estimate} into \eqref{finalmoment} yields the expected estimate.
\end{proof}

The lack of local compactness of the Hilbert space $\Hi$ prevents us from showing that $\spt(\phi)$ is Reifenberg flat in a neighborhood of the origin as done in \cite{DePauw3}. But if $\phi = \|T\|$ is Ahlfors regular we have additional structure. First we know that tangent planes exist almost everywhere by Lemma~\ref{tangentplane} and a slicing argument as used in the proof of Lemma~\ref{orthogonalfamily} below allows us to find orthogonal frames in the support of $\phi = \|T\|$ at all small scales around a point that possesses a tangent plane. A priori, the closeness to a tangent plane at a given scale depends on the particular point, but the moment computations above can be used to make this scale uniform in some small neighborhood.

\subsection{Uniform closeness to planes}

We now show how to find an orthogonal family in the support of a rectifiable chain. This is similar to \cite[Proposition 4.6.2]{DePauw3}, although simpler, because of the additional structure of a rectifiable chain we don't need to assume that the support is Reifenberg flat. We actually will use this family in order to show that the support of some almost monotonic chain is Reifenberg flat. Given a Radon measure $\phi$ in $\Hi$, $x \in \Hi$, $r > 0$ and $W \in \bG(\Hi,m)$ we define
\begin{align*}
\bbeta_2(\phi,x,r,W) & \defl \biggl(r^{-m-2}\int_{\B(x,r)} |\pi_{W^\perp}(y - x)|^2 \, d\phi(y)\biggr)^{\frac{1}{2}} \,, \\
\bbeta_\infty(\phi,x,r,W) & \defl r^{-1}\sup\{|\pi_{W^\perp}(y - x)| : y \in \spt(\phi) \cap \B(x,r)\} \,.
\end{align*}

\begin{Lem}
\label{orthogonalfamily}
Let $T \in \cR_m(\Hi;G)$ and assume that $W \in \bG(\Hi,m)$ and $0 < \rho \leq (25\sqrt{m})^{-1}$ are such that
\begin{enumerate}
	\item $\spt(T) \subset \B(0,1)$ and $\spt(\partial T) \subset \partial\B(0,1)$,
	\item $\pi_{W\#} (T \res Z_W(0,2^{-1})) \neq 0$,
	\item $\bbeta_\infty(\|T\|,0,1,W) < \rho$.
\end{enumerate}
Let $\rho' \defl m^\frac{1}{4}\rho^\frac{1}{2}$. Then $\rho' \leq \frac{1}{5}$ and for every $s \in (2\rho',1]$ there is an orthonormal family $e_1,\dots,e_m \in \Hi$ with $s e_i \in \spt(T)$, $i=1,...,m$.
\end{Lem}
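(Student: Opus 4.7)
First, the inequality $\rho'\le 1/5$ is immediate from $(\rho')^2=\sqrt{m}\,\rho\le\sqrt{m}/(25\sqrt{m})=1/25$. The key structural fact driving the proof is that hypothesis~(2), combined with (3) and the constancy theorem, sharpens to
\[
\pi_{W\#}(T\res Z_W(0,\tfrac{1}{2})) = g_0\curr{\B_W(0,\tfrac{1}{2})}
\]
for some $g_0\in G\setminus\{0_G\}$: by (3) and $\spt(T)\subset\B(0,1)$, no point of $\spt(\partial T)\subset\partial\B(0,1)$ projects into $\oB_W(0,\tfrac{1}{2})$ (else $|x|^2<\tfrac{1}{4}+\rho^2<1$), so the boundary of this pushforward lies on $\partial\B_W(0,\tfrac{1}{2})$ and constancy forces the displayed form.

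The core ingredient is the single-vector step: for any chain $T'$ satisfying the lemma's hypotheses in dimension $m'\ge 1$ (with the same $\rho$), one has $\spt(T')\cap\partial\B(0,s)\ne\emptyset$ for every $s\in(\rho,1]$. Indeed, $T'\res\B(0,s)$ is nonzero because its $\pi_{W'}$-pushforward agrees with $g_0\curr{\oB_{W'}(0,r)}$ on $\oB_{W'}(0,r)$ with $r:=\min\{\sqrt{s^2-\rho^2},\tfrac{1}{2}\}>0$ (preimages of points in this disk lie in $\B(0,s)$ by the height bound), and its boundary is concentrated on $\partial\B(0,s)$ because $\spt(\partial T')\subset\partial\B(0,1)$ is disjoint from $\oB(0,s)$ when $s<1$. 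This boundary cannot vanish: were it zero, $T'\res\B(0,s)$ would be a compactly supported nonzero $m'$-cycle whose $\pi_{W'}$-pushforward would be a compactly supported nonzero $m'$-cycle in $W'\cong\R^{m'}$, contradicting the constancy theorem. Hence $\emptyset\ne\spt(\partial(T'\res\B(0,s)))\subset\spt(T')\cap\partial\B(0,s)$.

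The proof proceeds by induction on $m$, with base $m=1$ given by the single-vector step. For the inductive step, apply the single-vector step to obtain $e_m\in\partial\B(0,1)$ with $se_m\in\spt(T)$, set $u:=\pi_W(e_m)/|\pi_W(e_m)|\in W$ (well-defined since $|\pi_W(e_m)|^2\ge 1-\rho^2/s^2>0$), and form the slice $T':=\langle T,\pi_{\spa(u)},0\rangle$. Since $u\in W$ we have $\pi_{\spa(u)}=\pi_{\spa(u)}\circ\pi_W$, so the slice at $0$ is unambiguously defined with $\pi_{W'\#}(T'\res Z_{W'}(0,\tfrac{1}{2}))=g_0\curr{\B_{W'}(0,\tfrac{1}{2})}$ for $W':=W\cap\spa(u)^\perp$. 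The height bound $\rho$ relative to $W'$ inside the ambient $\spa(u)^\perp$ is preserved (the $W'$-orthogonal complement in $\spa(u)^\perp$ equals $W^\perp$), and the support/boundary containments transfer. Thus $T'$ satisfies the lemma's hypotheses in $\spa(u)^\perp$ with $m-1$, $W'$, same $\rho$; since $(m-1)^{1/4}\rho^{1/2}\le\rho'$, the inductive hypothesis gives orthonormal $e_1,\dots,e_{m-1}\in\spa(u)^\perp$ with $se_i\in\spt(T')\subset\spt(T)$.

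The main obstacle is enforcing exact orthogonality. The construction gives $\langle e_i,u\rangle=0$ for $1\le i\le m-1$, but the required condition $\langle e_i,e_m\rangle=0$ reduces via $e_m=|\pi_W(e_m)|\,u+\pi_{W^\perp}(e_m)$ to $\langle\pi_{W^\perp}(e_i),\pi_{W^\perp}(e_m)\rangle$, whose magnitude is at most $\rho^2/s^2\le 1/(4\sqrt{m})$ but generally nonzero. To resolve this I would set up the choice of $e_m$ (equivalently of $u$) as a self-consistent fixed-point problem on the compact Stiefel manifold of orthonormal frames in $W$: the map sending a trial direction to the $\pi_W$-unit-direction of the $e_m$ produced by applying the single-vector step to the slice of $T$ perpendicular (in $\Hi$) to the frame it itself produces has Lipschitz constant $O(\rho/s)\le 1/(2m^{1/4})<1$, so Banach's fixed-point theorem yields a self-consistent direction producing an exactly orthonormal family $e_1,\dots,e_m$ with $se_i\in\spt(T)$.
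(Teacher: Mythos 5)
The calculation $\rho' \le \tfrac{1}{5}$ is correct, as is the identification of the obstruction: slicing in the direction $u = \pi_W(e_m)/|\pi_W(e_m)|$ yields vectors $e_1,\dots,e_{m-1}$ in $\spa(u)^\perp$, but these are orthogonal to $u$, not to $e_m$, and the discrepancy $\langle \pi_{W^\perp}(e_i),\pi_{W^\perp}(e_m)\rangle$ is in general nonzero. Your proposed repair by a Banach fixed-point argument, however, does not close this gap. There is no well-defined, single-valued, let alone Lipschitz, map from a trial direction to ``the $e_m$ produced'': the single-vector step only asserts that $\spt(T')\cap\partial\B(0,s)$ is nonempty, and this intersection can be a large set with no canonical choice, so there is no map to which one could apply the contraction principle. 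A secondary issue is that the slice $\langle T,\pi_{\spa(u)},0\rangle$ is only guaranteed to exist as a rectifiable $(m-1)$-chain for almost every parameter value, not for the specific value $0$; the proposal relies on the slice at $0$ transferring the hypotheses, and it may simply fail to exist there.

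The paper's proof resolves the orthogonality problem by a different device: the radial projection $\psi_{V,r}(x)=|x|\,|\pi_V(x)|^{-1}\pi_V(x)$, which has the crucial property \eqref{normpreserve} of preserving the norm and preserving orthogonality to vectors \emph{in $V$}. Using the homotopy $H_{V,r}$ between $\psi_{V,r}$ and $\pi_V$, together with the constancy theorem, one shows $\B_W(0,1)\setminus\B_W(0,2\rho)\subset\psi_{W,\rho}(\spt(T)\setminus Z_W(\rho))$. One then picks $w_1\in W$ with $|w_1|=s$ and takes $e_1$ with $\psi_{W,\rho}(se_1)=w_1$, so that $|e_1|=1$ and $e_1\perp (w_1^\perp\cap W)$ exactly. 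The trick for the induction is not to slice but to \emph{replace the reference plane}: set $W_1=\spa\{e_1\}\oplus (w_1^\perp\cap W)$, check via Lemma~\ref{differentplane} that $T$ still satisfies the hypotheses with respect to $W_1$ and the larger parameter $\rho'$, and pick $w_2\in w_1^\perp\cap W \subset W_1$ orthogonal to $e_1$. Since $e_1$ now lies \emph{in} $W_1$, the orthogonality-preservation property of $\psi_{W_1,\rho'}$ transfers $w_2\perp e_1$ to $e_2\perp e_1$ exactly. Iterating gives the exact orthonormal family; no fixed-point argument is needed, and no slicing is performed.
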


\begin{proof}
Note that if $v_1,\dots,v_m \in \Hi$ are orthonormal vectors with $|\pi_{W^\perp}(v_i)| \leq c$ and $V \defl \spa\{v_1,\dots,v_m\} \in \bG(\Hi,m)$, then 
\begin{equation}
\label{distbound}
\hdist(\B_V(0,1), \B_W(0,1)) \leq \sup \{|\pi_{W^\perp}(x)| : x \in \B_{V}(0,1)\} \leq \sqrt{m}c \,.
\end{equation}
By hypothesis (3) (and since $1 - \rho \leq \sqrt{1 - \rho^2}$), $\spt(\partial T) \cap Z_W(1-\rho) = \emptyset$ and the constancy theorem implies together with (2) that there is some $g_0 \neq 0$ with
\[
\pi_{W\#} (T \res Z_W(1-\rho)) = g_0 \curr{\B_W(0,1-\rho)} \,.
\]
For an $m$-plane $V$ and $r > 0$ let $\psi_{V,r} : \B(0,1) \setminus Z_V(r) \to V$ be the Lipschitz map given by $\psi_{V,r}(x) \defl |x| |\pi_V(x)|^{-1} \pi_V(x)$. This map preserves the norm and orthogonality in the following sense,
\begin{equation}
\label{normpreserve}
|\psi_{V,r}(x)| = |x| \,, \quad \mbox{ and for } y \in V \quad \psi_{V,r}(x) \perp y \Leftrightarrow x \perp y \,.
\end{equation}
The fist statement is obvious, for the second note that if $x = v + v^\perp$ with $v \in V$ and $v \in V^\perp$, then $\psi_{V,r}(x) = \lambda v$ for some $\lambda \neq 0$ and hence
\[
\langle \psi_{V,r}(x),y \rangle = \lambda \langle v,y \rangle = \lambda \langle x,y \rangle \,,
\]
for $y \in V$. Let $H_{V,r} : [0,1] \times (\B(0,1) \setminus Z_V(r)) \to V$ be the Lipschitz homotopy $H_{V,r}(t,x) = t\pi_V(x) + (1-t)\psi_{V,r}(x)$ between $\psi_{V,r}$ and $\pi_V$. By assumption $\pi_W : \spt(T) \cap Z_W(1-\rho) \to \B_W(0,1-\rho)$ is surjective. We next show that
\begin{equation}
\label{setinclusion}
B_W(0,1) \setminus \B_W(0,2\rho) \subset \psi_{W,\rho}(\spt(T) \setminus Z_W(\rho)) \,.
\end{equation}
This is essentially a consequence of the constancy theorem. More precisely, consider the rectifiable chain $S \defl \curr{0,1} \times (T \res Z_W(\rho)^c) \in \cR_{m+1}(\R\times\Hi;G)$ and the set $B_\rho \defl \B_W(0,1-\rho) \setminus \B_W(0,2\rho)$. Clearly, $H_{W,\rho\#} S = 0$ because the image lies in an $m$-dimensional plane. By the homotopy formula for $G$-chains (compare with the proof of Lemma~\ref{differentplane}),
\begin{align*}
0 & = \partial (H_{W,\rho\#} S) = H_{W,\rho\#} \partial S \\
 & = H_{W,\rho\#}\bigl(\curr 1 \times (T \res Z_W(\rho)^c) - \curr 0 \times (T \res Z_W(\rho)^c) - \curr{0,1} \times \partial(T \res Z_W(\rho)^c)\bigr) \\
 & = \pi_{W\#} (T \res Z_W(\rho)^c) - \psi_{W,\rho\#} (T \res Z_W(\rho)^c) - H_{W,\rho\#}\bigl(\curr{0,1} \times \partial(T \res Z_W(\rho)^c)\bigr) \,.
\end{align*}
Because of hypothesis (1) and (3), $H_{W,\rho\#}(\spt([0,1] \times \partial(T \res Z_W(\rho)^c)) \subset W \setminus B_\rho$ and on $B_\rho$, $(\pi_{W\#} (T \res Z_W(\rho)^c)) \res B_\rho = g_0\curr{B_\rho}$. So the same must be true for $\psi_{W,\rho\#} (T \res Z_W(\rho)^c)$, and since $g_0 \neq 0$, the inclusion in \eqref{setinclusion} holds by the constancy theorem because $\psi_{W,\rho}(\partial(T \res Z_W(\rho)^c)) \subset \partial \B_W(0,1) \cup \B_W(0,2\rho)$.

Pick some $w_1 \in W$ with $|w_1| = s$, where $s \in (2\rho',1]$. Since $2\rho \leq 2 \rho' < s$, \eqref{setinclusion} implies the existence of some $e_1 \in \Hi$ with $se_1 \in \spt(T)$ and $\psi_{W,\rho}(se_1) = w_1$. Because $\psi_{W,\rho}$ preserves the norm by \eqref{normpreserve}, $e_1$ is of unit length. Let $V_1$ be the orthogonal complement of $w_1$ in $W$. Again by \eqref{normpreserve}, the vector $e_1$ is also orthogonal to $V_1 \subset W$. Consider the new $m$-plane $W_1$ spanned by $e_1$ and $V_1$. There holds $|\pi_{W^\perp}(se_1)| \leq \rho$ because $se_1 \in \spt(T)$ and hence $\hdist(\B_W(0,1),\B_{W_1}(0,1)) \leq \sqrt{m}\rho s^{-1}$ by \eqref{distbound}. Accordingly, 
\begin{align*}
	\bbeta_\infty(\|T\|,0,1,W_1) & \leq \bbeta_\infty(\|T\|,0,1,W) + \hdist(\B_W(0,1),\B_{W_1}(0,1)) \\
 & < (1 + \sqrt{m}s^{-1})\rho \leq 2\sqrt{m} \rho s^{-1} \leq \sqrt{m} \rho \rho'^{-1} \\
 & = \rho' \leq 5^{-1} \,.
\end{align*}
Because of Lemma~\ref{differentplane}, the new plane $W_1$ satisfies the same hypotheses as $W$ with $\rho'$ in place of $\rho$.

Assume that for $k < m$ we have already constructed some orthonormal vectors $e_1,\dots,e_k$ orthogonal to some $m-k$ dimensional subspace $V_k \subset W$ with $se_i \in \spt(T)$ for all $i$. With $W_k$ we denote the $m$-plane spanned by $e_1,\dots,e_k$ and $V_k$. Now, pick a $w_{k+1} \in V_k$ with $|w_{k+1}| = s$ and $s \in (2\rho',1]$. The same calculation as for $W_1$ gives that
\[
\max\{\bbeta_\infty(\|T\|,0,1,W_k), \hdist(\B_W(0,1),\B_{W_k}(0,1))\} < \rho' \leq 5^{-1} \,,
\]
and as in \eqref{setinclusion} there is some $e_{k+1} \in \Hi$ with $se_{k+1} \in \spt(T)$ and $\psi_{W_k,\rho'}(se_{k+1}) = w_{k+1}$. Let $V_{k+1}$ be the orthogonal complement of $w_{k+1}$ in $V_k$. Since $w_{k+1}$ is orthogonal to each of the vectors $e_1,\dots,e_k$ and to $V_{k+1}$ the same holds for $e_{k+1}$ by \eqref{normpreserve} applied to the map $\psi_{W_k,\rho'}$. The new plane $W_{k+1}$ has the same properties we obtained for $W_k$. Proceeding this way we get the desired orthonormal vectors $e_1, \dots, e_m$.
\end{proof}

Lemma~\ref{densityintegralbound2} together with Proposition~\ref{quadratformprop} allow us to control $\bbeta_2$ with respect to some plane that is spanned by an orthogonal frame in the support of $\phi$. This corresponds to Lemma~4.7.2 and Proposition~4.7.3 in \cite{DePauw3}.

\begin{Prop}
	\label{betainftysmall}
There is a constant $\bc_{\theThm}(m) > 0$ with the following property. Let $\phi$ be a finite Borel measure on $\Hi$, $x_i,x_{i,k} \in \Hi$ for $i = 1,\dots,m$ and $k \geq 1$ be a sequence of points. Further assume that $0 < 4\sqrt{r} < r_0 \leq 1$ and $\xi : (0,r_0] \to (0,\frac{1}{2}]$ is a continuous increasing function such that
\begin{enumerate}
	\item $\phi(\partial \B(0,r)) = 0$,
	\item $\lim_{k \to \infty}|x_{i,k} - x_i| = 0$ for all $i$,
	\item $x_{i} \perp x_{j}$ for $i \neq j$, and $|x_{i}| = r \eta(r)$ for
	\[
	\eta(r) \defl \max\biggl\{ \sqrt[8]{r}, \sqrt[4]{\xi\left(2\sqrt{r}\right)} \biggr\} \,,
	\]
	\item $\Theta^m(\phi,0) = \Theta^m(\phi,x_{i,k}) = \Theta^m(\phi,y) = 1$ for all $i$, $k$ and $\phi$-almost all $y \in \B(0,r)$,
	\item $\exc_*^m(\phi,y,\rho) \leq \xi(\rho)$ for $0 < \rho \leq 4\sqrt{r}$ and $y \in \B(0,2r)$,
	\item $\exc^{m*}(\phi,y,\rho) \leq \xi(\rho)$ for $0 < \rho \leq 4\sqrt{r}$ and $y \in \{0,x_{i,k} : i,k \geq 1 \}$.
\end{enumerate}
Then
\[
\bbeta_\infty(\phi,0,r/2,W) < \bc_{\theThm}(m)\eta(r)^{\frac{1}{m+2}} \,,
\]
where $W \defl \spa\{x_1,\dots,x_m\}$.
\end{Prop}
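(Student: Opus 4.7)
The plan is to use Proposition~\ref{quadratformprop} to control the quadratic form $\bQ(\phi,r)$ on the vectors $x_i$, then combine with the trace bound from Lemma~\ref{densityintegralbound2} to show that the ``transverse'' part of $\bQ(\phi,r)$ is small; this gives an $L^2$ estimate $\bbeta_2$, which is finally upgraded to an $L^\infty$ estimate via Ahlfors regularity.

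First I would apply Proposition~\ref{quadratformprop} at each approximating point $x_{i,k}$. For $k$ large we have $x_{i,k} \in \B(0,2r)$, so assumption (5) gives $\exc_*^m(\phi,x_{i,k},\rho) \leq \xi(\rho)$ for $\rho \leq 4\sqrt r$, and combined with (6) one gets $\exc^m(\phi,x_{i,k},\rho) \leq \xi(\rho)$; together with (4) and (5) (used at $y=0$ and at arbitrary $y \in \B(0,r)$), all hypotheses of Proposition~\ref{quadratformprop} are satisfied, with $|x_{i,k}|$ only approximately equal to $r\eta(r)$. A straightforward perturbation of the statement (or rescaling $x_{i,k}$ by $r\eta(r)/|x_{i,k}|$, which tends to $1$) yields
\begin{equation*}
\bigl|\bQ(\phi,r)(x_{i,k}) - |x_{i,k}|^2\bigr| \leq \bc(m)\,|x_{i,k}|^2\,\eta(r) \,.
\end{equation*}
The quadratic form $x \mapsto \bQ(\phi,r)(x)$ is continuous (in fact it is a bounded quadratic form on a closed ball since $\phi$ is a finite measure supported in a bounded set after restriction to $\B(0,r)$), so passing $k \to \infty$ and using (2) and (3) gives, for every $i=1,\dots,m$,
\begin{equation*}
\bigl|\bQ(\phi,r)(x_i) - |x_i|^2\bigr| \leq \bc(m)\,|x_i|^2\,\eta(r) \,.
\end{equation*}

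Next I extend the orthonormal family $(x_i/|x_i|)_{i=1}^m$ to an orthonormal basis $(e_k)_{k\geq 1}$ of $\Hi$ with $e_i = x_i/|x_i|$ for $i\leq m$. Since $\bQ(\phi,r)$ is a nonnegative quadratic form, the spectral (monotone convergence) computation used in Subsection~\ref{betaprelim} gives
\begin{equation*}
\tr \bQ(\phi,r) \;=\; \sum_{i=1}^m \bQ(\phi,r)(e_i) \;+\; \sum_{k > m} \bQ(\phi,r)(e_k) \,.
\end{equation*}
Dividing the estimate on $\bQ(\phi,r)(x_i)$ by $|x_i|^2$ gives $\bQ(\phi,r)(e_i) \geq 1 - \bc(m)\eta(r)$. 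On the other hand, assumptions (4) and (5)–(6) (at $y=0$) allow to apply Lemma~\ref{densityintegralbound2} with $\epsilon = \xi(r)$ to conclude $\tr\bQ(\phi,r) \leq m + (m+4)\xi(r)$. Since $\xi(r) \leq \xi(2\sqrt r) \leq \eta(r)^4 \leq \eta(r)$, subtracting yields
\begin{equation*}
\sum_{k > m} \bQ(\phi,r)(e_k) \;\leq\; \bc'(m)\,\eta(r) \,.
\end{equation*}
Now a direct computation (swap sum and integral, using an orthonormal basis of $W^\perp$) identifies the left-hand side with $\tfrac{m+2}{\balpha(m)}\,r^{-m-2}\int_{\B(0,r)} |\pi_{W^\perp}(y)|^2\,d\phi(y) = \tfrac{m+2}{\balpha(m)}\,\bbeta_2(\phi,0,r,W)^2$, so
\begin{equation*}
\bbeta_2(\phi,0,r,W)^2 \;\leq\; \bc''(m)\,\eta(r) \,.
\end{equation*}

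Finally, to go from $\bbeta_2$ to $\bbeta_\infty$ on the smaller ball $\B(0,r/2)$, I would use Ahlfors regularity. Assumptions (4) and (5) yield, via the argument in Lemma~\ref{compactlem} (which uses only the measure $\phi$), a lower density bound $\phi(\B(z,\rho)) \geq \tfrac{1}{2}\balpha(m)\rho^m$ for every $z \in \spt(\phi) \cap \B(0,r)$ and every $0 < \rho \leq r$. If $z \in \spt(\phi) \cap \B(0,r/2)$ with $h \defl |\pi_{W^\perp}(z)| > 0$, then $|\pi_{W^\perp}(y)| \geq h/2$ for every $y \in \B(z,h/2)$ and $\B(z,h/2) \subset \B(0,r)$ (assuming $h \leq r$, which is automatic for small $\eta(r)$); integrating gives
\begin{equation*}
r^{m+2}\,\bbeta_2(\phi,0,r,W)^2 \;\geq\; (h/2)^2 \cdot \tfrac{1}{2}\balpha(m)(h/2)^m \;=\; \bc'''(m)\,h^{m+2} \,,
\end{equation*}
and therefore $h/r \leq \bc(m)\,\bbeta_2(\phi,0,r,W)^{2/(m+2)} \leq \bc(m)\,\eta(r)^{1/(m+2)}$. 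Taking the supremum over $z \in \spt(\phi) \cap \B(0,r/2)$ gives the proposition.

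The most delicate point is the continuity step: the conditions of Proposition~\ref{quadratformprop} require the exact normalization $|x| = r\eta(r)$ and density-one at $x$, which is why the hypothesis is phrased using the approximating sequence $x_{i,k}$ rather than $x_i$ directly. One must check that the estimate survives the perturbation $|x_{i,k}| \to |x_i|$; this is straightforward since the constant in Proposition~\ref{quadratformprop} depends continuously on $|x|$ and the quadratic form itself is bounded on bounded sets.
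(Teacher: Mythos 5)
Your overall scheme matches the paper's own proof exactly: apply Proposition~\ref{quadratformprop} along the approximating sequence, combine with the trace bound from Lemma~\ref{densityintegralbound2} to control $\bbeta_2(\phi,0,r,W)$, and upgrade to $\bbeta_\infty$ on $\B(0,r/2)$ via the lower density bound that follows from hypotheses (4) and (5). The $\bbeta_2 \to \bbeta_\infty$ step is written up a bit differently (the paper sets up an explicit dichotomy with threshold $\delta_m$ to handle the case where $\bbeta_2$ is not small, whereas you appeal to $\bbeta_\infty \leq 1$ being trivially sufficient when $\eta(r)$ is not small) but these are the same argument.

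The one place where your sketch has a real gap is the handling of the mismatch $|x_{i,k}| \neq r\eta(r)$. Your suggested fix of \emph{rescaling the point} $x_{i,k} \mapsto x_{i,k}\cdot r\eta(r)/|x_{i,k}|$ does not work: the rescaled point is generally not in the set $\{0,x_{i,k}\}$, so hypothesis (6) (the upper excess bound) and hypothesis (4) (density one) are not available at it, and these are required by Proposition~\ref{quadratformprop}. Your alternative of "a straightforward perturbation of the statement" is plausible but unproved and somewhat more work than you suggest. The paper's actual fix is to keep the point $x_{i,k}$ and \emph{perturb the radius} instead: since $s\mapsto s\eta(s)$ is continuous, strictly increasing and vanishes at $0$, there is a unique $r_{i,k}$ with $|x_{i,k}| = r_{i,k}\eta(r_{i,k})$, and $r_{i,k} \to r$; Proposition~\ref{quadratformprop} then applies verbatim at scale $r_{i,k}$. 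Passing to the limit $r_{i,k}\to r$ requires the continuity of $\rho\mapsto\bQ(\phi,\rho)$, and this is precisely why hypothesis (1) ($\phi(\partial\B(0,r))=0$) is in the statement. Your sketch never invokes hypothesis (1), which signals that you didn't quite see the right mechanism for the limit; this is worth fixing before the argument is complete.
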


\begin{proof}
If $k$ is big enough, then $W_k \defl \spa\{x_{1,k},\dots,x_{m,k}\}$ is an $m$-dimensional subspace of $\Hi$ and $W_k$ converges to $W$. Without loss of generality we assume that $W_k \in \bG(\Hi,m)$ for all $k$ and $0 < |x_{i,k}| < 2r\eta(r) \leq 2r$ for all $i$ and $k$. Let $e_i,f_{i,k} \in \Hi$ be the unit vectors such that $|x_{i,k}|f_{i,k} = x_{i,k}$ and $|x_{i}|e_{i} = x_{i}$ and let $e_{m+1,k},e_{m+2,k},e_{m+3,k},\dots$ be an orthonormal basis of $W_k^\perp$. By assumption $s \mapsto s \eta(s)$ is continuous, strictly increasing and satisfies $\lim_{s \downarrow 0} s \eta(s) = 0$. Hence there is a unique $r_{i,k} > 0$ for which $|x_{i,k}| = r_{i,k}\eta(r_{i,k})$ for some $0 < r_{i,k} < 2r$ and
\begin{equation}
\label{limit1}
\lim_{k \to \infty} \max_{1 \leq i \leq m}|r_{i,k} - r| = \lim_{k \to \infty} \max_{1 \leq i \leq m}|\eta(r_{i,k}) - \eta(r)| = 0 \,.
\end{equation}
Further, there is an orthonormal basis $e_{1,k}, \dots, e_{m,k}$ of $W_k$ with 
\begin{equation}
\label{limitofframe}
\lim_{k \to \infty} \max_{1 \leq i \leq m}|r \eta(r) e_{i,k} - x_{i,k}| = \lim_{k \to \infty} \max_{1 \leq i \leq m}|e_{i,k} - f_{i,k}| = 0 \,.
\end{equation}
$e_{i,k}$ can for example be constructed via the Gram-Schmidt procedure from $e_{i,k}$. From hypothesis (1) it follows that
\begin{equation}
\label{limitofframe2}
\lim_{\epsilon \downarrow 0} \phi(\B(0,r + \epsilon)) - \phi(\B(0,r - \epsilon)) = 0 \,.
\end{equation}

By the assumption on $\xi$ we have for all $0 < \rho \leq r$,
\[
\left| \frac{\phi(\B(0,\rho))}{\balpha(m) \rho^m} - 1 \right| \leq \max\{\exc^{m}_*(\phi,0,r), \exc^{m*}(\phi,0,r)\} \leq \xi(r) \,.
\]
Lemma~\ref{densityintegralbound2} then implies that for all $k$,
\begin{equation}
\label{tracebound}
\Biggl|m - \sum_{i \geq 1} \frac{1}{\bnu(m)r^{m+2}}\int_{\B(0,r)} \langle e_{i,k},y \rangle^2 \, d\phi(y)\Biggr| = |m - \tr \bQ(\phi,r)| \leq (m+4)\xi(r) \,.
\end{equation}
Additionally, Proposition~\ref{quadratformprop} implies that for all $i,k$,
\begin{align}
\nonumber
\Biggl|1 - \frac{1}{\bnu(m)r_{i,k}^{m+2}}\int_{\B(0,r_{i,k})} \langle f_{i,k},y \rangle^2 \, d\phi(y)\Biggr| & = \frac{1}{|x_{i,k}|^{2}}\left||x_{i,k}|^2 - \bQ(\phi,r_{i,k})(x_{i,k})\right| \\
\label{framebound}
 & \leq \bc_{\ref{quadratformprop}}(m) \eta(r_{i,k}) \,.
\end{align}
From \eqref{limit1}, \eqref{limitofframe} and \eqref{limitofframe2} we get
\begin{align}
	\nonumber
\epsilon_k & \defl \max_{1 \leq i \leq m} \biggl|\frac{1}{\bnu(m)r_{i,k}^{m+2}} \int_{\B(0,r_{i,k})} \langle f_{i,k},y \rangle^2 \, d\phi(y) \\
\label{errorbound}
 & \qquad \qquad - \frac{1}{\bnu(m)r^{m+2}}\int_{\B(0,r)} \langle e_{i,k},y \rangle^2 \, d\phi(y) \biggr| \to 0 \,,
\end{align}
for $k \to \infty$.

Since $\xi(r) \leq \eta(r)$ it follows from \eqref{limit1}, \eqref{tracebound}, \eqref{framebound} and \eqref{errorbound} that for $\bc(m) \defl \bnu(m)(m\bc_{\ref{quadratformprop}}(m) + 2m + 4)$,
\begin{align}
	\nonumber
\bbeta_2(\phi,0,r,W_k)^2 & = \frac{1}{r^{m+2}}\int_{\B(0,r)} |\pi_{W_k^\perp}(y)|^2 \, d\phi(y) \\
	\nonumber
& = \sum_{i > m} \frac{1}{r^{m+2}}\int_{\B(0,r)} \langle e_{i,k},y \rangle^2 \, d\phi(y) \\
	\nonumber
& \leq \bnu(m)\tr \bQ(\phi,r) - \sum_{i \leq m}\frac{1}{r^{m+2}}\int_{\B(0,r)} \langle e_{i,k},y \rangle^2 \, d\phi(y) \\
	\nonumber
& \leq \bnu(m)((m+4)\xi(r) + m\bc_{\ref{quadratformprop}}(m) \eta(r_{i,k}) + m\epsilon_k) \\
	\label{beta2est}
& \leq \bc(m) \max_{1 \leq i \leq m}\{\eta(r),\eta(r_{i,k}),\epsilon_k\} \to \bc(m) \eta(r) \,,
\end{align}
for $k \to \infty$. Let $\delta \defl \bbeta_2(\phi,0,r,W_k)$ and define
\begin{align*}
q_{m} & \defl \frac{2}{m+2} \,, \\
c_m & \defl 1 + (4\balpha(m)^{-1})^\frac{1}{m} \,, \\
\delta_{m} & \defl \left(2(c_m - 1) \right)^{- \frac{1}{q_m}} \,.
\end{align*}
If $\delta \geq \delta_m$, then
\[
\bbeta_\infty(\phi,0,r/2,W_k) \leq 1 \leq (\delta_m^{-1}\delta)^{q_m} \leq \delta_m^{-q_m} \delta^{q_m} \,.
\]
We therefore assume that $\delta < \delta_m$. Consider the set
\[
B \defl \left\{y \in \B(0,r) \cap \spt(\phi) : |\pi_{W_k^\perp}(y)| \geq \delta^{q_{m}}r \right\} \,.
\]
Observe that
\begin{align*}
	\delta^2 & = \frac{1}{r^{m+2}}\int_{\B(0,r)} |\pi_{W_k^\perp}(y)|^2 \, d\phi(y) \\
	& \geq \frac{1}{r^{m+2}} \phi(B) \delta^{2q_m}r^2 \,,
\end{align*}
and therefore
\begin{equation}
\label{measureest}
\phi(B) \leq r^m \delta^{2(1 - q_{m})} \,.
\end{equation}
Now assume there exists $y \in \B(0,r/2) \cap \spt(\phi)$ with
\[
|\pi_{W_k^\perp}(y)| > c_m \delta^{q_{m}} r \,.
\]
Put $\rho \defl (c_m - 1)\delta^{q_m}r$ and notice that $\B(y,\rho) \subset \oB(0,r)$ since
\[
\rho = (c_m - 1)\delta^{q_m}r < (c_m - 1)\delta_m^{q_m}r = (c_m - 1)\left(2(c_m - 1) \right)^{- 1}r \leq \tfrac{1}{2}r \,.
\]
By our assumption on the densities we can assume that $\Theta^m(\phi,y) = 1$. The above implies that $\spt(\phi) \cap \B(y,\rho) \subset B$ and with \eqref{measureest} we obtain
\begin{equation}
\label{measureest2}
\phi(\B(y,\rho)) \leq \phi(B) \leq r^m \delta^{2(1 - q_{m})} \,.
\end{equation}
Since $\xi(\rho) \leq \xi(r_0) \leq \frac{1}{2}$, the bound on $\exc_*^m(\phi,y,\rho)$ implies that
\[
\frac{\phi(\B(y,\rho))}{\balpha(m)\rho^m} \geq 1 - \xi(\rho) \geq \tfrac{1}{2} \,.
\]
Combining this with \eqref{measureest2} we obtain
\[
\tfrac{1}{2}\balpha(m)(c_m - 1)^m\delta^{mq_m}r^m = \tfrac{1}{2}\balpha(m)\rho^m \leq \phi(\B(y,\rho)) \leq r^m \delta^{2(1 - q_{m})} \,.
\]
Since $mq_m = \frac{2m}{m+2} = 2(1 - \frac{2}{m+2}) = 2(1 - q_{m})$, this gives a contradiction,
\[
1 \geq \tfrac{1}{2}\balpha(m)(c_m - 1)^m = \tfrac{1}{2}\balpha(m)4\balpha(m)^{-1} = 2  \,.
\]
Therefore
\[
\bbeta_\infty(\phi,0,r/2,W_k) \leq \max\{\delta_m^{-q_m},c_m\}\delta^{q_m} = \max\{2(c_m - 1),c_m\} \delta^{\frac{2}{m+2}}\,.
\]
Since $\delta = \bbeta_2(\phi,0,r,W_k)$, \eqref{beta2est} implies that
\[
\limsup_{k\to\infty}\bbeta_\infty(\phi,0,r/2,W_k) \leq \bc'(m)\eta(r)^\frac{1}{m+2} \,,
\]
for some constant $\bc'(m) > 0$ depending only on $m$. Because $W_k \cap \B(0,1)$ converges in Hausdorff distance to $W \cap \B(0,1)$, we get $\bbeta_\infty(\phi,0,r/2,W_k) \to \bbeta_\infty(\phi,0,r/2,W)$ and the proposition follows.


\end{proof}

This is the special case of the proposition above in case $\xi$ is the constant function.

\begin{Cor}
	\label{betainftysmall2}
There is a constant $\bc_{\theThm}(m) > 0$ with the following property. Let $\phi$ be a finite Borel measure on $\Hi$, $x_i,x_{i,k} \in \Hi$ for $i = 1,\dots,m$ and $k \geq 1$ and assume that $\epsilon, r_0,r > 0$ are such that:
\begin{enumerate}
	\item $0 < 4\sqrt{r} \leq r_0 \leq \epsilon \leq \frac{1}{2}$,
	\item $\phi(\partial \B(0,r)) = 0$,
	\item $\lim_{k \to \infty}|x_{i,k} - x_i| = 0$ for all $i$,
	\item $x_i \perp x_j$ for $i \neq j$ and $|x_i| = r \epsilon^\frac{1}{4}$,
	\item $\Theta^m(\phi,0) = \Theta^m(\phi,x_{i,k}) = \Theta^m(\phi,y) = 1$ for all $i$, $k$ and $\phi$-almost all $y \in \B(0,r_0)$.
	\item $\exc_*^m(\phi,y,r_0) \leq \epsilon$ for $y\in \B(0,2r_0)$,
	\item $\exc^{m*}(\phi,y,r_0) \leq \epsilon$ for $y \in \{0,x_{i,k} : i,k \geq 1\}$.
\end{enumerate}
Then
\[
\bbeta_\infty(\phi,0,r/2,W) < \bc_{\theThm}(m)\epsilon^{\frac{1}{4(m+2)}} \,,
\]
where $W \defl \spa\{x_1,\dots,x_m\}$.
\end{Cor}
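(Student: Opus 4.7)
The plan is to derive this Corollary directly from Proposition \ref{betainftysmall} by specializing the ``gauge'' $\xi$ to a constant. First I would set $\xi : (0,r_0] \to (0,2^{-1}]$ to be the constant function $\xi(\rho) \equiv \epsilon$. This is manifestly continuous and increasing (weakly), and takes values in $(0,2^{-1}]$ because hypothesis (1) of the Corollary gives $\epsilon \leq 2^{-1}$.

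Next I would verify that the quantity $\eta(r)$ appearing in Proposition \ref{betainftysmall} equals $\epsilon^{1/4}$ under the Corollary's hypotheses. By definition,
\[
\eta(r) = \max\bigl\{ r^{1/8},\, \xi(2\sqrt{r})^{1/4} \bigr\} = \max\bigl\{ r^{1/8},\, \epsilon^{1/4} \bigr\}.
\]
The chain of inequalities $4\sqrt{r} \leq r_0 \leq \epsilon$ in hypothesis (1) yields $r \leq (\epsilon/4)^2 \leq \epsilon^2$, whence $r^{1/8} \leq \epsilon^{1/4}$ and therefore $\eta(r) = \epsilon^{1/4}$. In particular, hypothesis (4) of the Corollary ($|x_i| = r\epsilon^{1/4}$) coincides with hypothesis (3) of the Proposition ($|x_i| = r\eta(r)$).

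Then I would check the excess bounds. Observe that $\rho \mapsto \exc_*^m(\phi,y,\rho)$ is a supremum over pairs $0 < s \leq t \leq \rho$ and is therefore non-decreasing in $\rho$; the same holds for $\exc^{m*}$. Consequently, for any $0 < \rho \leq 4\sqrt{r} \leq r_0$ and any $y \in \B(0,2r) \subset \B(0,2r_0)$, we have
\[
\exc_*^m(\phi,y,\rho) \leq \exc_*^m(\phi,y,r_0) \leq \epsilon = \xi(\rho),
\]
by hypothesis (6) of the Corollary, which is precisely hypothesis (5) of the Proposition. The same argument with hypothesis (7) gives hypothesis (6) of the Proposition. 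Hypotheses (1)--(4) of the Proposition correspond to (2), (3), (4), (5) of the Corollary verbatim.

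Having verified all seven hypotheses of Proposition \ref{betainftysmall}, I apply it to obtain
\[
\bbeta_\infty(\phi,0,r/2,W) < \bc_{\ref{betainftysmall}}(m) \eta(r)^{1/(m+2)} = \bc_{\ref{betainftysmall}}(m)\, \epsilon^{1/(4(m+2))},
\]
which is the desired conclusion with $\bc_{\ref{betainftysmall2}}(m) \defl \bc_{\ref{betainftysmall}}(m)$. There is no genuine obstacle here: the Corollary is a routine specialization, and the only content worth tracking is that the constraint $4\sqrt{r} \leq r_0 \leq \epsilon$ is exactly what makes the ``parabolic'' term $r^{1/8}$ in $\eta(r)$ harmless compared with the ``flat'' term $\epsilon^{1/4}$.
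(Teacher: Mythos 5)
Your proof is correct and is precisely the argument the paper intends: the paper states only that the Corollary "is the special case of the proposition above in case $\xi$ is the constant function," and you have simply spelled out that specialization, including the key check that $4\sqrt{r} \le r_0 \le \epsilon$ forces $r^{1/8} \le \epsilon^{1/4}$ so that $\eta(r)=\epsilon^{1/4}$, and the (valid) monotonicity of $\rho\mapsto\exc_*^m(\phi,y,\rho)$ and $\rho\mapsto\exc^{m*}(\phi,y,\rho)$.
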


\section{Regularity of almost minimizer}

First we define the main objects of this paper, namely almost mass minimizing rectifiable chains with respect to some gauge $\xi$. This is an adaptation of the original definition of Almgren in \cite{A} to chains. Almgren's original definition is harder to work with since competing surfaces have to be obtained by Lipschitz deformations of the original one and cut and paste constructions are not allowed. Due to slicing, such cut and paste constructions are easily available for rectifiable $G$-chains and this simplifies the arguments greatly. For this reason we follow Bombieri and his definition of almost mass minimizing integral currents \cite{B}.

\begin{Def}
	\label{almostminimal}
A rectifiable chain $T \in \cR_m(\Hi;G)$ is $(\bM,\xi,\delta)$-\emph{minimal} in a set $A \subset \Hi \setminus \spt(\partial T)$ if $\xi : (0,\delta] \to \R_+$ is a gauge and the following holds: For every $x \in A$, $0 < r < \min\{\delta,\dist\{x,\spt(\partial T)\}\}$ and every $S \in \cR_m(\Hi;G)$ with 
\begin{enumerate}
	\item $\spt(S) \subset \B(x,r)$,
	\item $\partial S = 0$,
\end{enumerate}
there holds
\[
	\bM(T \res \B(x,r)) \leq (1 + \xi(r))\bM(T \res \B(x,r) + S) \,.
\]
\end{Def}

We will further assume that $\xi$ is continuous and satisfies the Dini condition,
\[
	\int_0^{\delta} \frac{\xi(t)}{t} \, dt < \infty \,.
\]
The next result is a simple adjustment of \cite[Proposition~3.4.5]{DePauw3} to the setting of rectifiable $G$-chains. Note that because of the definition of almost minimality we use here, radius of balls instead of diameter of sets, there is no factor $2$ appearing in the lemma below.

\begin{Lem}
	\label{nearlymonlem}
Let $T \in \cR_m(\Hi;G)$ be $(\bM,\xi,\delta)$-minimal in $A \subset \Hi \setminus \B(\spt(\partial T),\delta)$ and define
\[
	\Xi(r) \defl m\int_0^r \frac{\xi(t)}{t} \, dt \,,
\]
for every $0 < r \leq \delta$. Then $T$ is almost monotonic in $A$ with gauge $\Xi$.
\end{Lem}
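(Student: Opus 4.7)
The plan is the classical monotonicity-via-cone-comparison argument, adapted to the almost-minimizing setting. Fix $x \in A$ (noting that $x$ is away from $\spt(\partial T)$, so $(\partial T) \res \B(x,r) = 0$ for $r < \delta$), and set $\phi(r) \defl \|T\|(\B(x,r))$. Then $\phi$ is monotone nondecreasing, hence of bounded variation, with an $\cL^1$-a.e.\ derivative $\phi'$.

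First I would build the cone competitor. For almost every $0 < r < \delta$, the slice $\langle T, d_x, r \rangle$ (with $d_x(y) \defl |y - x|$) is rectifiable, supported in $\partial \B(x,r)$, and satisfies $\partial \langle T, d_x, r \rangle = -\langle \partial T, d_x, r \rangle = 0$ together with $\partial (T \res \B(x,r)) = \langle T, d_x, r \rangle$. Setting $U_r \defl \curr{x} \cone \langle T, d_x, r \rangle$, equation~\eqref{cone_equality} yields
\[
\partial U_r = \langle T, d_x, r \rangle , \qquad \bM(U_r) = \frac{r}{m} \bM(\langle T, d_x, r \rangle) \,.
\]
Then $S_r \defl U_r - T \res \B(x,r)$ satisfies $\spt(S_r) \subset \B(x,r)$ and $\partial S_r = 0$, so by Definition~\ref{almostminimal},
\[
\phi(r) = \bM(T \res \B(x,r)) \leq (1+\xi(r)) \bM(T \res \B(x,r) + S_r) = (1+\xi(r)) \frac{r}{m} \bM(\langle T, d_x, r \rangle) \,.
\]

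Second, I would upgrade $\bM(\langle T, d_x, r \rangle)$ to a bound involving $\phi'(r)$. The slicing mass inequality \eqref{slicemass} applied to $d_x$ on annuli gives $\int_a^b \bM(\langle T, d_x, r \rangle)\,dr \leq \phi(b) - \phi(a)$, and since $\phi$ is monotone, $\bM(\langle T, d_x, r \rangle) \leq \phi'(r)$ for $\cL^1$-a.e.\ $r$. Combined with the previous step:
\[
\frac{\phi'(r)}{\phi(r)} \geq \frac{m}{r(1+\xi(r))} \qquad \text{for a.e.\ } 0 < r < \min\{\delta, \dist(x,\spt(\partial T))\} \,.
\]

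Third, I would integrate this differential inequality. Because $F(r) \defl \log \phi(r)$ is monotone nondecreasing, $\int_s^r F'(t)\,dt \leq F(r) - F(s)$ for $0 < s \leq r$ in the admissible range (assuming $\phi(s) > 0$; the case $\phi \equiv 0$ on an initial interval is trivial). Using the elementary bound $\frac{1}{1+\xi(t)} \geq 1 - \xi(t)$, I obtain
\[
\log \frac{\phi(r)}{\phi(s)} \geq \int_s^r \frac{m}{t(1+\xi(t))}\,dt \geq m \log \frac{r}{s} - (\Xi(r) - \Xi(s)) \,,
\]
which upon exponentiation and division by $\balpha(m)$ reads $e^{\Xi(s)} \balpha(m)^{-1} s^{-m}\phi(s) \leq e^{\Xi(r)} \balpha(m)^{-1} r^{-m}\phi(r)$. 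This is precisely almost monotonicity with gauge $\Xi$.

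The one subtlety is the integration step: since $\phi$ is only BV, one must rely on the monotonicity of $F = \log \phi$ to pass from the pointwise a.e.\ inequality $F' \geq m/(t(1+\xi(t)))$ to its integrated form (using the standard fact that $\int F' \leq F(r) - F(s)$ for monotone $F$). Everything else is routine given the slicing/cone identities \eqref{slicemass} and \eqref{cone_equality} already at our disposal.
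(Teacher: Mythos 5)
Your proof is correct and follows essentially the same route as the paper's: cone competitor via the slice $\langle T, d_x, r\rangle$, the mass bound $\bM(\langle T,d_x,r\rangle)\leq\phi'(r)$ a.e.\ from \eqref{slicemass}, the almost-minimality inequality, and integration of the resulting differential inequality for $\log\phi$. The only (minor) difference is that you explicitly flag the BV integration subtlety $\int_s^r F'\leq F(r)-F(s)$ for monotone $F$, which the paper leaves implicit; that is a welcome precision but not a new idea.
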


\begin{proof}
For $x \in A$ and $0 < r < \delta$ define $f_x(r) \defl \|T\|(\B(x,r))$. Let $d_x : \Hi \to \R$ be the distance function to $x$ and assume that $r$ is such that $\langle T, d_x, r\rangle \in \cR_{m-1}(\Hi;G)$ as well as $\partial (T \res \B(x,r)) = \langle T, d_x, r\rangle \in \cR_{m-1}(\Hi;G)$. This holds for almost all $0 < r < \delta$ because of \cite[Theorem~5.2.4]{PH} and by \eqref{slicemass},
\[
\int_0^r \bM(\langle T, d_x, s\rangle) \, ds \leq \bM(T \res \B(x,r)) = f_x(r) \,.
\]
Hence for almost all $r$,
\begin{equation}
\label{derivativemass}
\bM(\langle T, d_x, r\rangle) \leq f_x'(r) \,.
\end{equation}
Let $\curr x \cone \langle T, d_x, r\rangle \in \cR_m(\Hi;G)$ be the cone over $\langle T, d_x, r\rangle$ with center $x$. There holds $\partial(\curr x \cone \langle T, d_x, r\rangle) = \langle T, d_x, r\rangle$ and by \eqref{cone_equality},
\begin{equation}
\label{massconeestimate}
\bM(\curr x \cone \langle T, d_x, r\rangle) = \frac{r}{m}\bM(\langle T, d_x, r\rangle) \,.
\end{equation}
The almost minimality of $T$ implies in combination with \eqref{derivativemass} and \eqref{massconeestimate} that for almost all $r$,
\begin{align*}
f_x(r) & = \bM(T \res \B(x,r)) \leq (1 + \xi(r))\bM(\curr x \cone \langle T, d_x, r\rangle) \\
 & = (1 + \xi(r))\frac{r}{m}\bM(\langle T, d_x, r\rangle) \leq (1 + \xi(r))\frac{r}{m} f_x'(r) \,.
\end{align*}
Hence, for almost all $r$ with $f_x(r) > 0$,
\begin{align*}
(\log \circ f_x)'(r) & = \frac{f_x'(r)}{f_x(r)} \geq \frac{m}{r} \frac{1}{1 + \xi(r)} \geq \frac{m}{r}(1-\xi(r)) \,.
\end{align*}
Integrating shows that for all $0 < r_1 < r_2 < \delta$ with $f_x(r_1) > 0$,
\[
\log \left( \frac{f_x(r_2)}{f_x(r_1)}\right) \geq \int_{r_1}^{r_2} \frac{m}{r}(1-\xi(r)) \, dr = \log \left( \frac{r_2^m}{r_1^m}\right) - \Xi(r_2) + \Xi(r_1) \,,
\]
respectively that
\[
\exp(\Xi(r_1))r_1^{-m} f_x(r_1) \leq \exp(\Xi(r_2))r_2^{-m} f_x(r_2) \,.
\]
If $f_x(r_1) = 0$, the statement is trivial. Hence $T$ is almost monotonic in $A$ with gauge function $\Xi$.
\end{proof}

We will encounter a similar differential equation in connection with the epiperimetric inequality in Lemma~\ref{diffequation2}.

\subsection{Polyhedral approximation and a differential inequality}

Because of the formulation of Reifenberg's epiperimetric inequality with polyhedral chains we first need some results that justify this assumption. The required results about polyhedral approximation are contained in \cite{DePauw2}. With $\theta > 0$ and the group norm $\|\cdot\|$ we associate a new group norm $\|\cdot\|_\theta$ defined on $G$ by $\|0_G\|_\theta=0$ and $\|g\|_\theta \defl \max\{\|g\|,\theta\}$ whenever $g \neq 0_G$.

\begin{Lem}
\label{polyhedralcone}
Let $T \in \cR_m(\Hi;G)$ and $r_0 \in (0,1]$ such that $\spt(\partial T) \cap \B(0,r_0) = \emptyset$. Further let $\theta > 0$ and assume that,
\begin{enumerate}[(A)]
	\item $\Theta^m(\|T\|,x) \geq \theta$ for $\|T\|$-a.e.\ $x \in \B(0,r_0)$,
	\item \label{compspt} $\spt(T) \cap \B(0,r_0)$ is compact.
\end{enumerate}
Let $\bM_\theta$ be the mass on rectifiable $G$-chains induced by the norm $\|\cdot\|_\theta$ (defined right before the Lemma). For all $s > 0$ and almost every $r \in [0,r_0]$, $T_r \defl \partial (T \res \B(0,r)) \in \cR_{m-1}(\Hi;G)$ and there is a polyhedral chain $P_{r,s} \in \cP_{m-1}(\Hi;G)$ and a rectifiable chain $R_{r,s} \in \cR_m(\Hi;G)$ such that
\begin{enumerate}
	\item \label{boundaries} $\partial (R_{r,s} + \curr 0 \cone P_{r,s}) = T_r$,
	\item \label{sestimate} $\bM_\theta(R_{r,s}) < s$ and $\spt(R_{r,s}) \subset \oB(\spt(T_r),s)$,
	\item \label{conebounds} $\max\{\bM_\theta (\bar P_{r,s}) \res \B(0,r)), \bM_\theta(\curr 0 \cone P_{r,s})\} \leq \bM(\curr 0 \cone T_{r}) + s$, where $\bar P_{r,s}$ is the infinite cone generated by $\curr 0 \cone P_{r,s}$.
\end{enumerate}
\end{Lem}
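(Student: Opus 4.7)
The plan is to apply the polyhedral approximation theorem of \cite{DePauw2} to the slice $T_r$ and then take the cone with vertex $0$. First I would observe that for almost every $r \in (0, r_0)$ the slice $T_r \defl \partial(T \res \B(0,r)) \in \cR_{m-1}(\Hi;G)$ is well defined by the slicing theory of \cite{PH}, is a cycle (since $\partial T_r = \partial^2(T \res \B(0,r)) = 0$), has support contained in the compact set $\partial \B(0,r) \cap \spt(T)$ (by hypothesis \eqref{compspt}), and has finite mass bounded by $\tfrac{d}{dr}\|T\|(\B(0,r))$. The exceptional set of $r$ comes from the slicing theorem and the differentiability of $r \mapsto \|T\|(\B(0,r))$.

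Next I would invoke polyhedral approximation \cite{DePauw2}: for any $s' > 0$ (to be chosen below depending on $s$, $r$, and $\bM(T_r)$) there exist $P_{r,s'} \in \cP_{m-1}(\Hi;G)$ and $R_{r,s'} \in \cR_m(\Hi;G)$ with $T_r - P_{r,s'} = \partial R_{r,s'}$, $\bM(R_{r,s'}) < s'$, $\spt(R_{r,s'}) \cup \spt(P_{r,s'}) \subset \oB(\spt(T_r), s')$, and $\bM(P_{r,s'}) \leq \bM(T_r) + s'$. Since $T_r$ is a cycle, $P_{r,s'}$ is also a cycle. Consequently $\partial(\curr 0 \cone P_{r,s'}) = P_{r,s'}$, so $\partial(R_{r,s'} + \curr 0 \cone P_{r,s'}) = \partial R_{r,s'} + P_{r,s'} = T_r$, which is (1). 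Property (2) is immediate for $s' \leq s$.

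The substantive step, and the main obstacle, is (3). Because $\spt(P_{r,s'}) \subset \oB(\partial\B(0,r), s')$, a direct application of \eqref{cone_equality} and \eqref{cone_inequality} gives
\[
\bM(\curr 0 \cone P_{r,s'}) \leq \tfrac{r + s'}{m}\bM(P_{r,s'}) \leq \bM(\curr 0 \cone T_r) + \tfrac{s'}{m}\bM(T_r) + \tfrac{r+s'}{m}s' \,,
\]
where I used $\bM(\curr 0 \cone T_r) = \tfrac{r}{m}\bM(T_r)$. An analogous bound holds for $\bM(\bar P_{r,s'} \res \B(0,r))$: radial projection from $\B(0, r+s')$ onto $\B(0,r)$ along rays through $0$ multiplies $m$-volume by at most $(1 + s'/r)^m \to 1$, so the infinite cone intersected with $\B(0,r)$ contributes at most $\tfrac{r}{m}\bM(P_{r,s'})(1 + O(s'/r))$. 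Thus both quantities differ from $\bM(\curr 0 \cone T_r)$ by $O(s'(1 + \bM(T_r)) + s'^2)$, which is bounded by $s/2$ for $s'$ small enough.

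Finally I would handle the upgrade from $\bM$ to $\bM_\theta$: we have $\bM_\theta(X) - \bM(X) \leq \theta \cH^m(M_X)$, so the extra cost depends on the $\cH^m$-size of the cone. To keep this controlled, I would refine the polyhedral approximation so that every coefficient appearing in $P_{r,s'}$ has norm at least $\theta/2$, by transferring simplices with smaller coefficients into the error chain $R_{r,s'}$ (their total contribution to $\bM$ and boundary mass is controlled because there are only finitely many such simplices and their norms are small). Under this refinement, $\cH^m(M_{\curr 0 \cone P_{r,s'}}) \leq 2\bM(\curr 0 \cone P_{r,s'})/\theta$, so $\bM_\theta(\curr 0 \cone P_{r,s'}) \leq 3\bM(\curr 0 \cone P_{r,s'})$, and the same estimate with a possibly worse multiplicative constant bounds $\bM_\theta(\bar P_{r,s'} \res \B(0,r))$; both can then be absorbed into the $+s$ slack by taking $s'$ still smaller. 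The delicate point is thus ensuring that the polyhedral approximation of \cite{DePauw2} can be performed with a lower bound of order $\theta$ on coefficient norms while keeping the mass excess and support-closeness estimates intact; this is where hypothesis (A) is invoked, and it requires a careful but routine refinement of the construction in \cite{DePauw2}.
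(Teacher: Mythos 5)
Your first two paragraphs — slicing theory giving $T_r \in \cR_{m-1}(\Hi;G)$ as a cycle with compact support for a.e.\ $r$, then invoking the polyhedral approximation theorem of \cite{DePauw2}, and the derivation of conclusions (1) and (2) — follow essentially the same path as the paper and are fine.

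The gap is in your treatment of (3), specifically the upgrade from $\bM$ to $\bM_\theta$. You apply polyhedral approximation in the plain mass $\bM$, obtaining $\bM(P_{r,s'}) \leq \bM(T_r) + s'$, and then try to convert to $\bM_\theta$ via the inequality $\bM_\theta(X) - \bM(X) \leq \theta\,\cH^{m-1}(M_X)$ (you wrote $\cH^m$, but $P_{r,s'}$ is $(m-1)$-dimensional). This requires a bound on the Hausdorff \emph{size} of $P_{r,s'}$, which the $\bM$-approximation does not provide: simplices with coefficient norm much smaller than $\theta$ can have arbitrarily large total $\cH^{m-1}$-measure while contributing negligibly to $\bM(P_{r,s'})$. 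Your fix — ``transferring'' those small-coefficient simplices into the error chain $R_{r,s'}$ — does not work as stated, because $R_{r,s'}$ is an $m$-chain: you would have to fill the discarded $(m-1)$-cycle with an $m$-chain of small $\bM$, and there is no a priori bound that makes this possible (isoperimetric control would require the mass of the discarded piece to be small, which it need not be). You acknowledge this as the ``delicate point'' needing a ``routine refinement of \cite{DePauw2},'' but it is not routine — and it is also unnecessary. The paper avoids the problem entirely: \cite[Theorem~4.2(E)]{DePauw2} is a statement about rectifiable chains over an \emph{arbitrary} complete normed group, so one simply invokes it with the norm $\|\cdot\|_\theta$ in place of $\|\cdot\|$ and obtains directly $\bM_\theta(P_{r,s}) \leq \bM_\theta(T_r) + s$, $\bM_\theta(R_{r,s}) < s$. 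The role of hypothesis (A) is then not what you describe, but to guarantee $\bM_\theta(T_r) = \bM(T_r)$ for a.e.\ $r$ (the slices inherit the group elements of $T$, all of norm $\geq\theta$), which is the bridge that lets the paper pass from $\bM_\theta$ of the polyhedral cone back to $\bM(\curr 0\cone T_r)$ in estimate (3). Your cone estimates for $\curr 0\cone P_{r,s'}$ and $\bar P_{r,s'}\res\B(0,r)$ are in the right spirit, but they need to be run with $\bM_\theta$ from the start, not with $\bM$ followed by an unproved conversion.
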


\begin{proof}
Let $f : \Hi \to \R$ be the $1$-Lipschitz function given by $f(x) \defl |x|$. As noted before, the slice $\langle T,f,r\rangle$ exists for almost every $r \in \R$ and is an element of $\cR_{m-1}(\Hi;G)$. By \cite[Theorem~5.2.4]{PH} we have that $\langle T,f,r\rangle = \partial (T \res \{f < r\}) = \partial (T \res \B(0,r)) = T_r$ for almost every $r \leq r_0$. By the compactness assumption on $T$, the chain $T_r$ has also compact support. It follows from \cite[Theorem 4.2 (E)]{DePauw2} that there are $P_{r,s} \in \cP_{m-1}(\Hi;G)$ and $R_{r,s} \in \cR_m(\Hi;G)$ such that $P_{r,s} - T_{r} = \partial R_{r,s}$, $\bM_\theta(P_{r,s}) \leq \bM_\theta(T_{r}) + s$, $\bM_\theta(R_{r,s}) < s$ and $\spt(R_{r,s}) \subset \oB(\spt(T_{r}),s)$. In particular we have that $\partial P_{r,s} = 0$. \eqref{boundaries} and \eqref{sestimate} hold by construction. By the lower bound on the densities there holds $\bM(T_r) = \bM_\theta(T_r)$ for almost all $r$ because slices inherit the group elements from the original chain. From now on we also assume that $2s \leq r$.

As stated in \eqref{cone_equality} and \eqref{cone_inequality}, $\bM(\curr 0 \cone T_r) = \frac{r}{m}\bM(T_r)$ and also,
\begin{align}
\nonumber
\bM_\theta(\curr 0 \cone P_{r,s}) \res \B(0,r - s)) & \leq \frac{r-s}{m}\bM_\theta(P_{r,s}) \leq \frac{r}{m}\bM_\theta(T_r) + \frac{rs}{m} \\
\nonumber
 & = \bM(\curr 0 \cone T_r) + \frac{rs}{m} \\
\label{chainconemasscomp}
 & \leq \bM(\curr 0 \cone T_{r}) + s.
\end{align}
Let $\bar P_{r,s}$ be a large enough (or infinite) scaled version of $\curr 0 \cone P_{r,s}$ that has its boundary outside $\oB(0,2r)$. Since $\spt(P_{r,s}) \subset \B(\spt(T_r), s)$, we have $\spt(\curr 0 \cone P_{r,s}) \subset \spt(\bar P_{r,s}) \cap \B(0,r + s)$ and hence with \eqref{chainconemasscomp} (note that $2s \leq r$),
\begin{align*}
\max\{\bM_\theta(\bar P_{r,s} \res \B(0,r)), & \bM_\theta(\curr 0 \cone P_{r,s})\}\\
 & \leq \bM_\theta(\bar P_{r,s} \res \B(0,r + s)) \\
 & \leq \frac{(r + s)^m}{(r - s)^m} \bM_\theta(\bar P_{r,s} \res \B(0,r - s)) \\
 & \leq \frac{(r + s)^m}{(r - s)^m}\left(\bM(\curr 0 \cone T_{r}) + s\right) \,.
\end{align*}
This converges to $\bM(\curr 0 \cone T_{r})$ for $s \to 0$ and by replacing $s$ with a smaller value if necessary, we obtain \eqref{conebounds}.
\end{proof}

Next we use Reifenberg's epiperimetric inequality of Theorem~\ref{masscomparison2} to obtain a differential inequality for $f(r) \defl \bM(T \res \B(x,r))$, where $T$ is an almost minimizing $G$-chain.

\begin{Prop}
\label{rundifferentialequation}
There is a constant $0 < \epsilon_{\theThm}(m) \leq \frac{1}{4}$ with the following property. Let $T \in \cR_m(\Hi;G)$, $g_0 \in G \setminus \{0_G\}$, $0 < r_0 < 1$, $0 < \epsilon \leq \epsilon_{\theThm}(m)$ and assume that for any $0 < r \leq r_0$ there is some $W_r \in \bG(\Hi,m)$ such that:
\begin{enumerate}
	\item $\spt(\partial T) \subset \Hi \setminus \B(0,3r_0)$,
	\item $T$ is $(\bM,\xi,2r_0)$-minimal in $\B(0,r_0)$ for a continuous gauge $\xi$,
	\item $\Theta^m(\|T\|,x) \geq \frac{3}{4}\|g_0\|$ for $\|T\|$-a.e.\ $x \in \B(0,r_0)$,
	\item $\hdist(\B(0,2r) \cap \spt(T), \B(0,2r) \cap W_r) \leq \epsilon r$,
	\item $\Exc(T \res \B(0,2r),0,r,W_r) \leq \|g_0\|\epsilon r^m$,
	\item $\pi_{W_r\#} \left(T \res (\B(0,2r) \cap Z_{W_r}(r))\right) = g_0 \curr{\B_{W_r}(0,r)}$.
\end{enumerate}
If we set $f(r) \defl \bM(T \res \B(0,r))$ and $\lambda \defl \lambda_{\ref{masscomparison2}}(m)$, then for almost every $r \in [0,r_0]$,
\[
f(r) \leq (1 + \xi(r)) \frac{r}{m}\left(\lambda f'(r) + (1 - \lambda)\|g_0\|\balpha(m)mr^{m-1}\right) \,.
\]
\end{Prop}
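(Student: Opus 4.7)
The plan is to run the epiperimetric inequality (Theorem~\ref{masscomparison2}) on a polyhedral approximation of the cone over the sphere slice of $T$, then plug the resulting comparison surface into the almost minimality of $T$. Fix $r \in (0,r_0]$ in the full measure set of radii for which the slice $T_r \defl \partial(T \res \B(0,r)) = \langle T, |\cdot|, r\rangle$ belongs to $\cR_{m-1}(\Hi;G)$ and for which the coarea inequality \eqref{slicemass} yields $\bM(T_r) \leq f'(r)$. Equivalently, the elementary cone over this slice satisfies $\bM(\curr 0 \cone T_r) = (r/m)\bM(T_r) \leq (r/m)f'(r)$ by \eqref{cone_equality}. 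By hypotheses (3)–(6) and Lemma~\ref{compactlem} (applied with $\theta=\tfrac{3}{4}\|g_0\|$, which holds after further adjusting $\epsilon_{\theThm}$ so the spherical excess at scale $r_0$ is at most $\theta/2$), $\spt(T) \cap \B(0,2r)$ is compact, so Lemma~\ref{polyhedralcone} applies with coefficient norm $\|\cdot\|_\theta$.

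For each small $s > 0$ Lemma~\ref{polyhedralcone} furnishes $P_{r,s} \in \cP_{m-1}(\Hi;G)$ and $R_{r,s} \in \cR_m(\Hi;G)$ with $\partial(R_{r,s} + \curr 0 \cone P_{r,s}) = T_r$, $\bM(R_{r,s}) < s$, $\spt(R_{r,s}) \subset \oB(\spt(T_r),s)$, and $\bM_\theta(\bar P_{r,s} \res \B(0,r)) \leq (r/m)\bM(T_r) + s$, where $\bar P_{r,s}$ is the infinite cone extending $\curr 0 \cone P_{r,s}$. The bound in $\bM_\theta$ automatically forces every coefficient of $P_{r,s}$ to have norm at least $\tfrac{3}{4}\|g_0\|$, which is exactly condition (3) of Theorem~\ref{masscomparison2}. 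The remaining hypotheses of the epiperimetric inequality, applied to the rescaled cone $\eta_{1/r\#}\bar P_{r,s}$ and to the plane $W_r$, follow from the present hypotheses (4), (5), (6): smallness of the height (hypothesis (4)) yields the slab condition; the cylindrical push–forward identity (hypothesis (6)) together with the compactness of $\spt(T) \cap \B(0,2r)$ and the constancy theorem transfers to $\bar P_{r,s}$ provided $s$ is small enough; and the cylindrical mass excess of $\bar P_{r,s}$ is controlled by the mass excess of $T$ (hypothesis (5)) up to an error $O(s)$ from the polyhedral approximation. Choosing $\epsilon_{\theThm}(m) \leq \epsilon_{\ref{masscomparison2}}(m)$ and taking $s$ small we may invoke Theorem~\ref{masscomparison2} to obtain $S_{r,s} \in \cR_m(\Hi;G)$ with $\spt(S_{r,s}) \subset \B(0,r)$, $\partial S_{r,s} = P_{r,s}$, and
\[
\bM(S_{r,s}) \leq \lambda\bigl((r/m)\bM(T_r) + s\bigr) + (1-\lambda)\|g_0\|\balpha(m)r^m.
\]

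Now we set $\Sigma \defl S_{r,s} + R_{r,s} - T \res \B(0,r)$. A direct computation using $T_r = P_{r,s} + \partial R_{r,s}$ gives $\partial \Sigma = 0$ and $\spt(\Sigma) \subset \B(0, r+s)$. Since $r+s < 2r_0 \leq 2r_0$, we apply the $(\bM,\xi,2r_0)$-minimality at radius $r+s$ to get
\[
\bM(T \res \B(0,r+s)) \leq (1+\xi(r+s))\,\bM\bigl(T \res (\B(0,r+s) \setminus \B(0,r)) + S_{r,s} + R_{r,s}\bigr),
\]
which after subtracting $\bM(T \res (\B(0,r+s) \setminus \B(0,r)))$ from both sides and rearranging becomes
\[
f(r) \leq \bM(S_{r,s}) + s + \xi(r+s)\bigl(f(r+s) - f(r) + \bM(S_{r,s}) + s\bigr).
\]

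Letting $s \downarrow 0$ along the full–measure set of radii where $f$ is differentiable and $f$ is continuous at $r$, the term $f(r+s) - f(r)$ vanishes, $\bM(R_{r,s}) \to 0$, and the bound on $\bM(S_{r,s})$ passes to the limit to yield
\[
f(r) \leq (1 + \xi(r))\bigl[\lambda (r/m) f'(r) + (1-\lambda)\|g_0\|\balpha(m) r^m\bigr],
\]
which is the desired differential inequality after rewriting $\balpha(m)r^m = (r/m)\balpha(m) m r^{m-1}$. The main difficulty is verifying the hypotheses of Theorem~\ref{masscomparison2} for the polyhedral cone $\bar P_{r,s}$: the height and cylindrical excess assumptions (4), (5) must survive the passage from $T$ to the approximating cone, and the projection identity (6) must be transferred via a constancy–theorem argument controlled by the rectifiable remainder $R_{r,s}$; once $s$ is chosen small enough in terms of the allowed slack in $\epsilon_{\theThm}$ both transfers are straightforward.
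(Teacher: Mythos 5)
The proposal correctly identifies the overall strategy (polyhedral approximation via Lemma~\ref{polyhedralcone}, epiperimetric inequality via Theorem~\ref{masscomparison2}, substitution into almost minimality, limit $s \downarrow 0$), but it has a genuine gap in the verification of the hypotheses of Theorem~\ref{masscomparison2}.

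You claim that ``the cylindrical mass excess of $\bar P_{r,s}$ is controlled by the mass excess of $T$ (hypothesis (5)) up to an error $O(s)$.'' This is not true without further argument. The relevant estimate from Lemma~\ref{polyhedralcone} is
\[
\bM_\theta(\bar P_{r,s} \res \B(0,r)) \leq \bM(\curr 0 \cone T_r) + s = \tfrac{r}{m}\bM(T_r) + s \,,
\]
so the cylindrical excess of $\bar P_{r,s}$ over $W_r$ is essentially controlled by $\bM(\curr 0 \cone T_r)$, \emph{not} by $\Exc(T \res \B(0,2r),0,r,W_r)$. These two quantities are genuinely different: hypothesis (5) bounds the mass of $T$ inside the cylinder, whereas $\bM(\curr 0 \cone T_r) = \frac{r}{m}\bM(T_r) \leq \frac{r}{m}f'(r)$ involves the boundary slice, and $f'(r)$ can be large for a generic $r$ even when $f(r)$ is small. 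In fact, proving a bound on $\bM(T_r)$ in terms of $f(r)$ is precisely what the Proposition is aiming at; you cannot assume it.

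The paper handles this by a case distinction which your proposal omits. If $\lambda \bM(\curr 0 \cone T_r) \geq \|g_0\|(\epsilon r^m + \lambda\balpha(m)r^m)$, the desired inequality already follows directly from (4)--(6) without Theorem~\ref{masscomparison2}: the height bound (4) gives $\spt(T) \cap \B(0,r) \subset \B(0,2r) \cap Z_{W_r}(r)$, so
\[
f(r) \leq \Exc(T \res \B(0,2r),0,r,W_r) + \|g_0\|\balpha(m)r^m \leq \|g_0\|(\epsilon + \balpha(m))r^m \leq \lambda \bM(\curr 0 \cone T_r) + (1-\lambda)\|g_0\|\balpha(m)r^m \,.
\]
In the complementary case $\bM(\curr 0 \cone T_r) \leq \|g_0\|(\lambda^{-1}\epsilon + \balpha(m))r^m$, one can bound the cylindrical excess of $\bar P_{r,s}$ via Lemma~\ref{polyhedralcone}(3) together with the height bound (4) (which gives $\spt(\bar P_{r,s}) \cap Z_{W_r}(r) \subset \B(0,(1+\epsilon)r)$), and then Theorem~\ref{masscomparison2} becomes applicable. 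Without this split, your verification of the excess hypothesis of Theorem~\ref{masscomparison2} has no justification, and the argument fails exactly when $f'(r)$ is large relative to $f(r)$ --- which is a set of radii you cannot discard.
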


\begin{proof}
As in Lemma~\ref{polyhedralcone} we use the notation $T_r \defl \partial (T \res \B(0,r))$ for $r \in [0,r_0]$. Assume that
\[
s \leq \min\left\{\|g_0\|\epsilon r^m, 2^{-1}r \right\} \,,
\]
and for the application of Lemma~\ref{polyhedralcone} let $\theta \defl \frac{3}{4}\|g_0\|$ (notice $\spt(T) \cap \B(0,r_0)$ is compact according to Lemma~\ref{compactlem}) and consider the two chains $R_{r,s}$ and $P_{r,s}$ as constructed there. The constant $\theta$ is justified by our assumption on the densities of $\|T\|$. We also abbreviate $\|g\|_\theta \defl \max\{\|g\|,\theta\}$ if $g \neq 0_G$ and $\bM_\theta$ the associated mass. Note that $\bM \leq \bM_\theta$, $\|g_0\|_\theta = \|g_0\|$ and $\bM(T \res B) = \bM_\theta(T \res B)$ for every Borel set $B \subset \B(0,r_0)$. Let $\bar P_{r,s}$ and $\bar T_{r}$ be the infinite cones generated by $\curr 0 \cone P_{r,s}$ and $\curr 0 \cone T_r$ respectively. The following holds for almost all $r$:
\begin{equation}
\label{containedianulus}
\spt(R_{r,s}) \subset \oB(\spt(T_r),s) \subset \oB(0,r+s)\setminus \B(0,r-s) \,,
\end{equation}
\begin{equation}
\label{containedinball}
\spt(\curr 0 \cone P_{r,s} + R_{r,s}) \subset \oB(0,r + s) \,,
\end{equation}
\begin{equation}
\label{boundaryofcones}
\partial (\curr 0 \cone P_{r,s} + R_{r,s}) = T_r \,,
\end{equation}
\begin{equation}
\label{massofcones}
\bM_\theta(\curr 0 \cone P_{r,s}) \leq \bM(\curr 0 \cone T_{r}) + s, \quad \bM_\theta(R_{r,s}) \leq s \,,
\end{equation}
\begin{equation}
\label{coneballestimate}
\bM_\theta(\bar P_{r,s} \res \B(0,r)) \leq \bM(\curr 0 \cone T_r) + s \,.
\end{equation}
We want to show that
\begin{equation}
\label{firstfformula}
f(r) \leq (1 + \xi(r)) \left(\lambda \bM(\curr 0 \cone T_{r}) + (1 - \lambda)\|g_0\|\balpha(m)r^m\right) \,,
\end{equation}
for $\lambda = \lambda_{\ref{masscomparison2}} \in (\frac{1}{2},1)$. In order to apply Theorem~\ref{masscomparison2} it is necessary that the cone $\bar T_r$ has small cylindrical excess, but this may not hold for almost all $r$. So we consider two cases. First assume that $\lambda\bM(\curr 0 \cone T_{r}) \geq \|g_0\|(\epsilon r^m + \lambda\balpha(m) r^m)$. For $W \defl W_{r}$, assumptions (5) and (6) imply,
\begin{align*}
f(r) & \leq \bM(T \res (\B(0,2r) \cap Z_W(r))) \\
 & \leq \Exc(T \res \B(0,2r),0,r,W) + \|g_0\|\balpha(m)r^m \\
 & \leq \|g_0\|(\epsilon r^m + \balpha(m)r^m) \\
 & \leq \lambda \bM(\curr 0 \cone T_{r}) + (1 - \lambda)\|g_0\|\balpha(m) r^m \\
 & \leq (1 + \xi(r)) \left(\lambda \bM(\curr 0 \cone T_{r}) + (1 - \lambda)\|g_0\|\balpha(m)r^m\right) \,.
\end{align*}
In the second case $\bM(\curr 0 \cone T_{r}) \leq \|g_0\|(\lambda^{-1}\epsilon r^m + \balpha(m) r^m) \leq \|g_0\|(2\epsilon + \balpha(m)) r^m$. By assumption (4), $|\pi_{W^\perp}(x)| \leq \epsilon r$ for $x \in \spt(T) \cap \B(0,2r)$ and hence $|x| \leq (1 + \epsilon)r$ for all $x \in \spt(T) \cap \B(0,2r) \cap Z_{W}(r)$. Since we assume $s \leq \|g_0\|\epsilon r^m$, it follows from (5) and \eqref{coneballestimate} that
\begin{align}
	\nonumber
\bM_\theta(\bar P_{r,s} \res Z_W(r)) & \leq \bM_\theta(\bar P_{r,s} \res \B(0,(1 + \epsilon)r)) \\
	\nonumber
 & = (1 + \epsilon)^m \bM_\theta(\bar P_{r,s} \res \B(0,r)) \\
	\nonumber
 & \leq (1 + \epsilon)^m (\bM(\curr 0 \cone T_{r}) + s) \\
	\nonumber
 & \leq (1 + \epsilon)^m \|g_0\|((2\epsilon + \balpha(m))r^m + \epsilon r^m) \\
	\label{excessboundpoly}
 & \leq \bc(m)\|g_0\|\epsilon r^m + \|g_0\|\balpha(m)r^m \,,
\end{align}
for some constant $\bc(m) > 0$. By the constancy theorem it is $\pi_{W\#}(\bar P_{r,s} \res Z_W(r)) = g\curr{\B(0,r)}$ for some $g \in G$. It is indeed $g = g_0$ as we will see. From \eqref{boundaryofcones} it follows that $\partial (\curr 0 \cone P_{r,s} + R_{r,s} - (T \res \B(0,r))) = 0$ and hence $\pi_{W\#} (\curr 0 \cone P_{r,s} + R_{r,s}) = \pi_{W\#} (T \res \B(0,r))$ by the constancy theorem. From assumptions (4), (6) and the fact that $\epsilon \leq \frac{1}{4}$ it follows that the chain $\pi_{W\#} (T \res \B(0,r))$ has weight $g_0$ on $\B_W(0,4^{-1}r)$. From \eqref{containedianulus}, (4), $s \leq \frac{r}{2}$ and $\epsilon \leq \frac{1}{4}$ it follows that
\[
\spt(\pi_{W\#} R_{r,s}) \subset \B_W(0,r+s) \setminus \B_W(0,r-s-\epsilon r) \subset \B_W(0,r+s) \setminus \B_W(0,4^{-1} r) \,.
\]
So $\pi_{W\#} (\curr 0 \cone P_{r,s})$ has the same weight as $\pi_{W\#} (T \res \B(0,r))$ on $\B_W(0,4^{-1} r)$ which is $g_0$. Hence, if $\bc(m)\epsilon_{\theThm} < \epsilon_{\ref{masscomparison2}}$, it follows from \eqref{excessboundpoly} that
\[
\Exc_1(\bar P_{r,s}, W) < \|g_0\|\epsilon_{\ref{masscomparison2}} \,,
\]
and if $\epsilon_{\theThm} < \frac{1}{4}\epsilon_{\ref{masscomparison2}}$, it follows from (4) and \eqref{containedianulus} that for all $x \in \spt(\bar P_{r,s}) \cap Z_W$,
\[
|\pi_{W^\perp}(x)| \leq \sup_{y \in \spt(T_r)} \frac{|\pi_{W^\perp}(y)| +s}{r-s-|\pi_{W^\perp}(y)|} \leq \frac{2\epsilon_{\theThm} r}{r - 2\epsilon_{\theThm} r} \leq 4\epsilon_{\theThm} < \epsilon_{\ref{masscomparison2}} \,.
\]
Hence we can apply Theorem~\ref{masscomparison2} to the cone $\bar P_{r,s}$. This gives a new chain $S_{r,s} \in \cR_m(\Hi;G)$ with $\partial S_{r,s} = \partial (\bar P_{r,s} \res \B(0,r-s)) = \partial ((\curr 0 \cone P_{r,s}) \res \B(0,r-s))$ and
\[
\bM_\theta(S_{r,s}) \leq \lambda \bM_\theta((\curr 0 \cone P_{r,s}) \res \B(0,r-s)) + (1-\lambda)\|g_0\|\balpha(m) (r-s)^m \,.
\]
Let $S'_{r,s} \defl S_{r,s} + R_{r,s} + (\curr 0 \cone P_{r,s}) \res \B(0,r-s)^c$. By \eqref{boundaryofcones}, $\partial S'_{r,s} = T_r$ and by \eqref{massofcones} and \eqref{coneballestimate},
\begin{align*}
e(r,s) & \defl \bM_\theta(R_{r,s} + ((0 \cone P_{r,s}) \res \B(0,r-s)^c)) \\
 & \leq s + ((1 + sr^{-1})^m - (1 - sr^{-1})^m)(\bM_\theta(\curr 0 \cone T_r) + s) \,,
\end{align*}
and this converges to $0$ for $s \to 0$. By the almost minimality of $T$, \eqref{containedinball} and \eqref{massofcones} it follows for almost all $r$ and small enough $s$,
\begin{align*}
f(r) & = \bM(T \res \B(0,r)) \leq (1 + \xi(r + s))\bM_\theta(S'_{r,s}) \\
 & \leq (1 + \xi(r + s))(\bM_\theta(S_{r,s}) + e(r,s)) \\
 & \leq (1 + \xi(r + s))\bigl(\lambda \bM_\theta((\curr 0 \cone P_{r,s}) \res \B(0,r-s)) \\
 & \qquad \qquad \qquad \qquad \quad + (1-\lambda)\|g_0\|\balpha(m) (r-s)^m + e(r,s)\bigr) \\
 & \leq (1 + \xi(r + s))\bigl(\lambda(\bM(\curr 0 \cone T_r) + s) + (1-\lambda)\|g_0\|\balpha(m) (r-s)^m + e(r,s)\bigr) \,.
\end{align*}
Taking the limit for $s \to 0$ we obtain \eqref{firstfformula} for almost all $r \in [0,r_0]$. $f(r) = \bM(T \res \B(0,r))$ is monotonically increasing and hence differentiable almost everywhere. By the integral slice estimate \eqref{slicemass} we obtain for all $0 < a \leq b \leq r_0$,
\[
\int_a^b \bM(T_r) \, dr \leq \bM(T \res (\B(0,b) \setminus \oB(0,a))) = f(b) - f(a) \,.
\]
If $r$ is a Lebesgue point of $s \mapsto \bM(T_s)$ and also a point of differentiability for $f$, then $\bM(T_r) \leq f'(r)$. Almost every $r$ is such a point, hence $\bM(\curr 0 \cone T_r) = \frac{r}{m}\bM(T_r) \leq \frac{r}{m}f'(r)$ for almost all $r$. Applied to formula \eqref{firstfformula} we obtain the result.
\end{proof}

Next we state a lemma that treats the differential inequality obtained in Proposition~\ref{rundifferentialequation}.

\begin{Lem}
	\label{diffequation2}
Let $f,\xi,\Xi : (0,r_0] \to [0,\infty)$ be gauges and $\lambda \in (0,1)$, $\theta > 0$. Assume that $\xi$ is continuous and for almost all $r \in [0,r_0]$,
\begin{align*}
\Xi(r) & = m\int_0^r \frac{\xi(t)}{t} \, dt < \infty \,, \\
\theta r^m & \leq \exp(\Xi(r))f(r) \,, \\
f(r) & \leq (1 + \xi(r)) \frac{r}{m}\left(\lambda f'(r) + (1-\lambda)\theta mr^{m-1}\right) \,.
\end{align*}
Assume further that $(1 + \xi(r_0))\sqrt{\lambda} \leq \lambda_0 < 1$, $\exp(\Xi(r_0)) \leq 2$, $\xi(r) \leq \Xi(r)$ and
\[
	\lambda_0 \leq \frac{\Xi(r)}{\Xi(r) + \xi(r)} \,, \quad \mbox{for all } r \,.
\]
Then for all $0 < r \leq r_0$,
\[
	\exp(\Xi(r))\frac{f(r)}{r^m} - \theta \leq \Xi(r)\left(\Xi(r_0)^{-1}\left(\exp(\Xi(r_0))\frac{f(r_0)}{r_0^m} - \theta\right) + 8\theta \right) \,.
\]
\end{Lem}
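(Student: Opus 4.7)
The plan is to convert the differential inequality for $f$ into a Gronwall-type estimate for
\[
g(r) \defl \exp(\Xi(r))\frac{f(r)}{r^m} - \theta,
\]
which is non-negative by the second hypothesis. Using $\Xi'(r) = m\xi(r)/r$, a direct computation gives
\[
g'(r) = \frac{\exp(\Xi(r))}{r^m} f'(r) - \frac{m(1-\xi(r))}{r}\bigl(g(r) + \theta\bigr) \,.
\]
Combined with the lower bound $f'(r) \geq \frac{m f(r)}{\lambda r(1+\xi(r))} - \frac{(1-\lambda) m \theta r^{m-1}}{\lambda}$ obtained by rearranging the third hypothesis, and writing $\mu \defl m(1-\lambda)/\lambda$, this rearranges into a linear first-order differential inequality of the form
\[
g'(r) \geq \Bigl(\frac{\mu}{r} - \phi(r)\Bigr)g(r) - \psi(r),
\]
where $\phi(r) = O\bigl(\xi(r)/(\lambda r)\bigr)$ and, using $e^{\Xi}-1 \leq 2\Xi$ on $[0,\log 2]$, $\psi(r) = O\bigl(\theta(\mu\Xi(r) + \xi(r)/\lambda)/r\bigr)$.

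Applying the integrating factor $r^{-\mu} h(r)$, where $h(r) = \exp\bigl(\int_r^{r_0}\phi(s)\,ds\bigr)$, is the next step; the boundedness $\int_r^{r_0}\phi \leq \Xi(r_0)/\lambda$ keeps $h$ comparable to a constant. Integrating from $r$ to $r_0$ and absorbing $h$ into the constants produces
\[
g(r) \leq \Bigl(\frac{r}{r_0}\Bigr)^\mu g(r_0) + C\,\theta\, r^\mu \int_r^{r_0} s^{-\mu-1}\,\Xi(s)\,ds \,.
\]

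The crucial interplay between $\mu$, $\xi$ and $\Xi$ now enters. The hypothesis $\lambda_0 \leq \Xi(r)/(\Xi(r)+\xi(r))$ is equivalent to $\xi(r) \leq \frac{1-\lambda_0}{\lambda_0}\,\Xi(r)$, which via $\Xi'(r)=m\xi(r)/r$ gives $(\log\Xi)'(r) \leq m(1-\lambda_0)/(\lambda_0 r)$; integrating from $r$ to $r_0$ yields
\[
\frac{\Xi(r_0)}{\Xi(r)} \leq \Bigl(\frac{r_0}{r}\Bigr)^{\beta}, \qquad \beta \defl \frac{m(1-\lambda_0)}{\lambda_0} \,.
\]
The hypothesis $(1+\xi(r_0))\sqrt{\lambda} \leq \lambda_0$ in particular forces $\lambda \leq \lambda_0^2$, whence $\mu \geq m(1-\lambda_0^2)/\lambda_0^2 > \beta$ and an elementary computation gives $\mu - \beta \geq m(1-\lambda_0)/\lambda_0^2$ and $\mu/(\mu-\beta) \leq 1+\lambda_0 < 2$. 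Consequently $(r/r_0)^\mu \leq (r/r_0)^\beta \leq \Xi(r)/\Xi(r_0)$, which handles the first summand. For the integral, the pointwise bound $\Xi(s) \leq (s/r)^\beta \Xi(r)$ allows the explicit estimate
\[
r^\mu \int_r^{r_0} s^{-\mu-1} \Xi(s)\, ds \leq \frac{\Xi(r)}{\mu - \beta} \,.
\]
Collecting these, $g(r) \leq (\Xi(r)/\Xi(r_0))\, g(r_0) + C(\lambda_0,\lambda)\, \theta\, \Xi(r)$, and a careful accounting of the constants emerging from bounding $\phi$, $\psi$ and $h$ (together with $\mu/(\mu-\beta) \leq 2$) keeps $C(\lambda_0,\lambda) \leq 8$.

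The main technical obstacle is the first step: transforming the $f$-inequality into the clean form $r g'(r) \geq \mu g(r) - r\psi(r)$ with a remainder $\psi$ controlled linearly by $\theta\Xi$ and $\theta\xi$. Once that form is in hand, everything reduces to the spectral comparison $\mu \geq \beta$ and integration-by-parts estimates, which proceed mechanically. The verification that the numerical constant is no worse than $8$ is just bookkeeping, but relies essentially on the fact that $\lambda$ cannot exceed $\lambda_0^2$ and that $\exp(\Xi(r_0)) \leq 2$ keeps the ``perturbation'' factor $h$ uniformly bounded.
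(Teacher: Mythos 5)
Your argument takes a genuinely different route from the paper's. The paper first derives an inequality for $e(r) = r^m g(r) = \exp(\Xi(r))f(r) - \theta r^m$, then splits into two cases: if $(1-\sqrt\lambda)e(r)$ is dominated by the error $\theta r^m(1-\lambda)(\exp(\Xi)(1+\xi)-1)$, it bounds $e(r) \leq 8\theta r^m\Xi(r)$ directly; otherwise it deduces the \emph{exact} differential inequality $e(r) \leq \frac{\Xi}{\Xi+\xi}\frac{r}{m}e'(r)$, whose exponent $\tfrac{m(\Xi+\xi)}{\Xi}$ is tailored to match $\partial_r\log(r^m\Xi(r))$. Taking $h(r)=\max\{e(r),8\theta r^m\Xi(r)\}$ then makes the integration immediate and yields the constant $8$ with no $\lambda$-dependence. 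Your plan instead produces a single Gronwall inequality $g' \geq (\mu/r - \phi)g - \psi$ with $\mu = m(1-\lambda)/\lambda$, integrates with an explicit factor, and then compares the power $r^\mu$ against $\Xi$ via the spectral bound $\Xi(r_0)/\Xi(r) \leq (r_0/r)^\beta$. This is a clean and attractive reorganization, and the power comparison $\mu > \beta$, $\mu/(\mu-\beta) \leq 1+\lambda_0 < 2$ is correct.

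However, there is a genuine gap in the integrating-factor step. You observe $\int_r^{r_0}\phi \leq \Xi(r_0)/\lambda$ and conclude that $h(r) = \exp(\int_r^{r_0}\phi)$ is ``comparable to a constant'', absorbing it into $C$. But $\phi(r)$ is of size $\mu\,\xi(r)/r$, so $\int_r^{r_0}\phi \leq \tfrac{\mu}{m}\Xi(r_0) = \tfrac{(1-\lambda)\Xi(r_0)}{\lambda}$, and hence $h$ can be as large as $2^{(1-\lambda)/\lambda}$. The hypotheses fix $\lambda_0$ but allow $\lambda \to 0$ (only $\lambda \in (0,\lambda_0^2]$ is imposed), so $h$ is \emph{not} uniformly bounded and $C(\lambda_0,\lambda)$ as you have defined it is unbounded, not $\leq 8$. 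The fix is not to separate the $-\phi$ term as a bounded multiplicative perturbation: keep $A(r) \defl \mu/r - \phi(r)$ together and verify $rA(r) \geq \tilde\mu \defl \mu/(1+\xi(r_0))$, so that $\exp(-\int_r^s A) \leq (r/s)^{\tilde\mu}$ with \emph{no} blow-up. The spectral comparison must then be run against $\tilde\mu$, and this is where the full strength of the hypothesis $(1+\xi(r_0))\sqrt\lambda \leq \lambda_0$ is needed (the consequence $\lambda \leq \lambda_0^2$, which you extract, gives $\mu > \beta$ but not $\tilde\mu > \beta$). With this repair the approach goes through and the constant is uniformly controlled, but that accounting is precisely the non-trivial content of the lemma and the proposal as written does not supply it.
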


\begin{proof}
Define the function $e(r) \defl \exp(\Xi(r))f(r) - \theta r^m$. By assumption $e(r) \geq 0$ and w.l.o.g.\ we assume that $e(r) > 0$ for all $r > 0$. Otherwise we replace $\xi(r)$ with $\xi_a(r) \defl \xi(r) + ar$ for $a > 0$. Since the final estimate is continuous in $a$ we then can take the limit for $a \downarrow 0$ afterwards. 
	
Since $f$ and $\Xi$ are differentiable almost everywhere, the same is true for $e$ and therefore,
\begin{align*}
e'(r) & = \exp(\Xi(r))f'(r) + \Xi'(r)\exp(\Xi(r))f(r) - \theta mr^{m-1} \\
	& = \exp(\Xi(r))f'(r) + m\frac{\xi(r)}{r}(e(r) + \theta r^m) - \theta mr^{m-1} \\
	& = \exp(\Xi(r))f'(r) + m\frac{\xi(r)}{r}e(r) - (1-\xi(r))\theta mr^{m-1} \,.
\end{align*}
Hence,
\begin{align*}
e(r) & = \exp(\Xi(r))f(r) - \theta r^m \\
	& \leq \exp(\Xi(r))(1 + \xi(r))\frac{r}{m}\left(\lambda f'(r) + (1-\lambda)\theta mr^{m-1}\right) - \theta r^m \\
	& = (1 + \xi(r))\frac{\lambda r}{m}\bigl(e'(r) + (1-\xi(r))\theta mr^{m-1} - m\frac{\xi(r)}{r}e(r) \bigr) \\
	& \quad + \exp(\Xi(r))(1 + \xi(r))\frac{r}{m}(1-\lambda)\theta mr^{m-1} - \theta r^m \\
	& = (1 + \xi(r))\frac{\lambda r}{m}e'(r) - \lambda (1 + \xi(r))\xi(r)e(r)  \\
	& \quad + \theta r^m\bigl(\exp(\Xi(r))(1 + \xi(r))(1-\lambda) + (1-\xi(r))(1 + \xi(r))\lambda - 1 \bigr) \\
	& \leq (1 + \xi(r))\frac{\lambda r}{m}e'(r) - \lambda (1 + \xi(r))\xi(r)e(r) \\
	& \quad + \theta r^m(1-\lambda)\left(\exp(\Xi(r))(1 + \xi(r)) - 1 \right) \,.
\end{align*}
We look at two cases. First assume that for some $r$,
\begin{equation}
\label{firstassumption}
(1-\sqrt{\lambda}) e(r) \leq \theta r^m(1-\lambda)\bigl(\exp(\Xi(r))(1 + \xi(r)) - 1 \bigr) \,.
\end{equation}
Since we assume that $\exp(\Xi(r_0)) \leq 2$ and $\xi(r) \leq \Xi(r)$ for all $r$, we get
\begin{align}
	\nonumber
e(r) & \leq \theta r^m (1+\sqrt{\lambda})\bigl(\exp(\Xi(r))(1 + \xi(r)) - 1 \bigr) \\
	\nonumber
	& \leq \theta r^m (1+\sqrt{\lambda})\bigl(\exp(\Xi(r_0))\Xi(r) + 2\xi(r) \bigr) \\
	\nonumber
	& \leq 2\theta r^m\left(2\Xi(r) + 2\Xi(r) \right) \\
	\label{firsterbound}
	& \leq 8\theta r^m \Xi(r) \defr g(r) \,.
\end{align}
For this function $g$ the derivative calculates as
\begin{align}
	\nonumber
g'(r) & = 8\theta mr^{m-1}\Xi(r) + 8\theta r^{m-1}m\xi(r) = 8\theta r^{m-1}m(\Xi(r) + \xi(r)) \\
	\label{gbound}
	& = \frac{\Xi(r) + \xi(r)}{\Xi(r)}\frac{m}{r}g(r)\,.
\end{align}
	
For almost all $r$ at which \eqref{firstassumption} doesn't hold,
\[
	\sqrt{\lambda}e(r) \leq (1 + \xi(r))\frac{\lambda r}{m}e'(r) \,,
\]
because the sum of this inequality with the one in \eqref{firstassumption} has to be satisfied for almost every $r$ by our bound on $e(r)$ established before. In this case,
\[
	e(r) \leq (1 + \xi(r_0))\frac{\sqrt\lambda r}{m}e'(r) \leq \frac{\lambda_0 r}{m}e'(r) \leq \frac{\Xi(r)}{\Xi(r) + \xi(r)}\frac{r}{m}e'(r) \,.
\]
Let $h(r) \defl \max\{e(r),g(r)\}$. Then for almost every $r \in \{g \geq e\}$,
\[
h'(r) = g'(r) = \frac{\Xi(r) + \xi(r)}{\Xi(r)}\frac{m}{r}g(r) = \frac{\Xi(r) + \xi(r)}{\Xi(r)}\frac{m}{r}h(r) \,.
\]
and for almost every $r \in \{g < e\}$,
\[
h'(r) = e'(r) \geq \frac{\Xi(r) + \xi(r)}{\Xi(r)}\frac{m}{r}e(r) = \frac{\Xi(r) + \xi(r)}{\Xi(r)}\frac{m}{r}h(r) \,.
\]
Hence for almost every $r$,
\begin{align*}
\frac{\Xi(r) + \xi(r)}{\Xi(r)}\frac{m}{r}h(r) & \leq  h'(r) \,.
\end{align*}
Because $\Xi'(r) = \frac{m}{r}\xi(r)$,
\begin{align*}
\frac{\partial}{\partial r} \log(r^m\Xi(r)) & = \frac{\partial}{\partial r} \left(m\log(r) + \log(\Xi(r))\right) = \left(1 + \frac{\xi(r)}{\Xi(r)} \right) \frac{m}{r} \\
	& \leq \frac{h'(r)}{h(r)} = \frac{\partial}{\partial r} \log(h(r)) \,.
\end{align*}
Integrating gives for all $0 < r \leq r_0$,
\[
	\frac{r_0^m\Xi(r_0)}{r^m\Xi(r)} \leq \frac{h(r_0)}{h(r)} \,,
\]
respectively,
\[
	\frac{e(r)}{r^m} \leq \frac{h(r)}{r^m} \leq \frac{\Xi(r)}{\Xi(r_0)} \frac{h(r)}{r_0^m} \leq \frac{\Xi(r)}{r_0^m\Xi(r_0)}\left(\exp(\Xi(r_0))f(r_0) - \theta r_0^m + 8\theta r_0^m \Xi(r_0)\right) \,.
\]
This shows the lemma.
\end{proof}

Before we proceed we give some remarks on the assumptions on $\xi$ in the lemma above.

\begin{Lem}
	\label{technicalproperties}
The following properties hold for a gauge $\xi : (0,r_0] \to \R_+$:
\begin{enumerate}
	\item If $\xi$ is concave, then $\Xi(r) \geq m\xi(r) \geq \xi(r)$ for all $r$.
	\item If $\frac{\xi(r)}{r^\alpha} \geq \frac{\xi(s)}{s^\alpha}$ for  all $0 < r \leq s \leq r_0$ and some $0 < \alpha \leq 1$. Then the same holds for $\beta \in [\alpha,1]$ in place of $\alpha$ and $\Xi(r) \geq \frac{m}{\alpha} \xi(r)$ is satisfied for all $r$. Additionally,
	\[
		\lambda \leq \frac{\Xi(r)}{\Xi(r) + \xi(r)} \quad \mbox{ holds for all } r \,,
	\]
	if $\lambda \leq \frac{m}{m+\alpha}$, respectively, $\alpha \leq m\frac{1 - \lambda}{\lambda}$.
\end{enumerate}
\end{Lem}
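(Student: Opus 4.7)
The statements here are elementary consequences of monotonicity and concavity, so my plan is to handle each part by direct pointwise manipulation of the integrand defining $\Xi$.

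For part (1), the plan is to first observe that for a concave gauge $\xi$ with $\xi(0+)=0$, the function $t\mapsto \xi(t)/t$ is non-increasing on $(0,r_0]$. This follows from the standard concavity inequality: for $0<s\le t$, writing $s=\frac{s}{t}\,t+(1-\frac{s}{t})\cdot 0$ gives $\xi(s)\ge \frac{s}{t}\,\xi(t)+(1-\frac{s}{t})\,\xi(0+)=\frac{s}{t}\,\xi(t)$. Then I bound $\Xi(r)$ from below by $m\int_0^r \frac{\xi(r)}{r}\,dt=m\,\xi(r)$, and since $m\ge 1$ (in fact $m$ is a positive integer here) the second inequality $m\,\xi(r)\ge\xi(r)$ is immediate.

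For part (2), the three conclusions should be dispatched in order. First, for $\beta\in[\alpha,1]$ and $0<r\le s\le r_0$, I would write
\[
\frac{\xi(r)}{r^\beta}=\frac{\xi(r)}{r^\alpha}\,r^{\alpha-\beta}\ge \frac{\xi(s)}{s^\alpha}\,s^{\alpha-\beta}=\frac{\xi(s)}{s^\beta},
\]
using the hypothesis together with the fact that $t\mapsto t^{\alpha-\beta}$ is non-increasing because $\alpha-\beta\le 0$. Second, for the integral bound, applying the hypothesis with $s$ replaced by $r$ and $r$ replaced by $t\le r$ yields $\xi(t)\ge \xi(r)\,t^{\alpha}/r^{\alpha}$, so
\[
\Xi(r)=m\int_0^r \frac{\xi(t)}{t}\,dt \;\ge\; \frac{m\,\xi(r)}{r^\alpha}\int_0^r t^{\alpha-1}\,dt=\frac{m}{\alpha}\,\xi(r).
\]
Third, since $\Xi(r)/(\Xi(r)+\xi(r))$ is an increasing function of the ratio $\Xi(r)/\xi(r)$, the previous step gives
\[
\frac{\Xi(r)}{\Xi(r)+\xi(r)}\ge\frac{m/\alpha}{m/\alpha+1}=\frac{m}{m+\alpha},
\]
and the stated equivalence $\lambda\le m/(m+\alpha)\Longleftrightarrow \alpha\le m(1-\lambda)/\lambda$ is a one-line algebraic rearrangement.

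There is no real obstacle here: the only thing to be slightly careful about is the use of the gauge assumption $\lim_{r\to 0}\xi(r)=0$ in part (1), which is needed to rule out a nonzero intercept when applying concavity at $0$, and the fact that the integrals $\int_0^r t^{\alpha-1}dt$ in part (2) converge precisely because $\alpha>0$. Both are guaranteed by the hypotheses, so the proof reduces to the short calculations indicated above.
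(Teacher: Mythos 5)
Your proof is correct and follows the same route as the paper: part (1) uses concavity plus the gauge condition $\xi(0+)=0$ to show $t\mapsto\xi(t)/t$ is non-increasing and then bounds the integrand below by $\xi(r)/r$; part (2) establishes the $\beta$-monotonicity by pairing the hypothesis with the monotonicity of $t\mapsto t^{\alpha-\beta}$, uses the same pointwise bound $\xi(t)\ge \xi(r)\,t^\alpha/r^\alpha$ to get $\Xi(r)\ge\frac{m}{\alpha}\xi(r)$, and finishes with the identical algebraic rearrangement. No gaps.
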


\begin{proof}
If $\xi$ is concave, then for $0 < r \leq r_0$ and $0 < s \leq 1$ there holds $\xi(sr) \geq s\xi(r)$, respectively, $(sr)^{-1}\xi(sr) \geq r^{-1}\xi(r)$. This shows that
\[
	\Xi(r) = m \int_0^r \frac{\xi(t)}{t} \, dt \geq m \int_0^r \frac{\xi(r)}{r} \, dt = m\xi(r) \geq \xi(r) \,.
\]
Let $\alpha$ and $\beta$ be as in (2), then
\[
\frac{\xi(s)}{s^\beta} = \frac{\xi(s)}{s^{\beta - \alpha} s^\alpha} \leq \frac{\xi(r)}{s^{\beta - \alpha} r^\alpha} = \frac{r^{\beta - \alpha}}{s^{\beta - \alpha}} \frac{\xi(r)}{r^\beta} \leq \frac{\xi(r)}{r^\beta} \,.
\]
Similarly to the concave case,
\begin{align*}
\Xi(r) & = m \int_0^r \frac{\xi(t)}{t} \, dt \geq m \int_0^r t^{\alpha - 1}\frac{\xi(r)}{r^\alpha} \, dt \\
 & = \frac{m}{\alpha} \xi(r) \,.
\end{align*}
For all $r$ we get
\[
\frac{\Xi(r)}{\Xi(r) + \xi(r)} \geq \frac{\Xi(r)}{\Xi(r) + \tfrac{\alpha}{m}\Xi(r)} = \frac{m}{m + \alpha} \,.
\]
\end{proof}

\subsection{Upper excess bounds and Reifenberg flatness}

First we give conditions that allow to estimate the excess in a neighborhood of a point with small excess.

\begin{Lem}
	\label{weakdini}
Let $\phi$ be a finite Borel measure on $\Hi$, $0 < r_0,\epsilon,\eta \leq 1$, $\xi : (0,2r_0] \to (0,\epsilon]$ be a gauge and $|x| \leq r_0\eta$. Assume that
\begin{enumerate}
	\item $\Theta^m(\phi,0) = \Theta^m(\phi,x) = 1$,
	\item $\exc^m(\phi,0,2r_0) \leq 1$,
	\item for $0 < s \leq t \leq r_0$ and $y \in \{0,x\}$,
	 \[\exp(\xi(s))\frac{\phi(\B(y,s))}{\balpha(m)s^m} \leq \exp(\xi(t))\frac{\phi(\B(y,t))}{\balpha(m)t^m} \,. \]
\end{enumerate}
Then
\begin{align*}
\exc^m(\phi,x,r_0) & \leq \exc^m(\phi,0,r_0(1 + \eta)) + 2^{m+6}\max\{\eta,\epsilon\} \,.
\end{align*}
\end{Lem}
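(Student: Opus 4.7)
The plan is to show that the density ratio $f_x(r) := \phi(\B(x,r))/(\balpha(m)r^m)$ is tightly sandwiched around $1$ throughout $(0,r_0]$, so its oscillation (which equals $\exc^m(\phi,x,r_0)$) is automatically controlled by the allowable errors. The key quantities are $E := \exc^m(\phi,0,r_0(1+\eta))$ and $\mu := \max\{\eta,\epsilon\}$, and the target is $\exc^m(\phi,x,r_0) \leq E + 2^{m+6}\mu$. Note $r_0(1+\eta) \leq 2r_0$, so hypothesis (2) gives $E \leq 1$, which will be used to absorb multiplicative factors.

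First I would derive a uniform lower bound $f_x(t) \geq 1 - \epsilon$ for every $0 < t \leq r_0$. This follows by applying hypothesis (3) at $y=x$ and letting $s \downarrow 0$: since $\xi$ is a gauge with $\xi(s)\to 0$ and $\Theta^m(\phi,x)=1$, the left side tends to $1$, yielding $1 \leq \exp(\xi(t))f_x(t)$, whence $f_x(t) \geq e^{-\epsilon} \geq 1-\epsilon$. Next, hypothesis (3) also gives $f_x(t) \leq \exp(\xi(r_0)-\xi(t))\,f_x(r_0) \leq e^\epsilon f_x(r_0)$, reducing everything to bounding $f_x(r_0)$ from above.

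For the upper bound at scale $r_0$, I would use the geometric inclusion $\B(x,r_0) \subset \B(0,r_0(1+\eta))$ (which follows from $|x| \leq r_0\eta$) to get $f_x(r_0) \leq (1+\eta)^m f_0(r_0(1+\eta))$. Since $\Theta^m(\phi,0) = 1$, one has $f_0(r_0(1+\eta)) \leq 1 + \exc^{m*}(\phi,0,r_0(1+\eta)) \leq 1 + E$. Combining, $f_x(t) \leq e^\epsilon(1+\eta)^m(1+E)$. Using $(1+2\epsilon)(1+\eta)^m \leq (1+\mu)^{m+2} \leq 1 + 2^{m+2}\mu$ (from the binomial estimate $(1+\mu)^k \leq 1+(2^k-1)\mu$ for $\mu\leq 1$) and $E \leq 1$, this simplifies to $f_x(t) \leq 1 + E + 2^{m+3}\mu$.

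For the final step, I would estimate the two pieces of the excess separately. The positive part satisfies $f_x(t) - f_x(s) \leq (1 + E + 2^{m+3}\mu) - (1-\epsilon) \leq E + 2^{m+4}\mu$. For the negative part, hypothesis (3) yields $f_x(s) \leq e^\epsilon f_x(t) \leq (1+2\epsilon)f_x(t)$, so $f_x(s) - f_x(t) \leq 2\epsilon f_x(t) \leq 2\epsilon(2 + 2^{m+3}\mu) \leq 2^{m+5}\mu$. Taking the maximum of the two bounds gives $\exc^m(\phi,x,r_0) \leq E + 2^{m+6}\mu$, as desired. The mild subtlety here is to keep the coefficient of $E$ equal to $1$: this is achieved by writing $(1+2^{m+2}\mu)(1+E) = 1 + E + 2^{m+2}\mu(1+E)$ and invoking $E \leq 1$ to collapse the cross term into the $O(\mu)$ error, rather than leaving a $(1+O(\mu))E$ contribution.
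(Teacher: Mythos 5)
Your proof is correct, and it takes a genuinely different route from the paper's. The paper compares increments $f_x(t)-f_x(s)$ (where $f_y(\rho) := \phi(\B(y,\rho))/(\balpha(m)\rho^m)$) directly to increments of $f_0$: it bounds $f_x(t)$ by $(1+\eta)^m f_0(r(1+\eta))$ up to exponential factors, and then for the lower term it invokes the equality of limits $\lim_{s\downarrow 0}e^{\xi(s)}f_x(s)=\lim_{s\downarrow 0}e^{\xi(s)}f_0(s)$ together with strict monotonicity to produce, for each $s$, a companion scale $s'\leq s$ with $e^{\xi(s')}f_0(s')\leq e^{\xi(s)}f_x(s)$; because $\xi$ need not make these functions strictly increasing, the paper further perturbs $\xi(s)\rightsquigarrow\xi(s)+as$ and lets $a\downarrow 0$. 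You instead sandwich $f_x$ uniformly on $(0,r_0]$: the lower bound $f_x\geq e^{-\epsilon}\geq 1-\epsilon$ from letting $s\downarrow 0$ in hypothesis (3), and the upper bound $f_x\leq e^\epsilon(1+\eta)^m(1+E)$ from $\B(x,r_0)\subset\B(0,r_0(1+\eta))$ and the one-sided excess at $0$; the oscillation of $f_x$ is then controlled by the width of the sandwich, and the near-monotonicity from (3) handles the decreasing part. This avoids both the $s\mapsto s'$ pairing and the perturbation argument, at the cost of slightly coarser intermediate constants, which the generous factor $2^{m+6}$ absorbs (your bookkeeping does close: the positive part gives $E+2^{m+4}\mu$ and the negative part gives $2^{m+5}\mu$, both $\leq E+2^{m+6}\mu$). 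One incidental gain: the paper fixes $r$ by $|x|=\eta r$ and literally proves the estimate only for $\exc^m(\phi,x,r)$ with that $r\leq r_0$, so its last line requires a small patch to cover all of $(0,r_0]$; your argument works at scale $r_0$ from the start and sidesteps this.
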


\begin{proof}
Because of (1) and (2), the following estimate holds for all $0 < r \leq 2r_0$,
\begin{equation}
\label{densitybound}
\frac{\phi(\B(0,r))}{\balpha(m)r^m} \leq \exc^m(\phi,0,2r_0) + 1 \leq 2 \,.
\end{equation}
Let $0 < r \leq r_0$ be such that $|x| = \eta r$ (if $|x| = 0$ there is nothing to show). We first assume that the functions in (3) are strictly increasing on $(0,r_0]$. Clearly, 
\begin{align*}
\frac{\phi(\B(x,r))}{\balpha(m)r^m} & \leq \frac{\phi(\B(0,|x| + r))}{\balpha(m)(|x| + r)^m} \frac{(|x| + r)^m}{r^m} \\
 & = \frac{\phi(\B(0,r(1+\eta)))}{\balpha(m)(r(1 + \eta))^m} (1 + \eta)^m \,.
\end{align*}
Hence for $0 < t \leq r$,
\begin{align}
	\nonumber
\exp(\xi(t))\frac{\phi(\B(x,t))}{\balpha(m)t^m} & \leq \exp(\xi(r))\frac{\phi(\B(x,r))}{\balpha(m)r^m} \\
	\label{aboveest}
 & \leq \exp(\xi(r))(1 + \eta)^m\frac{\phi(\B(0,r(1+\eta)))}{\balpha(m)(r(1 + \eta))^m} \,.
\end{align}
Since
\[
\lim_{s \downarrow 0}\exp(\xi(s))\frac{\phi(\B(x,s))}{\balpha(m)s^m} = \lim_{s \downarrow 0}\exp(\xi(s))\frac{\phi(\B(0,s))}{\balpha(m)s^m} \,,
\]
and these functions are strictly increasing, there is for any $s \in (0,r]$ some $s' \in (0,s]$ with
\[
\exp(\xi(s'))\frac{\phi(\B(0,s'))}{\balpha(m)s'^m} \leq \exp(\xi(s))\frac{\phi(\B(x,s))}{\balpha(m)s^m} \,.
\]
Thus,
\begin{equation}
\label{belowest}
-\frac{\phi(\B(x,s))}{\balpha(m)s^m} \leq -\exp(\xi(s') -\xi(s))\frac{\phi(\B(0,s'))}{\balpha(m)s'^m} \leq -\exp(-\xi(s))\frac{\phi(\B(0,s'))}{\balpha(m)s'^m} \,.
\end{equation}
If we assume that $\exp(\xi(r))(1+\eta)^m \leq 1 + \delta$ and $\exp(-\xi(r)) \geq 1-\delta$ for some $\delta > 0$, and combine \eqref{densitybound}, \eqref{aboveest} with \eqref{belowest}, we get for $s,t \in (0,r]$,
\begin{align*}
\frac{\phi(\B(x,t))}{\balpha(m)t^m} - \frac{\phi(\B(x,s))}{\balpha(m)s^m} & \leq (1+\delta)\frac{\phi(\B(0,r(1+\eta)))}{\balpha(m)(r(1 + \eta))^m} - (1-\delta)\frac{\phi(\B(0,s'))}{\balpha(m)s'^m} \\
 & \leq \exc^m(\phi,0,r(1 + \eta)) + 4\delta \,.
\end{align*}
Since $\eta \leq 1$ and $\xi(2r_0) \leq \epsilon \leq 1$, there holds
\begin{align*}
\exp(\xi(r))(1+\eta)^m - 1 & \leq (1 + 4\epsilon)(1 + 2^m\eta) - 1 \\
 & \leq (4 + 2^m + 2^{m+2})\max\{\epsilon,\eta\} \\
 & \leq 2^{m+4}\max\{\epsilon,\eta\} \,,
\end{align*}
and similarly, $\exp(-\xi(r)) \geq 1 - \epsilon$. Hence
\[
\exc^m(\phi,x,r) \leq \exc^m(\phi,0,r(1 + \eta)) + 2^{m+6}\max\{\epsilon,\eta\} \,.
\]
This shows the lemma assuming that $s \mapsto \exp(\xi(s))\frac{\phi(\B(x,s))}{\balpha(m)s^m}$ is strictly increasing. The general case follows by replacing $\xi(s)$ with $\xi(s) + as$ and taking the limit $a \downarrow 0$.
\end{proof}

The following bootstrap argument is an application of the moments computations. It is the key lemma for showing Reifenberg flatness of the support of a chain in a neighborhood of small excess.

\begin{Lem}
	\label{bootstraplem}
Let $T \in \cR_m(\Hi;G)$ and $0 < r_0,s_0,\epsilon \leq 1$ be such that:
\begin{enumerate}
	\item $0 < 4\sqrt{s_0} \leq r_0 \leq \epsilon \leq \frac{1}{81}$.
	\item $\spt(\partial T) \subset \Hi \setminus \B(0,2r_0)$.
	\item $\Theta^m(\|T\|,x) = \theta > 0$ for $\|T\|$-almost all $x \in \spt(T) \cap \B(0,2r_0)$.
	\item There is a plane $W \in \bG(\Hi;G)$ and $g_0 \in G$ with $\|g_0\| = \Theta^m(\|T\|,0) = \theta$, such that $\lim_{r \downarrow 0}\bbeta_\infty(\|T\|,0,r,W) = 0$ and for all sufficiently small $r$,
\[
\pi_{W\#} \left(T \res (\B(x,r) \cap Z_{W}(\pi_{W}(x),2^{-1}r))\right) = g_0 \curr{\B_{W}(\pi_{W}(x),2^{-1}r)} \,.
\]
	\item there holds,
	\begin{align*}
		\exc_*^{m}(\|T\|,x,r_0) & \leq \theta\epsilon \mbox{ for all } x \in \B(0,2r_0) \,, \\
		\exc^{m*}(\|T\|,x,r_0) & \leq \theta\epsilon  \mbox{ for } \|T\|-\mbox{almost every } x \in \B(0,2r_0\epsilon^\frac{1}{4}) \,.
	\end{align*}
\end{enumerate}
If
\[
\bc_{\ref{betainftysmall2}}(m)\epsilon^{\frac{1}{4(m+2)}} \leq \frac{1}{25\sqrt{m}} \,,
\]
then for all $r \in (0,s_0)$ there is a plane $W_r \in \bG(\Hi,m)$ such that
\begin{equation*}
	\hdist(\spt(T) \cap \B(0,r),W_{r} \cap \B(0,r)) < 2\bc_{\ref{betainftysmall2}}(m)r\epsilon^{\frac{1}{4(m+2)}} \,.
\end{equation*}
and
\begin{equation*}
	\pi_{W_r\#} \left(T \res \B(0,r) \cap Z_{W_r}(0,\tfrac{r}{2})\right) = g_0 \curr{\B_{W_r}(0,\tfrac{r}{2})} \,.
\end{equation*}
Further, for any $r \in (0,s_0)$ there is an orthogonal frame $x_1, \dots, x_m \in \spt(T) \cap \partial\B(0,r)$ with $x_i \perp x_j$ for $i \neq j$.
\end{Lem}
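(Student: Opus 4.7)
The plan is a backward bootstrap on the scale $r$. After normalizing the norm on $G$ by $\theta$ so that $\theta=\|g_0\|=1$, set $\rho\defl\bc_{\ref{betainftysmall2}}(m)\epsilon^{1/(4(m+2))}$ and $\rho'\defl m^{1/4}\rho^{1/2}$; the numerical assumption gives $\rho\leq (25\sqrt m)^{-1}$, so $\rho'\leq 1/5$. Let $R\subset(0,s_0)$ be the set of scales $r$ at which \emph{both} conclusions of the lemma hold. The base case $(0,r_1]\subset R$ for some $r_1>0$ is immediate from hypothesis (4), using $W_r=W$, together with Lemma~\ref{orthogonalfamily} applied to the rescaling of $T$ (whose normalized height over $W$ tends to $0$). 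The overall target is to show $r_\star\defl\sup\{r:(0,r]\subset R\}=s_0$.

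Suppose for contradiction $r_\star<s_0$, and pick $r\in(r_\star,\min\{s_0,r_\star\epsilon^{-1/4}\})$. Set $s\defl r\epsilon^{1/4}<r_\star$; by the inductive assumption there is an orthogonal frame $x_1,\dots,x_m\in\spt(T)\cap\partial\B(0,s)$ with $|x_i|=s=r\epsilon^{1/4}$. I plan to feed these into Corollary~\ref{betainftysmall2} at scale $r$. The bound $4\sqrt r\leq r_0$ comes from assumption (1); hypothesis (5) supplies the lower-excess bound at every point of $\B(0,2r_0)$; hypothesis (3) together with upper semicontinuity of the density (Lemma~\ref{densityexist}, valid because (5) makes $\|T\|$ nearly monotonic at every point) forces $\Theta^m(\|T\|,x_i)=1$.

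The delicate ingredient is the upper-excess bound $\exc^{m*}(\|T\|,x_i,r_0)\leq\epsilon$ at the specific points $x_i$, because hypothesis (5) supplies this bound only $\|T\|$-almost everywhere. I plan to obtain it by invoking Lemma~\ref{weakdini} with base point $0$, point $x=x_i$, $\eta=\epsilon^{1/4}$, and a constant gauge $\xi\equiv\epsilon$, which yields
\[
\exc^m(\|T\|,x_i,r_0)\leq \exc^m(\|T\|,0,r_0(1+\eta))+2^{m+6}\max\{\eta,\epsilon\}=O(\epsilon^{1/4}).
\]
Feeding these verified hypotheses into Corollary~\ref{betainftysmall2} gives $\bbeta_\infty(\|T\|,0,r/2,W_r)\leq\rho$ with $W_r\defl\spa\{x_1,\dots,x_m\}$, which upgrades (after a small numerical adjustment) to the stated Hausdorff bound at scale $r$. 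The projection identity at scale $r$ then follows from Lemma~\ref{differentplane}: Corollary~\ref{cor.reif.angle.1} compares $W_r$ to $W$ at small scales to give $\|\pi_{W_r}-\pi_W\|\leq 1/5$, and the identity is known for $W$ at all very small scales by (4), so Lemma~\ref{differentplane} transfers it to $W_r$ at scale $r$. Hence $r\in R$, contradicting $r_\star<s_0$. The orthogonal-frame conclusion at every $s\in(0,s_0)$ then follows by applying Lemma~\ref{orthogonalfamily} at any scale $r\in(s,s/(2\rho'))\cap R$.

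The main obstacle is the transfer of the upper-excess bound from $\|T\|$-almost every point of $\B(0,2r_0\epsilon^{1/4})$ to the specific orthogonal-frame points $x_i$; the slack of order $\epsilon^{1/4}$ produced by Lemma~\ref{weakdini} must remain smaller than the threshold Corollary~\ref{betainftysmall2} is willing to absorb in order for the bootstrap to close. This is precisely where the quantitative numerical hypothesis $\bc_{\ref{betainftysmall2}}(m)\epsilon^{1/(4(m+2))}\leq (25\sqrt m)^{-1}$ is consumed, and a secondary book-keeping check verifies that the iterative factor $\epsilon^{-1/4}$ in the scale jump $r_\star\mapsto r_\star\epsilon^{-1/4}$ is enough to reach $s_0$ in finitely many bootstrap steps.
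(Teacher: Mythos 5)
Your backward bootstrap is essentially the paper's argument recast as a supremum--contradiction rather than an explicit geometric sequence $s_{k+1}=2s_k\epsilon^{1/4}$; the overall structure (base case from hypothesis (4), induction via Corollary~\ref{betainftysmall2}, transfer of the projection identity by Corollary~\ref{cor.reif.angle.1} and Lemma~\ref{differentplane}, orthogonal frames from Lemma~\ref{orthogonalfamily}) is sound. However, there is a genuine gap in how you verify the hypotheses of Corollary~\ref{betainftysmall2} at the frame points $x_i$.

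First, upper semicontinuity of $x\mapsto\Theta^m(\|T\|,x)$ does \emph{not} force $\Theta^m(\|T\|,x_i)=\theta$: since $\Theta^m=\theta$ at $\|T\|$-a.e.\ nearby point, $\limsup_{y\to x_i}\Theta^m(y)\leq\Theta^m(x_i)$ only gives $\Theta^m(x_i)\geq\theta$, leaving the possibility that the density at $x_i$ is strictly larger or fails to exist (hypothesis (5) supplies only a \emph{constant} lower-excess bound, which is not a gauge vanishing at $0$, so Lemma~\ref{densityexist} does not directly yield existence of the density at $x_i$). Second, your Lemma~\ref{weakdini} argument produces an upper-excess bound of order $\epsilon^{1/4}$ at $x_i$, but hypothesis (7) of Corollary~\ref{betainftysmall2} requires $\exc^{m*}(\phi,\cdot,r_0)\leq\epsilon$ with the \emph{same} $\epsilon$ that controls the frame scale $|x_i|=r\epsilon^{1/4}$ and appears in the conclusion $\bbeta_\infty<\bc(m)\epsilon^{1/(4(m+2))}$; inflating that parameter to $C\epsilon^{1/4}$ changes the geometry of the frame and degrades the conclusion, so the bootstrap does not close as written. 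Both issues are resolved---without any appeal to Lemma~\ref{weakdini}---by exploiting the fact that Corollary~\ref{betainftysmall2} is deliberately stated for an \emph{approximating sequence} $x_{i,k}\to x_i$: since the density equals $\theta$ and the upper-excess bound $\leq\theta\epsilon$ both hold at $\|T\|$-a.e.\ point of $\B(0,2r_0\epsilon^{1/4})$, you may choose $x_{i,k}$ with $\Theta^m(\|T\|,x_{i,k})=\theta$ and $\exc^{m*}(\|T\|,x_{i,k},r_0)\leq\theta\epsilon$ converging to the frame point $x_i$, and feed those into the corollary directly. (A minor further point: to obtain the Hausdorff bound at scale $r$ rather than $r/2$, set the frame scale to $s=2r\epsilon^{1/4}$ and apply the corollary with parameter $2r$, as the paper does, instead of absorbing the factor of two by hand.)
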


\begin{proof}
Let $\phi \defl \theta^{-1}\|T\|$. By assumption, $s_0$ satisfies the bounds of Corollary~\ref{betainftysmall2}. Consider the sequence $s_k$ defined by $s_{k+1} \defl 2s_k\epsilon^\frac{1}{4}$. The factor $2$ in this definition is to compensate for the fact that the $\bbeta_\infty$ estimate in Corollary~\ref{betainftysmall2} is at scale $1/2$. This is a monotone sequence with $\lim_{k\to\infty} s_k = 0$ since by definition
\[
s_{k+1} \leq 2s_k\epsilon^\frac{1}{4} \leq \tfrac{2}{3}s_k \,.
\]
Because of (4) there is some $k \geq 1$ such that for any $s \in (0,s_k)$,
\begin{equation}
\label{betabound}
\bbeta_\infty(\phi,0,s,W) < \frac{1}{25\sqrt{m}} \,,
\end{equation}
and
\begin{equation}
\label{projectiongood}
\pi_{W\#} \left(T \res (\B(0,s) \cap Z_{W}(0,\tfrac{s}{2}))\right) = g_0 \curr{\B_{W}(0,\tfrac{s}{2})} \,.
\end{equation}
From Lemma~\ref{orthogonalfamily} we obtain that for all $s \in (0,s_k)$ there is an orthogonal frame $x_1(s),\dots,x_n(s) \in \Hi$ such that
\begin{equation}
\label{orthframe}
x_i(s) \in \spt(T) \,, \quad |x_i(s)| = s \quad \mbox{and} \quad x_i(s) \perp x_j(s) \mbox{ if } i \neq j \,.
\end{equation}
For any $r \in (0,s_{k-1})$ set $s \defl 2r\epsilon^\frac{1}{4} < s_k$. Corollary~\ref{betainftysmall2} implies that for all but countably many $r$ (i.e.\ those $r$ with $\|T\|(\partial \B(0,r)) = 0$) there is some plane $W_{r} \in \bG(\Hi,m)$ with
\begin{equation}
\label{betainfinityapriori}
\bbeta_\infty(\|T\|,0,r,W_{r}) < \bc_{\ref{betainftysmall2}}(m)\epsilon^{\frac{1}{4(m+2)}} \leq \frac{1}{25\sqrt{m}} \,.
\end{equation}
More precisely, the plane $W_{r}$ is spanned by the vectors $x_i(s)$, $i=1,\dots,m$, from \eqref{orthframe} and because of hypothesis (3) we can approximate $x_i(s)$ as necessary for Corollary~\ref{betainftysmall2}. In order to proceed the argument we show that 
\begin{equation}
\label{projectiongood2}
\pi_{W_r\#} \left(T \res (\B(0,r) \cap Z_{W_r}(0,\tfrac{r}{2}))\right) = g_0 \curr{\B_{W_r}(0,\tfrac{r}{2})} \,.
\end{equation}
This is achieved with Lemma~\ref{differentplane} by estimating the distance from $W_r$ to $W$. Since $x_i(s) \in \spt(T) \cap \B(0,s)$ by \eqref{orthframe}, we obtain from \eqref{betainfinityapriori},
\begin{align*}
	|\pi_{W}(x_i(s)) - x_i(s)| = |\pi_{W^\perp}(x_i(s))| \leq \bbeta_\infty(\|T\|,0,s,W)s < \frac{s}{25\sqrt{m}} \,.
\end{align*}
For all $w \in W_r$, if we write $w = \sum_i \lambda_i x_i(s)$ with $|w|^2 = s^2\sum_i|\lambda_i|^2$,
\[
|\pi_{W}(w) - w| < \frac{s}{25\sqrt{m}} \sum_i |\lambda_i| \leq \frac{|w|}{25} \,.
\]
Hence $\|\pi_W-\pi_{W_r}\| \leq \frac{1}{5}$ from Lemma~\ref{lemma.norms.1} and Lemma~\ref{differentplane} implies together with \eqref{projectiongood} that \eqref{projectiongood2} holds for $W_r$ if $r \in (0,s_{k-1})$ and $\|T\|(\partial \B(0,r)) = 0$. Because the inequality in \eqref{betainfinityapriori} is strict, \eqref{betainfinityapriori} and \eqref{projectiongood2} hold for all $r \in (0,s_{k-1})$. Again from Lemma~\ref{orthogonalfamily} it follows that we can find an orthogonal frame as in \eqref{orthframe} for all $r \in (0,s_{k-1})$.

Proceeding this way we conclude by induction on $k$ that for all $r \in (0,s_0)$ there is such a plane $W_{r} \in \bG(\Hi,m)$. To obtain the estimate on the Hausdorff distance note that for all $x \in \spt(T) \cap \B(0,r)$,
\[
\dist(x,W_{r} \cap \B(0,r)) < \bc_{\ref{betainftysmall2}}(m)r\epsilon^{\frac{1}{4(m+2)}} \,.
\]
On the other side, \eqref{projectiongood2} implies that for any
\[
x \in W_{r} \cap \B\left(0,r\left(1 - \bc_{\ref{betainftysmall2}}(m)\epsilon^{\frac{1}{4(m+2)}}\right)\right)
\]
there is an $x' \in \spt(T) \cap \B(0,r)$ with $|x-x'| < \bc_{\ref{betainftysmall2}}r\epsilon^{\frac{1}{4(m+2)}}$. Hence
\begin{equation*}
	\hdist(\spt(T) \cap \B(0,r),W_{r} \cap \B(0,r)) < 2\bc_{\ref{betainftysmall2}}(m)r\epsilon^{\frac{1}{4(m+2)}} \,.
\end{equation*}
\end{proof}

The following result shows that almost monotonic chains have neighborhoods that are Reifenberg flat. The assumption on the almost constant densities will be justified below in Lemma~\ref{almostconstdensitieslem} for nearly monotonic chains if we have a discreteness assumption on $G$. Moreover an almost monotonic chain is nearly monotonic because of Lemma~\ref{nearlymonotonic}.

\begin{Thm}
	\label{reifflat}
	Let $T \in \cR_m(\Hi;G)$ and assume that there are $x_0 \in \Hi$, $\theta > 0, r_0 \in (0,1]$ with
	\begin{enumerate}
		\item $\spt(\partial T) \subset \Hi \setminus \B(x_0,2r_0)$,
		\item $\Theta^m(\|T\|,x) = \Theta^m(\|T\|,x_0) = \theta$ for $\|T\|$-almost every $x \in \B(x_0,r_0)$,
		\item $T$ is almost monotonic in $\B(x_0,r_0)$ for some gauge $\xi : (0,r_0] \to (0,\infty)$.
	\end{enumerate}
Then for any $\epsilon > 0$ there is a $r_\epsilon \in (0,r_0]$ such that for all $x \in \spt(T) \cap \B(x_0,r_\epsilon)$ and all $r \in (0,r_\epsilon]$ there is a plane $W \in \bG(\Hi,m)$ and $g_{x} \in G$ with the properties,
	\[
	\exc^m(\|T\|,x,r_\epsilon) \leq \theta \epsilon \,,
	\]
	\[
	\hdist\bigl(\spt(T) \cap \B(x,r), (x + W) \cap \B(x,r)\bigr) \leq r\epsilon \,,
	\]
	\[
	\pi_{W\#} \left(T \res (\B(x,r) \cap Z_{W}(\pi_W(x),\tfrac{r}{2})\right) = g_{x}\curr{\B_{W}(\pi_W(x),\tfrac{r}{2})} \,,
	\]
	and $\|g_{x}\| = \theta$. Further, for any $x \in \spt(T) \cap \B(x_0,r_\epsilon)$ and $r \in (0,r_\epsilon]$ there is an orthogonal frame $x_1, \dots, x_m$ with $|x_i| = r$ and $x + x_i \in \spt(T)$.
\end{Thm}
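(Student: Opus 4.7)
The plan is to apply the bootstrap argument Lemma~\ref{bootstraplem} at $\|T\|$-almost every point and then extend the conclusion to the entire support of $T$ by an approximation argument. As preparatory work, I would first combine the almost monotonicity assumption (3) with Lemma~\ref{nearlymonotonic} to get near monotonicity with gauge $\xi_*=\bc\,\xi$, and then apply Lemma~\ref{compactlem} together with the density hypothesis~(2) and Lemma~\ref{densityexist} to obtain, after shrinking $r_0$: Ahlfors regularity on $\spt(T)\cap\B(x_0,r_0)$, upper semicontinuity of $\Theta^m(\|T\|,\cdot)$, and the inequality $\Theta^m(\|T\|,x)\leq\theta$ at every $x$ in this set. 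Lemma~\ref{tangentplane} then gives, for $\|T\|$-almost every $x$ in this set, a tangent plane $W_x$, an element $g_x\in G$ with $\|g_x\|=\theta$, and the projection identity required by hypothesis~(4) of Lemma~\ref{bootstraplem}.

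Given $\epsilon>0$ sufficiently small, I would verify the hypotheses of Lemma~\ref{bootstraplem} centered at any such ``good'' point $a$ (tangent plane exists and density equals $\theta$). The lower excess bound in hypothesis~(5) holds uniformly in $y\in\B(a,2r_1)$ for small $r_1$ by near monotonicity. The upper excess bound in hypothesis~(5), needed only at $\|T\|$-almost every $y\in\B(a,2r_1\epsilon^{1/4})$, I would obtain by applying Lemma~\ref{weakdini} to $\phi=\theta^{-1}\|T\|$: at $\|T\|$-almost every such $y$ one has $\Theta^m(\phi,y)=1$, and the exponential-gauge monotonicity at $a$ together with $\Theta^m(\|T\|,a)=\theta$ yields $\exc^m(\|T\|,a,r)\to 0$ as $r\downarrow 0$; Lemma~\ref{weakdini} then transports this smallness from $a$ to a.e.\ nearby $y$ with a uniform error. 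Lemma~\ref{bootstraplem} applied at $a$ then produces, at every scale $r\in(0,s_0]$, an $m$-plane $W_{a,r}$, an orthonormal frame in $\spt(T)\cap\partial\B(a,r)$, the Hausdorff-distance bound $\hdist(\spt(T)\cap\B(a,r),W_{a,r}\cap\B(a,r))<2\bc_{\ref{betainftysmall2}}\,r\,\epsilon^{1/(4(m+2))}$, and the projection identity with weight $g_0$ of norm $\theta$.

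Finally I would extend these conclusions from the dense set of good points to every $x\in\spt(T)\cap\B(x_0,r_\epsilon)$. Given such $x$ and $r\in(0,r_\epsilon]$, I would pick a good point $a$ with $|x-a|\ll r$; the Reifenberg-flatness estimate around $a$ at scale $r(1+\gamma)$ implies, with a small loss absorbed into the final $\epsilon$, the corresponding estimate around $x$ at scale $r$, while the projection identity at $x$ is deduced from the one at $a$ via Lemma~\ref{differentplane} and the constancy theorem, which forces the weight $g_x$ to satisfy $\|g_x\|=\theta$. The orthonormal frame at $(x,r)$ is then produced by a further application of Lemma~\ref{orthogonalfamily}. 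Since the projection identity now holds at every $x$ at every sufficiently small scale, $\|T\|(\B(x,r))$ is pinned to $\theta\,\balpha(m)\,r^m$ up to an $O(\epsilon)$ error, which forces $\Theta^m(\|T\|,x)=\theta$ and yields the uniform excess bound $\exc^m(\|T\|,x,r_\epsilon)\leq\theta\epsilon$.

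The main obstacle is verifying hypothesis~(5) of Lemma~\ref{bootstraplem}, specifically the upper-excess bound $\exc^{m*}$ at $\|T\|$-almost every nearby point, since near monotonicity alone controls only the lower excess. This is the step where the equality-of-density hypothesis~(2) plays an essential role: it is what makes Lemma~\ref{weakdini} applicable at a.e.\ nearby point with the required normalization $\Theta^m(\phi,\cdot)=1$, so that the smallness of $\exc^m(\|T\|,a,r)$ obtained from pointwise monotonicity at the good point $a$ can be transferred uniformly onto a set of full $\|T\|$-measure in a small neighborhood of $a$.
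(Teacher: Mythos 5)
Your proposal follows the same skeleton as the paper's proof and uses the same lemmas (near monotonicity via Lemma~\ref{nearlymonotonic}, Ahlfors regularity via Lemma~\ref{compactlem}, tangent planes via Lemma~\ref{tangentplane}, transfer of excess via Lemma~\ref{weakdini}, and the bootstrap Lemma~\ref{bootstraplem}), and you correctly identify the upper-excess verification as the crux. There is, however, a genuine gap in the uniformity of scales.

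The issue is in the sentence where you pivot the weak-Dini argument: you take a ``good'' point $a$, observe that $\exc^m(\|T\|,a,r) \to 0$ as $r \downarrow 0$, and then transfer this smallness to a.e.\ nearby $y$ via Lemma~\ref{weakdini}. But the scale $r_1 = r_1(a)$ at which $\exc^m(\|T\|,a,r_1) \leq \epsilon$ depends on $a$; the convergence $\exc^m(\|T\|,a,r)\to 0$ at each good point $a$ gives no uniform rate without something like Egorov's theorem, which you do not invoke. Consequently, the scale $s_0 = s_0(a)$ at which Lemma~\ref{bootstraplem} produces planes and orthogonal frames also depends on $a$, and as $a$ ranges over the dense set of good points, $s_0(a)$ could degenerate. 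Your final extension step requires, for any $x$ and any $r\in(0,r_\epsilon]$, a good point $a$ with $|x-a|\ll r$ and $r < s_0(a)$; this is not available if $s_0(a)$ can be arbitrarily small near $x$. So the theorem's conclusion — a single $r_\epsilon$ valid for all $x \in \spt(T) \cap \B(x_0,r_\epsilon)$ and all $r \in (0,r_\epsilon]$ — does not follow.

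The fix, which is exactly what the paper does and exactly why hypothesis~(2) explicitly records $\Theta^m(\|T\|,x_0) = \theta$, is to pivot the weak-Dini transfer from the fixed point $x_0$ rather than from varying good points $a$. Since $\Theta^m(\|T\|,x_0)=\theta$, there is a single scale $r_1$ (depending only on $x_0$, $\epsilon$ and the gauge) at which $\exc^m(\phi,x_0,6r_1) \leq \epsilon$ and $\theta^{-1}\bc_{\ref{nearlymonotonic}}\xi(6r_1) \leq \epsilon$. Lemma~\ref{weakdini} applied with center $x_0$ then yields $\exc^m(\phi,x,3r_1) \leq 2^{m+7}\epsilon$ uniformly for all $x$ near $x_0$ with $\Theta^m(\phi,x)=1$, which is precisely the uniform upper-excess bound that hypothesis~(5) of Lemma~\ref{bootstraplem} requires. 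With this one change, the uniform scale $r_2 \defl \frac{1}{16}(r_1\epsilon)^2$ works for Lemma~\ref{bootstraplem} at every good point, and the rest of your extension argument goes through. As a minor point, your final extraction of the excess bound $\exc^m(\|T\|,x,r_\epsilon)\leq\theta\epsilon$ for all $x\in\spt(T)$ (not just good $x$) should be carried out, as the paper does, by a limiting argument with a sequence of good points $x_i \to x$ and a dilation parameter $\lambda \downarrow 1$, rather than by asserting that the projection identity ``pins'' the mass; the projection identity alone controls only the pushed-forward mass, not $\|T\|(\B(x,r))$ itself.
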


\begin{proof}
Fix some $\epsilon > 0$ with
\[
\epsilon \leq \frac{1}{2^{m+7}} \min\left\{\frac{1}{81}, \frac{1}{(25\sqrt{m}\bc_{\ref{betainftysmall2}}(m))^{4(m+2)}} \right\} \,.
\]
The finite Borel measure $\phi \defl \theta^{-1}\|T\|$ is almost monotonic in $\B(x_0,r_0)$ with respect to the same gauge as $\|T\|$. From Lemma~\ref{nearlymonotonic} it follows that $\exc^m_*(\|T\|,x,r) \leq \bc_{\ref{nearlymonotonic}}\xi(r)$ for all $x \in \B(x_0,r_0)$ and $0 < r \leq r_0$. Since $\Theta^m(\phi,x_0) = 1$ there is a $r_1 \in (0,\frac{r_0}{6}]$ such that
\[
\max\{\theta^{-1}\bc_{\ref{nearlymonotonic}}\xi(6r_1),\exc^m(\phi,x_0,6r_1)\} \leq \epsilon \,.
\]
According to Lemma~\ref{weakdini}, any $x \in \B(x_0,3r_1\epsilon)$ with $\Theta^m(\phi,x) = 1$ satisfies
\begin{align}
\nonumber
\exc^m(\phi,x,3r_1) & \leq \exc^m(\phi,x_0,6r_1) + 2^{m+6}\max\{\epsilon,\epsilon\} \leq 2^{m+7}\epsilon \\ 
\label{excessbound}
& \leq \min\left\{\frac{1}{81}, \frac{1}{(25\sqrt{m}\bc_{\ref{betainftysmall2}}(m))^{4(m+2)}} \right\} \,.
\end{align}
Let $r_2 \defl \frac{1}{16}(r_1\epsilon)^2$, respectively, $4\sqrt{r_2} \leq r_1\epsilon$. Since $\bc_{\ref{nearlymonotonic}}\xi(6r_1) \leq \frac{\theta}{2}$, it follows from Lemma~\ref{compactlem}, that $\|T\|$ is Ahlfors-regular in $\oB(x_0,6r_1)$ and hence tangent planes for $T$ exist for $\|T\|$-a.e.\ $x \in \oB(x_0,6r_1)$ by Lemma~\ref{tangentplane}. Consider a point $x \in \B(x_0,r_2)$ with $\Theta^m(\|T\|,x) = \theta$ at which such a tangent plane exists. Note that $\B(x,2r_1\epsilon) \subset \B(x_0,3r_1\epsilon)$ and $\max\{\exc^{m*}(\phi,y,r),\exc_*^{m}(\phi,z,r)\} \leq 2^{m+7}\epsilon$ if $0 < 4\sqrt{r} \leq r_1\epsilon$, $y \in \B(x,2r_1\epsilon)$ and $z \in \B(x,2r_1\epsilon)$ with $\Theta^m(\phi,x) = 1$. From Lemma~\ref{bootstraplem} and \eqref{excessbound} we obtain that for any $0 < r < r_2$ there is a plane $W_{x,r} \in \bG(\Hi,m)$ such that,
\begin{equation}
\label{hdistxr}
	\hdist(\spt(T) \cap \B(x,r),W_{x,r} \cap \B(x,r)) < 2\bc_{\ref{betainftysmall2}}(m)r\bigl(2^{m+7}\epsilon\bigr)^{\frac{1}{4(m+2)}} \leq \frac{2r}{25} \,.
\end{equation}
and
\begin{equation}
\label{projectxr}
	\pi_{W_{x,r}\#} \left(T \res (\B(x,r) \cap Z_{W_{x,r}}(\pi_{W_{x,r}}(x),\tfrac{r}{2})\right) = g_x \curr{\B_{W_{x,r}}(x,\tfrac{r}{2})} \,.
\end{equation}
with $\|g_x\| = \theta$. Moreover, there is an orthogonal frame centered at $x$ for all scales $0 < r < r_2$ as described in the theorem. The set of those points $x \in \spt(T) \cap \oB(x_0,r_2)$ at which tangent planes exist and $\Theta^m(\|T\|,x) = \theta$ is dense. Thus for all $x \in \oB(x_0,r_2) \cap \spt(T)$ and all $0<r<r_2$ there is a $W_{x,r} \in \bG(\Hi,m)$ such that \eqref{hdistxr} holds as well as \eqref{projectxr} for some $g_{x,r} \in G$ with $\|g_{x,r}\| = \theta$. $g_{x,r}$ does not depend on $r$ because of Corollary~\ref{cor.reif.angle.1} and Lemma~\ref{differentplane}. Lemma~\ref{compactlem} and Lemma~\ref{nearlymonotonic} imply that $\spt(T) \cap \B(x,r)$ is compact if $x \in \oB(x_0,r_2)$ and $r$ is small enough. Thus the statement about the orthogonal frames holds for all points in $\spt(T) \cap \oB(x_0,r_2)$. Let $x \in \spt(T) \cap \oB(x_0,r_2)$ and $x_i$ be a sequence in $\oB(x_0,r_2)$ with $\Theta^m(\|T\|,x) = \theta$ that converges to $x_0$. Let $0 < r < 3r_1$ and $\lambda > 1$ with $\lambda r < 3r_1$. Applying \eqref{excessbound} to the sequence $x_i$,
\begin{align*}
\frac{\|T\|(\B(x,r))}{\balpha(m)r^m} - \theta & \leq \limsup_{i\to\infty} \frac{\|T\|(\B(x_i,\lambda r))}{\balpha(m)r^m} - \theta \\
 & \leq \limsup_{i\to\infty} \biggl|\frac{\|T\|(\B(x_i,\lambda r))}{\balpha(m)(\lambda r)^m} - \theta\biggl| \lambda^m + |\theta \lambda^m - \theta| \\
 & \leq \exc^m(\|T\|,x,3r_1) + (\lambda^m - 1)\theta \\
 & \leq \theta 2^{m+7}\epsilon \lambda^m + (\lambda^m - 1)\theta \,.
\end{align*}
Similarly we obtain a lower bound of $-\theta 2^{m+7}\epsilon \lambda^{-m} + (\lambda^{-m} - 1)\theta$. Taking the limit for $\lambda \downarrow 1$, it follows that $\exc^m(\|T\|,x,r) \leq \theta 2^{m+8}\epsilon$ for all $x \in \spt(T) \cap \oB(x_0,r_2)$ and $0 < r < 3r_1$. This concludes the proof.
\end{proof}

\subsection{Main regularity results}

Combining Theorem~\ref{reifflat}, Lemma~\ref{nearlymonotonic}, Lemma~\ref{almostminimal}, Proposition~\ref{rundifferentialequation}, Lemma~\ref{diffequation2} and Lemma~\ref{technicalproperties} we obtain.

\begin{Prop}
	\label{improvedexcess}
Assume that $T \in \cR_m(\Hi;G)$, $x_0 \in \spt(T)$ and $r_0 > 0$ satisfy:
\begin{enumerate}
	\item $\spt(\partial T) \subset \Hi \setminus \B(x_0,2r_0)$.
	\item $T$ is $(\bM,\xi,r_0)$-minimal in $\B(x_0,r_0)$ for a continuous gauge $\xi$ with $\Xi(\delta) = m\int_0^{r_0} \frac{\xi(s)}{s} \, ds < \infty$.
	\item $\Theta^m(\|T\|,x) = \Theta^m(\|T\|,x_0) = \theta$ for $\|T\|$-almost every $x \in \B(x_0,r_0)$.
	\item Set $\lambda_0 \defl \sqrt[4]{\lambda_{\ref{masscomparison2}}}$. Assume that $(1 + \xi(r_0)) \lambda_0 \leq 1$, $\exp(\Xi(r_0)) \leq 2$ and $s \mapsto \frac{\xi(s)}{s^\alpha}$ is decreasing for some $\alpha \leq m \frac{1 - \lambda_0}{\lambda_0}$.
\end{enumerate}
Then there are constants $r_1 > 0$ and $\bc_{\theThm}(m,r_0,\Xi(r_0),\theta,\bM(T)) > 0$ such that for all $0 < r \leq r_1$,
\begin{align*}
\exp(\Xi(r))\frac{\|T\|(\B(x,r))}{\balpha(m) r^m} - \theta &  \mbox{ is increasing in } r \mbox{ if } x \in \oB(x_0,r_1)\,, \\
	\exc^{m}_*(\|T\|,x,r) & \leq \bc_{\theThm}\,\Xi(r)  \,, \mbox{ if } x \in \oB(x_0,r_1) \,, \\
\exp(\Xi(r))\frac{\|T\|(\B(x,r))}{\balpha(m) r^m} - \theta & \leq \bc_{\theThm}\, \Xi(r) \,, \mbox{ if } x \in \spt(T) \cap \oB(x_0,r_1) \,, \\
\exc^{m*}(\|T\|,x,r) & \leq \bc_{\theThm}\,\Xi(r)  \,, \mbox{ if } x \in \spt(T) \cap \oB(x_0,r_1) \,.
\end{align*}
In particular $\Theta^m(\|T\|,x) = \theta$ for all $x \in \spt(T) \cap \oB(x_0,r_1)$.
\end{Prop}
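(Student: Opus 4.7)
The plan is to run the three ingredients of the regularity scheme in sequence: Reifenberg flatness from Theorem~\ref{reifflat}, the mass differential inequality from Proposition~\ref{rundifferentialequation}, and its integration via Lemma~\ref{diffequation2}. Lemma~\ref{nearlymonlem} first converts the $(\bM,\xi,r_0)$-minimality hypothesis into almost monotonicity of $T$ in $\B(x_0,r_0)$ with gauge $\Xi$, and Lemma~\ref{nearlymonotonic} then yields a gauge $\Xi_*=\bc\,\Xi$ controlling $\exc^m_*(\|T\|,x,r)$ for every $x\in\B(x_0,r_0)$ and every sufficiently small $r$. This already proves the second displayed estimate of the statement (on $\exc^{m}_*$) after shrinking $r_1$, and it also shows that $r\mapsto\exp(\Xi(r))\balpha(m)^{-1}r^{-m}\|T\|(\B(x,r))$ is increasing for $x\in\oB(x_0,r_1)$, which is the first displayed conclusion.

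Next, I would fix a sufficiently small $\epsilon>0$ depending only on $m$ (so that Reifenberg flatness kicks in at the scale required by Proposition~\ref{rundifferentialequation}) and apply Theorem~\ref{reifflat} at $x_0$: this furnishes $r_\epsilon\in(0,r_0]$ such that for every $x\in\spt(T)\cap\B(x_0,r_\epsilon)$ and every $r\in(0,r_\epsilon]$ there exist $W_{x,r}\in\bG(\Hi,m)$ and $g_x\in G$ with $\|g_x\|=\theta$, $\hdist(\spt(T)\cap\B(x,r),(x+W_{x,r})\cap\B(x,r))\leq r\epsilon$, correct projection identity on $Z_{W_{x,r}}(\pi_{W_{x,r}}(x),r/2)$, and $\exc^m(\|T\|,x,r_\epsilon)\leq\theta\epsilon$. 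In particular, for every such $x$ we have $\Theta^m(\|T\|,x)=\theta$, which is the last assertion of the proposition. This step also supplies the flatness hypotheses (4), (5), (6) required by Proposition~\ref{rundifferentialequation} at every scale $r\leq r_\epsilon$ and every centre $x\in\spt(T)\cap\B(x_0,r_\epsilon/3)$, after translating $x$ to the origin.

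With these in hand, fix $x\in\spt(T)\cap\oB(x_0,r_1)$ for a suitably small $r_1<r_\epsilon$ and set $f(r)\defl\|T\|(\B(x,r))$. Proposition~\ref{rundifferentialequation}, applied with $g_0=g_x$ (so $\|g_0\|=\theta$), yields the pointwise-a.e.\ inequality
\[
f(r)\leq(1+\xi(r))\frac{r}{m}\Bigl(\lambda f'(r)+(1-\lambda)\theta\balpha(m)mr^{m-1}\Bigr),
\]
where $\lambda=\lambda_{\ref{masscomparison2}}(m)$. The hypotheses (4) of the proposition — namely $(1+\xi(r_0))\lambda_0\leq 1$ with $\lambda_0=\sqrt[4]{\lambda}$, $\exp(\Xi(r_0))\leq 2$, and the monotonicity of $s\mapsto\xi(s)/s^\alpha$ with $\alpha\leq m(1-\lambda_0)/\lambda_0$ — are tailored so that Lemma~\ref{technicalproperties}(2) gives $\xi\leq\tfrac{\alpha}{m}\Xi\leq\Xi$ and $\lambda_0\leq\Xi(r)/(\Xi(r)+\xi(r))$ for all $r$, which (together with the almost monotonicity $\theta r^m\leq\exp(\Xi(r))f(r)$ from the first paragraph) are exactly the assumptions of Lemma~\ref{diffequation2}. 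Applying that lemma at $r_0$ replaced by $r_1$ and using $f(r_1)/r_1^m\leq \bc(m,\bM(T),r_1,\theta)$ yields
\[
\exp(\Xi(r))\frac{\|T\|(\B(x,r))}{\balpha(m)r^m}-\theta\leq \bc_{\theThm}\,\Xi(r),
\]
which is the third displayed conclusion, and the fourth ($\exc^{m*}\leq\bc_{\theThm}\Xi(r)$) follows by combining this with the already established lower bound and the definition of $\exc^{m*}$.

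The main obstacle will be the bookkeeping of constants and the verification that the Reifenberg-flatness hypotheses needed by Proposition~\ref{rundifferentialequation} are available \emph{uniformly} in $x$ and $r$, not only at $x_0$. This is precisely what Theorem~\ref{reifflat} provides, but one must check carefully that its output $\epsilon$ (which depends only on $m$) can be taken small enough to enter the domain of Proposition~\ref{rundifferentialequation}, and that the factor $(1+\xi)$ appearing in the differential inequality does not destroy the contraction built into $\lambda_0=\sqrt[4]{\lambda}$. Condition (4) of the statement is engineered exactly to absorb this factor, so the verification is essentially a matching of inequalities.
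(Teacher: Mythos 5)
Your proposal follows the same scheme as the paper: convert almost minimality to almost/nearly monotonicity via Lemma~\ref{nearlymonlem} and Lemma~\ref{nearlymonotonic} (giving the first two displays), use Theorem~\ref{reifflat} to supply the flatness hypotheses of Proposition~\ref{rundifferentialequation} uniformly near $x_0$, apply that proposition to get the differential inequality, and integrate via Lemma~\ref{diffequation2} and Lemma~\ref{technicalproperties}. That is precisely the paper's argument, and the observation that hypothesis (4) is engineered to absorb the $(1+\xi)$ factor is correct.

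There is one overstatement: you assert that $\Theta^m(\|T\|,x)=\theta$ for \emph{every} $x\in\spt(T)\cap\B(x_0,r_\epsilon)$ ``follows from Theorem~\ref{reifflat}.'' Theorem~\ref{reifflat} yields $\exc^m(\|T\|,x,r_\epsilon)\leq\theta\epsilon$ and $\|g_x\|=\theta$, but neither of these pins the density exactly: the former bounds the oscillation of the density ratio at scales below the \emph{fixed} $r_\epsilon$ by $\theta\epsilon$, which only shows $\Theta^m(\|T\|,x)$ lies within $O(\theta\epsilon)$ of $\theta$ (and $\epsilon$ cannot be sent to $0$ at a fixed $x$ since $r_\epsilon\to 0$). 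What \emph{does} hold a priori is $\Theta^m(\|T\|,x)\geq\theta$, by upper semicontinuity of $\Theta^m(\|T\|,\cdot)$ (Lemma~\ref{densityexist}) and hypothesis (3), and that is exactly the lower bound $\theta r^m\leq\exp(\Xi(r))f(r)$ required by Lemma~\ref{diffequation2}. The equality $\Theta^m(\|T\|,x)=\theta$ is a \emph{consequence} of the third display (letting $r\downarrow 0$), not an input available before it; this is why in the paper's proof the equality appears at the very end, after the differential inequality has been integrated and the estimate transferred from density-$\theta$ points to all support points by an approximation argument. Replace ``$=\theta$'' by ``$\geq\theta$'' in that paragraph (and derive equality at the end) and the argument is correct. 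You also skip the routine but necessary verification that hypothesis (5) of Proposition~\ref{rundifferentialequation} (smallness of cylindrical excess over $W_r$) actually follows from the $\hdist$ and $\exc^m$ bounds of Theorem~\ref{reifflat}; the paper spells this out, and it should be included since it is the step where the choice of $\epsilon$ in Theorem~\ref{reifflat} must be matched against $\epsilon_{\ref{rundifferentialequation}}(m)$.
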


\begin{proof}
From Lemma~\ref{almostminimal} it follows that $T$ is almost monotonic in $\B(x_0,r_0)$ with gauge $\Xi : (0,r_0] \to \R_+$. From Lemma~\ref{nearlymonotonic} it follows that $\|T\|$ is nearly monotonic in $\B(x_0,r_0)$ with some gauge $\bc_* \, \Xi$ where $\bc_* \defl \bc_{\ref{nearlymonotonic}}(m,r_0,\Xi(r_0),\bM(T))$. This shows the first two conclusions.

From Theorem~\ref{reifflat} it follows that for any $0 < \epsilon < 1$ there is a scale $0 < r_\epsilon \leq \frac{r_0}{2}$ such that for all $x \in \spt(T)\cap \B(x_0,r_\epsilon)$ and $0 < r \leq r_\epsilon$ there is a plane $W = W_{x,r} \in \bG(\Hi,m)$ and $g_x \in G$ with $\|g_x\| = \theta$ and
\[
\exc^m(\|T\|,x,2r_\epsilon) \leq \theta\epsilon \,,
\]
\[
	\hdist(\spt(T) \cap \B(x,2r), (x + W) \cap \B(x,2r)) \leq r\epsilon \,,
\]
\[
	\pi_{W\#} \left(T \res (\B(\pi_{W}(x),2r) \cap Z_{W}(\pi_{W}(x),r)\right) = g_x \curr{\B_{W}(\pi_{W}(x),r)} \,.
\]
In order to apply Proposition~\ref{rundifferentialequation} we need to check that $T$ has small cylindrical excess over $W$. But this is rather simple using the two properties above. Let $x \in \B(0,r_\epsilon)$. For simplicity we translate the chain to the origin $T' \defl \tau_{-x\#} T$, where $\tau_{-x}(y) = y - x$. Then for any $y \in \spt(T') \cap \B(0,2r) \cap Z_{W}(0,r)$ there holds $|y| \leq r + r\epsilon$ and thus
\[
	\spt(T') \cap \B(0,2r) \cap Z_{W}(0,r) \subset \B\left(0, r\left(1 + \epsilon\right)\right) \,.
\]
Therefore,
\begin{align*}
	\frac{1}{\balpha(m) r^m} & \Exc(T' \res \B(0,2r),0,r,W) \\
	& = \frac{1}{\balpha(m) r^m} (\|T'\|(\B(0,2r) \cap Z_W(r)) - \theta\balpha(m) r^m) \\
	& \leq \frac{1}{\balpha(m) r^m} (\|T'\|(\B\left(0, r\left(1 + \epsilon\right)\right)) - \theta\balpha(m) r^m) \\
	& = \frac{\|T'\|(\B\left(0, r\left(1 + \epsilon\right)\right))}{\balpha(m) (1+\epsilon)^mr^m} (1 + \epsilon)^m - \theta \\
	& \leq (\theta + \exc^{m}(\|T'\|,0,2r_\epsilon))(1 + \epsilon)^m - \theta \\
	& \leq (\theta + \epsilon\theta)(1 + 2^m\epsilon) - \theta \\
	& \leq \theta(2^m + 2^m\epsilon + 1)\epsilon \,.
\end{align*}
So if $\epsilon$ is small enough we can apply Proposition~\ref{rundifferentialequation}. It follows that for all $x \in \B(x_0,r_\epsilon)$ with $\Theta^m(\|T\|,x) = \theta$ and for almost all $0 < r \leq r_\epsilon$,
\[
	f_x(r) \leq (1 + \xi(r)) \frac{r}{m}\left(\lambda f_x'(r) + (1 - \lambda)\theta\balpha(m) mr^{m-1}\right),
\]
where $f_x(r) \defl \|T\|(\B(x,r))$ and $0 < \lambda \defl \lambda_{\ref{masscomparison2}} < 1$. Note that (4) implies that $(1 + \xi(r_0))\sqrt{\lambda_{\ref{masscomparison2}}} \leq \lambda_0 = \sqrt[4]{\lambda_{\ref{masscomparison2}}}$. Hence with the help of Lemma~\ref{technicalproperties} we can apply Lemma~\ref{diffequation2} and obtain
\[
	\frac{f_x(r)}{\balpha(m) r^m} - \theta \leq \exp(\Xi(r))\frac{f_x(r)}{\balpha(m) r^m} - \theta \leq \bc^* \, \Xi(r) \,,
\]
 for all $0 < r \leq r_\epsilon$ and some $\bc^* = \bc^*(m,r_0,\Xi(r_0),\theta,\bM(T))$. Accordingly, $\exc^{m*}(\|T\|,x,r) \leq (\bc^* + \bc_*)\Xi(r)$ if $x \in \B(x_0,r_\epsilon)$ satisfies $\Theta^m(\|T\|,x) = \theta$ and $0 < r \leq r_\epsilon$. 

Now assume that $x \in \spt(T) \cap \oB(x_0,r_\epsilon)$ and let $0 < r < r_\epsilon$. Let $x_k$ be a sequence in $\B(x_0,r_\epsilon)$ with $|x_k -x| = r\delta_k \to 0$ and $\Theta^m(\|T\|,x_k) = \theta$ for all $k$. Since $\Xi$ is continuous,
\begin{align*}
\bc^*\Xi(r) \geq \frac{\|T\|(\B(x,r))}{\balpha(m) r^m} - \theta & = \left(1 + \delta_k\right)^m\frac{\|T\|(\B(x,r))}{\balpha(m) r^m(1 + \delta_k)^m} - \theta \\
 & \leq \left(1 + \delta_k\right)^m\frac{\|T\|(\B(x_k,r(1 + \delta_k)))}{\balpha(m) r^m(1 + \delta_k)^m} - \theta \\
 & \leq \left(1 + \delta_k\right)^m(\bc^* \Xi(r(1 + \delta_k)) + \theta) - \theta \\
 & \rightarrow \bc^*\Xi(r) \,,
\end{align*}
for $k \to \infty$. Hence $\Theta^m(\|T\|,x) = \theta$ for all $x \in \spt(T) \cap \oB(x_0,r_\epsilon)$ and we obtain the third conclusion. If $0 < r \leq s < r_\epsilon$ are such that $(\balpha(m) s^m)^{-1}f_x(s) \geq (\balpha(m) r^m)^{-1}f_x(r)$, then
\begin{align*}
\frac{\|T\|(\B(x,s))}{\balpha(m) s^m} - \frac{\|T\|(\B(x,r))}{\balpha(m) r^m} & = \left(\frac{\|T\|(\B(x,s))}{\balpha(m) s^m} - \theta\right) - \left(\frac{\|T\|(\B(x,r))}{\balpha(m) r^m} - \theta\right) \\
	& \leq \bc^*\Xi(s) + \bc_*\Xi(r) \leq (\bc^* + \bc_*)\Xi(s) \,.
\end{align*}
\end{proof}

Before we can apply Proposition~\ref{improvedexcess} we need to find an open set in $\spt(T)\setminus \spt(\partial T)$ where the densities are almost constant. This is possible assuming the normed Abelian group $(G,\|\cdot\|)$ is such that $\{\|g\| : g \in G\}$ is a discrete and closed subset of $\R$, or equivalently that
\[
	\delta_L(G) \defl \inf\{|\|g\|-\|h\|| : g \neq h, \|g\|,\|h\| \leq L \} > 0
\]
for all $L > 0$.

\begin{Lem}
\label{almostconstdensitieslem}
Let $\Hi$ be a Hilbert space, $(G,\|\cdot\|)$ be a normed Abelian group such that $\{ \|g\| : g \in G \}$ is discrete, and $T \in \cR_m(\Hi;G)$ be a rectifiable $G$-chain that is nearly monotonic in an open set $U \subset \Hi \setminus \spt(\partial T)$. Then there is a dense open subset $U_{d}$ of $\spt(T) \cap U$ with the property that the map $x \mapsto \Theta^m(\|T\|,x)$ is locally $\|T\|$-almost constant on $U_{d}$. Moreover, if $g \mapsto \|g\|$ is constant on $G \setminus \{0_G\}$, then $x \mapsto \Theta^m(\|T\|,x)$ is $\|T\|$-almost constant on $U$.
\end{Lem}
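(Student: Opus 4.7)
The plan is to combine near-monotonicity with discreteness of the group norm as follows. First I will record that by Lemma~\ref{densityexist}, the density $\Theta(x) \defl \Theta^m(\|T\|,x)$ exists everywhere on $U$, is finite, and is upper semicontinuous. Further, since $T$ is rectifiable we have $\Theta(x) = \|\bg(x)\|$ for $\|T\|$-almost every $x$, so the density function takes its essential values in the countable set $V \defl \{\|g\| : g \in G\}$. Discreteness of $G$ amounts to $V \cap (0,\infty) \subset [\delta(G),\infty)$, so every nonempty subset of $V \cap (0,\infty)$ admits a minimum, and at each point of $V \cap (0,\infty)$ there is a gap of positive width to the rest of $V$.

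The core of the argument is to show that every nonempty relatively open subset of $\spt(T) \cap U$ contains a nonempty relatively open subset on which $\Theta$ is $\|T\|$-almost constant. Let $W \subset U$ be open with $W \cap \spt(T) \neq \emptyset$, equivalently $\|T\|(W) > 0$. Set
\[
V(W) \defl \bigl\{ v \in V \cap (0,\infty) : \|T\|(W \cap \Theta^{-1}(v)) > 0 \bigr\},
\]
which is nonempty since $\|\bg\| > 0$ on $M_T$. Let $v^* \defl \min V(W)$ and pick $\eta > 0$ so small that $V \cap [v^*, v^* + \eta) = \{v^*\}$. By upper semicontinuity of $\Theta$ the set $W' \defl W \cap \{x \in U : \Theta(x) < v^* + \eta\}$ is open in $U$. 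Since $W \cap \Theta^{-1}(v^*) \subset W'$ has positive $\|T\|$-measure, $W' \cap \spt(T) \neq \emptyset$. For $\|T\|$-almost every $x \in W'$ we have $\Theta(x) \in V$ and $\Theta(x) < v^* + \eta$, which forces $\Theta(x) = v^*$. Thus $\Theta = v^*$ $\|T\|$-almost everywhere on the nonempty relatively open subset $W' \cap \spt(T)$ of $W \cap \spt(T)$.

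Now define
\[
U_d \defl \bigl\{ x \in \spt(T) \cap U : \exists\, \text{open } W \ni x,\ W \subset U,\ \text{and } v \in V \text{ with } \Theta = v\ \|T\|\text{-a.e.\ on } W \bigr\}.
\]
By construction $U_d$ is relatively open in $\spt(T) \cap U$ and $\Theta$ is locally $\|T\|$-almost constant there. The previous paragraph applied to an arbitrary relatively open neighborhood of a given point of $\spt(T) \cap U$ shows that $U_d$ is dense in $\spt(T) \cap U$.

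For the final assertion, if $\|g\| = c$ for every $g \in G \setminus \{0_G\}$, then $V \cap (0,\infty) = \{c\}$. Since $\Theta(x) = \|\bg(x)\| = c$ for $\|T\|$-almost every $x \in U$, the function $\Theta$ is $\|T\|$-almost equal to the constant $c$ on $U$. There is no serious obstacle; the only small care needed is the use of discreteness to extract both a minimum of $V(W)$ and an isolating gap $\eta$, which together with upper semicontinuity localizes a single value of $\Theta$ on an open set.
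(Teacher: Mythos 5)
Your proof follows essentially the same route as the paper's: existence, finiteness, and upper semicontinuity of $\Theta$ come from Lemma~\ref{densityexist}, one notes that $\Theta(x)=\|\bg(x)\|$ for $\|T\|$-a.e.\ $x$ so the density takes values in $V=\{\|g\|:g\in G\}$, and then discreteness together with upper semicontinuity isolates a single density value on a relatively open set. The bookkeeping differs slightly: the paper fixes a point $x$ whose density lies within $\delta(G)/2$ of the essential infimum of $\|\bg\|$ and uses upper semicontinuity to squeeze $\|\bg(y)\|$ into $[\operatorname{essinf},\operatorname{essinf}+\delta(G))$, whereas you take $v^*$ to be the least positive-measure density value in $W$, surround it with an isolating gap $\eta$, and form $W'=W\cap\{\Theta<v^*+\eta\}$. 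These two devices are interchangeable and lead to the same conclusion, so I regard the approach as essentially identical.

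One small point deserves a flag. You write that discreteness ``amounts to $V\cap(0,\infty)\subset[\delta(G),\infty)$, so every nonempty subset of $V\cap(0,\infty)$ admits a minimum, and at each point of $V\cap(0,\infty)$ there is a gap of positive width.'' The ``so'' is not a valid inference: $\delta(G)>0$ gives only a uniform lower bound on $V\cap(0,\infty)$ and by itself does not prevent the set of norm values from accumulating inside $[\delta(G),\infty)$. (One can put a norm on $\bigoplus_{n}\Z/2\Z$, say $\|g\|=1+\sum_n g_n/n$ for $g\neq 0$, which is complete with $\delta(G)=1$ but whose set of values accumulates at $1$.) What your argument actually needs is that $\{\|g\|:g\in G\}$ is a closed discrete, i.e.\ locally finite, subset of $\R$, which gives both the minimum and the isolating gap, and also the countability used to see that $\|T\|$ is concentrated on the positive-measure fibers $\Theta^{-1}(v)$. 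The paper's own proof has the same hidden reliance — it passes from $\operatorname{essinf}\leq\|\bg(y)\|<\operatorname{essinf}+\delta(G)$ to $\|\bg(y)\|=\operatorname{essinf}$, which likewise requires a gap above $\operatorname{essinf}$ — so this is a wrinkle you inherited rather than introduced. Under the locally-finite reading of discreteness (which is evidently the intended one), your argument is complete and correct.
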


\begin{proof}
Without loss of generality we can assume that $\|T\|(U) > 0$, otherwise $\spt(T) \cap U = \emptyset$. It follows from Lemma~\ref{densityexist} that $\Theta^m(\|T\|,x)$ exists for all $x \in U$ and the function $x \mapsto \Theta^m(\|T\|,x)$ is upper semicontinuous on $U$. Let $A$ be the subset of those $x \in U$ for which $\Theta^m(\|T\|,x) = \|\bg(x)\| > 0$, where $\bg : \Hi \to G$ is some $\cH^m$-measurable $G$-orientation representing $T$. The set $A$ satisfies $\|T\|(U \setminus A) = 0$ as we have seen in Subsection~\ref{chains_subsection}. For $L \defl 2\cdot \operatorname{essinf}_{\|T\|}\{\|\bg(z)\| : z \in A\} > 0$ it holds that $0 < \delta_L(G) \leq \operatorname{essinf}_{\|T\|}\{\|\bg(z)\| : z \in A\}$ by the discreteness assumption on $G$. Now fix some point $x \in A$ with
\[
	\|\bg(x)\| < \delta_L(G)/2 + \operatorname{essinf}_{\|T\|}\{\|\bg(z)\| : z \in A\} \,.
\]
Because $x \mapsto \|\bg(x)\|$ is upper semicontinuous on $A$, there is some $r > 0$ such that $\B(x,r) \subset U$ and for $\|T\|$-a.e.\ $y \in \B(x,r) \cap A$,
\begin{align*}
	\operatorname{essinf}_{\|T\|}\{\|\bg(z)\| : z \in A\} & \leq \|\bg(y)\| < \|\bg(x)\| + \delta_L(G)/2 \\
	& < \delta_L(G) + \operatorname{essinf}_{\|T\|}\{\|\bg(z)\| : z \in A\} \,.
\end{align*}
Hence for $\|T\|$-a.e.\ $y \in \B(x,r)$,
\[
\Theta^m(\|T\|,y) = \|\bg(y)\| = \operatorname{essinf}_{\|T\|}\{\|\bg(z)\| : z \in A\} \,.
\]
This shows that $\Theta^m(\|T\|,y)$ is equal to some constant $\theta > 0$ for $\|T\|$-a.e.\ $y \in \B(x,r)$. The same argument shows that for any open subset $U' \subset U$ with $\|T\|(U') > 0$ we can find an open subset $V' \subset U'$ with $\|T\|(V') > 0$ on which $x \mapsto \Theta^m(\|T\|,x)$ is constant. This shows the first statement. If we further assume that $\|g\| = \theta > 0$ for all $g \neq 0_G$, then obviously $\Theta^m(\|T\|,x) = \|\bg(x)\| = \theta$ for $\|T\|$-almost every $x \in U$. Hence we can take $U_{d} = U$. 
\end{proof}

This assumption on the group is indeed a necessary one in order to obtain points of almost constant densities as we will see in Example~\ref{discreteness_example}. The following lemma is a standard result for representing sets as graphs. We say that a map $f : U \subset \R^m \to \Hi$ is of class $C^{1,\xi}$ for some gauge $\xi$ if there is a constant $C > 0$ such that for all $x,y \in U$ there is a linear map $Df_x : \R^m \to \Hi$ such that
\[
|f(y) - f(x) - Df_x(y-x)| \leq C|x-y|\xi(C|x-y|) \,.
\]

\begin{Lem} 
\label{manifoldlem}
Let $S \subset X$ be a closed set with $0 \in S$ and assume that $r_0 > 0$ and $\eta : (0,2r_0] \to \R_+$ is a gauge with the following properties:
\begin{enumerate}
	\item For all $x \in \B(0,r_0)$ and all $0 < r \leq r_0$ there is a plane $W_{x,r} \in \bG(\Hi,m)$ with
	\[
	\hdist(S \cap \B(x,r), (x + W_{x,r}) \cap \B(x,r)) \leq r\eta(r) \,.
	\]
	\item For $W \defl W_{0,r_0}$ assume that $\pi_W(S \cap \B(0,\frac{r_0}{4}) \cap Z_W(0,\frac{r_0}{8})) = \B_W(0,\frac{r_0}{8})$.
	\item $\int_0^{2r_0} \frac{\eta(r)}{r} \, dr \leq \frac{1}{120}$.
	\item $r \mapsto \frac{\eta(r)}{r}$ is decreasing.
\end{enumerate}
Set $\hat \eta(r) \defl \int_0^{r} \frac{\eta(s)}{s} \, ds$. Then there is a unique map $f : \B_W(0,\frac{r_0}{8}) \to W_0^\perp$ with $\graph(f) = S \cap \B(0,\frac{r_0}{4}) \cap Z_W(0,\frac{r_0}{8})$ and $f$ is of class $C^{1,\hat\eta}$.
\end{Lem}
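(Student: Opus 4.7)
The plan is to build the graph $f$ in three stages: first construct a well-defined tangent plane $W_x$ at each $x \in S \cap \B(0, r_0/4)$, then show these planes depend continuously on $x$ with modulus controlled by $\hat\eta$, and finally deduce the graphical representation, differentiability, and regularity of the derivative. Throughout I use hypothesis (4) to translate between $\eta$ and $\hat\eta$: since $s \mapsto \eta(s)/s$ is decreasing one has $\eta(r) \leq \hat\eta(r)$ and moreover, for any $r \leq r_0$, $\eta(r) \leq 2\int_{r/2}^{r}\eta(s)/s\,ds$, so dyadic sums of $\eta(r_k)$ with $r_k = 2^{-k}r_0$ are controlled by $2\hat\eta(r_0) \leq 1/60$.

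For the first stage, fix $x$ and apply Corollary~\ref{cor.reif.angle.1} to the pair $(r_{k+1}, r_k)$: upgrading $\eta(r_{k+1}) \leq \eta(r_k)$ by monotonicity of $\eta$, the corollary gives $\hdist(W_{x,r_{k+1}} \cap \B(0,1), W_{x,r_k} \cap \B(0,1)) \leq 4\eta(r_k)$. Summing produces a Cauchy sequence whose limit $W_x$ satisfies $\hdist(W_{x,r} \cap \B(0,1), W_x \cap \B(0,1)) \leq C\hat\eta(r)$ for every $0 < r \leq r_0$ and some absolute $C$. For the second stage, given $x_1, x_2 \in S \cap \B(0, r_0/4)$ with $|x_1 - x_2|$ small, choose $R = 8|x_1 - x_2|$ and apply Lemma~\ref{lemma.reif.angle.2} with $\lambda = 7/8$, $\nu = 8$ and $\epsilon = \eta(R)$; hypothesis (3) guarantees $\eta(R) + 1/8 + 1/8 \leq 1$, so the lemma yields $\hdist(W_{x_1,R} \cap \B(0,1), W_{x_2,R} \cap \B(0,1)) \leq 48\eta(R)$. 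Combining with the "same center" estimate at the two points produces
\[
\hdist(W_{x_1} \cap \B(0,1), W_{x_2} \cap \B(0,1)) \leq C' \hat\eta(8|x_1 - x_2|).
\]

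For the third stage, note that since $\hat\eta(r_0) \leq 1/120$ one has $\hdist(W_x \cap \B(0,1), W \cap \B(0,1)) \leq 1/10$ (say) for every $x \in S \cap \B(0, r_0/4)$, so by Lemma~\ref{lemma.norms.1} the projection $\pi_W$ restricted to any affine copy $y + W_x$ is a bi-Lipschitz bijection onto $W$ with distortion $\leq 1 + o(1)$. Injectivity of $\pi_W|_{S \cap \B(0, r_0/4) \cap Z_W(0, r_0/8)}$ follows: if $y_1 \neq y_2$ shared a projection, applying the Reifenberg bound at scale $R = 2|y_1 - y_2|$ centered at $y_1$ would put $y_2$ within $R\eta(R)$ of $y_1 + W_{y_1, R}$, hence within $CR\hat\eta(R)$ of $y_1 + W_{y_1}$; composing with the bi-Lipschitz property of $\pi_W$ on that plane would force $|\pi_W(y_2 - y_1)| \geq (1 - o(1))|y_1 - y_2| > 0$, a contradiction. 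Combined with hypothesis (2), this defines the map $f : \B_W(0, r_0/8) \to W^\perp$ uniquely.

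To read off $C^{1,\hat\eta}$ regularity, let $L_x : W \to W^\perp$ be the linear map whose graph is $W_x$ (available because $W_x$ is close to $W$). For $x_0 \in S$ and $x = w + f(w) \in S$ with $w$ near $w_0 = \pi_W(x_0)$, applying the Reifenberg flatness at scale $r = |x - x_0|$ together with the first-stage bound gives $\operatorname{dist}(x, x_0 + W_{x_0}) \leq (C+1)r\,\hat\eta(r)$; decomposing $x - x_0$ along $W \oplus W^\perp$ and using the near-identity of $\pi_W$ on $x_0 + W_{x_0}$ yields
\[
|f(w) - f(w_0) - L_{x_0}(w - w_0)| \leq C'' |w - w_0|\,\hat\eta(C''|w - w_0|),
\]
so $Df_{w_0} = L_{x_0}$. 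The modulus of continuity bound $\|Df_{w_1} - Df_{w_2}\| \leq C''' \hat\eta(C'''|w_1 - w_2|)$ is then immediate from the second stage, completing the proof. The main obstacle is simply bookkeeping: extracting the correct constants from Corollary~\ref{cor.reif.angle.1} and Lemma~\ref{lemma.reif.angle.2} so that the smallness condition $\hat\eta(r_0) \leq 1/120$ is actually enough to keep all the planes within $1/5$ of $W$, which is the margin needed for $\pi_W$ to act as a bi-Lipschitz chart everywhere.
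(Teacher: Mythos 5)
Your proposal is correct and follows essentially the same path as the paper's proof: you invoke Corollary~\ref{cor.reif.angle.1} at consecutive dyadic scales to produce a Cauchy sequence of planes converging to a tangent plane $W_x$, use Lemma~\ref{lemma.reif.angle.2} to compare tangent planes at nearby centers, deduce that $\pi_W$ is a bi-Lipschitz chart, and read off the $C^{1,\hat\eta}$ Taylor expansion with $Df_{w_0}=L_{x_0}$, exactly as in the paper. The only thing to watch is that your choice $R = 8|x_1-x_2|$ requires $|x_1-x_2|\leq r_0/8$ to stay within the range $r\leq r_0$ where hypothesis (1) provides planes (the paper sidesteps this by working only at scales $r_k=2^{-k}r_0$ with $k\geq 1$); your concluding remark about constant bookkeeping implicitly acknowledges this.
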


\begin{proof}
Recall that for the distance on $\bG(\Hi,m)$ there holds $\|\pi_{V_1} - \pi_{V_2}\| = \hdist(\B_{V_1}(0,1),\B_{V_2}(0,1))$ as observed in Lemma~\ref{lemma.norms.1}. Since $r \mapsto \frac{\eta(r)}{r}$ is assumed to be decreasing,
\begin{equation}
\label{xiestimate}
\hat \eta(r) = \int_0^{r} \frac{\eta(s)}{s} \, ds \geq \int_0^{r} \frac{\eta(r)}{r} \, ds = \eta(r) \,.
\end{equation}
Fix a point $x \in S \cap \B(0,r_0)$ and some $0 < r \leq r_0$. Comparing the planes $W_{x,r}$ and $W_{x,r/2}$ it follows from Corollary~\ref{cor.reif.angle.1} and $\eta(r/2) \leq \eta(r)$ that
\begin{equation}
\label{samecenter}
\left\|\pi_{W_{x,r}} - \pi_{W_{x,r/2}}\right\| \leq \eta(r)(2 + r(r/2)^{-1}) = 4 \eta(r) \,. 
\end{equation}
Similarly, it follows from Lemma~\ref{lemma.reif.angle.2} and $\eta(r) \leq \eta(r_0) \leq \frac{1}{4}$ that for $x,y \in S \cap \B(0,r_0)$ and $0 < r \leq r_0$ with $|x-y| \leq r/2$,
\begin{equation}
\label{samescale}
\left\|\pi_{W_{x,r}} - \pi_{W_{y,r}}\right\| \leq 24 \eta(r) \,.
\end{equation}
To see this let $\lambda = \frac{1}{2}, \epsilon = \eta(r) \leq \frac{1}{4}$ and $\nu = 4$ in the setting of Lemma~\ref{lemma.reif.angle.2}. Let $r_k \defl 2^{-k} r_0$. It follows from \eqref{samecenter} and hypothesis (4) that for $0 \leq k \leq l$,
\begin{align}
\nonumber
\bigl\|\pi_{W_{x,r_k}} - \pi_{W_{x,r_l}}\bigr\| & \leq \sum_{i \geq k} \bigl\|\pi_{W_{x,r_i}} - \pi_{W_{x,r_{i+1}}}\bigr\| \leq 4 \sum_{i \geq k} \eta(r_i) \\
\label{cauchy}
 & = 4 \sum_{i \geq k} \int_{r_i}^{2r_i}\frac{\eta(r_i)}{r_i} \leq 4 \int_0^{2r_{k}} \frac{\eta(r)}{r} \, dr = 4 \hat \eta(2r_{k}) \,.
\end{align}
Because the Grassmannian $\bG(\Hi,m)$ is complete, there is for any $x \in S \cap \B(x_0,r_0)$ a plane $W_x = \lim_{k \to \infty} W_{x,k}$. Moreover, if $x,y \in S \cap \B(0,r_0)$ and $k \geq 1$  are such that $|x-y| \leq r_{k}/2$, then with \eqref{xiestimate}, \eqref{samescale} and \eqref{cauchy},
\begin{align}
\nonumber
\bigl\|\pi_{W_{x}} - \pi_{W_{y}}\bigr\| & \leq \bigl\|\pi_{W_{x,r_k}} - \pi_{W_{x}}\bigr\| + \bigl\|\pi_{W_{x,r_k}} - \pi_{W_{y,r_k}}\bigr\| + \bigl\|\pi_{W_{y,r_k}} - \pi_{W_{y}}\bigr\|\\
\label{tangentdifference}
 & \leq 24\eta(r_k) + 8 \hat \eta(2r_{k}) \leq 32 \hat\eta(2r_{k}) \,.
\end{align}
With \eqref{cauchy} and hypothesis (1),
\begin{align}
\nonumber
 \hdist(S \cap \B(x,r_k), &(x + W_{x}) \cap \B(x,r_k)) \\
\nonumber
 & \leq \hdist(S \cap \B(x,r_k), (x + W_{x,r_k}) \cap \B(x,r_k)) \\
\nonumber
 & \quad + \hdist((x + W_{x,k}) \cap \B(x,r_k), (x + W_{x}) \cap \B(x,r_k)) \\
\nonumber
 & \leq r_k \eta(r_k) + r_k\bigl\|\pi_{W_{x,r_k}} - \pi_{W_{x}}\bigr\| \\
\label{tangenplanelimit}
 & \leq 5r_k \hat\eta(2r_k) \,.
\end{align}
Further, if $|x| \leq r_0/2$, then with \eqref{xiestimate}, \eqref{samescale}, \eqref{cauchy} and hypothesis (3),
\begin{align}
\nonumber
\bigl\|\pi_{W} - \pi_{W_{x,r_k}}\bigr\| & \leq \bigl\|\pi_{W_{0,r_0}} - \pi_{W_{x,r_0}}\bigr\| + \bigl\|\pi_{W_{x,r_0}} - \pi_{W_{x,r_k}} \bigr\|\\
\nonumber
 & \leq  24 \eta(r_0) + 4 \hat \eta(2r_0) \leq 30 \eta(2r_0) \\
\label{projectto0}
 & \leq \frac{1}{4} \,.
\end{align}
So let $x,y \in S \cap \B(0,r_0/4)$ and $k \geq 1$ such that $r_{k+1} < |x-y| \leq r_{k}$. By assumption there is a $v \in W_{x,r_k}$ with $|x + v - y| \leq r_k\eta(r_k)$. Because of \eqref{projectto0},
\begin{align*}
\bigl|\pi_{W}(x-y)\bigr| & \geq \bigl|\pi_{W_{x,r_k}}(v)\bigr| - \bigl|\pi_{W_{x,r_k}}(x + v - y)\bigr| - \bigl\|\pi_{W} - \pi_{W_{x,r_k}}\bigr\||x-y| \\
 & \geq |v| - r_k\eta(r_k) - \tfrac{1}{4}|x-y| \\
 & \geq \tfrac{3}{4}|x-y| - 2r_k\eta(r_0) \\
 & \geq \tfrac{3}{4}|x-y| - 4\hat\eta(2r_0)|x-y| \\
 & \geq \tfrac{1}{\sqrt{2}}\bigl|x-y\bigr| \,.
\end{align*}
Together with hypothesis (2) this shows that $\pi_W : S \cap \B(0,r_0/4) \cap Z_W(0,r_0/8)) \to \B_W(0,r_0/8)$ is a bi-Lipschitz map. Let $f : \B_W(0,r_0/4) \to W^\perp$ be the map that represents $S \cap \B(0,r_0/4) \cap Z_W(0,r_0/8))$ as a graph over $\B_W(0,r_0/8)$. Since $\sqrt{2}|\pi_{W}(x-y)| \geq |x-y|$ for $x,y \in S \cap \B(0,r_0/4)$, we can estimate the Lipschitz constant by $\Lip(f) \leq 1$. \eqref{projectto0} also implies that for $v \in W_x$,
\[
\bigl|\pi_{W}(v)\bigr| \geq |v| - \tfrac{1}{4}|v| \geq \tfrac{1}{\sqrt{2}}\bigl|v\bigr| \,.
\]
This shows that $\pi_W : W_{x} \to W$ is injective and there is a linear map $L_x : W \to W^\perp$ with $\|L_x\| \leq 1$ which represents $W_x$ as a graph over $W$.

Let $w,w' \in \B_W(0,r_0/8)$ with $r_{k+1} < |w-w'| \leq r_{k}$ for some $k \geq 2$ and set $x \defl w + f(w),x' \defl w'+f(w') \in S$. Since $f$ is $1$-Lipschitz,
\begin{align*}
r_{k+1} & \leq |w-w'| \leq |x-x'| \leq |w-w'| + |f(w)-f(w')| \leq 2|w-w'| \leq r_{k-1} \,.
\end{align*}
In particular $x,x' \in S \cap \B(0,r_0/4)$ and by \eqref{tangenplanelimit} there is some $v \in W_x$ with $|x + v - x'| \leq 5r_{k-1} \hat\eta(2r_{k-1})$. Because $\pi_{W^\perp}(v) = L_x(\pi_W(v))$, we conclude
\begin{align*}
|f(w') - f(w) - L_x(w'- w)| & \leq |f(w') - f(w) - \pi_{W^\perp}(v)| + |L_x(\pi_W(v) + w - w')| \\
 & = |\pi_{W^\perp}(x' - x - v)| + |L_x(\pi_W(v + x - x'))| \\
 & \leq 2|x' - x - v| \leq 10r_{k-1} \hat\eta(2r_{k-1}) \\
 & \leq 40|w-w'| \hat\eta(8|w-w'|) \,.
\end{align*}
This shows the result.
\end{proof}

Note that if $\eta(r) = c r^\alpha$ for some $0 < \alpha \leq 1$, then $\hat\eta(r) = \frac{c}{\alpha}r^\alpha$ and hypothesis (4) is satisfied because of Lemma~\ref{technicalproperties}(2). Combining Proposition~\ref{improvedexcess}, Lemma~\ref{almostconstdensitieslem} and Lemma~\ref{manifoldlem} we obtain our main regularity result.

\begin{Thm}
	\label{reifflat2}
Let $\Hi$ be a Hilbert space with $\dim(\Hi) > m \geq 1$ and let $(G,\|\cdot\|)$ be a normed Abelian group such that $\{\|g\| : g \in G\}$ is discrete. There is a constant $0 < \alpha_0 < 1$ with the following property. Let $T \in \cR_m(\Hi;G)$ and $U \subset \Hi \setminus \spt(T)$ be an open set. Assume that $T$ is $(\bM,\xi,\delta)$-minimal in $U$ for some gauge $\xi(r) = cr^\alpha$, where $\alpha > 0$. Then there is an open dense subset $U_{\textrm{reg}}$ of $\spt(T) \cap U$ that is a $C^{1,\beta}$-submanifold of $\Hi$ with
\[
\beta \defl \frac{\min\{\alpha_0, \alpha\}}{8(m+2)} \,.
\]
Moreover, if $g \mapsto \|g\|$ is constant on $G \setminus \{0_G\}$, then 
\[
\cH^m(\spt(T) \cap U \setminus U_{\textrm{reg}}) = \|T\|(U \setminus U_{\textrm{reg}}) = 0 \,.
\]
\end{Thm}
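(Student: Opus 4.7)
The plan is to assemble the machinery accumulated throughout the paper in sequence. First, observe that Lemma~\ref{nearlymonlem} shows $T$ is almost monotonic in $U$ with gauge $\Xi(r) = \tfrac{mc}{\alpha} r^\alpha$, hence nearly monotonic by Lemma~\ref{nearlymonotonic}. Since $G$ is discrete, Lemma~\ref{almostconstdensitieslem} then provides an open dense subset $U_d \subset \spt(T) \cap U$ on which $x \mapsto \Theta^m(\|T\|,x)$ is locally $\|T\|$-almost constant. This is the step where discreteness is used; without it the conclusion can fail, as Example~\ref{discreteness_example} is advertised to show.

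Next, fix $x_0 \in U_d$ and choose a small ball $\B(x_0, r_0) \subset U$ on which $\Theta^m(\|T\|,\cdot)$ equals some $\theta > 0$ for $\|T\|$-a.e.\ point. Define $\alpha_0 \defl m(1-\lambda_0)/\lambda_0$ with $\lambda_0 \defl \sqrt[4]{\lambda_{\ref{masscomparison2}}(m)}$, and replace $\alpha$ by $\min\{\alpha,\alpha_0\}$ so that the hypotheses of Proposition~\ref{improvedexcess} are met (using Lemma~\ref{technicalproperties}(2) to check the monotonicity condition for $s \mapsto \xi(s)/s^\alpha$). Proposition~\ref{improvedexcess} then yields constants so that $\Theta^m(\|T\|,x) = \theta$ for every $x \in \spt(T) \cap \oB(x_0,r_1)$, and
\[
\exc^m(\|T\|,x,r) \leq \bc\,\Xi(r) \leq \bc'\,r^{\min\{\alpha,\alpha_0\}}
\]
for all such $x$ and all $0<r\leq r_1$. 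This places us in the hypotheses of Theorem~\ref{reifflat}, so a further shrinking of the scale allows us to invoke Lemma~\ref{bootstraplem} and Proposition~\ref{betainftysmall} (with the gauge $\xi(r) = cr^\alpha$), producing for each $x \in \spt(T) \cap \B(x_0,r_2)$ and each sufficiently small $r$ a plane $W_{x,r} \in \bG(\Hi,m)$ with
\[
\hdist\bigl(\spt(T) \cap \B(x,r),\,(x+W_{x,r}) \cap \B(x,r)\bigr) \leq \bc''\,r\cdot\eta(r),
\]
where $\eta(r) \defl r^{\beta}$ with $\beta = \min\{\alpha_0,\alpha\}/(8(m+2))$; the exponent $1/(4(m+2))$ in Corollary~\ref{betainftysmall2} combined with the extra $\sqrt[4]{\xi(2\sqrt{r})} \sim r^{\alpha/8}$ hidden in Proposition~\ref{betainftysmall} produces this $\beta$. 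Simultaneously, the projection identity at the end of Theorem~\ref{reifflat} ensures condition (2) of Lemma~\ref{manifoldlem} holds after a further reduction of scale.

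Finally, Lemma~\ref{manifoldlem} with this $\eta$ gives $\hat\eta(r) = \beta^{-1} r^\beta$ and hence represents $\spt(T) \cap \B(x_0,r_3)$ (possibly intersected with a cylinder) as the graph of a $C^{1,\beta}$-map over the relevant tangent plane. Letting $x_0$ range over $U_d$ and taking the union of these graphical neighborhoods produces the desired open dense $C^{1,\beta}$-submanifold $U_{\textrm{reg}} \subset \spt(T) \cap U$. For the second assertion, when $\|\cdot\|$ is constant on $G\setminus\{0_G\}$, the second conclusion of Lemma~\ref{almostconstdensitieslem} yields $\Theta^m(\|T\|,\cdot)$ equal to this common value at $\|T\|$-a.e.\ point of $U$, so the regularity analysis applies at $\|T\|$-a.e.\ point, whence $\|T\|(U \setminus U_{\textrm{reg}}) = 0$; the vanishing of $\cH^m(\spt(T) \cap U \setminus U_{\textrm{reg}})$ then follows from \eqref{measurecompare} (Ahlfors regularity, secured by Lemma~\ref{compactlem}).

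The main obstacle I expect is verifying that the exponent $\beta = \min\{\alpha_0,\alpha\}/(8(m+2))$ indeed drops out cleanly from the composition of Proposition~\ref{improvedexcess}, Lemma~\ref{bootstraplem} and Proposition~\ref{betainftysmall}: one must check that the polynomial gauge $\xi(r) = cr^\alpha$ propagates through each rescaling without logarithmic losses, and that the smallness constraints on $\epsilon$ in Lemma~\ref{bootstraplem} and Lemma~\ref{manifoldlem} can be satisfied on a common scale $r_3$. The bookkeeping is tedious but purely mechanical once the input estimates are placed correctly.
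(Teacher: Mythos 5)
Your proposal follows the paper's proof essentially line for line: you invoke the same chain of results in the same order, namely Lemma~\ref{nearlymonlem}~$\Rightarrow$~Lemma~\ref{nearlymonotonic}~$\Rightarrow$~Lemma~\ref{almostconstdensitieslem}~$\Rightarrow$~Proposition~\ref{improvedexcess} (together with Lemma~\ref{technicalproperties})~$\Rightarrow$~Theorem~\ref{reifflat}~$\Rightarrow$~Proposition~\ref{betainftysmall}~$\Rightarrow$~Lemma~\ref{manifoldlem}, with Lemma~\ref{differentplane} and the constancy theorem handling the projection condition, and your $\beta$ drops out the same way (the dominating term in $\max\{\sqrt[8]{r},\sqrt[4]{\Xi(2\sqrt{r})}\}^{1/(m+2)}$ being $r^{\alpha/(8(m+2))}$ once $\alpha\leq\alpha_0<1$). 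The only small inaccuracy is that you cite Corollary~\ref{betainftysmall2} for the $1/(4(m+2))$ exponent; the paper's proof applies Proposition~\ref{betainftysmall} directly (the corollary with a constant gauge is what enters Lemma~\ref{bootstraplem}, which here is used inside Theorem~\ref{reifflat} rather than re-invoked at the top level), but your final exponent is nonetheless the correct one.
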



\begin{proof}
We can assume that $\delta \leq 1$, and hence $T$ is almost minimal with respect to the gauge $r \mapsto cr^{\min\{\alpha_0, \alpha\}}$, where $\alpha_0 \defl m \frac{1 - \sqrt[4]{\lambda_{\ref{masscomparison2}}}}{\sqrt[4]{\lambda_{\ref{masscomparison2}}}} < 1$. By restricting to the smaller of the two values we can further assume that $\alpha \leq \alpha_0$. As before we define $\Xi(r) \defl m\int_0^r \frac{\xi(s)}{s} \, ds = \frac{m}{\alpha}r^\alpha$.

Without loss of generality we can assume that $\spt(T) \cap U \neq \emptyset$ and by exhaustion we may also assume that $\dist(U,\spt(\partial T)) > 0$. Due to Lemma~\ref{nearlymonlem} and Lemma~\ref{nearlymonotonic}, $T$ is nearly monotonic in $U$. Because of Lemma~\ref{almostconstdensitieslem} there is a dense open set $U_{d}$ of $\spt(T) \cap U$ that has locally $\|T\|$-almost constant densities. With Proposition~\ref{improvedexcess} and Lemma~\ref{technicalproperties} we deduce that for $\|T\|$-a.e.\ $x_0 \in U_d$ there are $c', r_1 > 0$ such that
\begin{align*}
	\exc^{m}_*(\|T\|,x,r) & \leq c' r^\alpha  \,, \mbox{ if } x \in \oB(x_0,r_1) \,, \\
\exc^{m*}(\|T\|,x,r) & \leq c' r^\alpha  \,, \mbox{ if } x \in \spt(T) \cap \oB(x_0,r_1) \,.
\end{align*}
The assumption $\alpha \leq \alpha_0$ is needed for Proposition~\ref{improvedexcess}. With Theorem~\ref{reifflat} we can further assume that $r_1$ is small enough such that we have orthogonal frames in the support of $T$ around all points $x \in \spt(T) \cap \oB(x_0,r_1)$ and for all scales $0 < r < r_1$. Let us assume that $\Theta^m(\|T\|,x) = \theta > 0$ for $\|T\|$-a.e.\ $x \in \oB(x_0,r_1)$. Applying the moments computations we obtain from Proposition~\ref{betainftysmall} that there is a scale $0 < r_2 \leq r_1$ and $c'' > 0$ such that for all $x \in \spt(T) \cap \oB(x_0,r_2)$ with $\Theta^m(\|T\|,x) = \theta$ and all $0 < r < r_2$ there is a plane $W \in \bG(\Hi,m)$ with
\begin{align*}
\bbeta_\infty(\|T\|,x,r,W)	& < \bc_{\ref{betainftysmall}} \max\biggl\{ \sqrt[8]{2r}, \sqrt[4]{\Xi\left(2\sqrt{2r}\right)} \biggr\}^\frac{1}{m+2} \\
	& \leq c'' r^\beta \,,
\end{align*}
and further if $r_2$ is small enough, Theorem~\ref{reifflat} and Lemma~\ref{differentplane} imply that
\begin{equation}
	\label{projectionfinal}
	\pi_{W\#} \left(T \res (\B(x,r) \cap Z_{W}(\pi_W(x),\tfrac{r}{2})\right) = g_x\curr{\B_{W}(\pi_W(x),\tfrac{r}{2})} \,,
\end{equation}
for some $g_x \in G$ with $\|g_x\| = \theta$. As in the last part of the proof of Lemma~\ref{bootstraplem} we obtain that
\begin{equation}
	\label{distancefinal}
\hdist(\spt(T) \cap \B(x,r), (x + W) \cap \B(x,r)) < 2c''r^{1+\beta} \,.
\end{equation}
These points $x$ are dense $\spt(T) \cap \oB(x_0,r_2)$ and as in the proof of Theorem~\ref{reifflat} we conclude that for all $x \in \spt(T) \cap \oB(x_0,r_2)$ and $0 < r < r_2$ there is a plane with \eqref{projectionfinal} and \eqref{distancefinal}. If $W$ is such a plane at $x_0$ and scale $0 < r < r_2$, the constancy theorem implies further that that $\pi_W : \spt(T) \cap \B(x_0,r) \cap Z_{W}(\pi_W(x_0),\frac{r}{2}) \to \B_{W}(\pi_W(x_0),\frac{r}{2})$ is surjective. This allows to apply Lemma~\ref{manifoldlem} and we see that $\spt(T) \cap \oB(x_0,r)$ is a $C^{1,\beta}$-submanifold of $\Hi$ if $r$ is small enough. Let $U_\textrm{reg}$ be the set of points in $\spt(T) \cap U$ with a relatively open neighborhood which is a $C^{1,\beta}$-submanifold of $\Hi$. The observations above show that any $x_0 \in U_d$ with $\Theta^m(\|T\|,x_0) = \theta$ is contained in $U_d$. Hence $\|T\|(U_d \setminus U_{\textrm{reg}}) = 0$ and since $\spt(T) \cap U_d$ is dense in $\spt(T) \cap U$, $U_\textrm{reg}$ is dense in $\spt(T) \cap U$. In case $g \mapsto \|g\| = \theta$ is constant on $G \setminus \{0_G\}$, the set of points $x_0 \in \spt(T) \cap U_d$ with $\Theta^m(\|T\|,x_0) = \theta$ forms a set of full $\|T\|$-measure, hence $\|T\|(\Hi \setminus U_{\textrm{reg}}) = 0$  because $\|T\|(\Hi \setminus U_d) = 0$.
\end{proof}

This theorem can be formulated for other gauge functions $\xi$. If we assume that $\xi$ is a continuous gauge such that:
\begin{enumerate}
	\item $r \mapsto \frac{\xi(r)}{r^{\alpha}}$ is decreasing, where $\alpha \leq \alpha_0 \defl \min\{m \frac{1 - \sqrt[4]{\lambda_{\ref{masscomparison2}}}}{\sqrt[4]{\lambda_{\ref{masscomparison2}}}}, \frac{1}{8(m+2)} \}$;
	\item $\Xi(r) \defl m\int_0^r \frac{\xi(s)}{s} \, ds < \infty$ for all $r$;
	\item $\hat \eta(r) \defl \int_0^r \frac{\eta(s)}{s} \, ds < \infty$ for all $r$ where $\eta(r) \defl \Xi\left(2\sqrt{r}\right)^\frac{1}{4(m+2)}$.
\end{enumerate}
Under the same assumptions as in the theorem above we obtain a $C^{1,\hat \eta}$ regularity result.

By hypothesis (1) and Lemma~\ref{technicalproperties} there is some constant $c > 0$ such that 
\[
\Xi(r) \geq m\xi(r) \geq \xi(r) \geq c r^{\frac{1}{8(m+2)}} \,,
\]
for all small $r > 0$. Hence 
\[
\eta(r) \geq \max\{1,c\}\max\biggl\{ \sqrt[8]{r}, \sqrt[4]{\Xi\left(2\sqrt{r}\right)} \biggr\}^\frac{1}{m+2} \,.
\]
In order to establish the technical assumption (4) in Lemma~\ref{manifoldlem}, note that,
\begin{align*}
\partial_r\frac{\eta(r)}{r} & = -\frac{1}{r^2}\Xi\left(2\sqrt{r}\right)^\frac{1}{4(m+2)} + \frac{1}{r}\frac{1}{4(m+2)}\Xi\left(2\sqrt{r}\right)^{\frac{1}{4(m+2)} - 1}\frac{m}{2\sqrt r}\xi(2\sqrt{r})\frac{1}{\sqrt r} \\
 & \leq -\frac{m^\frac{1}{4(m+2)}}{r^2}\xi\left(2\sqrt{r}\right)^\frac{1}{4(m+2)} + \frac{1}{r^2}\frac{m^{\frac{1}{4(m+2)}}}{4(m+2)}\xi\left(2\sqrt{r}\right)^{\frac{1}{4(m+2)} - 1}\xi(2\sqrt{r}) \\
 & = \frac{m^\frac{1}{4(m+2)}}{r^2}\xi(2\sqrt{r})^\frac{1}{4(m+2)}\left(\frac{1}{4(m+2)} - 1\right) \leq 0 \,.
\end{align*}
This shows that $r\mapsto \frac{\eta(r)}{r}$ is decreasing and with the help of Lemma~\ref{diffequation2}, Lemma~\ref{technicalproperties} and Lemma~\ref{manifoldlem}, the regularity result follows as in the proof of Theorem~\ref{reifflat2}.

\section{Examples and Counterexamples}
\label{examples}

In this section we give examples indicating the sharpness of our results. In the example below we show that in case $\{ \|g\| : g \in G\}$ is not discrete, there is in general no uniform lower density bound and mass minimizing $G$-chains may have no point of regularity.

\begin{Expl}
[Discreteness of the group]
\label{discreteness_example}
Let $G = (\Z/2\Z)^\infty$ be the Abelian group with coordinatewise addition in $\Z/2\Z$ and norm
\[
\|(a_1,a_2,a_3,\dots)\| \defl \sum_{i=1}^\infty \frac{1}{3^i}|a_i| \,,
\]
where $|1_{\Z/2\Z}| = 1$ and $|0_{\Z/2\Z}| = 0$. As a metric space $(G,\|\cdot\|)$ is bi-Lipschitz equivalent to the standard Cantor set and therefore totally disconnected but not discrete. Let $g_i \in G$ be the sequence $(a_1,a_2,a_3,\dots)$ with $a_i = 1$ and $a_j = 0$ for $j \neq i$. Let $\{p_i\}_{i \geq 1} \subset [0,1]$ be a countable dense subset and consider the $G$-chain 
\[
T = \sum_i g_i \curr{(0,p_i),(1,p_i)} \in \cR_1(\R^2;G) \,.
\]
Let $\pi(x,y) = x$ be the projection onto the first coordinate. A straightforward calculation shows that for $g = (1,1,\dots)$,
\begin{align*}
\bM(\pi_\# T) & = \bM(g \curr{(0,0),(1,0)}) = \|g\| = \sum_{i\geq 1} \|g_i\| \\
 & = \sum_{i\geq 1} \bM(g_i \curr{(0,p_i),(1,p_i)}) = \bM(T) \,.
\end{align*}
The constancy theorem implies that for any filling $S \in \cR_1(\R^2;G)$ of $\partial T$ there holds $\pi_\# S = g \curr{(0,0),(1,0)}$ and since $\pi$ is $1$-Lipschitz,
\[
\bM(T) = \bM(\pi_\# T) = \bM(\pi_\# S) \leq \bM(S) \,.
\]
Thus $T$ is a mass minimizing filling of $\partial T$, but $\spt(T) = [0,1]^2$ is not $\cH^1$-rectifiable and in particular contains no point of regularity and no lower bound on $1$-densities.
\end{Expl}

Under suitable conditions, the set of regular points of an almost minimizing chain is a $C^1$ submanifold. The converse is true too, even allowing for nice unions of such submanifolds as we show in the proposition below. Let $M \subset \Hi$ be an oriented $m$-dimensional submanifold of regularity $C^1$ possibly with boundary. At any $x \in M$ there is a tangent plane $\operatorname{Tan}(M,x) \in \bG(\Hi,m)$. Since $M$ is oriented, there is a unique choice of orientation for $\operatorname{Tan}(M,x)$ such that $\pi_{\operatorname{Tan}(M,x)} : M \cap \B(0,r) \to \operatorname{Tan}(M,x)$ is an orientation preserving homeomorphism onto its image for all small enough $r$. 

\begin{Prop}
\label{submanifoldprop}
Let $0 < \alpha \leq 1$ and $M_1,\dots,M_k \subset \Hi$ be compact oriented $m$-dimensional submanifolds in $\Hi$ of regularity $C^{1,\alpha}$. Let $T \defl \curr{M_1} + \dots + \curr{M_k} \in \cR_m(\Hi;\Z)$. Further assume that there are $C,\delta > 0$ such that:
\begin{enumerate}
	\item If $x \in M_i$ and $y \in M_j$ for some $i,j$, then
	\[
	\left\|\pi_{\operatorname{Tan}(M_i,x)} - \pi_{\operatorname{Tan}(M_j,y)}\right\| \leq C|x-y|^\alpha \,.
	\]
	\item If $x \in M_i$, $y \in M_j$ and $|x - y| \leq \delta$ for some $i,j$, then the orthogonal projection from $\operatorname{Tan}(M_j,y)$ to $\operatorname{Tan}(M_i,x)$ is orientation preserving.
\end{enumerate}
Then there are constants $C',\delta' > 0$ such that $T$ is $(\bM,\xi,\delta')$-minimal with $\xi(t) \defl C't^{2\alpha}$.
\end{Prop}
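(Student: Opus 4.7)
The plan is to reduce almost-minimality to the classical projection-area comparison, using the tangent plane to one of the submanifolds at the center of the ball as reference plane. Fix $x \in \spt(T)$ with $x \in M_i$ for some $i$ (the case $x \notin \spt(T)$ is trivial for small $r$), set $W := \operatorname{Tan}(M_i,x)$, and choose $\delta' > 0$ small enough (depending on $C$, $\delta$ and the compactness of the $M_j$) that for every $j$, every connected component $N$ of $M_j \cap \B(x,2r)$ and every $y \in N$, hypothesis~(1) gives $\|\pi_{\operatorname{Tan}(M_j,y)} - \pi_W\| \leq 2Cr^\alpha$. Then Lemma~\ref{lemma.norms.1}(3) implies that $\pi_W|_N$ is an orientation-preserving (by hypothesis~(2)) $C^{1,\alpha}$ diffeomorphism onto its image $D_N \subset W$, and $N$ is the graph of some $f_N : D_N \to W^\perp$ with $\|Df_N\|_\infty \leq C_1 r^\alpha$.

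Since $\sqrt{1 + |Df_N|^2} \leq 1 + C_2 r^{2\alpha}$ on $D_N$, the area formula gives
\[
\bM(T \res \B(x,r)) = \sum_{N} \int_{D_{N,r}} \sqrt{1 + |Df_N|^2}\,d\cH^m \leq (1 + C_2 r^{2\alpha}) \sum_N \cH^m(D_{N,r}),
\]
where $D_{N,r} := \pi_W(N \cap \B(x,r))$ and the sum runs over the finitely many components $N$ meeting $\B(x,r)$.

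For any competitor $S \in \cR_m(\Hi;\Z)$ with $\spt(S) \subset \B(x,r)$ and $\partial S = 0$, the push-forward $\pi_{W\#}S \in \cR_m(W;\Z)$ has compact support and zero boundary, hence vanishes by the constancy theorem applied in the top-dimensional ambient $W$. Therefore $\pi_{W\#}(T \res \B(x,r) + S) = \pi_{W\#}(T \res \B(x,r)) = \sum_N \curr{D_{N,r}}$ (all sheets carrying positive multiplicity by hypothesis~(2)), whose mass is $\sum_N \cH^m(D_{N,r})$. Since $\pi_W$ is $1$-Lipschitz,
\[
\bM(T \res \B(x,r) + S) \geq \bM(\pi_{W\#}(T \res \B(x,r) + S)) = \sum_N \cH^m(D_{N,r}),
\]
and combining with the previous display yields the desired $\bM(T \res \B(x,r)) \leq (1 + C_2 r^{2\alpha})\bM(T \res \B(x,r) + S)$, i.e.\ the claimed $(\bM,\xi,\delta')$-minimality with $\xi(r) = C_2 r^{2\alpha}$.

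The main obstacle is the graph decomposition in the first step: one must verify the slope estimate $\|Df_N\|_\infty \leq C_1 r^\alpha$ uniformly across all $j$ and all components of $M_j \cap \B(x,2r)$ (not just the manifold $M_i$ through $x$), and check that the projected sheets all inherit the same positive orientation in $W$. These are precisely the uniform conditions encoded in hypotheses~(1) and~(2), and the passage from H\"older control on tangent planes to a graph slope bound is a standard $C^{1,\alpha}$ argument I would carry out by covering each component with normal neighborhoods of diameter comparable to $r$ and applying the inverse function theorem to $\pi_W$ restricted to such a neighborhood.
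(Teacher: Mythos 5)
Your proof follows the same route as the paper: pick a reference plane $W$ close to all tangent planes meeting the ball, bound the Jacobian of the graph maps by $1 + O(r^{2\alpha})$, kill the competitor with the constancy theorem, and close with the $1$-Lipschitzness of $\pi_W$. The graph-decomposition step you flag as the "main obstacle" is likewise treated as a standard $C^{1,\alpha}$ fact in the paper (which outsources it to \cite[Lemma~2.2.3]{DePauw3}).

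One small omission: the parenthetical claim that "$x \notin \spt(T)$ is trivial for small $r$" does not cover the range $\dist(x,\spt T) \le r < \min\{\delta',\dist(x,\spt\partial T)\}$, where $T\res\B(x,r)$ is nonzero even though $x\notin\spt T$, so you cannot set $W := \operatorname{Tan}(M_i,x)$ since there is no such $i$. The fix is exactly the paper's: choose any $x_1 \in M_{i} \cap \B(x,r)\setminus\partial M_i$ and set $W := \operatorname{Tan}(M_{i},x_1)$; hypothesis~(1) applied to $x_1$ (rather than $x$) still gives the $O(r^\alpha)$ tilt bound for all sheets in $\B(x,r)$, and the rest of your argument goes through verbatim.
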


Note that if we consider only one submanifold, then these two conditions are trivially satisfied. See \cite[Lemma~2.2.3]{DePauw3} for two different characterizations of $C^{1,\alpha}$ manifolds in $\R^n$ which easily generalize to Hilbert spaces.

\begin{proof}
Since all the $M_i$ are H\"older regular and compact there are constants $C_1,\delta_1 > 0$ with the following property: If $x \in M_i \setminus \partial M_i$ for some $i$ and $W \in \bG(\Hi,m)$ is an $m$ plane with $d_{W,i,x} \defl \|\pi_{\operatorname{Tan}(M_i,x)} - \pi_{W}\| \leq \delta_1$, then
\begin{enumerate}
	\item for all $r > 0$ with	$Cr^\alpha \leq \delta_1$, the orthogonal projection $\pi_W : M_i \cap \B(x,r) \to B_{W,i,x,r} \subset W$ is a homeomorphism onto a neighborhood $B_{W,i,x,r}$ of $\pi_W(x)$ in $W$.
	\item If $\bw : B_{W,i,x,r} \to W^\perp$ is the map with the defining property $w + \bw(w) \in M$ in case $w \in U_{W,i,x,r}$, then
\begin{equation*}
\label{lipestimateofparam}
\sup_{w \in B_{W,i,x,r}} \left\|D\bw_w\right\| \leq C_1 (d_{W,i,x} + r^\alpha) \,.
\end{equation*}
\end{enumerate}
For (2) compare with the proof of Lemma~\ref{manifoldlem} or \cite[Lemma~2.2.3]{DePauw3}. If we assume that $W$ satisfies $d_{W,i,x} = \|\pi_{\operatorname{Tan}(M_i,x)} - \pi_{W}\| \leq Cr^\alpha \leq \delta_1$, then there is a constant $C_2 > 0$ such that for all $w \in B_{W,i,x,r}$,
\[
(\J \bw_w)^2 = \|D\bw_w\|_{\HS}^2 + \sum_{\#K \geq 2} \det((D\bw_w^*D\bw_w)_K) \leq C_2 r^{2\alpha} \,.
\]
See \eqref{jacobian_sum} and \eqref{norms}. Hence
\[
(1 + (\J \bw_w)^2)^\frac{1}{2} \leq (1 + C_2 r^{2\alpha})^\frac{1}{2} \leq 1 + C_2 r^{2\alpha} \,.
\]
Because $\pi_W : M_i \cap \B(x,r) \to B_{W,i,x,r} \subset W$ is a homeomorphism by (1), the cylindrical excess of a compact subset $A \subset M_i \cap \B(x,r)$ over the plane $W$ estimates as
\begin{align}
\nonumber
0 \leq \cH^m(A) - \cH^m(\pi_W(A)) & = \int_{\pi_W(A)} (1 + (\J \bw_w)^2)^{\frac{1}{2}}  - 1 \, dw \\
\nonumber
 & \leq C_2\int_{\pi_W(A)}r^{2\alpha} \, dw \\
\label{excessestimatesubman}
 & = C_2r^{2\alpha}\cH^m(\pi_W(A)) \,.
\end{align}

Let $0 < \delta_2 \leq \frac{1}{2}\delta$ be small enough such that $C(2\delta_2)^\alpha \leq \delta_1$ and consider some scale $0 < r \leq \delta_2$ and some $x \in \Hi$. Note that by the compatibility of orientations, $\spt(\partial T) = \partial M_1 \cup \dots \cup \partial M_k$. Assume that $\B(x,r)$ intersects $M_{i(1)}, \dots, M_{i(l)}$ in a set of positive measure and fix some points $x_{j} \in A_j \defl M_{i(j)} \cap \B(x,r) \setminus \partial M_{i(j)}$ as well as $W \defl \operatorname{Tan}(M_{i(1)},x_1)$. Since $2r \leq \delta$, for all $j = 1,\dots,l$ there holds,
\[
\|\pi_{\operatorname{Tan}(M_{i(j)},x_j)} - \pi_{W}\| \leq C|x_1 - x_j|^\alpha \leq C(2r)^\alpha \leq \delta_2 \,.
\]
Because $A_j \subset M_{i(j)} \cap \B(x_j,2r)$ it follows from \eqref{excessestimatesubman} that
\[
\cH^m(A_j) \leq (1 + C_2(2r)^{2\alpha})\cH^m(\pi_W(A_j)) = (1 + C_3r^{2\alpha})\cH^m(\pi_W(A_j)) \,,
\]
for $C_3 \defl 4C_2$. With the hypothesis on the orientation and relative position of the submanifolds, $\bM(T) = \bM(\curr{M_1}) + \cdots + \bM(\curr{M_k})$. More precisely, at every point where two submanifolds meet, they have the same tangent plane with compatible orientations and thus their multiplicities do not cancel. Since all the projections $\pi_W : A_j \to W$ are injective and orientation preserving,
\begin{align*}
\bM(T \res \B(x,r)) & = \bM(\curr{A_1}) + \cdots + \bM(\curr{A_l}) \\
 & \leq (1 + C_3r^{2\alpha})\bM(\pi_{W\#}\curr{A_1}) + \cdots + (1 + C_3r^{2\alpha})\bM(\pi_{W\#}\curr{A_l}) \\
 & = (1 + C_3r^{2\alpha})\bM(\pi_{W\#}(T \res \B(x,r))) \,.
\end{align*}
Let $S \in \cR_m(\Hi;\Z)$ with $\spt(S) \subset \B(x,r)$ and $\partial S = 0$. The constancy theorem implies $\pi_{W\#} S = 0$ and hence
\begin{align*}
\bM(T \res \B(x,r)) & \leq (1 + C_3r^{2\alpha})\bM(\pi_{W\#}(T \res \B(x,r))) \\
 & = (1 + C_3r^{2\alpha})\bM(\pi_{W\#}(T \res \B(x,r) + S)) \\
 & \leq (1 + C_3r^{2\alpha})\bM(T \res \B(x,r) + S) \,.
\end{align*}
This shows that $T$ is almost minimizing.
\end{proof}

The following example illustrates the proposition above showing that if $G = \Z$, then the set of regular points of an almost minimizing rectifiable $G$-chain is in general not of full measure. The same construction also works for an arbitrary normed Abelian group $G$ if there are $g_1,g_2 \in G \setminus \{0_G\}$ with $\|g_1\| + \|g_2\| = \|g_1 + g_2\|$.

\begin{Expl}
[Size of the regular set]
\label{regularsetexample}
Let $C \subset [0,1]$ be a topological Cantor set with $\cH^1(C) > 0$ and $\{0,1\} \subset C$. Let $\gamma : \R \to \R$ be a smooth function with $\gamma(t) > 0$ for $|t| < 1$ and $\gamma(t) = 0$ for $|t| \geq 1$. We could for example take
\[
\gamma(t) \defl
\left\{
	\begin{array}{ll}
		\exp(-1/(1-t^2))  & \mbox{if } |t| < 1 \,, \\
		0  & \mbox{if } |t| \geq 1 \,.
	\end{array}
\right.
\]
Fix an enumeration $U_i$ of the connected components of $[0,1] \setminus C$. Since $\{0,1\} \subset C$ each $U_i$ is of the form $(a_i-b_i,a_i+b_i)$ for some $0 < a_i,b_i < 1$. We define $\gamma_i : \R \to \R$ by $\gamma_i(t) \defl c_i\gamma(b_i^{-1}(t - a_i))$ for some $c_i > 0$ such that $|D^k\gamma_i| \leq 2^{-i}$ for all $k \leq i$. Set $f : \R \to \R$ to be the sum $f \defl \sum_i \gamma_i$. Then the partial sums $\sum_i D^k\gamma_i$ converge uniformly for all $k$ and hence $f$ is smooth and satisfies $f(t) = 0$ for $t \in C$ and $f(t) > 0$ for $t \in [0,1] \setminus C$.

Consider
\[
T \defl (\id_{\R} \times 0)_\#\curr{0,1} + (\id_{\R} \times f)_\#\curr{0,1} \in \cR_1(\R^2;\Z) \,.
\]
Clearly, $\partial T = (g_1 + g_2)(\curr 1 - \curr 0)$ and the set $C$ is the complement of the set of regular points. Since $\|T\|(C) = 2\cH^1(C) > 0$, the set of regular points doesn't have full measure. It remains to check that $T$ is almost minimizing. Since $f \in C^2$ has compact support, there is a constant $L > 0$ such that for all $x,y \in \R$,
\[
\left|f(y) - f(x) - f'(x)(y-x)\right| \leq L|y-x|^2 \,.
\]
Assume that $f'(x)^2 > 4Lf(x)$ for some $\epsilon > 0$. If $f'(x) > 0$ let $y \in \R$ be such that $x-y = f'(x)(2L)^{-1}$, and $y - x = f'(x)(2L)^{-1}$ otherwise. We get
\begin{align*}
f(y) & \leq f(x) + f'(x)(y-x) + L|y-x|^2 \\
 & < f'(x)(4L)^{-1} - f'(x)^2(2L)^{-1} + f'(x)^2(4L)^{-1} \\
 & = 0 \,.
\end{align*}
This contradicts $f(y) \geq 0$ and hence $f'(x)^2 \leq 4L f(x)$ for all $x \in [0,1] \setminus C$. Let $W_1 = \{(t,tf'(x)) : t \in \R\}$ be the tangent line at $(x,f(x))$ to $T$ and $W_0 = \{(t,0) : t \in \R\}$ the tangent line to $T$ at $(x,0)$. A simple calculation shows that
\[
\|\pi_{W_0} - \pi_{W_1}\| \leq f'(x) \leq (4L)^\frac{1}{2} f(x)^\frac{1}{2} \,,
\]
and hence for all $p,q \in \spt(T)$,
\[
	\|\pi_{\operatorname{Tan}(T,p)} - \pi_{\operatorname{Tan}(T,q)}\| \leq L'|p-q|^\frac{1}{2} \,,
\]
for some $L' > 0$. Proposition~\ref{submanifoldprop} now shows that $T$ is almost minimizing with a linear gauge $\xi(r) = L_4 r$. A more careful analysis shows that $T$ is actually almost minimizing with respect to a quadratic gauge function. But since Theorem~\ref{reifflat2} certainly applies to a linear gauge, this is enough to show that almost minimizing chains can have a branching set of positive measure.
\end{Expl}


\end{document}